\newtheorem{thm}{Theorem}[section]
\newtheorem{lem}[thm]{Lemma}
\newtheorem{cor}[thm]{Corollary}
\newtheorem{defn}[thm]{Definition}
\newtheorem{prop}[thm]{Proposition}
\newtheorem*{thm*}{Theorem}
\theoremstyle{definition}
\newtheorem{ex}[thm]{Example}
\newtheorem{rmk}[thm]{Remark}
\newcommand{\fR}{\mathbb{R}} 
\newcommand{\ol}[1]{\overline{#1}} 
\let\mbold\mathbf
\let\L\@undefined
\newcommand{\L}{\mbold L}
\newcommand{\A}{\mbold A}
\newcommand{\U}{\mbold U}
\newcommand{\V}{\mbold V}
\renewcommand{\ll}{L}
\renewcommand{\aa}{A}
\newcommand{\bb}{B}
\newcommand{\uu}{U}
\newcommand{\g}{\mathfrak{g}}
\newcommand{\h}{\mathfrak{h}}
\newcommand{\pb}{!} 
\newcommand{\fb}{\mbold{GL}} 
\newcommand{\so}{\mbold{s}} 
\newcommand{\ta}{\mbold{t}} 
\newcommand{\Xlin}{\mathfrak{X}^{\mathrm{lin}}}
\newcommand{\Tlin}{T^{\mathrm{lin}}}
\newcommand{\TD}{\mathcal{L}} 
\newcommand{\cf}{\omega} 
\newcommand{\bmap}{\omega} 
\newcommand{\amod}{} 
\newcommand{\toto}{\rightrightarrows}
\newcommand{\param}{t}
\newcommand{\unit}{\mbold{1}} 
\newcommand{\expon}[1]{\exp (#1)}
\newcommand{\inv}{\mbold{i}} 
\newcommand{\generic}{\gamma}
\newcommand{\genericU}{\upsilon}
\newcommand{\bisection}{\Sigma}
\newcommand{\Add}{\kappa} 
\newcommand{\di}{D} 
\newcommand{\op}{\text{op}}
\newcommand{\atiyah}{\mathfrak r}
\newcommand{\atiyahTensor}{\mathfrak R}
\newcommand{\atiyahLieAlgebroid}{\alpha}
\newcommand{\atiyahTorsionClass}{\atiyahLieAlgebroid}
\DeclareMathOperator{\Bis}{Bis}
\DeclareMathOperator{\Ad}{Ad}
\DeclareMathOperator{\ad}{ad}
\DeclareMathOperator{\End}{End}
\DeclareMathOperator{\Der}{Der}
\DeclareMathOperator{\id}{id}
\DeclareMathOperator{\Lie}{Lie}
\DeclareMathOperator{\Id}{Id} 
\DeclareMathOperator{\pr}{pr}
\DeclareMathOperator{\m}{m} 
\DeclareMathOperator{\rk}{rk}
\DeclareMathOperator{\Hom}{Hom}
\newcommand{\diff}{{\mathrm d}}
\newcommand{\D}{\mathcal{D}} 
\newcommand{\lvf}[1]{L(#1)}
\newcommand{\rvf}[1]{R(#1)}
\title{Invariant connections and PBW theorem for Lie groupoid pairs}
\thanks{Research partially supported by the Fonds National de la Recherche, Luxembourg, through the AFR Grant PDR 2012-1 (Project Reference 3966341).}
\author{Camille Laurent-Gengoux}
\address{Université de Lorraine, CNRS, IECL, F-57000 Metz, France}
\email{camille.laurent-gengoux@univ-lorraine.fr}
\author{Yannick Voglaire}
\address{Université du Luxembourg, Mathematics Research Unit, Maison du Nombre, 6 avenue de la Fonte, L-4364 Esch-sur-Alzette, Luxembourg}
\email{yannick.voglaire@uni.lu}
\subjclass[2010]{53C05,53C30,53C12} 
\keywords{Atiyah classes, Lie groupoids, homogeneous spaces, linearization, Poincaré-Birkhoff-Witt theorem, foliations, equivariant principal bundles}
\begin{document}

\begin{abstract}
To a closed wide Lie subgroupoid $\A$ of a Lie groupoid $\L$, i.e.\ a Lie groupoid pair, we associate an Atiyah class which we interpret as the obstruction to the existence of $\L$-invariant fibrewise affine connections on the homogeneous space $\L/\A$.
For Lie groupoid pairs with vanishing Atiyah class, we show that the left $\A$-action on the quotient space $\L/\A$ can be linearized.

In addition to giving an alternative proof of a result of Calaque about the Poincar\'e--Birkhoff--Witt map for Lie algebroid pairs with vanishing Atiyah class, this result specializes to a necessary and sufficient condition for the linearization of dressing actions, and gives a clear interpretation of the Molino class as an obstruction to the simultaneous linearization of all the monodromies.

In the course of the paper, a general theory of connections on Lie groupoid equivariant principal bundles is developed.
\end{abstract}

\maketitle

\tableofcontents


\section{Introduction}
\label{sec:introduction}

Invariant connections form an important tool for the study of homogeneous spaces, and their study goes back to the work of É.\ Cartan on Lie groups.
Generalizing the canonical invariant connection on a symmetric space, Nomizu studied the existence of invariant connections on reductive homogeneous spaces \cite{nomizu_invariant_1954}.
Around 1960, Nguyen Van Hai \cite{nguyen-van-hai_conditions_1964}, Vinberg \cite{vinberg_invariant_1960} and Wang \cite{wang_invariant_1958} independently characterized the set of $G$-invariant affine connections on a not necessarily reductive homogeneous space $G/H$, and computed necessary and sufficient conditions for their existence.
In the connected case, the obstruction was given a cohomological meaning in \cite{nguyen-van-hai_relations_1965} as an element in the first Lie algebra cohomology $H^1(\h,\Hom(\g/\h \otimes \g/\h,\g/\h))$.
Recently, this class was rediscovered by Calaque, C\u{a}ld\u{a}raru and Tu \cite{calaque_pbw_2013} and shown to be the obstruction to a Poincaré--Birkhoff--Witt-type theorem for inclusions of Lie algebras.
Bordemann in \cite{bordemann_atiyah_2012}, among other results, realized and explained the link between the two approaches and gave a geometric interpretation of the PBW theorem using invariant connections.

In this paper, we explore some aspects of the scarcely studied class of homogeneous spaces of Lie groupoids, see \cite[Section 8]{liu_dirac_1998} and \cite[Section 3]{moerdijk_integrability_2006}.
This research was triggered by recent papers of Chen, Stiénon and Xu \cite{chen_atiyah_2016}, who generalize Atiyah classes \cite{atiyah_complex_1957} to inclusions of Lie algebroids, and of Calaque \cite{calaque_pbw_2014}, who generalizes the PBW-type theorem to that case as well.
One of our intents is to reprove geometrically the latter theorem. We develop along the way the study of invariant connections on homogeneous spaces of Lie groupoids and on equivariant principal groupoid bundles.

A related study, with a somewhat different point of view, was carried out in \cite{laurent-gengoux_kapranov_2014}: while obviously the authors of op.\ cit.\ mostly focus on the case of non-vanishing Atiyah class, they also consider the vanishing case, where the corresponding Kapranov $dg$-manifold is shown to be \emph{linearizable}.
This means that it can be represented by an $L_\infty$-algebra structure that only admits a $1$-ary bracket while all higher brackets vanish, including  the bilinear bracket. 
As a Kapranov $dg$-manifold, or as an $L_\infty$-algebra, it is therefore of limited interest, but the fact that it is trivial has interesting consequences. The authors interpret it geometrically as meaning that the natural left $\aa$-action on $ \L/\A$ (with $\L$ and $\A $ local Lie groupoids integrating $ \ll$ and $ \aa$, respectively) is formally linearizable. They recover as a corollary an interpretation of this vanishing found in \cite{calaque_pbw_2014} in terms of equivariance of the Poincaré--Birkhoff--Witt map. In this paper, we clarify this geometric interpretation, consider the Lie groupoid $\A$-action instead of the infinitesimal $\aa$-action and replace ``formal'' by ``semi-local''. Semi-local meaning here ``in a neighborhood of the zero section'', i.e.\ in a neighborhood of the common base manifold $M$ of both $\L$  and $\A$.

Our main tool in this article will be Lie groupoid pairs, i.e.\ pairs $(\L,\A) $ with $\L$ a Lie groupoid and $\A \subset \L$ a closed Lie subgroupoid over the same manifold $M$, or their local counterparts. 
As mentioned above, their infinitesimal counterparts have been recently studied \cite{chen_atiyah_2016,laurent-gengoux_kapranov_2014} under the name of Lie algebroid pairs, which are pairs $(\ll,\aa)$ made of a Lie algebroid $\ll$ together with a Lie subalgebroid $\aa$ over the same base. 
In the transitive case, the latter were already studied, although for other reasons, by Kubarski \emph{et al.}\ in \cite{balcerzak_primary_2001,kubarski_algebroid_1998}.
Lie algebroid pairs are an efficient manner to unify various branches of differential geometry where (regular) \emph{transverse} structures appear, as can be seen from the following list of examples:
\begin{enumerate}
  \item \emph{Lie subalgebras}.
    Let $\g$ be a Lie algebra. For any Lie subalgebra $\h$ of $\g$, the pair $(\g,\h) $ is a Lie algebroid pair. 
    
  \item \emph{Foliations}.
    Let $\ll=TM$ be the tangent bundle Lie algebroid of a manifold $M$.
    For any integrable distribution $\aa \subset TM$, i.e.\ any foliation on the manifold $M$, the pair $(\ll,\aa) $ is a Lie algebroid pair.
    
  \item \emph{Non-commutative integrable systems}.
    Let $\ll = T^*M$ be the cotangent algebroid of a Poisson manifold
    $(M,\pi)$, see \cite{crainic_integrability_2004}. 
    Consider a coisotropic foliation on $M$, i.e.\ a foliation whose leaves are all coisotropic submanifolds.
    Covectors vanishing on the tangent space of the coisotropic foliation form a Lie subalgebroid $\aa $ of $ \ll$, and the pair $(\ll,\aa) $ is a Lie algebroid pair.
    In particular, the coisotropic foliation can be chosen to be a regular integrable system (in the sense of \cite[Chapter 12]{laurent-gengoux_poisson_2013}) or a non-commutative integrable system in the sense of \cite{fernandes_global_2015}.

  \item \emph{Manifolds with a Lie algebra action}.
    If $M$ is a manifold with an action of a Lie algebra $\g$, one can build a matched pair \cite{mokri_matched_1997} of Lie algebroids $\ll=TM \times (\g\ltimes M)$ where $\g\ltimes M$ is the action Lie algebroid.
    Taking $\aa$ to be this action Lie algebroid yields a Lie algebroid pair $(\ll,\aa)$.
    
  \item \emph{Poisson manifolds with a Poisson $\g$-action}.
    If $P$ is a Poisson manifold with a Poisson $\g$-action, Lu \cite{lu_poisson_1997} defines a matched pair of Lie algebroids $\ll=T^*P \times (\g\ltimes P)$.
    Taking $\aa$ to be the action Lie algebroid yields a Lie algebroid pair $(\ll,\aa)$.
    
  \item Enlarging the setting from real Lie algebroids to complex Lie algebroids \cite{weinstein_integration_2007}, one could add \emph{complex manifolds} among the examples, since an almost complex structure $ J$ on a smooth manifold $X$ is complex if and only if $(T_{\mathbb C}X, T^{0,1}X)$ is a Lie algebroid pair \cite{laurent-gengoux_holomorphic_2010}. 
\end{enumerate}
There is a canonical, Bott-type, $\aa$-module structure on $\ll/\aa$ which is fundamental for the study of the transverse geometry of $\ll$ with respect to $\aa$. To the best of our knowledge, it was first considered for a general Lie algebroid pair in \cite[Example 4]{crainic_differentiable_2003}, and extensively studied in \cite{chen_atiyah_2016}.
For foliations, this $\aa$-module structure is the Bott connection, while for Lie algebras, it is simply the $\aa$-action on $\ll/\aa$ induced by the adjoint action. 
When $\ll$ is the double of a Lie bialgebra $\aa$, then $\ll/\aa \simeq \aa^*$ with the $\aa$-module structure defined by the coadjoint action.

As in Wang's original work \cite{wang_invariant_1958}, we prove our results first in the context of principal bundles. 
To this end, we introduce a notion of equivariant principal bundle of Lie groupoids (see Definition \ref{defn:epb}).
Such bundles include generalized morphisms as particular examples, but we are mostly interested in those which are not of that kind.
It turns out that, just like generalized morphisms, equivariant principal bundles may be seen as ``morphisms'' between Lie groupoids, whose composition is associative up to bi-equivariant diffeomorphisms (see Proposition~\ref{prop:epb-bicategory}).

For a vector bundle $E$ and an equivariant principal bundle $P$ with structure groupoid the frame groupoid $\fb(E)$, we first extend to this context the equivalence between connection 1-forms on $P$ and connections on the associated bundle $P(E)$ (see Theorem \ref{thm:connection-forms-connections-on-associated-bundles}).
We then establish, for equivariant principal bundles over a homogeneous space $\L/\A$, an equivalence between fibered connection 1-forms and some Lie algebroid connections (see Theorem \ref{thm:general}).
Combining these two results, we arrive at a characterization of invariant connections on vector bundles over homogeneous spaces of Lie groupoids (see Theorems \ref{thm:L-conn-on-E--conn-on-assoc-bundle}).
As a special case, we obtain the following generalization of Wang's theorem (see Theorem \ref{thm:L-conn-on-LoverA--conn-on-tgt-bundle}):
\begin{thm*}
  Let $(\L, \A)$ be a Lie groupoid pair over $M$, with Lie algebroid pair $(\ll,\aa)$.
  There is a bijective correspondence between
  \begin{enumerate}[label=(\arabic*)]
    \item $\A$-compatible $\ll$-connections on $\ll/\aa$, and
    \item $\L$-invariant fibrewise affine connections on $\L/\A \to M$.
  \end{enumerate}
\end{thm*}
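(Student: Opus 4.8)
The plan is to read off the statement as the case $E=\ll/\aa$ of Theorem~\ref{thm:L-conn-on-E--conn-on-assoc-bundle}, the only genuinely new point being a geometric description of the associated vector bundle. Endow $\ll/\aa$ with its canonical transverse (Bott-type) $\aa$-module structure. The homogeneous space in question is the submersion $\ol\ta\colon\L/\A\to M$, $[\ell]\mapsto\ta(\ell)$, and $\L\to\L/\A$ is the tautological $\L$-equivariant principal $\A$-bundle over it, so the associated bundle $\L(\ll/\aa)\to\L/\A$ is the vector bundle to which Theorem~\ref{thm:L-conn-on-E--conn-on-assoc-bundle} applies, yielding a bijection between $\A$-compatible $\ll$-connections on $\ll/\aa$ and $\L$-invariant fibrewise connections on $\L(\ll/\aa)$. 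It thus remains to match the latter with $\L$-invariant fibrewise affine connections on $\L/\A\to M$, for which two identifications are needed.

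First, I would construct an $\L$-equivariant vector bundle isomorphism from $\L(\ll/\aa)$ onto the vertical (fibrewise) tangent bundle $T^{\ol\ta}(\L/\A)$ of $\ol\ta$. It is induced by the map
\[
  \L\times(\ll/\aa)\longrightarrow T^{\ol\ta}(\L/\A),\qquad
  (\ell,\,\xi+\aa)\longmapsto\frac{\diff}{\diff\param}\Big|_{\param=0}\bigl[\ell\cdot\expon{\param\xi}\bigr],
\]
where $\expon{\param\xi}$ is to be read, via the groupoid exponential, as a curve of arrows through $\unit_{\so(\ell)}$ inside the fibre of $\ta$ over $\so(\ell)$, with initial velocity $\xi$. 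One checks that the right-hand side lies in $\ker\diff\ol\ta$, depends on $\xi$ only modulo $\aa$, and that the map then descends to $\L(\ll/\aa)$ — the descent being precisely the compatibility of the principal right $\A$-action on $\L$ with the transverse $\aa$-structure on $\ll/\aa$; a dimension count together with surjectivity of the differential of $\L\to\L/\A$ shows it is an isomorphism, and left translation by an arrow $\ell'$ of $\L$ with $\so(\ell')=\ta(\ell)$ commutes with the construction, giving $\L$-equivariance.

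Second, I would note that a \emph{fibrewise affine connection} on $\L/\A\to M$ is by definition an affine connection on each fibre $\ol\ta^{-1}(x)$ depending smoothly on $x$, and that an affine connection on a manifold is precisely a linear connection on its tangent bundle; hence such a datum is the same as a fibrewise connection — a connection along the $\ol\ta$-fibres — on the vector bundle $T^{\ol\ta}(\L/\A)\to\L/\A$, which is exactly the notion of connection on the associated bundle occurring in Theorem~\ref{thm:L-conn-on-E--conn-on-assoc-bundle}. Under the isomorphism of the first step, $\L$-invariance of such a connection — the condition that every arrow of $\L$ act between $\ol\ta$-fibres by an affine isomorphism — corresponds to $\L$-invariance in the sense of that theorem, and combining the two identifications with Theorem~\ref{thm:L-conn-on-E--conn-on-assoc-bundle} gives the asserted bijection.

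The step I expect to demand the most care is the equivariant identification $\L(\ll/\aa)\cong T^{\ol\ta}(\L/\A)$: one must pin down the precise structure on $\ll/\aa$, check well-definedness of the descent, smoothness and naturality, and keep track of source/target conventions in the exponential. It is essential here that the structure used lives on the quotient $\ll/\aa$, rather than on $\ll$ itself, where only an adjoint action ``up to homotopy'' is available. Everything else is a formal specialization; in particular no torsion-freeness intervenes, and the correspondence is with \emph{all} fibrewise affine connections, exactly as in Wang's classical theorem.
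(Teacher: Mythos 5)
Your proposal is correct and follows essentially the same route as the paper: the paper's proof of Theorem~\ref{thm:L-conn-on-LoverA--conn-on-tgt-bundle} likewise specializes Theorem~\ref{thm:L-conn-on-E--conn-on-assoc-bundle} to $E=\ll/\aa$ and builds the $\L$-equivariant isomorphism $\frac{\L\times_M\ll/\aa}{\A}\cong T^{\ol\ta}(\L/\A)$ by right-translating elements of $\ll$ (via $(\generic,\lambda)\mapsto 0_\generic\bullet\lambda^{-1}$) and descending through $\pi:\L\to\L/\A$. The only caveat is the one you already flag: since $\ll=\ker T\so$, your curve in the $\ta$-fibre must carry initial velocity $\xi^{-1}=T\inv(\xi)$ rather than $\xi$, exactly the inverse appearing in the paper's formula.
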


Given a Lie groupoid pair $(\L,\A)$ over $M$ and an $\A$-module $E$, the obstruction to the existence of an invariant connection on the associated vector bundle $\frac{\L\times_M E}{\A}$ is a class $\alpha_{(\L,\A),E}$ in the degree 1 Lie groupoid cohomology of $\A$, that we call the \emph{Atiyah class} of the $\A$-module $E$ with respect to the Lie groupoid pair $(\L,\A)$.
We relate it to the Atiyah class of $E$ with respect to the Lie algebroid pair $(\ll,\aa)$ as introduced in \cite{chen_atiyah_2016} (see Proposition \ref{prop:vanest}).
We also show its invariance with respect to Morita equivalence of pairs of Lie groupoids (see Theorem \ref{thm:Morita-equivalence}).
The quotient bundle $\ll/\aa$ is naturally an $\A$-module, and its Atiyah class is called the Atiyah class of the Lie pair $(\L,\A)$.

It should be noted that, although we characterize the connection forms on $\L$-equivariant principal $\U$-bundles $P$ over a homogeneous space $\L/\A$ for all Lie pairs $(\L,\A)$ and all Lie groupoids $\U$ (see Definition~\ref{defn:epb} and Theorem~\ref{thm:general}), we only obtain a cohomological obstruction to their existence for \emph{transitive} $\U$ (see Proposition \ref{prop:atiyah-class-transitive-case}).
Indeed, in that case, the obstruction lies in $H^1(\A,(\ll/\aa)^*\otimes P(\uu_0))$ where $\uu_0$ is the isotropy subalgebroid of the Lie algebroid $\uu$ of $\U$.
In order to make sense of this obstruction in the general case, we would need to work in the context of representations up to homotopy, which we reserve for later work.

Provided that the Atiyah class of $(\L,\A) $ is zero, we construct successively: 
\begin{enumerate}
  \item An $\L$-invariant fibrewise affine connection on the fibered manifold $\L/\A \to M$.

  \item An $\A$-equivariant exponential map from $ \ll/\aa$ to $\L/\A$, which is well defined in a neighborhood of the zero section $\imath :M \to \ll/\aa $ and a diffeomorphism onto its image (see Theorem~\ref{thm:exponential}).

  \item An $\A$-equivariant Poincar\'e--Birkhoff--Witt isomorphism from $ \Gamma(S(\ll/\aa))$ to $ U(\ll)/U(\ll)\cdot\Gamma(\aa)$ (see Theorem \ref{thm:calaque}).
\end{enumerate}
The exponential map above is indeed the exponential of the connection announced in the first item, and its infinitesimal jet is the Poincar\'e--Birkhoff--Witt isomorphism. The Poincar\'e--Birkhoff--Witt  isomorphism that we eventually obtain gives an alternative proof of Theorem 1.1 in \cite{calaque_pbw_2014}, valid under the assumption that the Lie algebroid pair $ (\ll,\aa)$ integrates to a Lie groupoid pair $ (\L,\A)$\footnote{We also have a version with local Lie groupoids which allows to drop the integrability condition, see Theorem~\ref{thm:exponential-local}.}:

\begin{thm*}
	The Atiyah class of a Lie groupoid pair $(\L,\A)$ vanishes if and only if there exists a $\Bis(\A)$-equivariant filtered coalgebra isomorphism from $\Gamma(S(\ll/\aa))$ to $U(\ll) / U(\ll) \cdot \Gamma(\aa)$.
\end{thm*}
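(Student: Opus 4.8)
The plan is to carry out, in order, the three constructions announced in the introduction, and then to pass to infinite jets along $M$. First, since the Atiyah class of $(\L,\A)$ vanishes — this class being the obstruction to the existence of an $\A$-compatible $\ll$-connection on $\ll/\aa$ — such a connection exists; equivalently, by Theorem~\ref{thm:L-conn-on-LoverA--conn-on-tgt-bundle}, there is an $\L$-invariant fibrewise affine connection $\nabla$ on the fibred manifold $\L/\A\to M$, and I would fix one such $\nabla$.

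Next I would invoke Proposition~\ref{prop:exponential}: the fibrewise exponential map $\exp^\nabla$ of $\nabla$ is defined on a neighbourhood of the zero section $M\subset\ll/\aa$, is a diffeomorphism onto a neighbourhood of $M\subset\L/\A$, restricts to the identity of $M$, and is $\A$-equivariant for the natural left $\A$-action on $\L/\A$ and the Bott-type $\A$-module structure on $\ll/\aa$; in particular it is equivariant for the induced actions of the group $\Bis(\A)$ of bisections. As $\exp^\nabla$ fixes $M$ pointwise and is a local diffeomorphism there, pullback along it identifies, for every $k$, the $C^\infty(M)$-algebra $C^\infty(\L/\A)/I_M^{k+1}$ of $k$-jets along $M$ with $C^\infty(\ll/\aa)/I_M^{k+1}=\bigoplus_{j\le k}\Gamma(S^j(\ll/\aa)^{*})$ (fibrewise polynomial functions of degree $\le k$ on the vector bundle $\ll/\aa\to M$, $I_M$ now denoting the ideal of the zero section), compatibly in $k$ and $\Bis(\A)$-equivariantly.

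I would then dualise these identifications over $C^\infty(M)$ and take the colimit over $k$: on one side this yields $\Gamma(S(\ll/\aa))$ with its deconcatenation coproduct and symmetric-degree filtration, on the other the filtered coalgebra of distributions on $\L/\A$ supported along $M$, i.e.\ $\varinjlim_k\bigl(C^\infty(\L/\A)/I_M^{k+1}\bigr)^{\vee}$. The remaining point is to identify this last coalgebra with $U(\ll)/U(\ll)\cdot\Gamma(\aa)$: the universal enveloping algebra $U(\ll)$ acts by invariant differential operators on $\L$, each descending to a differential operator on $C^\infty(\L/\A)$ valued in $C^\infty(M)$ along $M$, with kernel precisely $U(\ll)\cdot\Gamma(\aa)$, and the resulting map $U(\ll)/U(\ll)\cdot\Gamma(\aa)\to\varinjlim_k(C^\infty(\L/\A)/I_M^{k+1})^{\vee}$ is an isomorphism of filtered coalgebras intertwining the $\Bis(\A)$-actions. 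Composing the three isomorphisms produces the asserted $\Bis(\A)$-equivariant filtered coalgebra isomorphism $\Gamma(S(\ll/\aa))\xrightarrow{\ \sim\ }U(\ll)/U(\ll)\cdot\Gamma(\aa)$, which is nothing but the $\infty$-jet of $\exp^\nabla$ along $M$.

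The geometric input — from the vanishing of the Atiyah class to the invariant connection and its equivariant exponential — is immediate given the results quoted above, so the \emph{main obstacle} will be the last step. Concretely I expect the delicate points to be setting up the identification of $U(\ll)/U(\ll)\cdot\Gamma(\aa)$ with distributions supported along $M$ so that the coproducts, the filtrations, and the full $\Bis(\A)$-action (not merely the infinitesimal $\aa$-action) are all matched, together with tracking $\Bis(\A)$-equivariance through the jet functor; one then checks, by passing to associated graded, that the composite agrees there with the tautological symbol isomorphism $\operatorname{gr}\bigl(U(\ll)/U(\ll)\cdot\Gamma(\aa)\bigr)\cong\Gamma(S(\ll/\aa))$, which guarantees that the map is bijective and strictly filtration-preserving.
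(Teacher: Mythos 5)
Your proposal follows essentially the same route as the paper: vanishing Atiyah class gives an $\L$-invariant fibrewise affine connection, Proposition~\ref{prop:exponential} gives the $\A$-equivariant exponential diffeomorphism, and the PBW isomorphism is obtained by dualizing its pullback on jets along $M$, with your "distributions supported along $M$" playing the role of the paper's $\Hom_{C^\infty(M)}(J^\infty([M,\cdot]),C^\infty(M))$ and your identification of $U(\ll)/U(\ll)\cdot\Gamma(\aa)$ via invariant differential operators being exactly Lemma~\ref{lem:coalgebras2}. The argument is correct and matches the paper's proof in both structure and substance.
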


The latter theorem specializes to give a cohomological interpretation of the linearization of dressing actions (see Corollary~\ref{cor:dressing-action-linearizable}) and an interesting result about monodromies of foliations (see Theorem~\ref{thm:monodromies-main}) that we quote here.

\begin{thm*}
	Let ${\mathcal F}$ be a regular foliation on a manifold $ M$. The Atiyah class of the Lie algebroid pair $(TM,T{\mathcal F})$ vanishes if and only if all the monodromies are simultaneously linearizable.
\end{thm*}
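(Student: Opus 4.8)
By the \emph{monodromies} of $\mathcal F$ we mean its holonomy germs: to a leafwise path $\gamma$ from $x$ to $y$, both in a leaf $L$, one attaches the germ $\mathrm{hol}_\gamma$ at the origin of a transverse diffeomorphism $(\mathbb R^q,x)\to(\mathbb R^q,y)$, where $q=\mathrm{codim}\,\mathcal F$; in particular, for $x\in L$ one gets the holonomy representation $\pi_1(L,x)\to\mathrm{Diff}_{\mathrm{germ}}(\mathbb R^q,0)$. They are \emph{simultaneously linearizable} if there is a germ of transverse coordinates at each point of $M$, varying smoothly over $M$, in which every $\mathrm{hol}_\gamma$ equals its linear part — that is, the linear holonomy, which is the leafwise parallel transport of the Bott $\aa$-connection on $\ll/\aa=\nu\mathcal F$. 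The plan is to produce such coordinates as the exponential map of the invariant connection supplied by our linearization theorem for Lie groupoid pairs with vanishing Atiyah class.

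First I would fix a Lie groupoid pair integrating $(TM,T\mathcal F)$. A monodromy germ only sees an arbitrarily small neighborhood of a leaf, so it is enough to work over a foliated open set (or a saturated tubular neighborhood of a leaf), where such a pair $(\L,\A)$ exists: over a foliated chart $U$ one may take $\L=U\times U$ the pair groupoid and $\A$ the closed wide subgroupoid of pairs of points lying in a common plaque, which integrates $T\mathcal F|_U$; more globally one takes for $\A$ the (local) monodromy groupoid of $\mathcal F$ inside a source-connected integration $\L$ of $TM$. By the van Est comparison (Proposition~\ref{prop:vanest}) the Atiyah class of $(\L,\A)$ coincides with the Atiyah class of the $\aa$-module $\ll/\aa$, so it vanishes by hypothesis, and by Morita invariance (Theorem~\ref{thm:Morita-equivalence}) this is independent of the chosen model. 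I would also record the two translations needed afterwards: the induced $\A$-action on $\ll/\aa=\nu\mathcal F$ via the Bott connection is leafwise parallel transport, with isotropy at $x\in L$ the linear holonomy $\pi_1(L,x)\to GL(\nu_xL)$; and in a chart $\L/\A\cong U\times\mathbb R^q$ coming from a foliated chart, the left $\A$-action on $\L/\A$ near the zero section $M\hookrightarrow\L/\A$ becomes, after composition across charts, the holonomy action, with isotropy the holonomy representation of the leaf.

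Now, the Atiyah class of $(\L,\A)$ being zero, there is an $\L$-invariant fibrewise affine connection on $\L/\A\to M$ (cf.\ Theorem~\ref{thm:L-conn-on-LoverA--conn-on-tgt-bundle}), hence, by Proposition~\ref{prop:exponential}, an $\A$-equivariant exponential map $\exp$ from $\nu\mathcal F$ to $\L/\A$, defined on a neighborhood of the zero section, restricting there to a diffeomorphism onto a neighborhood of $M\hookrightarrow\L/\A$, and intertwining the fibrewise-linear $\A$-action on $\nu\mathcal F$ with the $\A$-action on $\L/\A$. Reading this equivariance through the two translations above, $\exp$ conjugates, for every leafwise path $\gamma$, the holonomy germ $\mathrm{hol}_\gamma$ to the linear map given by Bott parallel transport along $\gamma$; equivalently, pulling back transverse coordinates along the germ $\exp_x\colon(\nu_xL,0)\to(\mathbb R^q,x)$ linearizes every holonomy germ with source $x$. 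As $\exp$ is defined on a whole neighborhood of the zero section, the family $(\exp_x)_{x\in M}$ is smooth in $x$, so all monodromies are linearized simultaneously.

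It remains to justify ``simultaneously'' in the absence of a global Lie groupoid pair: $\exp$ is the exponential of the connection on $\L/\A\to M$, which is built from a single globally defined $T\mathcal F$-compatible $TM$-connection on $\nu\mathcal F$ — whose existence is precisely the vanishing of the class — so the exponential maps attached to the various foliated charts are restrictions of one and the same map. For a leafwise loop $\gamma$ written as a concatenation of short arcs through successive charts, the $\exp$-conjugate of $\mathrm{hol}_\gamma$ is then the composite of the linearized pieces, namely the linear holonomy of $\gamma$. The main obstacle is exactly this bookkeeping: pinning down the dictionary between the abstract $\A$-action on $\L/\A$ near its zero section and the classical holonomy pseudogroup, and controlling the semi-local-to-global passage — in particular for foliations whose monodromy groupoid does not embed into $\Pi_1(M)$, so that the Lie groupoid pair exists only locally.
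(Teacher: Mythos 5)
Your overall strategy is the paper's: the vanishing of the Atiyah class gives an $\A$-compatible connection, its equivariant exponential linearizes the $\A$-action on $\L/\A$ near the zero section, and that action is identified with the monodromy. However, the two points you defer as ``the main obstacle'' are precisely the substance of the paper's proof, and the substitutes you propose for the first one do not work. Working over a single foliated chart gains nothing, since the holonomy of a plaque is trivial; and taking $\A$ to be the monodromy groupoid of $\mathcal F$ sitting inside a source-connected integration of $TM$ fails in general: as the paper notes (citing \cite{moerdijk_integrability_2006}), even for $M$ simply connected $T\mathcal F$ integrates to a \emph{closed wide} Lie subgroupoid of the pair groupoid only when $M/\mathcal F$ is a manifold, and the monodromy groupoid need not embed (nor be Hausdorff). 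So ``fix a Lie groupoid pair integrating $(TM,T\mathcal F)$'' is not available, not even over a saturated neighborhood of a single leaf, and the van Est/Morita reductions you invoke have nothing to act on.

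The paper's resolution, which your argument needs, is the local Lie groupoid machinery of Section \ref{sec:local-Lie-groupoids}: take $\L=M\times M$ globally, restrict to a neighborhood $\mathcal U$ of the diagonal, and define $\L/\A$ directly as the leaf space of the foliation induced by the right $T\mathcal F$-action on $\mathcal U$ --- no integration of $T\mathcal F$ to an honest subgroupoid is required, and Corollary \ref{cor:exponential-local} then supplies the $\A$-equivariant diffeomorphism $\ll/\aa\supset[M,\ll/\aa]\to[M,\L/\A]\subset\L/\A$ in exactly this setting. The second deferred point --- the dictionary between the left $\aa$-action on $\L/\A$ and the classical holonomy --- is what the good system of transversals is for: the map $x\in N_{\mathcal F}|_m\mapsto(m,p(x))$ descends to a diffeomorphism $\Phi$ from a neighborhood of the zero section of $N_{\mathcal F}$ onto a neighborhood of $M$ in $\L/\A$ carrying the monodromy foliation to the left-$\aa$-action foliation (Proposition \ref{prop:monodromies}), while on $\ll/\aa$ the $\aa$-action foliation is the Bott foliation; Lemma \ref{lem:linearcase} then converts the resulting foliated diffeomorphism into linearization of each monodromy by the Bott holonomy. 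Note also that the paper's \emph{definition} of simultaneous linearizability is precisely a foliated diffeomorphism between the monodromy foliation and the Bott foliation near the zero section, which is what the composite $\Phi^{-1}\circ\exp$ produces directly, with no need to manipulate smooth families of coordinate germs. Your closing observation that everything is governed by one global $T\mathcal F$-compatible connection on $\nu\mathcal F$ is correct and is indeed why the linearization is simultaneous, but it does not by itself produce the global space $\L/\A$ on which the exponential lives.
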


The paper is structured as follows.
In Section \ref{sec:preliminaries}, we recall basic facts about Lie groupoids and Lie algebroids.
In Section \ref{sec:equivariant-principal-bundles}, we review generalized morphisms of Lie groupoids, before introducing equivariant principal bundles and associated vector bundles.
We describe an action of the tangent groupoid to a Lie groupoid on the anchor map of its Lie algebroid that plays for us the role of an adjoint action.
In Section \ref{sec:global-atiyah-class}, we show that $\ll/\aa$ is an $\A$-module, introduce the Atiyah classes of generalized morphisms and of $\A$-modules, and prove their Morita invariance.
In Section \ref{sec:main-results}, we prove our main results about connections on homogeneous spaces of Lie groupoids.
In Section \ref{sec:PBW-theorem}, we use invariant connections to prove a Poincaré--Birkhoff--Witt theorem.
In Section \ref{sec:local-Lie-groupoids}, we investigate how our results can be adapted to local Lie groupoid pairs in order to drop some integrability conditions.
In Section \ref{sec:examples}, we work out applications to Lie groups and foliations.


\subsection{Acknowledgments}
\label{ssec:acknowledgments}

Both authors would like to thank Penn State University for its hospitality. We are in particular extremely grateful to Professor Ping Xu, who inspired us the present work. We would also like to thank Martin Bordemann, Damien Calaque, Rui Loja Fernandes, Benoît Jubin, Norbert Poncin, Mathieu Stiénon and Matteo Tommasini for useful discussions. The first author would also like to thank the University of Marrakesh where, at a CIMPA conference, part of this work was presented in April 2015. We thank the referees for useful remarks.


\section{Preliminaries}
\label{sec:preliminaries}

This section recalls classical notions for Lie groupoids, and fixes our notation and conventions: most facts are basic, but are quite dispersed in the literature. 
We investigate in particular bisections, and introduce the very convenient operation $\star$ that we insist to be a convenient and pedagogical manner to deal with those objects, in particular when one has to see bisections as the group integrating sections of the Lie algebroids. We also introduce the operator $\Add$ that shall play a crucial rôle in the proofs, and that we invite the reader to understand as a formalization of the adjoint action of bisections on sections of the Lie algebroid.

Let us state some general conventions about vector bundles. 
Projections from vector bundles to their base manifold will be denoted by the letter $q$, with the total space added as a subscript, as in $q_E:E\to M$, if necessary.
For $E$ a vector bundle over a manifold $M$, we shall denote by $\Gamma(E)$ the space of global smooth sections of $E$ and by $\Gamma_{\mathcal U}(E)$ the space of smooth sections over an open subset ${\mathcal U} \subset M$.  
The fiber at a particular point $x \in M$ shall be denoted by $E_x$. For all $e \in \Gamma (E)$, $e_x \in E_x$ stands for the evaluation at $x\in M$ of the section $e$. 
We may also use the notation $\left. e \right|_x$. 
For $\epsilon \in E_x$, a section $e \in \Gamma(E)$ is said to be \emph{through $\epsilon$} if $e_x = \epsilon$. 
For $\phi: N \rightarrow M$ a smooth map, we denote by $\phi^* E$ the pullback of $E$ through $\phi$, i.e.\ the fibered product
\[
  \phi^* E = N \times^{\phi,q}_{M} E = \left\{ (y,\epsilon) \in N\times E \mid \phi(y) = q(\epsilon) \right\}.
\]
It is a vector bundle over $N$, with projection $q(y,\epsilon)=y$.
For every section $e\in \Gamma(E)$, the pullback of $e$ through $\phi$ is the section denoted by $\phi^*e$ and defined by 
$(\phi^*e)_y = (y, e_{\phi(y)})$
for all $y \in N$.


\subsection{Lie groupoids}
\label{ssec:lie-groupoids}

A \emph{groupoid} is a small category in which every morphism is invertible. 
The morphisms of a groupoid are called the arrows, and the objects are called the units.
The \emph{structure maps} of a groupoid are 
the source and target maps associating to an arrow its source and target objects respectively, 
the unit map associating to an object the unit arrow from that object to itself,
the inversion sending each arrow to its inverse,
and the multiplication sending two composable arrows to their composition.
A \emph{Lie groupoid} is a groupoid where the set $\L$ of arrows and the set $M$ of objects are smooth manifolds, all structure maps are smooth and the source and target maps are surjective submersions, see \cite{cannas_da_silva_geometric_1999,mackenzie_general_2005}.
The manifold $M$ of objects shall be referred to as the \emph{unit manifold}.
The unit map can be shown to be a closed embedding, and the unit manifold shall actually be considered as an embedded submanifold of $\L$.
We will often identify the Lie groupoid with its manifold of arrows and talk about ``a Lie groupoid $\L$ over a manifold $M$'', written shortly as $\L\toto M$.
For all the groupoids considered, the source map shall be denoted by $\so$, the target map by $\ta$, the unit map by $\unit$, the inverse map by $\inv$, and the multiplication by a dot. 
The convention in this paper shall be that the product $\generic_1 \cdot \generic_2$ of two elements $\generic_1,\generic_2 \in \L$ is defined if and only if $\so (\generic_1) = \ta (\generic_2)$. 

A \emph{Lie groupoid morphism} from a Lie groupoid $\L$ over $M$ to a Lie groupoid $\U$ over $N$ is a pair of smooth maps $\varphi:\L\to\U$ and $\varphi_0:M\to N$ such that $\so\circ\varphi=\varphi_0\circ\so$, $\ta\circ\varphi=\varphi_0\circ\ta$, and $\varphi(\generic_1\cdot \generic_2)=\varphi(\generic_1)\cdot\varphi(\generic_2)$ for all composable $\generic_1,\generic_2\in\L$.

\begin{ex}\label{ex:frameGroupoid}
The frame groupoid $\fb(E)$ of a vector bundle $E$ over a manifold $M$ is the Lie groupoid whose unit manifold is $M$ and whose set of arrows between two arbitrary points $x,y \in M$ is made of all invertible linear maps from $E_x$ to $E_{y}$. 
The source of such arrows is $x$ and their target is $y$.
\end{ex}

\subsubsection*{Modules}
For a Lie groupoid $\L$ over a manifold $M$, a (left) \emph{$\L$-module} is a vector bundle $E$ over $M$ equipped with a Lie groupoid morphism from $\L$ to $\fb(E)$. 
It is often convenient to see it as an assignment
\begin{equation}
\label{eq:Lie-groupoid-module}
  \L\times_M^{\so,q} E \to E: (\generic, e) \mapsto \generic \cdot e
\end{equation}
satisfying the usual axioms of a left action, see \cite{mackenzie_general_2005}. 

Let us denote by $\L_n$ the manifold of all $n$-tuples $(\generic_1, \dots, \generic_n) \in \L^n$ such that the product of any two successive elements is defined, i.e.\ such that $\so(\generic_i)=\ta(\generic_{i+1})$ for all $i=1,\dots,n-1$. 
The \emph{Lie groupoid cohomology} \cite{crainic_differentiable_2003} of an $\L$-module $E$ is the cohomology $H^\bullet(\L,E)$ of the complex
\begin{equation}
\label{eq:groupoid-cohomology-complex}
  \xymatrix{
    C^0(\L,E)
    \ar[r]^{\partial_0} &
    C^1(\L,E)
    \ar[r]^{\partial_1} &
    C^2(\L,E)
    \ar[r]^{\partial_2} &
    C^3(\L,E)
    \ar[r]^-{\partial_3} &
    \cdots
  }
\end{equation}
where
\begin{enumerate}
  \item $C^0(\L,E)$ is the space $\Gamma (E)$ of sections of $E$;

  \item for all $n \in {\mathbb N_*}$, $C^n (\L,E)$ is the space%
    \footnote{The space $C^n(\L,E)$ can also be described as the space of sections of the vector bundle $\ta^*E \to \L_n$ where $\ta:\L_n \to M$ stands for the map $ (\generic_1, \dots, \generic_n)  \mapsto \ta (\generic_1)$.}
    of smooth functions $F$ from $\L_n$ to $E$ such that 
    $F(\generic_1, \dots, \generic_n) \in E_{\ta (\generic_1)}$
    for all $ (\generic_1, \dots, \generic_n) \in \L_n$;
  
  \item for all $e\in \Gamma(E)$, $\partial_0 e$ is the element of $C^1(\L,E)$ defined by
    $\partial_0 e (\generic) = \generic \cdot e_{\so(\generic)} - e_{\ta(\generic)}$
    for all $\generic \in \L_1 = \L $;

  \item for all $n\in \mathbb N$ and all $F\in C^n(\L,E)$, 
    $\partial_n F$ is the element of $C^{n+1}(\L,E)$ defined by
    \begin{align*}
      \partial_n F (\generic_0, \dots, \generic_n) 
      ={} & \generic_0 \cdot F( \generic_1, \dots, \generic_{n-1} ) \\
       & - \sum_{i=0}^{n-1} (-1)^i 
           F(\generic_0, \dots, \generic_{i} \cdot \generic_{i+1}, \dots, \generic_{n}) \\
       & - (-1)^n F(\generic_0, \dots, \generic_{n-1}) 
    \end{align*} 
    for all $(\generic_0, \dots, \generic_n) \in \L_{n+1}$.
\end{enumerate}

Since we will mostly be interested in the first cohomology space $H^1(\L,E)$, it is worth describing it 
more explicitly. 
On the one hand, $1$-cocycles are functions $F$ from $\L$ to $E$
such that $F(\generic) \in E_{\ta (\generic)}$ for all $\generic \in \L$ and satisfying the 
\emph{cocycle identity}
\begin{equation}
\label{eq:1cocycle}
  F( \generic_1 \cdot \generic_2) 
  = \generic_1 \cdot F(\generic_2) + F(\generic_1)
\end{equation} 
for all $\generic_1,\generic_2$ in $\L$.
On the other hand, $1$-coboundaries are $E$-valued functions on $\L$ of the form
\begin{equation}
\label{eq:1cobord}
  F(\generic) = \generic \cdot e_{\so(\generic)} - e_{\ta(\generic)} 
\end{equation} 
 for some section $e \in \Gamma(E)$.

\subsubsection*{Subgroupoids}

A \emph{Lie subgroupoid} of a Lie groupoid $\L\toto M$ is a Lie groupoid $\A\toto N$ together with a Lie groupoid morphism $(\varphi,\varphi_0)$ from $\A$ to $\L$ such that both $\varphi$ and $\varphi_0$ are injective immersions.
A Lie subgroupoid $\A$ is said to be \emph{wide} if its unit manifold is $M$ and $\varphi_0=\id$.
A wide Lie subgroupoid is said to be \emph{closed} if the inclusion $\varphi:\A\to\L$ is a closed embedding.

\begin{defn}
A \emph{Lie groupoid pair} is a pair $(\L,\A)$ with $\L$ a Lie groupoid and $\A$ a closed wide Lie subgroupoid of $\L$.
\end{defn}

For $\A$ a closed wide subgroupoid of $\L$, the quotient space $\L/\A $ is a (Haussdorf) manifold that fibers over $M$ through a surjective submersion $\underline{\ta}$ (see \cite[Section 3]{moerdijk_integrability_2006}) defined as the unique map making the following diagram commutative: 
\begin{equation}
\label{eq:t_bar}
  \vcenter{\xymatrix{
    &
    \L \ar[d]^{\pi} \ar[dl]_{\ta}
    \\
    M
    &
    \L/\A \ar@{.>}[l]^{\underline{\ta}}
  }} .
\end{equation}
Projections to quotients by a group(oid) action will generally be denoted by the letter $\pi$. 
The source will be added as a subscript, as in $\pi_P:P\to P/\U$, when a risk of confusion exists.

The quotient $\L/\A$ is a \emph{homogeneous space} of $\L$ in the following sense \cite{liu_dirac_1998}: it is a smooth manifold $X$ with a map to $M$ such that there exists a smooth section $\sigma:M\to X$ with $\L\cdot\sigma(M)=X$. Given such data, the \emph{stabilizer} of $\sigma$ is the closed subgroupoid $\A$ of $\L$ that sends $\sigma(M)$ to itself, and this yields an equivariant diffeomorphism $X\cong \L/\A$.

\subsubsection*{Pullbacks}
Let $\L\toto M$ be a groupoid and $\phi:N\to M$ a map.
Consider the \emph{anchor} $\rho:\L\to M\times M:\generic\mapsto(\ta(\generic),\so(\generic))$ of $\L$.
The \emph{pullback} $\phi^\pb\L$ of $\L$ by $\phi$ is, as a set, the pullback of the anchor by $\phi\times\phi$:
\begin{equation}
\label{eq:groupoid-pullback}
  \vcenter{\xymatrix{
    \phi^\pb\L \ar[r] \ar[d] & \L \ar[d]^\rho \\
    N\times N \ar[r]_{\phi\times\phi} & M\times M
  }} .
\end{equation}
Explicitly, we set $\phi^\pb\L=N\times_{M}^{\phi,\ta} \L \times_{M}^{\so,\phi} N$.
The structure maps $\so(y,\generic,x)=x$, $\ta(y,\generic,x)=y$ and multiplication
\[
  (z,\generic,y) \cdot (y,\generic',x) = (z,\generic\cdot\generic',x)
\]
on $\phi^\pb\L$ make \eqref{eq:groupoid-pullback} a commutative diagram of groupoid morphisms, with left arrow the anchor of $\phi^\pb\L$ and top arrow the projection on the second component.

When $\L$ is a Lie groupoid, we require $\phi$ to be a smooth map such that
\begin{equation}
\label{eq:target-circ-pr1}
  \ta\circ\pr_1 : \L\times_M^{\so,\phi} N \to M
\end{equation}
is a surjective submersion.
In that case, $\phi^\pb\L$ becomes a Lie groupoid such that \eqref{eq:groupoid-pullback} is a commutative diagram of Lie groupoid morphisms with the appropriate universal property.

\subsubsection*{Tangent groupoid}

The tangent bundle $T\L$ of a Lie groupoid $\L$ over $M$ canonically becomes a Lie groupoid over $TM$ by applying the tangent functor to all the structure maps (see  \cite{mackenzie_general_2005}, or \cite{courant_tangent_1994} for its infinitesimal counterpart).
In what follows, we will denote the groupoid multiplication in $T\L$ by
\begin{equation}
\label{eq:def-composition-in-TL}
  u \bullet u'
  = T_{(\generic,\generic')}\m(u,u') ,
\end{equation}
for all $u\in T_\generic\L$, $u'\in T_{\generic'}\L$, where $m$ denotes the multiplication in $\L$.
We will denote by $u^{-1}$ the inverse $T\inv(u)$, and by $0_\generic$ the zero vector at $\generic$.


\subsection{Lie algebroids}
\label{ssec:lie-algebroids}

A \emph{Lie algebroid} is a vector bundle $\ll$ over a smooth manifold $M$ together with a Lie bracket $[\cdot,\cdot]$ on the space $\Gamma(\ll)$ of global sections of $L$ and a bundle map $\rho:\ll\to TM$ called the \emph{anchor map}, related by the following Leibniz rule:
\begin{equation}
  [l_1,fl_2] = f[l_1,l_2] + \rho(l_1)(f) l_2
\end{equation}
for all $l_1,l_2\in\Gamma(\ll)$ and $f\in C^\infty(M)$.
All vector bundles will be real in this paper, and we shall use the letter $\rho$, with the Lie algebroid as subscript if necessary, for the anchor map and $[\cdot,\cdot]$ for the Lie bracket of all the Lie algebroids that we consider.
We refer to \cite{mackenzie_general_2005} for the general theory of Lie algebroids.

A \emph{base-preserving Lie algebroid morphism} from $\ll\to M$ to $\uu\to M$ is a bundle map $\phi:\ll\to\uu$ covering the identity of $M$, such that $\rho_\ll=\rho_\uu\circ\phi$ and $[\phi(l_1),\phi(l_2)]-\phi[l_1,l_2]=0$ for all $l_1,l_2\in\Gamma(\ll)$ (see below for more on the first condition).
A \emph{(wide) Lie subalgebroid} of a Lie algebroid $\ll\to M$ is an injective base-preserving morphism $\aa\to\ll$.

We recall \cite{moerdijk_introduction_2003,huebschmann_lie-rinehart_2004} that the universal enveloping algebra of a Lie algebroid $\ll$ is constructed by taking the quotient of the augmentation ideal of the universal enveloping algebra of the semi-direct product Lie algebra $\Gamma(\ll) \ltimes_\rho C^\infty(M)$ by the relations $ f \cdot l = fl $,  and $ f \cdot g =  fg $ for all $ f,g \in C^\infty(M)$ and $ l\in \Gamma (\ll)$. It is a coalgebra, and, for $\aa \subset \ll$ a Lie subalgebroid, $ U(\ll)/U(\ll) \cdot \Gamma(\aa)$
inherits a coalgebra structure \cite{calaque_pbw_2014}.

Every Lie groupoid $\L$ with unit manifold $M$ admits a Lie algebroid 
$\ll \to M$.
In the present article, for all $x \in M$, $\ll_x$ shall be defined as the kernel of 
$T_x \so : T_x \L \to T_x M$
(i.e.\ the tangent space at $x$ to the $\so$-fiber over $x$), while the anchor map $\rho:\ll\to TM$ at $x$ is the restriction to $\ll_x$ of 
$T_x \ta : T_x \L \to T_x M$.
To every section $l \in \Gamma(\ll)$ there correspond two vector fields on $\L$, namely the \emph{left-invariant vector field} $\lvf{l}$, and the \emph{right-invariant vector field} $\rvf{l}$.
The values of these vector fields at a generic element $\generic \in\L$ with source $x$ and target $y$ are given by
\begin{equation}
\label{eq:left-and-right-invariant-vector-fields} 
  \lvf{l}\big|_\generic 
  = 0_\generic \bullet (-l_x^{-1})
  \qquad \textrm{and} \qquad
  \rvf{l}\big|_\generic = l_y\bullet 0_\generic .
\end{equation}
The sign convention is chosen so that, evaluated at a unit $x\in M$, the left- and right-invariant vector fields $\lvf{l}|_x=-T_x\inv(l_x)$ and $\rvf{l}|_x=l_x$ project to the same element in the normal bundle $T_x\L/T_xM$.
The commutator of two right-invariant vector fields is again right-invariant.
Hence the sections of $L$ acquire a Lie bracket by transporting the commutator of (right-invariant) vector fields through the isomorphism $l\mapsto \rvf l$, completing the description of the Lie algebroid of a Lie groupoid.

\begin{ex}
\label{ex:derivative-endomorphisms}
Let us describe the Lie algebroid of the frame groupoid $\fb(E)$ of a vector bundle $E$ (Example \ref{ex:frameGroupoid}).
The $\so$-fiber at a point $x\in M$ is the manifold of all linear isomorphisms from $E_x$ to $E_y$ for all $y\in M$.
Its tangent space at $\Id_{E_x}$ is the vector space of linear maps $X:E_x\to TE|_{E_x}$ which are sections of the canonical projection $TE|_{E_x}\to E_x$.
In particular, for each such $X$ there exists an element $X_M\in T_xM$ making the diagram
\[
  \vcenter{\xymatrix{
    E_x \ar[r]^X \ar[d]_q & TE|_{E_x} \ar[d]^{Tq} \\
    \{x\} \ar[r] & \{X_M\}
  }}
\]
commute.
The collection of these tangent spaces forms a vector bundle $\Tlin E$ whose sections are the \emph{linear vector fields} $\Xlin(E)$, i.e.\ the bundle maps $E\to TE$ that are sections of the canonical projection bundle map $TE\to E$ over $TM\to M$.
These vector fields are closed under the Lie bracket of vector fields and, together with the anchor map $X\mapsto X_M$, this gives $\Tlin E$ the structure of a Lie algebroid.

A common description of $\Tlin E=\Lie(\fb(E))$ is as the Lie algebroid $\D(E)$ whose sections are the \emph{derivative endomorphisms} of $E$, i.e.\ the $\fR$-linear endomorphisms $D$ of $\Gamma(E)$ such that there exists a vector field $D_M$ on $M$ with
\begin{equation}
\label{eq:derivative-endomorphism}
  D(fe) = D_M(f)e + f D(e)
\end{equation}
for all $e\in\Gamma(E)$ and $f\in C^\infty(M)$.
The explicit correspondence between $\D(E)$ and $\Tlin E$ is as follows (see \cite[Section 3.4]{mackenzie_general_2005} for more detail).

Any $D\in \Gamma(\D(E))$ yields a dual $D^*\in \Gamma(\D(E^*))$ such that
\[ \left< D^*(\epsilon),e\right> = D_M (\left<\epsilon,e\right>) - \left< \epsilon, D(e)\right> \]
for all $\epsilon\in\Gamma(E^*)$ and $e\in\Gamma(E)$.
This $D^*$ in turn induces a linear vector field $X\in\Xlin(E)$ defined by
\[
  X(l_\epsilon) = l_{D^*(\epsilon)} ,
  \qquad
  X(q^*f) = q^*(D_M(f))
\]
for all $\epsilon\in\Gamma(E^*)$ and $f\in C^\infty(M)$.
Here, $l_\epsilon$ is the fibrewise-linear function on $E$ corresponding to the section $\epsilon$.
We used the fact that linear vector fields are determined by their action on linear functions and on pullbacks of functions on the base, and that they preserve the latter two subspaces of functions.
The association $D\mapsto X$ is $C^\infty(M)$-linear and induces a Lie algebroid isomorphism 
\begin{align}
\label{eq:isomorphism-derivative-endomorphisms-derivations}
  \TD &: \D(E) \to \Tlin(E) .
\end{align}
\end{ex}


\subsubsection*{Pullbacks}

The pullback $\phi^\pb\uu$ of a Lie algebroid $\uu\to N$ by a smooth map $\phi:M\to N$ is defined in a similar way to the Lie groupoid case, see e.g.\ \cite{higgins_algebraic_1990}.
As a set, it is the pullback of its anchor by $T\phi$:
\[
  \vcenter{\xymatrix{
    \phi^\pb\uu \ar[r] \ar[d] & \uu \ar[d]^\rho \\
    TM \ar[r]_{T\phi} & TN
  }} .    
\]
To see when it is a vector bundle, it is best to replace the right-hand column in the above diagram by its pullback $\phi^*\uu\to\phi^*TN$ by $\phi$, to get a diagram of vector bundles over $M$.
Then $\phi^\pb\uu$ is a vector subbundle of $TM\oplus\phi^*\uu$ if and only if the bundle map $\psi:TM\oplus\phi^*\uu\to\phi^*TN:(X,u)\mapsto T\phi(X)-\rho(u)$ has constant rank.
For simplicity, we require $\psi$ to have maximal rank, i.e.\ we require $T\phi$ and $\rho$ to be transverse, in which case the kernel of $\psi$,
\[ \phi^\pb\uu = TM\times_{\phi^*TN}\phi^*\uu , \] 
has rank
\[
  \rk(\phi^\pb\uu) = \rk(\uu) + \dim M - \dim N .
\]
The vector bundle $\phi^ \pb\uu$ then becomes a Lie algebroid with anchor map given by the first projection and Lie bracket defined by
\begin{multline}
\label{eq:bracket-pullback}
  [(X,\sum_i f_i\phi^*u_i),(X',\sum_j f'_j\phi^*u'_j)] 
  \\
  = ( [X,X'], \sum_{i,j} \left( f_if'_j\phi^*[u_i,u'_j] + f X(f')\phi^*u' - f' X'(f)\phi^*u \right) ) 
\end{multline}
for all $X,X'\in\Gamma(TM)$, $f_i,f'_j\in C^\infty(M)$ and $u_i,u'_j\in\Gamma(\uu)$ such that $T\phi(X)=\sum_i f_i \phi^*(\rho(u_i))$ and $T\phi(X')=\sum_j f'_j \phi^*(\rho(u'_j))$.

When $\U\toto N$ is a Lie groupoid and $\phi:M\to N$ a smooth map such that \eqref{eq:target-circ-pr1} is a surjective submersion, we have that $T\phi$ is transverse to $\rho$ and there is a natural isomorphism $\phi^\pb(\Lie(\U)) \cong \Lie(\phi^\pb\U)$.


\subsubsection*{Morphisms not preserving the base}

Consider two Lie algebroids $L\to M$ and $U\to N$ and a bundle map $\Phi:L\to U$ over $\phi:M\to N$.
The pullback $\phi^\pb U$ is not a vector bundle for general $\phi$, but the Lie bracket \eqref{eq:bracket-pullback} on 
\[  \phi^\pb \Gamma(U) := \Gamma(TM)\times_{\Gamma(\phi^*TN)}\Gamma(\phi^*U) \]
always makes sense.
Moreover, $\Phi$ always induces a base-preserving map $\Phi^*:L\to\phi^*U$ and thus a map of sections $\Phi^\pb=(\rho,\Phi^*):\Gamma(L)\to\phi^\pb\Gamma(U)$.
Hence, $\Phi$ said to be a \emph{Lie algebroid morphism} if
\begin{enumerate}
  \item it is \emph{anchored}, i.e.\ it commutes with the anchors:
		$\rho_U \circ \Phi = T\phi \circ \rho_L$,
	\item the induced map $\Gamma(L) \to \phi^\pb \Gamma(U)$ is a Lie algebra morphism.
\end{enumerate}


\subsubsection*{Connections}

Let $\ll\to M$ be a Lie algebroid, and $E\to M$ a vector bundle.
An \emph{$\ll$-connection on $E$} is an $\fR$-bilinear assignment
\[
  \Gamma (\ll) \times \Gamma (E) \to \Gamma(E)
  : (l,e) \mapsto \nabla_l e
\]
which satisfies
\[
  \nabla_{fl} e = f \nabla_l e 
  \qquad \text{and} \qquad
  \nabla_l  (fe) = f \nabla_l e + \rho(l)(f) e 
\]
for all $f\in C^\infty(M)$, $l \in \Gamma(\ll)$, and $e \in \Gamma(E)$. 
An $\ll$-connection $\nabla$ on $E$ is said to be \emph{flat} when
\[
  \nabla_{l_1} \nabla_{l_2} e - \nabla_{l_1} \nabla_{l_2}e  = \nabla_{[l_1,l_2]} e
\]
for all $l_1,l_2 \in \Gamma (\ll)$, $e \in \Gamma (E)$.
A vector bundle $E \to M$ equipped with a flat $\ll$-connection is said to be an \emph{$\ll$-module}. 

For each $l\in\Gamma(\ll)$, the map $\nabla_l:\Gamma(E)\to\Gamma(E)$ is a derivative endomorphism as in Example~\ref{ex:derivative-endomorphisms} (with associated vector field $\rho(l)$), and the assignment $l\mapsto \nabla_l$ is $C^\infty(M)$-linear.
Hence, a connection $\nabla$ can be recast (see \cite{kosmann-schwarzbach_differential_2002}) as an anchored map $\ll\to \D(E)$.
A connection is flat if and only if the corresponding anchored map is a Lie algebroid morphism.

The above is the ``covariant derivative'' picture.
The corresponding horizontal lift is obtained by composing with the Lie algebroid isomorphism \eqref{eq:isomorphism-derivative-endomorphisms-derivations}, to get an anchored map $\ll\to \Tlin(E)$.
This horizontal lift point of view of anchored maps $\ll\to \Tlin(E)$, seen as maps $q_L^*E\to TE$, was extensively studied in \cite{fernandes_lie_2002}.

Replacing $\mathcal D(E)$ by any Lie algebroid $U$, not necessarily over the same base, we may consider anchored maps from $\ll$ to $\uu$ as generalized connections.
Anchored maps have been used by many authors, and their use in Lie algebroid theory goes back at least to~\cite{balcerzak_primary_2001}. 
The \emph{curvature} of an anchored map $\nabla$ from $\ll$ to $\uu$ is the bundle map $R^\nabla:\ll\wedge \ll\to r^*\uu$ defined on sections by
\[  R^\nabla(l,l')=[\nabla^\pb(l),\nabla^\pb(l')]-\nabla^\pb([l,l']) . \]
When $\uu$ is regular, the curvature actually lands in $r^*\uu_0$, where $\uu_0$ is the isotropy subalgebroid of $\uu$, i.e.\ the kernel of its anchor map.

A \emph{connection on a vector bundle $E\to M$} is a $TM$-connection on $E$, i.e.\ an anchored map $TM\to \D(E)$.
If the manifold $M$ is fibered over another manifold $N$ through a surjective submersion $f:M\to N$, a \emph{fibered connection on $E$} is by definition a $T^fM$-connection on $E$, where $T^fM=\ker Tf\subset TM$.
It is thus a smooth family of connections on the vector bundles $i_x^*E$, for $x\in N$, where $i_x:f^{-1}(x)\to M$ is the inclusion of the fiber over $x$. 

An \emph{affine connection} on a manifold $M$ is a connection on the vector bundle $TM$, i.e.\ an anchored map $TM\to \D(TM)$.
If the manifold $M$ is fibered over $N$ through a surjective submersion $f:M\to N$, a \emph{fibrewise affine connection} is a $T^fM$-connection on $T^fM$.
It is thus a smooth family of affine connections on the fibers of $f$.


\subsection{Bisections}
\label{ssec:bisections}

An open submanifold $\bisection$ of $\L$ to which the restrictions of $\so$ and of $\ta$ are both diffeomorphisms onto their respective images is called a \emph{local bisection} of $\L$. The open subsets
$\ta(\bisection)$ and $\so(\bisection)$ are called the \emph{target} and \emph{source} of $\bisection$ respectively.
When $\ta(\bisection) = \so(\bisection) = M$, we speak of a \emph{global bisection}. 
A local bisection is said to be \emph{through an element $\generic \in \A$} when $\generic \in \bisection$, and to be  \emph{through an element $u \in T_\generic \A$} when $\generic \in \bisection$ and $u \in T_{\generic} \bisection$. 

It is often convenient to see a local bisection $\bisection$ as a section of the target map, that we then denote by $\bisection_{\ta} : \ta(\bisection) \to \L$, or as a section of the source map, that we then denote by $\bisection_{\so} : \so(\bisection) \to \L$. 
A global bisection $\bisection$ induces a diffeomorphism of $M$ denoted by $\underline{\bisection}$ and defined by
\begin{equation}\label{eq:bisection_induces_on_base}
  \underline{\bisection} := \ta \circ \bisection_{\so} .
\end{equation}
For $\bisection$ a local bisection, \eqref{eq:bisection_induces_on_base} still makes sense as a diffeomorphism from the source to the target of $\bisection$.

The composition of two local bisections $\bisection', \bisection \in \L$ is defined by
\begin{equation}\label{def:star_subsets} 
  \bisection' \star \bisection 
  := \left\{ \generic' \cdot \generic \mid \generic' \in \bisection',\generic \in \bisection ,
  \so(\generic') = \ta (\generic) \right\}. 
\end{equation}
Global bisections form a group under $ \star$, and the product of two local bisections is a local bisection.
In the whole text, we simply write \emph{bisection} when referring to a local bisection, since all constructions considered in this paper are local by nature. Local bisections only form a pseudogroup, that we denote by $\Bis(\L)$. We shall often speak of inverses and products without mentioning that we only have a pseudogroup.

For $ E$ an $\L$-module, $e \in \Gamma(E)$ and $\bisection$ a local bisection, we again use the notation $\star$ and denote by $\bisection \star e$ the local section of $E$ of $\bisection$ given by
\[  \left. \bisection \star e \right|_x  := \bisection_\ta (x) \cdot e_{\underline{\bisection}^{-1} (x)}  \]
for all $x\in M$ where the right-hand side is defined. 
For any pair $\bisection',\bisection$ of bisections, the relation
$(\bisection' \star \bisection) \star e =\bisection' \star (\bisection \star e)$ holds, 
allowing us to erase the parentheses and to write simply $\bisection' \star \bisection \star e$ for such expressions.

The Lie algebra $ \Gamma(\ll)$ is ``the Lie algebra of the group of bisections''.
For example,
for any smooth%
\footnote{A $1$-parameter family of bisections is said to be smooth if the map $(x,t) \mapsto \bisection(\param)_{\so}(x)$ is smooth.} 
$1$-parameter family $\bisection(\param)$ of 
bisections, defined for $\param$ in a neighborhood of $0 \in {\mathbb R}$, and such that $\bisection (0)$ is the unit manifold of $\L$, the map defined, for all $x \in M$, by
\begin{equation}\label{derivation}
  x \mapsto \left. \frac{\diff}{\diff \param} \bisection(\param)_\so(x) \right|_{\param =0}
\end{equation}
takes values in the kernel of the source map in $T_x \L $, i.e.\ is a section of the Lie algebroid $\ll$. 

For any $\L$-module $E$, an $\ll$-module structure (i.e.\ a flat $\ll$-connection) on $E$ is induced by
\begin{equation}\label{eq:relation-algebroid-groupoid}
   \nabla_{ l }  e  = \left. \frac{\diff}{\diff \param} \bisection (\param)^{-1} \star e \right|_{\param =0}   ,
\end{equation}
for any smooth $1$-parameter family $\bisection(\param)$ of local bisections with 
$ l = \left. \frac{\diff}{\diff \param} \bisection(\param)_\so \right|_{\param =0}.$

The group of bisections of a Lie groupoid $\L$ naturally acts by the adjoint action on the Lie algebra of sections of $\ll$,
\begin{equation}\label{eq:def_adjoint_bisections} 
  \left. \Ad_\bisection l \right|_x
  := \left. \frac{\diff}{\diff \param} \left( \bisection 
            \star \bisection(\param)
            \star \bisection^{-1} \right)_\so (x) \right|_{\param =0},
\end{equation}
where $\bisection \in \Bis(\L)$, $l \in\Gamma(\ll)$, and $\bisection(\param)$ is as in the previous paragraph. 

\begin{ex}
\label{ex:bisections-of-frame-bundle-are-automorphisms}
  The bisections of the frame groupoid of a vector bundle $E$ are the \emph{automorphisms} of $E$, i.e.\ the diffeomorphisms $E\to E$ that send fibers to fibers linearly.
  The Lie algebra $\Der(E)$ of this infinite dimensional Lie group is composed of the vector fields on $E$ whose flows are by automorphisms of $E$, i.e.\ the linear vector fields $\Xlin(E)=\Gamma(\Tlin(E))$ of Example \ref{ex:derivative-endomorphisms}.
  
  The action of $\Bis(\fb(E))$ on $\Xlin(E)$ is given by
  \begin{align}
  \label{eq:action-bisections-frame-bundle-on-linear-vector-fields}
    \Ad_\phi X &= T\phi \circ X \circ \phi^{-1}
  \end{align}
  for all $\phi\in\Bis(\fb(E))$ and $X\in\Xlin(E)$.
  When the base manifold is a point, this boils down to the action of $GL(V)$ on $\mathfrak{gl}(V)$ by matrix conjugation, $\Ad_gX=gXg^{-1}$, for a vector space $V$.
\end{ex}

Since the group of bisections also acts on the smooth functions on $M$ by $\Sigma \cdot f = \underline{\Sigma}^* f$ for all functions $f$ and all bisections $\Sigma$, this action extends to $U(\ll) $ by coalgebra morphisms.
Indeed, for all $\Sigma \in \Bis(\L)$ and $u = l_1 \cdot \dots \cdot l_k\in U(\ll)$, with $l_1, \dots,l_k  $ sections of $\ll$, we have an action
   \[ \Sigma \cdot u = \Ad_\Sigma l_1  \cdot \dots \cdot   \Ad_\Sigma l_k  \]
Moreover, for  $(\L,\A)$ a Lie groupoid pair, the group of bisections of $\A$, seen as a subgroup of the group of bisections of $\L$, acts on $ U(\ll)/U(\ll) \cdot \Gamma(\aa)$ by coalgebra morphisms.
The following proposition is easy to prove.

\begin{prop}
\label{bisection_action}
Let $(\L,\A)$ be a Lie groupoid pair. 
Then the (pseudo-)group $\Bis(\A)$ of bisections of $\A$ acts on $U(\ll)/U(\ll) \cdot \Gamma(\aa)$. 
The infinitesimal action of this action is simply left-multiplication by 
sections of $\aa$,
\[  a \cdot \overline u = \overline{ a \cdot u}  \]
for all $a \in \Gamma (\aa)$ and all $u \in U(\ll)$.
Here, $ u \mapsto \overline{u} $ is the projection from $ U(\ll)$ to $U(\ll)/U(\ll) \cdot \Gamma(\aa) $.
\end{prop}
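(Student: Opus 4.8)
The plan is to verify directly that the formulas in the statement define a well-defined action of $\Bis(\A)$ on $U(\ll)/U(\ll)\cdot\Gamma(\aa)$, and then to differentiate. First I would check that the action of $\Bis(\L)$ on $U(\ll)$ by coalgebra automorphisms, defined on products of sections by $\Sigma\cdot(l_1\cdots l_k)=\Ad_\Sigma l_1\cdots\Ad_\Sigma l_k$, is well defined: since $\Ad_\Sigma$ is a Lie algebra automorphism of $\Gamma(\ll)$ (as the adjoint action of a group element) and $\Sigma\cdot f=\underline\Sigma^*f$ is an algebra automorphism of $C^\infty(M)$ compatible with $\Ad_\Sigma$ via the Leibniz rule, the pair extends to an automorphism of the semidirect-product Lie algebra $\Gamma(\ll)\ltimes_\rho C^\infty(M)$, hence of its universal enveloping algebra, and this automorphism descends to $U(\ll)$ because it preserves the defining relations $f\cdot l=fl$ and $f\cdot g=fg$. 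Being induced by a Lie algebra automorphism, it is automatically a coalgebra automorphism for the primitively-generated coproduct.

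Next I would show that for $\Sigma\in\Bis(\A)$ the subspace $U(\ll)\cdot\Gamma(\aa)$ is preserved. This follows because $\Ad_\Sigma$ preserves $\Gamma(\aa)$: indeed $\Sigma$ is a bisection of the subgroupoid $\A$, so $\expon{\param a}$ for $a\in\Gamma(\aa)$ stays in $\A$, and $\Sigma\star\expon{\param a}\star\Sigma^{-1}$ is a curve of bisections of $\A$ through the unit, whose derivative at $\param=0$ therefore lands in $\Gamma(\aa)$; more generally $\Ad_\Sigma(u\cdot a)=(\Sigma\cdot u)\cdot(\Ad_\Sigma a)\in U(\ll)\cdot\Gamma(\aa)$. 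Hence the $\Bis(\A)$-action on $U(\ll)$ descends to the quotient coalgebra $U(\ll)/U(\ll)\cdot\Gamma(\aa)$, proving the first assertion.

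For the infinitesimal statement, I would pick $a\in\Gamma(\aa)$ and compute $\frac{\diff}{\diff\param}\big|_{\param=0}\big(\expon{\param a}\cdot u\big)$ in $U(\ll)$ for $u=l_1\cdots l_k$. By the product formula this is $\sum_{i}l_1\cdots\big(\tfrac{\diff}{\diff\param}\big|_0\Ad_{\expon{\param a}}l_i\big)\cdots l_k=\sum_i l_1\cdots[a,l_i]\cdots l_k$, using that the derivative of the adjoint action of the flow of $a$ is the bracket $[a,-]$ (this is exactly what formula \eqref{eq:def_adjoint_bisections} together with the identification of $\Gamma(\aa)$ as the Lie algebra of $\Bis(\A)$ gives). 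On the other hand, in $U(\ll)$ one has the standard commutator identity $a\cdot(l_1\cdots l_k)=(l_1\cdots l_k)\cdot a+\sum_i l_1\cdots[a,l_i]\cdots l_k$, so $\sum_i l_1\cdots[a,l_i]\cdots l_k = a\cdot u - u\cdot a$. Passing to the quotient $U(\ll)/U(\ll)\cdot\Gamma(\aa)$ kills the term $u\cdot a$ (since $a\in\Gamma(\aa)$), leaving $\overline{a\cdot u}=a\cdot\overline u$, as claimed.

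The only genuinely delicate point is the identification of the derivative of $\param\mapsto\Ad_{\expon{\param a}}l$ with $[a,l]$, i.e.\ matching the sign and normalization conventions of \eqref{derivation}, \eqref{eq:def_adjoint_bisections} and the bracket on $\Gamma(\ll)$ defined via right-invariant vector fields; everything else is a routine check that a Lie algebra automorphism extends to the enveloping algebra and descends to the relevant quotients. I would therefore spend the bulk of the write-up carefully unwinding the definition \eqref{eq:def_adjoint_bisections} on a $1$-parameter family $\bisection(\param)=\expon{\param a}$ and verifying $\tfrac{\diff}{\diff\param}\big|_0\Ad_{\expon{\param a}}l=[a,l]$, and treat the coalgebra and quotient compatibilities as formal consequences of the semidirect-product description of $U(\ll)$ recalled in Section~\ref{ssec:lie-algebroids}.
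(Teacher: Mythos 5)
Your argument is correct and follows exactly the route the paper intends: the paper states this proposition without proof (calling it ``easy to prove'') right after defining the $\Bis(\L)$-action on $U(\ll)$ on generators by $\Ad_\Sigma$ and on functions by $\underline{\Sigma}^*$, and your write-up supplies precisely the missing verifications --- well-definedness via the semidirect-product presentation of $U(\ll)$, stability of $U(\ll)\cdot\Gamma(\aa)$ under $\Bis(\A)$ because $\Sigma\star\expon{\param a}\star\Sigma^{-1}$ stays in $\Bis(\A)$, and the commutator identity $\sum_i l_1\cdots[a,l_i]\cdots l_k = a\cdot u - u\cdot a$ reducing to left multiplication modulo $U(\ll)\cdot\Gamma(\aa)$. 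Your flagging of the sign/normalization in $\tfrac{\diff}{\diff\param}\big|_{\param=0}\Ad_{\expon{\param a}}l=[a,l]$ is the right place to be careful, and with the paper's conventions (bracket transported from right-invariant vector fields, $\expon{\param l}$ the flow of $\rvf{l}$) it comes out as you state.
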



\subsubsection*{Exponential map}

Given a section $l\in\Gamma(\ll)$, for each $\param \in {\mathbb R}$ and $\generic \in \L$
for which is it defined, we denote by $\Phi_\param (\generic)$ the flow of the right-invariant vector field $\rvf{l}$ starting from $\generic$ and evaluated at time $\param$.
Every point $m \in M$ admits a neighborhood ${\mathcal U}$ such that the submanifold $\Phi_{\param} (\unit(\mathcal{U}))$ is a local bisection of $\L$ for all $\param$ small enough.
We denote by $\param \to \expon{\param l}$ this map, about which we recall three important properties.

\begin{prop}[{\cite{mackenzie_general_2005}}]
\label{prop:bisections}
  Let $\L$ be a Lie groupoid with Lie algebroid $\ll$.
  \begin{enumerate}
    \item For all $l \in \Gamma (\ll)$, and all $\param_1,\param_2$ such that $\expon{\param_1 l}, \expon{\param_2 l}$, and $\expon{(\param_1 + \param_2 )l}$ exist, the identity 
			\[ \expon{\param_1 l} \star \expon{\param_2 l} = \expon{(\param_1 + \param_2 )l} \]
			holds.

    \item For all $l \in \Gamma (\ll)$ and all local bisections $\bisection$,
      \begin{equation}
      \label{expon-and-bisections}
        \expon{ \Ad_\bisection l } = \bisection \star \expon{l} \star \bisection^{-1}.
      \end{equation}
		
		\item For all $l,l'\in\Gamma(\ll)$,
			\begin{equation}
			\label{expon-lie-bracket}
				[l,l'] = \left. \frac{d}{dt} \Ad_{\expon{-tl}}l' \right|_{t=0} .
			\end{equation}
  \end{enumerate}
\end{prop}


\section{Equivariant principal bundles}
\label{sec:equivariant-principal-bundles}


\subsection{Generalized morphisms}
\label{ssec:lie-groupoid-generalized-morphisms}

In this section, we collect some facts about Lie groupoid generalized morphism (see \cite{blohmann_stacky_2008} and references 15, 18, 19, 25 and 26 therein for more details).
We then go on by introducing a Lie algebroid morphism $\Add$ related to the adjoint action that will be useful in the next sections. 

We adopt the point of view of bibundles: a generalized morphism from $\L$ to $\U$ is a manifold with two commuting actions of $\L$ and $\U$ such that the $\U$-action is free and proper (i.e.\ principal) with orbit space the base of $\L$.

\begin{defn}
\label{def:generalized-morphism}
  Let $\L\toto M$ and $\U\toto N$ be two Lie groupoids.
  A \emph{generalized morphism from $\L$ to $\U$} is a smooth manifold $P$ with a left $\L$-action on a map $l:P\to M$ and a right $\U$-action on a map $r:P\to N$ such that:
  \begin{enumerate}[label=(\arabic*)]
    \item \label{def:generalized-morphism-1}
      the left and right actions commute,
    \item \label{def:generalized-morphism-2}
      $l$ is a surjective submersion,
    \item \label{def:generalized-morphism-3}
      the map
      \begin{equation}
      \label{eq:gen-mor-inverse-of-right-action-on-P}
        P \times_N \U \to P\times_M P : (p,\genericU) \mapsto (p,p\cdot \genericU)
      \end{equation}
      is a diffeomorphism.
  \end{enumerate}
  The manifold $P$ is called the manifold of arrows, and the maps $l$ and $r$ are called the left and right moment maps, respectively.
\end{defn}

Note that both sides of the map \eqref{eq:gen-mor-inverse-of-right-action-on-P} have a structure of Lie groupoid over $P$: the action groupoid for the right $\U$-action on $P$ on the left-hand side (with source $\so(p,\genericU)=p\cdot\genericU$ and target $\ta(p,\genericU)=p$) and the Lie groupoid induced by the submersion $l:P\to M$ on the right-hand side (with source $\so(p,p')=p'$ and target $\ta(p,p')=p$).
With these structures, the map is actually a Lie groupoid morphism.

The second component
\begin{equation}
\label{eq:gen-mor-division-map}
  \di_P : P\times_M P \to \U
\end{equation}
of the inverse of \eqref{eq:gen-mor-inverse-of-right-action-on-P} is called the \emph{division map} of $P$.
Since \eqref{eq:gen-mor-inverse-of-right-action-on-P} and $\pr_2:P\rtimes \U\to \U$ are Lie groupoid morphisms, the division map is also a Lie groupoid morphism.
Moreover, it is \emph{$\L$-invariant} and \emph{$\U$-equivariant}, in the sense that
\begin{align}
\label{eq:gen-mor-invariance-equivariance-division-map-1}
  \di_P(\generic \cdot p,\generic\cdot p') &= \di_P(p,p') \\
\label{eq:gen-mor-invariance-equivariance-division-map-2}
  \di_P(p\cdot\genericU,p'\cdot\genericU') &= \genericU^{-1}\cdot\di_P(p,p')\cdot\genericU'
\end{align}
for all $p,p'\in P$ in the same $l$-fiber, $\generic\in\L$ such that $\so(\generic)=l(p)$, and $\genericU,\genericU'\in\U$ such that $r(p)=\ta(\genericU)$ and $r(p')=\ta(\genericU')$.

Generalized morphisms can be composed: if $P$ is a generalized morphism from $\L$ to $\U$, and $Q$ is a generalized morphism from $\U$ to $\V$, then $P\times_N Q$ is a smooth manifold thanks to condition~\ref{def:generalized-morphism-2} (here, $N$ is the manifold of units of $\U$).
Moreover, by condition~\ref{def:generalized-morphism-3} it has a proper and free (right) $\U$-action $((p,q),\genericU)\mapsto (p\cdot\genericU,\genericU^{-1}\cdot q)$, so that the quotient $\frac{P\times_{N}Q}{\U}$ is a smooth manifold as well.
The latter still has a free and fiber-transitive $\V$-action and is in fact a generalized morphism from $\L$ to $\V$, which we call the \emph{composition} $P\circ Q$ of $P$ and $Q$.

There is a natural notion of equivalence between generalized morphisms with same source and target Lie groupoids.
An equivalence between two generalized morphisms $P$ and $P'$ from $\L$ to $\U$ is a smooth map $\phi:P\to P'$ which is \emph{bi-equivariant}: it commutes with the left and right moment maps and with the left and right actions.
Since the left moment maps are surjective submersions and the right actions are transitive on the $l$-fibers, such bi-equivariant maps are necessarily diffeomorphisms.

\begin{ex}[Units]
\label{ex:identity-generalized-morphism}
  Any Lie groupoid $\L\toto M$ defines a generalized morphism, the \emph{unit generalized morphism} $\Id_\L$ from $\L$ to $\L$.
  Its manifold of arrows is $\L$, with the left and right multiplications as left and right actions, respectively.
  It is a unit for the composition of generalized morphisms, in the sense that if $P$ is a generalized morphism from $\L$ to $\U$, then there are natural bi-equivariant diffeomorphisms ${\Id_\L} \circ P \simeq P$ and $P\circ {\Id_\U} \simeq P$ induced by the left and right actions, respectively.
\end{ex}

Similarly, the composition of generalized morphisms is associative up to coherent bi-equivariant diffeomorphisms.
More precisely, the above structures fit together to form a bicategory (see \cite[Proposition 2.12]{blohmann_stacky_2008} for example).

\begin{prop}
\label{prop:generalized-morphisms-bicategory}
  The Lie groupoids with generalized morphisms as 1-morphisms and bi-equivariant maps as 2-morphisms form a bicategory.
\end{prop}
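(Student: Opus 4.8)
The plan is to verify directly the axioms of a bicategory, most of the substantive work having already been carried out in the discussion preceding the statement. First I would check that, for fixed Lie groupoids $\L\toto M$ and $\U\toto N$, the generalized morphisms from $\L$ to $\U$ together with bi-equivariant maps form a category, indeed a groupoid: the composite of two bi-equivariant maps is again bi-equivariant, the identity map of a generalized morphism is bi-equivariant, and, as already observed, every bi-equivariant map is a diffeomorphism. This supplies the hom-categories of the prospective bicategory, with vertical composition of $2$-morphisms given by ordinary composition of maps.

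Next I would promote the composition $(P,Q)\mapsto \frac{P\times_N Q}{\U}$ (which we already know lands in generalized morphisms, by conditions \ref{def:generalized-morphism-2} and \ref{def:generalized-morphism-3}) to a functor in both arguments. Given bi-equivariant maps $\phi\colon P\to P'$ between generalized morphisms from $\L$ to $\U$ and $\psi\colon Q\to Q'$ between generalized morphisms from $\U$ to $\V$, the map $\phi\times\psi\colon P\times_N Q\to P'\times_N Q'$ is well defined, since $\phi$ and $\psi$ intertwine the moment maps appearing in the fibered product, and it is equivariant for the diagonal $\U$-action; hence it descends, by the universal property of the quotient by a free and proper action, to a smooth map $\frac{P\times_N Q}{\U}\to\frac{P'\times_N Q'}{\U}$, which one checks at once to be bi-equivariant. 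Functoriality, i.e.\ compatibility with identities and with vertical composition, is immediate because everything is induced by honest maps on representatives.

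Then I would exhibit the coherence data. The left and right unitors $\Id_\L\circ P\xrightarrow{\ \sim\ }P$ and $P\circ\Id_\U\xrightarrow{\ \sim\ }P$ are precisely the natural bi-equivariant diffeomorphisms recorded in Example \ref{ex:identity-generalized-morphism}, induced by the left and right actions. For the associator, given $P$ from $\L$ to $\U$, $Q$ from $\U$ to $\V$ and $R$ from $\V$ to its unit manifold target, I would observe that forming the relevant fibered products stays within smooth manifolds thanks to condition \ref{def:generalized-morphism-2}, and that, using condition \ref{def:generalized-morphism-3}, both $(P\circ Q)\circ R$ and $P\circ(Q\circ R)$ are canonically identified, via ``the identity on representatives'', with the quotient of the triple fibered product $P\times_N Q\times Q$-fashion, i.e.\ of $P\times_N Q\times R$ over the appropriate unit manifolds, by the free and proper action of $\U\times\V$. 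The resulting diffeomorphism is manifestly bi-equivariant and natural in $P$, $Q$, $R$.

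Finally, the pentagon and triangle identities hold automatically: every structure isomorphism occurring (unitors, associators, and the functoriality isomorphisms of horizontal composition) is induced by the identity map on representatives, so the two composites around each coherence diagram agree on the nose rather than merely up to a further $2$-isomorphism. The main obstacle is thus not conceptual but one of smoothness bookkeeping: at each formation of a fibered product or a quotient one must check one remains in the category of smooth manifolds, which is exactly where Definition \ref{def:generalized-morphism}\,\ref{def:generalized-morphism-2}--\ref{def:generalized-morphism-3} and the properness and freeness of the actions get used repeatedly. Alternatively, one may simply invoke the identification of our generalized morphisms with Hilsum--Skandalis bibundles together with the known fact that these organize into a bicategory (see \cite{blohmann_stacky_2008}).
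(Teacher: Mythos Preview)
Your proposal is correct and essentially matches the standard argument; the paper itself does not prove this proposition but simply records it with a reference to \cite[Proposition~2.12]{blohmann_stacky_2008}, which you also cite as an alternative. Your direct verification is exactly the argument one finds spelled out there, so there is no substantive difference in approach, only in that you have unpacked the details while the paper defers to the literature.
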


\begin{ex}[Morphisms]
\label{ex:bundlization-generalized-morphism}
  Any Lie groupoid morphism $\varphi$ from $\L\toto M$ to $\U\toto N$ defines a generalized morphism from $\L$ to $\U$ which, following \cite{blohmann_stacky_2008}, will be called its \emph{bundlization}.
  Its manifold of arrows is $P_\varphi=M\times_N^{\varphi_0,\ta}\U$, the moment maps are $l(x,\genericU)=x$ and $r(x,\genericU)=\so(\genericU)$, the left action is $\generic\cdot(x,\genericU)=(\generic\cdot x,\varphi(\generic)\cdot\genericU)$, and the right action is $(x,\genericU)\cdot\genericU'=(x,\genericU\cdot\genericU')$.
\end{ex}

Any manifold $P$ with a left $\L$-action on a map $l$ and a right $\U$-action on a map $r$, has an \emph{opposite} manifold $P^\op$ with a left $\U$-action and a right $\L$-action.
It is defined by $P^\op=P$, $l^\op=r$, $r^\op=l$, with left action $\genericU\cdot_\op p = p\cdot \genericU^{-1}$ and right action $p\cdot_\op \generic = \generic^{-1} \cdot p$.
If $P$ and $P^\op$ are both generalized morphisms, $P$ is then \emph{weakly invertible} in the sense that 
\begin{align}
\label{eq:P-Pop}
  P \circ P^\op \to \Id_\L    & : [(p,p')]\mapsto \di_{P^\op}(p,p') \\
\label{eq:Pop-P}
  {P^\op} \circ P \to \Id_\U  & : [(p,p')]\mapsto \di_{P}(p,p')
\end{align}
are bi-equivariant diffeomorphisms.
Such weakly invertible generalized morphisms are called \emph{Morita morphisms}.
Two Lie groupoids are called \emph{Morita equivalent} if there exists a Morita morphism between them.

Some Lie groupoid morphisms, although not invertible themselves, have a weakly invertible bundlization, as shows the following basic example.
\begin{ex}[Pullbacks]
\label{ex:pullback-generalized-morphism}
  Consider the pullback $\varphi_0^\pb\L$ of a Lie groupoid $\L\toto M$ by a surjective submersion $\varphi_0:N\to M$, and consider the corresponding morphism $\varphi:\varphi_0^\pb\L\to\L:(x,\generic,x')\mapsto\generic$.
  Its bundlization is $P_\varphi=N\times_M\L$ with the actions described in Example~\ref{ex:bundlization-generalized-morphism}.
  Now it is easy to see that $(P_\varphi)^\op$ is also a generalized morphism, with division map $D_{(P_\varphi)^\op}((x,\generic),(x',\generic')) = (x,\generic\cdot{\generic'}^{-1},x')$.
  
  More generally, it is sufficient that $\varphi_0$ be a smooth map such that $\so\circ\pr_2:N\times_M\L\to M$ is a surjective submersion for the pullback groupoid to be a Lie groupoid and for the associated generalized morphism to be a Morita morphism from $\varphi_0^\pb\L$ to $\L$.
\end{ex}

A generalized morphism $P$ from $\L$ to $\U$ with moment maps $l$ and $r$ induces a morphism $\Phi_P$ from the pullback $l^\pb\L$ to $\U$ over $r$.
This \emph{induced morphism} is defined by 
\begin{align}
\label{eq:gen-mor-induced-morphism}
  \Phi_P(p,\generic,p')=\di_P(p,\generic\cdot p')
\end{align}
where $\di_P$ is the division map of $P$, i.e.\ $\Phi_P(p,\generic,p')$ is the unique $\genericU\in\U$ such that $\generic\cdot p' = p\cdot \genericU$.
Hence, a generalized morphism may be pictured as
\begin{equation}
\label{eq:diagram-generalized-morphism-pullback-and-morphism}
  \vcenter{\xymatrix{
      & l^\pb\L \ar[dl]_{\pr_2} \ar[dr]^{\Phi_P}  \ar@<.5ex>[d] \ar@<-.5ex>[d]
      & \\
    \L \ar@<.5ex>[d] \ar@<-.5ex>[d] 
      & P \ar@{>>}[dl]_l \ar[dr]^r
      & \U \ar@<.5ex>[d] \ar@<-.5ex>[d] \\
    M 
      & 
      & N
  }} .
\end{equation}
When $P$ is a Morita morphism, $r$ is also a surjective submersion and we may also pull back $\U$ to $P$. The morphism $\Phi_P$ then induces a base-preserving morphism 
\begin{equation}
\label{eq:isomorphism-of-pullback-groupoids-from-Morita-equivalence}
  \tilde\Phi_P:l^\pb\L\to r^\pb\U
\end{equation}
which is readily seen to be a diffeomorphism, with inverse $\tilde\Phi_{P^\op}$.

\begin{ex}
\label{ex:gen-mor-global-section}
  Any global section $\sigma:M\to P$ of the left moment map $l$ yields a Lie groupoid morphism $\varphi_\sigma:\L\to \U$ defined by $\varphi_\sigma(\generic) = \Phi_P(\sigma(\ta(\generic)), \generic, \sigma(\so(\generic)))$, and $P$ is then bi-equivariantly diffeomorphic to the bundlization of $\varphi_\sigma$ through the map $M\times_N\U\to P:(m,\genericU)\mapsto \sigma(m)\cdot\genericU$.
  Conversely, any bundlization has an obvious global section.
  Hence, $l$ has a global section if and only if $P$ is isomorphic to the bundlization of a Lie groupoid morphism.
\end{ex}

\begin{ex}[Induced morphism of units]
\label{ex:induced-morphism-identity-gen-mor}
  The induced morphism $\Phi_{\Id_\L}:\ta^\pb\L\to\L$ (over $\so:\L\to M$) of a unit generalized morphism $\Id_\L$ is 
  \begin{align}
  \label{eq:induced-morphism-identity-gen-mor}
    \Phi_{\Id_\L} 
    : \L \times_M^{\ta,\ta} \L \times_M^{\so,\ta} \L \to \L
    : (\generic_1, \generic_2, \generic_3) \mapsto \generic_1^{-1} \cdot \generic_2 \cdot \generic_3.
  \end{align}
\end{ex}

\begin{ex}[Induced morphism of bundlizations]
\label{ex:induced-morphism-bundlization-of-morphism}
  The induced morphism $\Phi_{P_\varphi}:l^\pb\L\to\U$ (over $r:P_\varphi\to N$) of the bundlization $P_\varphi=M\times_N\U$ of a morphism $\varphi:\L\to\U$ as in Example \ref{ex:bundlization-generalized-morphism} is 
  \begin{align}
  \label{eq:induced-morphism-bundlization-of-morphism}
    \Phi_{P_\varphi} = \Phi_{\Id_\U} \circ \tilde\varphi
  \end{align}
  where $\tilde \varphi$ is the Lie groupoid morphism from $l^\pb\L\toto P_\varphi$ to $\ta^\pb\U\toto\U$ given by $((m,\genericU),\generic,(m',\genericU'))\mapsto(\genericU,\varphi(\generic),\genericU')$.
\end{ex}


\subsection{An adjoint action}
\label{ssec:adjoint-action}

It is well-known that representations of Lie groupoids in the sense of Eq.~\eqref{eq:Lie-groupoid-module} are not general enough to include a natural notion of adjoint representation. Instead, representations up to homotopy \cite{arias_representations_2013} or VB-groupoids \cite{gracia-saz_vb_2017} are necessary.
Here, we do not need a full-blown adjoint representation as, in the end, our Atiyah class lies in the cohomology with values in a usual module. As a computational replacement, the following $T\L$-action on the anchor $L\to TM$ will be sufficient.
Although very much related, it should not be confused with the representation of the first jet bundle $J^1\L$ on the vector bundle $L\to M$ \cite{crainic_secondary_2005}.
 
Consider the Lie algebroid morphism 
\begin{equation}
\label{eq:def-kappa}
  \Add = \Lie(\Phi_{\Id_\L}):\ta^\pb\ll\to\ll ,
\end{equation}
over $\so:\L\to M$ corresponding to the Lie groupoid morphism of Example \ref{ex:induced-morphism-identity-gen-mor}.

\begin{lem}
\label{lem:kappa}
  The Lie algebroid morphism $\Add$ reads
  \begin{align}
  \label{eq:def-adjoint-gen-mor}
    \Add(u,\lambda)   
    &= u^{-1} \bullet \lambda \bullet 0_\generic
  \end{align}
  for all $(u,\lambda)\in (\ta^\pb \ll)_\generic$, $\generic\in\L$.
\end{lem}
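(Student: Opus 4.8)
The goal is to compute the Lie algebroid morphism $\Add = \Lie(\Phi_{\Id_\L}) : \ta^\pb\ll \to \ll$ explicitly, where $\Phi_{\Id_\L} : \L\times_M^{\ta,\ta}\L\times_M^{\so,\ta}\L \to \L$ is the map $(\generic_1,\generic_2,\generic_3)\mapsto \generic_1^{-1}\cdot\generic_2\cdot\generic_3$ from Example~\ref{ex:induced-morphism-identity-gen-mor}. The first thing I would do is identify the Lie algebroid $\ta^\pb\ll$ of the pullback groupoid $\ta^\pb\L$ over $\so:\L\to M$ concretely: by the discussion of pullbacks of Lie algebroids, $\ta^\pb\ll = T\L \times_{\ta^*TM} \ta^*\ll$, and one sees that its fibre over $\generic\in\L$ consists of pairs $(u,\lambda)$ with $u\in T_\generic\L$, $\lambda\in\ll_{\ta(\generic)}$, and $T\ta(u)=\rho(\lambda)$. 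This is exactly the domain written in the statement.

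**Key steps.** The computation is by differentiating $\Phi_{\Id_\L}$ along a curve in $\ta^\pb\L$ through the unit over $\generic$. A unit of $\ta^\pb\L\toto\L$ sitting over $\generic$ is the triple $(\generic,\generic,\generic)$ (since the base manifold of $\ta^\pb\L$ is $\L$, identified with the appropriate diagonal), and $\Phi_{\Id_\L}$ sends it to $\generic^{-1}\cdot\generic\cdot\generic = \generic$, as it should for an induced morphism over $\so$. So I would take a path $\param\mapsto(\generic_1(\param),\generic_2(\param),\generic_3(\param))$ in $\ta^\pb\L$ with $\generic_i(0)=\generic$, whose velocity at $\param=0$ is the element of $(\ta^\pb\ll)_\generic$ we want to apply $\Add$ to. Because $(\ta^\pb\ll)_\generic = \ker(T\so)$ inside $T_{(\generic,\generic,\generic)}(\ta^\pb\L)$, and the source of $\ta^\pb\L$ is $\so(y,\genericU,x)=x$ in the notation $\phi^\pb\L = N\times^{\phi,\ta}_M\L\times^{\so,\phi}_M N$, one translates this into: $\generic_3$ traces the $\so$-direction, so its velocity is a left-invariant-type vector $0_\generic\bullet(-\lambda^{-1})$ for the $\ll$-component $\lambda$, while the condition $\ta(\generic_1)=\ta(\generic_2)=\ta(\generic_3)$ forces $\generic_1,\generic_2$ to move with velocity $u$ (equal $\ta$-compatibly) — here I must be careful, as the identification between the abstract $(u,\lambda)$ and the concrete triple-velocity is the crux. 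Applying $T\Phi_{\Id_\L}$ to $(\dot\generic_1,\dot\generic_2,\dot\generic_3)=(u,u,0_\generic\bullet(-\lambda^{-1}))$ and using the chain rule together with the definition of $\bullet$ in $T\L$ and of $T\inv$, the inverse term contributes $u^{-1}$, the middle term $\lambda\bullet 0_\generic$ (using $T\ta(u)=\rho(\lambda)$), and the third term $0_\generic$ on the right, giving $\Add(u,\lambda) = u^{-1}\bullet\lambda\bullet 0_\generic$.

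**Main obstacle.** The routine part is differentiating the triple product; the genuinely delicate point is the bookkeeping that matches the abstract pullback-algebroid element $(u,\lambda)\in(\ta^\pb\ll)_\generic$ with the correct tangent vector to the submanifold $\L\times_M^{\ta,\ta}\L\times_M^{\so,\ta}\L$ at $(\generic,\generic,\generic)$, i.e.\ correctly pinning down which of the three factors carries the $T\L$-part $u$ and which carries the algebroid part $\lambda$, and checking that the compatibility $T\ta(u)=\rho(\lambda)$ is exactly what makes $\lambda\bullet 0_\generic$ (equivalently $\lambda\bullet \dot\generic_2$ for the middle factor) well-defined in the tangent groupoid $T\L\toto TM$. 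I would verify this by also checking the anchor and the base point: $\Add$ must cover $\id_\L$ as a bundle map $\ta^\pb\ll\to\ll$? — no, it covers $\so:\L\to M$ — so I would confirm that $\ta(\Add(u,\lambda)) = \so(\generic)$, which is transparent from the formula $u^{-1}\bullet\lambda\bullet 0_\generic$ since its target endpoint is $\so(\generic)$. Consistency of $\Add$ with the anchor $\rho_{\ta^\pb\ll}(u,\lambda)=u$ mapping to $T\so(u)$ then pins everything down, and it also reproduces the expected adjoint-action interpretation to be recorded in Lemma~\ref{lem:adjoint-formula}.
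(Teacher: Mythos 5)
Your overall strategy is the same as the paper's (differentiate the induced morphism $\Phi_{\Id_\L}(\generic_1,\generic_2,\generic_3)=\generic_1^{-1}\cdot\generic_2\cdot\generic_3$ along a curve in $\ta^\pb\L$ through the unit over $\generic$ whose velocity represents $(u,\lambda)$), but the step you yourself identify as the crux --- matching $(u,\lambda)$ with a concrete triple-velocity --- is carried out incorrectly, and the error is not merely notational. First, the unit of $\ta^\pb\L\toto\L$ over $\generic$ is \emph{not} $(\generic,\generic,\generic)$: the unit map of a pullback groupoid is $y\mapsto(y,\unit_{\phi(y)},y)$, so here it is $(\generic,\unit_{\ta(\generic)},\generic)$; indeed $(\generic,\generic,\generic)$ does not even lie in $\L\times_M^{\ta,\ta}\L\times_M^{\so,\ta}\L$ unless $\so(\generic)=\ta(\generic)$. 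Second, since the source of $\ta^\pb\L$ is the \emph{third} component, a velocity in $\ker T\so$ must have third component $0_\generic$ --- not the left-invariant vector $0_\generic\bullet(-\lambda^{-1})$ you assign to it --- and the algebroid element $\lambda\in\ll_{\ta(\generic)}$ must sit on the \emph{middle} factor, as a vector in $\ker T_{\unit_{\ta(\generic)}}\so$, while $u$ sits on the first factor only (the constraint $\ta(\generic_1)=\ta(\generic_2)$, not your $\ta(\generic_2)=\ta(\generic_3)$, is what forces $T\ta(u)=\rho(\lambda)$). Third, the chain rule applied to the triple you actually wrote, $(u,u,0_\generic\bullet(-\lambda^{-1}))$, yields $u^{-1}\bullet u\bullet\bigl(0_\generic\bullet(-\lambda^{-1})\bigr)$, which is not $u^{-1}\bullet\lambda\bullet 0_\generic$; the claim that the middle term ``contributes $\lambda\bullet 0_\generic$ using $T\ta(u)=\rho(\lambda)$'' does not follow from anything.

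The correct computation, which is essentially the whole content of the paper's proof, is: take a path $\bigl(\generic^u(\param),\generic^{\lambda}(\param),\generic\bigr)$ through $(\generic,\unit_{\ta(\generic)},\generic)$ with $\generic^u$ tangent to $u$ and $\generic^{\lambda}$ tangent to $\lambda$ at $\param=0$ (the fibered-product conditions $\ta(\generic^u(\param))=\ta(\generic^{\lambda}(\param))$ and $\so(\generic^{\lambda}(\param))=\ta(\generic)$ are exactly what encode $T\ta(u)=\rho(\lambda)$ and $\lambda\in\ll$); then
$\Add(u,\lambda)=\tfrac{d}{d\param}\bigl(\generic^u(\param)^{-1}\cdot\generic^{\lambda}(\param)\cdot\generic\bigr)\big|_{\param=0}=u^{-1}\bullet\lambda\bullet 0_\generic$.
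Your sanity checks at the end (target of the result is $\so(\generic)$, compatibility with the anchor) are fine and worth keeping, but they do not repair the identification error, which you need to redo.
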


\begin{proof}
  Let $(\generic^u(t), \generic^\lambda(t), \generic)$
  be a path through $1_\generic=(\generic,1_{\ta(\generic)},\generic)$ in $\ta^\pb\L$, with $\generic^u(t)$ (respectively, $\generic^\lambda(t)$) tangent to $u$ (respectively, $\lambda$) at 0.
  Then
  \begin{align*}
  \label{eq:}
    \Add(u,\lambda)
    &= \left. \frac{d}{dt} \generic^u(t)^{-1} \cdot \generic^\lambda(t) \cdot \generic \right|_{t=0} \\
    &= u^{-1} \bullet \lambda \bullet 0_\generic .
  \end{align*}
\end{proof}

As expected from its definition, $\Add$ is closely related to the (right) adjoint action of bisections on sections of the Lie algebroid.
\begin{lem}
\label{lem:adjoint-formula}
	Let $\bisection$ be a local bisection of 
  the groupoid	$\L$ and $l\in \Gamma(\ll)$ a section of its Lie algebroid  $\ll$.
	For all $\generic \in \bisection$ with source $x$ and target $y$,  the value at $x \in M$ of the adjoint action of $\bisection$ on $l$ depends only on $l_y$ and on the unique tangent vector in $T_\generic \bisection$
	that $T_\generic \ta $ maps to $ \rho (l_y)$. Explicitly:
  \begin{equation}
  \label{eq:adjoint_explicit}
    \left( \Ad_{\bisection^{-1}} l \right)_x
    = \Add(u,l_y)
  \end{equation}
where $u = T\bisection_\ta \big( \rho (l_y) \big)$.
\end{lem}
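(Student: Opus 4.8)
The plan is to unwind the definitions on both sides of \eqref{eq:adjoint_explicit} and reduce the statement to Lemma \ref{lem:kappa}. First I would fix a bisection $\bisection$ through $\generic$ with source $x$ and target $y$, and observe that the section $\bisection_\ta$ of the target map is uniquely determined near $y$, so the tangent vector $u = T\bisection_\ta(\rho(l_y)) \in T_\generic\bisection$ is well defined and is indeed the unique vector in $T_\generic\bisection$ that $T_\generic\ta$ sends to $\rho(l_y)$; this disposes of the ``depends only on'' clause. The substance is the explicit formula, so I would compute $\left(\Ad_{\bisection^{-1}} l\right)_x$ directly from the definition \eqref{eq:def_adjoint_bisections}, namely as
$$ \left.\frac{\diff}{\diff\param}\left(\bisection^{-1}\star\expon{\param l}\star\bisection\right)_\so(x)\right|_{\param=0}. $$

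Next I would choose a convenient smooth $1$-parameter family of elements realizing each factor. Pick a curve $\generic^\lambda(\param)$ in $\L$ through $\generic^\lambda(0)=y$ (more precisely through the unit $1_y$, since $\expon{\param l}$ starts at the identity bisection) with $\tfrac{\diff}{\diff\param}\big|_0\generic^\lambda(\param) = l_y$; this represents the middle factor $\expon{\param l}$ evaluated appropriately. The outer factors are the \emph{constant} family $\generic$ and its inverse $\generic^{-1}$, contributing zero vectors $0_\generic$ and $0_{\generic^{-1}}$ upon differentiation. Using the description \eqref{eq:def-composition-in-TL} of the multiplication in the tangent groupoid $T\L$, differentiating the product $\bisection^{-1}\star\expon{\param l}\star\bisection$ at $\param=0$ and restricting to the source fiber over $x$ then yields a tangent vector of the form $0_{\generic^{-1}}\bullet l_y\bullet 0_\generic$, where I must be careful that $0_{\generic^{-1}} = (0_\generic)^{-1}$ in $T\L$. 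Comparing with \eqref{eq:def-adjoint-gen-mor} in Lemma \ref{lem:kappa}, which reads $\Add(u,\lambda) = u^{-1}\bullet\lambda\bullet 0_\generic$, I see I need the identification $u^{-1} = 0_{\generic^{-1}}$, i.e.\ that the tangent vector $u = T\bisection_\ta(\rho(l_y))$ to the bisection has $u^{-1} = 0_{\generic^{-1}}$ inside $T\L$; but the outer bisection factor, being a fixed bisection differentiated in the direction coming only from the $\expon{\param l}$ factor, contributes exactly $u$ when one keeps track of how the source-fiber constraint forces the bisection's section to move, so this is where the vector $u$ genuinely enters rather than $0_\generic$.

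The main obstacle, and the point requiring the most care, is the bookkeeping of which factor moves when one evaluates the triple product at the source over the fixed point $x\in M$. Because $\star$ composes bisections and then one evaluates $(\cdot)_\so$ at $x$, the element $\generic$ at the source-end is pinned (its source stays $x$), but the ``middle'' copy of $\generic^{-1}$ at the target-end is forced to follow $\bisection^{-1}_\ta$ applied to the moving base point, and it is precisely this constrained motion that produces the tangent vector $u$ rather than $0_\generic$. I would make this rigorous by writing the curve $\param\mapsto\left(\bisection^{-1}\star\expon{\param l}\star\bisection\right)_\so(x)$ as an honest composition of three curves in $\L$ — one through $\generic^{-1}$ with velocity governed by $u$ (via $T\inv$ and the relation $T\bisection_\ta(\rho(l_y)) = u$), one through $1_y$ with velocity $l_y$, and the constant curve at $\generic$ — apply the tangent-functor multiplication \eqref{eq:def-composition-in-TL} twice, and read off the result. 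Finally, invoking Lemma \ref{lem:kappa} identifies the resulting expression $u^{-1}\bullet l_y\bullet 0_\generic$ with $\Add(u,l_y)$, completing the proof; one should also note in passing that $(u,l_y)$ indeed lies in $(\ta^\pb\ll)_\generic$ since $T\ta(u) = \rho(l_y)$ by construction, so the right-hand side is meaningful.
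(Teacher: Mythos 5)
Your proposal is correct and follows essentially the same route as the paper: evaluate the composite bisection $\bisection^{-1}\star\expon{\param l}\star\bisection$ as a source-section at $x$, write it as a product of three curves (one through $\generic^{-1}$ with velocity $u^{-1}=T\inv(u)$, one through $1_y$ with velocity $l_y$, and the constant curve at $\generic$), differentiate via the tangent-groupoid multiplication, and invoke Lemma \ref{lem:kappa}. The momentary detour in your second paragraph claiming the left factor contributes $0_{\generic^{-1}}$ (which would force the false identity $u^{-1}=0_{\generic^{-1}}$) is correctly retracted by your own final paragraph, whose bookkeeping of the moving left factor matches the paper's computation exactly.
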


\begin{proof}
  For all $\generic \in \bisection$ with source $x$ and target $y$, we have
  \begin{align*}
    \left. \Ad_{\bisection^{-1}} l \right|_x 
    &= \left. \frac{\diff}{\diff \param} \left( \bisection^{-1}
              \star \exp(\param l) 
              \star \bisection \right)_\so(x) \right|_{\param =0} \\
    &= \left. \frac{\diff}{\diff \param} 
              \left( \bisection_\ta {\left( \ta(\exp(\param l)_\so(y)) \right)} \right)^{-1}
              \cdot \exp(\param l)_\so(y)
              \cdot \generic \right|_{\param =0} \\
    &= \left( T\bisection_\ta \left( \rho (l_y) \right) \right)^{-1} \bullet l_y \bullet 0_\generic \\
	&= \Add(u, l_y)
  \end{align*}
  where $u = T\bisection_\ta \big( \rho (l_y) \big)$.
\end{proof}

We list here some properties of $\Add$ that follow in a straightforward manner from the definitions. Below, $\lvf{\cdot}$ and $\rvf{\cdot}$ are the left- and right-invariant vector fields defined in \eqref{eq:left-and-right-invariant-vector-fields}.

\begin{prop}
\label{prop:properties-kappa}
  For every $\generic\in \L$ with source $x$ and target $y$, we have
  \begin{enumerate}
    \item \label{prop:properties-kappa-lvf}
      $\Add\big( \lvf{\lambda}|_\generic, \lambda \big) = 0$ for all $\lambda\in \ll_x$,
    \item \label{prop:properties-kappa-rvf}
      $\Add\big( \rvf{\lambda}|_\generic, 0_x \big) = -\lambda$ for all $\lambda\in \ll_y$,
    \item \label{prop:properties-kappa-action-of-units}
      $\Add\big( 0_x, \lambda) = \lambda$ for all $\lambda\in \ll_x$.
  \end{enumerate}
  For every $(u,\lambda) \in (\ta^\pb \ll)|_\generic$ and every $u'\in T\L$, we have
  \begin{enumerate}[resume]
    \item \label{prop:properties-kappa-composition}
      $\Add(u\bullet u',\lambda) = \Add(u',\Add(u,\lambda))$ whenever $u\bullet u'$ is defined.
  \end{enumerate}
\end{prop}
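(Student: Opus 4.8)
The plan is to verify each of the four identities directly from the formula \eqref{eq:def-adjoint-gen-mor} for $\Add$, namely $\Add(u,\lambda) = u^{-1} \bullet \lambda \bullet 0_\generic$, using only the formulas \eqref{eq:left-and-right-invariant-vector-fields} for the left- and right-invariant vector fields and the elementary properties of the groupoid multiplication $\bullet$ in the tangent groupoid $T\L$ (associativity, the behaviour of units $0_\generic$, and compatibility of inversion with $\bullet$). Throughout I will repeatedly use that $0_\generic \bullet 0_{\generic'} = 0_{\generic\cdot\generic'}$ whenever $\so(\generic)=\ta(\generic')$, that $0_{\ta(\generic)} \bullet v = v$ and $v \bullet 0_{\so(\generic)} = v$ for $v\in T_\generic\L$ (units of the tangent groupoid over $TM$ are the zero vectors at unit points of $\L$, and the source/target in $T\L$ of $v\in T_\generic\L$ are $T\so(v)$ and $T\ta(v)$; when these are zero vectors the composability conditions hold), and that $(v\bullet w)^{-1} = w^{-1}\bullet v^{-1}$.

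First, for \ref{prop:properties-kappa-lvf}: by \eqref{eq:left-and-right-invariant-vector-fields}, $\lvf{\lambda}|_\generic = 0_\generic \bullet (-\lambda_x^{-1})$ for $\lambda\in\ll_x$, so $(\lvf{\lambda}|_\generic)^{-1} = \lambda_x \bullet 0_\generic^{-1}$; wait — more carefully, $(0_\generic \bullet (-\lambda^{-1}))^{-1} = (-\lambda^{-1})^{-1} \bullet 0_\generic^{-1} = (-(-\lambda)) \bullet 0_{\generic^{-1}}$... I will instead just substitute directly into the defining formula: $\Add(\lvf{\lambda}|_\generic,\lambda) = (\lvf{\lambda}|_\generic)^{-1}\bullet \lambda \bullet 0_\generic = (0_\generic\bullet(-\lambda^{-1}))^{-1}\bullet\lambda\bullet 0_\generic = \lambda^{-1}\bullet 0_{\generic^{-1}}\bullet\lambda\bullet 0_\generic$, and since $\lambda\in\ll_x=\ker T_x\so$ so that $0_{\generic^{-1}}\bullet\lambda$ makes sense and equals $\lambda$ only if the bracketing works out — I expect this telescopes to $0_x$ using $0_{\generic^{-1}}\bullet 0_\generic = 0_x$ and the fact that $\lambda^{-1}\bullet\lambda$ over the appropriate composability data collapses. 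For \ref{prop:properties-kappa-rvf}: $\rvf{\lambda}|_\generic = \lambda_y\bullet 0_\generic$ for $\lambda\in\ll_y$, so $\Add(\rvf{\lambda}|_\generic,0_x) = (\lambda_y\bullet 0_\generic)^{-1}\bullet 0_x\bullet 0_\generic = 0_\generic^{-1}\bullet\lambda_y^{-1}\bullet 0_{\generic^{-1}\cdot\generic}$; here $0_x$ absorbs into $0_\generic$ giving $0_\generic$ on the right, and $0_{\generic}^{-1}\bullet 0_\generic$ contributes $0_x$, leaving $\lambda_y^{-1} = -\lambda_y$, i.e.\ the answer is $-\lambda$. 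For \ref{prop:properties-kappa-action-of-units}: $\Add(0_x,\lambda) = 0_x^{-1}\bullet\lambda\bullet 0_\generic = 0_x\bullet\lambda\bullet 0_\generic$; since $\lambda\in\ll_x$ has $T\ta(\lambda)=\rho(\lambda)$ and source $0_x$ in $T\L$... actually here $\generic=x$ is a unit, so $0_\generic = 0_x$ and the expression is $0_x\bullet\lambda\bullet 0_x = \lambda$ by the unit laws. For \ref{prop:properties-kappa-composition}: $\Add(u',\Add(u,\lambda)) = (u')^{-1}\bullet(u^{-1}\bullet\lambda\bullet 0_\generic)\bullet 0_{\generic'} = (u')^{-1}\bullet u^{-1}\bullet\lambda\bullet 0_\generic\bullet 0_{\generic'} = (u\bullet u')^{-1}\bullet\lambda\bullet 0_{\generic\cdot\generic'} = \Add(u\bullet u',\lambda)$, using associativity of $\bullet$, the identity $(u\bullet u')^{-1} = (u')^{-1}\bullet u^{-1}$, and $0_\generic\bullet 0_{\generic'} = 0_{\generic\cdot\generic'}$.

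The main obstacle — really the only subtlety — is keeping track of the composability conditions in the tangent groupoid $T\L\toto TM$ throughout these manipulations: a product $v\bullet w$ is defined precisely when $T\so(v) = T\ta(w)$ in $TM$, and one must check that the vectors appearing (such as $\lambda\in\ll_x$, which satisfies $T\so(\lambda)=0_x$ and $T\ta(\lambda)=\rho(\lambda)$, or $0_\generic$, with $T\so(0_\generic)=0_{\so(\generic)}$) are genuinely composable at each stage. These checks are routine given \eqref{eq:left-and-right-invariant-vector-fields} and the description of $\ll_x$ as $\ker T_x\so$. I would present items \ref{prop:properties-kappa-lvf}--\ref{prop:properties-kappa-action-of-units} as short one-line substitutions and item \ref{prop:properties-kappa-composition} as the associativity computation above, remarking once at the start that all composability conditions are immediate. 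No deeper input is needed; everything follows from Lemma \ref{lem:kappa} and the groupoid axioms for $T\L$.
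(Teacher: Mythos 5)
Your overall strategy --- direct substitution into the formula $\Add(u,\lambda)=u^{-1}\bullet\lambda\bullet 0_\generic$ of Lemma \ref{lem:kappa} and manipulation in the tangent groupoid --- is the right one (the paper itself offers no proof beyond ``follows from the definitions''), and your treatment of item \ref{prop:properties-kappa-composition} is complete and correct. But your verifications of items \ref{prop:properties-kappa-lvf} and \ref{prop:properties-kappa-rvf} do not go through, and the reason is the same in both cases: you move factors past one another in a product that is not commutative. In item \ref{prop:properties-kappa-rvf} you reduce to $0_{\generic^{-1}}\bullet\lambda^{-1}\bullet 0_{\generic}$ and then ``absorb'' $0_{\generic^{-1}}$ into $0_\generic$ to leave $\lambda^{-1}=-\lambda$; but these two zero vectors sit on opposite sides of $\lambda^{-1}$ and cannot be multiplied together. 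The product $0_{\generic^{-1}}\bullet(-\lambda)\bullet 0_\generic$ is the conjugate of $-\lambda$ by $\generic$: it lands in $\ll_x$ (not $\ll_y$, where the claimed answer $-\lambda$ lives), and in the Lie group case it equals $-\Ad_{g^{-1}}\lambda$, not $-\lambda$. In item \ref{prop:properties-kappa-lvf}, the expression you reach (correctly $(-\lambda)\bullet 0_{\generic^{-1}}\bullet\lambda\bullet 0_\generic$, note your inversion slip: $(-\lambda^{-1})^{-1}=-\lambda$) does not telescope: the two copies of $\lambda$ are separated by $0_{\generic^{-1}}$, the middle product $0_{\generic^{-1}}\bullet\lambda$ is not even composable for general $\lambda\in\ll_x$ since $T\so(0_{\generic^{-1}})=0_y$ while $T\ta(\lambda)=\rho(\lambda)\in T_xM$, and for a Lie group the analogous product equals $\Ad_{g^{-1}}\lambda-\lambda\neq 0$. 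So ``I expect this telescopes'' is exactly where the argument fails.

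What the honest computation shows is that the first two identities hold with the left- and right-invariant vector fields interchanged: $\Add\big(\rvf{\lambda}|_\generic,\lambda\big)=0_{\generic^{-1}}\bullet(\lambda^{-1}\bullet\lambda)\bullet 0_\generic=0_{\generic^{-1}\cdot 1_y\cdot\generic}=0_{1_x}$ for $\lambda\in\ll_y$, and $\Add\big(\lvf{\lambda}|_\generic,0\big)=(-\lambda)\bullet 0_{\generic^{-1}}\bullet 0_\generic=(-\lambda)\bullet 0_{1_x}=-\lambda$ for $\lambda\in\ll_x$ --- the latter consistent with the Maurer--Cartan form of Example \ref{ex:maurer-cartan-form}, which is defined precisely on $\ker T\ta\ni\lvf{\lambda}|_\generic$. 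A correct write-up should prove these corrected statements (or flag the discrepancy with the printed ones), rather than ``confirm'' the printed item \ref{prop:properties-kappa-rvf} by an invalid commutation. Finally, the composability checks are less routine than you claim even in item \ref{prop:properties-kappa-action-of-units}: the product $0_{1_x}\bullet\lambda$ is defined only when $\rho(\lambda)=0$, so for general $\lambda\in\ll_x$ one must read ``$0_x$'' as the unit $T\unit(\rho(\lambda))$ of $T\L\toto TM$ over $\rho(\lambda)$; with that reading the unit laws do give $\lambda$, as you say.
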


By items \ref{prop:properties-kappa-action-of-units} and \ref{prop:properties-kappa-composition} of Proposition \ref{prop:properties-kappa}, we may see the map
\begin{align*}
	\Add &: T\L \times_{TM}^{T\ta,\rho} \ll \to \ll
\end{align*}
as a \emph{right $T\L$-action on the anchor map} $\rho:L\to TM$.
In that context, we will use the notation 
\begin{equation}
\label{eq:definition-Add-u}
	\Add_u=\Add(u,\cdot) : \rho^{-1}(T\ta(u)) \to \rho^{-1}(T\so(u))
\end{equation}
for all $u\in T\L$.


\subsection{Equivariant principal bundles}
\label{ssec:equivariant-principal-bundles}

In this section, we slightly extend the notion of generalized morphism.
Specifically, we consider (right) principal bundles with a compatible left action of a Lie groupoid, without requiring the base manifolds of the principal bundle and of the Lie groupoid acting on the left to agree.

In the case of groups instead of groupoids, these objects are already very natural and were considered in \cite{wang_invariant_1958} and \cite{bordemann_atiyah_2012}, for example. 
Apart from the passage to groupoids, what seems to be new here is the fact that these objects, as well as the connection forms on them, can be composed.

\subsubsection*{Definition}
We start with the definition of our bundles and their composition.

\begin{defn}
  \label{defn:epb}
  Let $\L\toto M$ and $\U\toto N$ be two Lie groupoids.
  An \emph{$\L$-equivariant principal $\U$-bundle} over a manifold $X$ is a surjective submersion $\pi:P\to X$ from a manifold $P$ with a left $\L$-action on a map $l:P\to M$ and a right $\U$-action on a map $r:P\to N$ such that:
  \begin{enumerate}[label=(\arabic*)]
    \item \label{def:equivariant-principal-bundle-1}
      the left and right actions commute,
    \item \label{def:equivariant-principal-bundle-2}
      $l$ is a surjective submersion,
    \item \label{def:equivariant-principal-bundle-3}
      the map
      \begin{equation}
      \label{eq:epb-inverse-of-right-action-on-P}
        P \times_N \U \to P\times_X P : (p,\genericU) \mapsto (p,p\cdot \genericU)
      \end{equation}
      is a diffeomorphism.
  \end{enumerate}
  The maps $l$ and $r$ are called the left and right moment maps, respectively.
\end{defn}

As before, the diffeomorphism \eqref{eq:epb-inverse-of-right-action-on-P} is a Lie groupoid morphism, which yields a \emph{division map} $D_P:P\times_X P\to \U$ with the same invariance and equivariance properties as those of generalized morphisms (see Eqs. \eqref{eq:gen-mor-division-map}--\eqref{eq:gen-mor-invariance-equivariance-division-map-2}).

The axioms imply that $l$ descends to $\ol l:X\to M$, and $P$ will often be represented by the diagram
\[
  \vcenter{\xymatrix{
    \L \ar@<.5ex>[d] \ar@<-.5ex>[d] 
      & P \ar@{>>}[dl]_l \ar@{>>}[d]^\pi \ar[dr]^r 
      & \U \ar@<.5ex>[d] \ar@<-.5ex>[d] \\
    M 
      & X \ar@{>>}[l]_{\ol l} 
      & N
  }} .
\]
When there can be no confusion about the actions, we will use the notation $_\L P_\U$ as shorthand for the above diagram.

When $X=M$ and $l=\pi$, we recover the notion of \emph{generalized morphism} between Lie groupoids.
Just as generalized morphisms, equivariant principal bundles may be composed: if $P$ is an $\L$-equivariant principal $\U$-bundle, and $Q$ is a $\U$-equivariant principal $\V$-bundle, then $P\times_N Q$ is a smooth manifold thanks to condition~\ref{def:equivariant-principal-bundle-2}.
Moreover, by condition~\ref{def:equivariant-principal-bundle-3} it has a proper and free $\U$-action $(u,(p,q))\mapsto (pu^{-1},uq)$, so that the quotient $P\circ Q=\frac{P\times_{N}Q}{\U}$ is a smooth manifold as well.
The latter still has a free and proper $\V$-action and is in fact an $\L$-equivariant principal $\V$-bundle over the manifold $(P\circ Q)/\V$, which we call the \emph{composition} of $P$ and $Q$.

The equivalences between two $\L$-equivariant principal $\U$-bundles $P$ and $P'$ (over $X$ and $X'$, respectively) are again the bi-equivariant diffeomorphisms $\phi:P\to P'$.
Note that these induce $\L$-equivariant maps $\psi:X\to X'$ on the bases such that $\pi'\circ \phi = \psi\circ \pi$.

The proof of the following result is a straightforward adaptation of the corresponding proof for generalized morphisms in, e.g., \cite{blohmann_stacky_2008}.

\begin{prop}
  \label{prop:epb-bicategory}
  The Lie groupoids with equivariant principal bundles as 1-morphisms and bi-equivariant diffeomorphisms as 2-morphisms form a bicategory. 
\end{prop}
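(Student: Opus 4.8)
The plan is to verify the data and axioms of a bicategory by hand, following the argument for generalized morphisms in \cite{blohmann_stacky_2008} and checking that nothing there used the coincidence of the base of the principal bundle with the unit manifold of the left-acting groupoid. First I would record that, for fixed Lie groupoids $\L$ and $\U$, the $(\L,\U)$-EPBs over arbitrary bases, together with bi-equivariant maps, form a category, and in fact a groupoid: a composite of bi-equivariant maps is bi-equivariant, identities are evident, and, as noted after Definition~\ref{defn:epb}, every bi-equivariant map is automatically a diffeomorphism. This provides the hom-categories $\Hom(\L,\U)$.

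Next I would set up horizontal composition as a functor $\Hom(\L,\U)\times\Hom(\U,\V)\to\Hom(\L,\V)$. On objects it is the composition $P\circ Q=\frac{P\times_N Q}{\U}$ already constructed in the text, whose smoothness and EPB property are recorded there. On $2$-morphisms, given bi-equivariant $\phi\colon P\to P'$ and $\psi\colon Q\to Q'$, the $\U$-equivariant map $\phi\times\psi\colon P\times_N Q\to P'\times_N Q'$ descends to a bi-equivariant map $P\circ Q\to P'\circ Q'$; one then checks compatibility with vertical composition and with identity $2$-morphisms. The functoriality of $\circ$ is exactly the interchange law.

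Then I would produce the coherence isomorphisms. For the unit at an object $\L$ I take the unit generalized morphism $\Id_\L$ of Example~\ref{ex:identity-generalized-morphism}, viewed as an $(\L,\L)$-EPB over $M$ with $\pi=l=\ta$; the left and right unitors are the bi-equivariant diffeomorphisms induced by the left and right actions, namely $[(\generic,p)]\mapsto\generic\cdot p$ from $(\L\times_M P)/\L$ to $P$, and symmetrically on the other side, exactly as in Example~\ref{ex:identity-generalized-morphism}. For the associator, both $(P\circ Q)\circ R$ and $P\circ(Q\circ R)$ are canonically identified with the iterated quotient of the fiber product $P\times_N Q\times_{N'} R$ (over the unit manifolds of the two intermediate groupoids) by the free and proper actions of those groupoids; the resulting canonical identification is $a_{P,Q,R}$. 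Naturality of the unitors and of the associator in all arguments is then a routine check, and the triangle and pentagon identities reduce, after unwinding, to equalities between the evident shuffle maps on $\frac{P\times Q\times R\times S}{\cdots}$, which hold on the nose — precisely as in the group(oid) case, so no new input is required.

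The one genuinely delicate point, and where I would spend the most care, is smoothness: the unitors, the associator, and their inverses are maps out of iterated quotients of fiber products by free and proper group(oid) actions, so I would systematically use that each quotient map is a surjective submersion to reduce smoothness of a map out of the quotient to smoothness of its (obviously smooth) lift, and likewise for inverses. The compatibilities between the various moment maps that make the iterated fiber products into manifolds are exactly conditions~\ref{def:equivariant-principal-bundle-2}--\ref{def:equivariant-principal-bundle-3} of Definition~\ref{defn:epb}, used repeatedly. Granting this bookkeeping, the verification is entirely parallel to the generalized-morphism case.
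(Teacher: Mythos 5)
Your proposal is correct and follows the same route as the paper, which disposes of this proposition in one line by calling it a straightforward adaptation of the generalized-morphism argument in \cite{blohmann_stacky_2008}; you are simply carrying out that adaptation in detail. One side remark is false, though harmless: bi-equivariant maps between $(\L,\U)$-EPBs are \emph{not} automatically diffeomorphisms, so the hom-categories are genuine categories rather than groupoids. The generalized-morphism argument for automatic invertibility uses that the right $\U$-action is transitive on the $l$-fibres, which for an EPB holds only for the $\pi$-fibres; in Example~\ref{ex:manifolds-not-full-subcategory-of-epb} (multivalued functions between manifolds, where the groupoid actions are trivial) any smooth map commuting with the structure maps is bi-equivariant, and these are certainly not all invertible. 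Since a bicategory requires only hom-categories, and your unitors and associators are constructed as explicit diffeomorphisms, nothing else in your argument is affected.
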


The equivariant principal bundles form a generalization of the generalized morphisms which only includes more ``degenerate'' morphisms.
Indeed note that, keeping our previous notation, the base $(P\circ Q)/\V$ of a composition $P\circ Q$ fibers over $X$ through a surjective submersion whose fibers are those of $\overline{l_Q}:Y\to N$, where $Y$ is the base of $Q$.
As a result, the dimension of the fibers of $\overline{l_{P\circ Q}}$ is the sum of the dimensions of the fibers of $\overline{l_{P}}$ and of $\overline{l_{Q}}$.
This implies that a weakly invertible equivariant principal bundle is, up to bi-equivariant diffeomorphism, a weakly invertible generalized morphism (i.e.\ a Morita morphism).

\subsubsection*{Examples}

The main example of ``degenerate'' morphism that we will consider is induced by a Lie groupoid pair $(\L,\A)$.
Given such a pair, $\L$ can be considered in two ways as an equivariant principal bundle: either as an $\A$-equivariant principal $\L$-bundle, or as an $\L$-equivariant principal $\A$-bundle. 
Those are denoted respectively by ${}_\A\L_\L$ and ${}_\L\L_\A$.
In both cases, the left and right moment maps are just the target and source maps, respectively, and the actions are by left and right translations.
The first version, ${}_\A\L_\L$, has $X=M$ and is isomorphic to the bundlization $P_i$ of the inclusion morphism $i:\A\to \L$, since $P_i=M\times_M \L$ is bi-equivariantly diffeomorphic to $\L$.
It is thus a generalized morphism.
The second version, ${}_\L\L_\A$, sits over the homogeneous space $X=\L/\A$ and is not a generalized morphism as soon as $X\neq M$.

The composition of the $\L$-equivariant principal $\A$-bundle ${}_\L\L_\A$ with a generalized morphism $P$ from $\A$ to a Lie groupoid $\U$ yields an $\L$-equivariant principal $\U$-bundle $Q={}_\L\L_\A \circ P$ over $\L/\A$. This kind of composition will be studied extensively.

\begin{ex}
\label{ex:LLA-morphism-phi}
  Let $\varphi: \A \to \U$ be a Lie groupoid morphism over $\varphi_0:M\to N$.
  Then the fibered product $\L \times_{N} \U = \L \times_N^{{\varphi_0}\circ{\so},\ta} \U$ is a smooth manifold which admits a free and proper right $\A$-action defined by
  \[  (\L\times_N \U) \times_M \A \to \L\times_N \U : ((\generic,\genericU),\generic') \mapsto (\generic\cdot \generic',\varphi(\generic'^{-1})\cdot\genericU) .  \]
  The corresponding quotient 
  \begin{equation}
  \label{eq:defn-Q}
    Q=\frac{\L \times_{N} \U}{\A} 
  \end{equation}
  is then an $\L$-equivariant principal $\U$-bundle over $\L/\A$ when endowed with the left and right moment maps $l:Q\to M:[(g,u)]\mapsto \ta(g)$ and $r:Q\to N:(g,u)\mapsto \so(u)$, with the projection $\pi:Q\to \L/\A:[(g,u)]\mapsto [g]$, and with the actions
  \begin{align*}
    \L \times_M Q \to Q &: (\generic,[(\generic',\genericU)]) \mapsto [(\generic\cdot\generic',\genericU)] \\
    Q \times_N \U \to Q &: ([(\generic,\genericU)],\genericU') \mapsto [(\generic,\genericU\cdot\genericU')] .
  \end{align*}
  It is canonically isomorphic to the composition ${}_\L\L_\A \circ P_\varphi$.
\end{ex}

The bundles constructed in Example\ \ref{ex:LLA-morphism-phi} exhaust all $\L$-equivariant principal $\U$-bundles $Q$ over $X$ such that
\begin{enumerate}
  \item the $\L$-action on $X$ is transitive, and
  \item the left moment map has a smooth global section.
\end{enumerate}
Indeed, let $\sigma$ be such a section, let $\ol\sigma=\pi\circ\sigma$ be the corresponding section of $\ol l$, and denote by $\A$ the closed subgroupoid of $\L$ that sends $\ol\sigma(M)$ to itself.
Then, $X$ is equivariantly diffeomorphic to $\L/\A$.
Moreover, there is a Lie groupoid morphism $\phi:\A\to\U$ defined by the relation $\generic\cdot \sigma(\so(\generic)) = \sigma(\ta(\generic))\cdot\phi(\generic)$.
Now, we have a map $r\circ\sigma:M\to N$ and we may consider $\L\times_N\U\to Q:(\generic,\genericU)\mapsto \generic\cdot\sigma(\so(\gamma))\cdot\genericU$, which descends to an isomophism (i.e.\ bi-equivariant diffeomorphism) $\frac{\L \times_{N} \U}{\A}  \to Q$ as promised.
  
Dropping the existence of a section of the left moment map, we still have that $X$ is a homogeneous space of $\L$ and thus has a section $\ol\sigma:M\to X$. Defining $\A$ as above and $P=\pi^{-1}(\sigma(M))$, we get that $P$ is a generalized morphism from $\A$ to $\U$ and that there is a map $\L\times_M P\to Q:(\generic,p)\mapsto \generic\cdot p$, which descends to an isomorphism $\frac{\L \times_{M} P}{\A}  \to Q$.
  
We thus proved the following result about principal bundles over homogeneous spaces.
\begin{prop}
\label{prop:exhaust}
  Any $\L$-equivariant principal bundle $Q$ over $X$ for which the $\L$-action on $X$ is transitive is isomorphic to a composition ${}_\L\L_\A \circ P$ where $\A$ is the stabilizer of some section $\sigma:M\to X$ and $P$ is a generalized morphism. The latter generalized morphism is isomorphic to (the bundlization of) a morphism if and only if the left moment map of $Q$ admits a section.
\end{prop}

\begin{rmk}
\label{rmk:epb-seen-as-principal-bibundles}
  If $P$ is an $\L$-equivariant principal $\U$-bundle over $X$, there is an induced $\L$-action on $X$.
  Hence, we may consider the action groupoid $\L\ltimes X$ over $X$.
  Then $P$ is actually a generalized morphism from $\L\ltimes X$ to $\U$ with left action defined by $(\generic,x)\cdot p = \generic\cdot p$ whenever $\pi(p)=x$.
  \[
    \vcenter{\xymatrix{
      \L \ar@<.5ex>[d] \ar@<-.5ex>[d] 
        & P \ar@{>>}[dl]_l \ar@{>>}[d]^\pi \ar[dr]^r 
        & \U \ar@<.5ex>[d] \ar@<-.5ex>[d] \\
      M 
        & X \ar@{>>}[l]_{\ol l} 
        & N
    }}
    \qquad\leadsto\qquad
    \vcenter{\xymatrix{
      \L\ltimes X \ar@<.5ex>[d] \ar@<-.5ex>[d] 
        & P \ar@{>>}[dl]_{\pi} \ar[dr]^r 
        & \U \ar@<.5ex>[d] \ar@<-.5ex>[d] \\
      X 
        & 
        & N
    }}
  \]
  
  So in some sense, equivariant principal bundles are just a special kind of generalized morphisms, but we do not want to see them in this way. We really want to see them as ``morphisms'' from $\L$ to $\U$.
\end{rmk}

\begin{ex}
\label{ex:manifolds-not-full-subcategory-of-epb}
  While the association $X\mapsto \underline X=(X\toto X)$ realizes the category of manifolds as a full subcategory of the bicategory of Lie groupoids defined in Proposition \ref{prop:generalized-morphisms-bicategory}, the analogous result is not true for the bicategory where generalized morphisms as 1-morphisms are replaced by equivariant principal bundles as in Proposition \ref{prop:epb-bicategory}.
  Indeed, the equivariant principal bundles between $\underline X$ and $\underline Y$ for two smooth manifolds $X$ and $Y$ are the \emph{multivalued functions from $X$ to $Y$}.
  Here, a multivalued function from $X$ to $Y$ is a smooth manifold $Z$ together with a surjective submersion to $X$ and a smooth map to $Y$.
\end{ex}

\subsubsection*{Connections}

The connection forms defined below are natural extensions of those on usual principal bundles with structure group, and on principal bundles with structure groupoid.
To define them, let us first explain some notation.

If $\L\times_M P\to P$ is a left action on a map $P\to M$, there is an induced $T\L$-action on $TP$ simply obtained by applying the tangent functor. We will write $\generic\cdot p$ for the action of $\generic\in \L$ on $p\in P$, and $u\cdot X$ for the action of $u\in T\L$ on $X\in TP$, whenever defined.
We use the same notation for a right action $P\times_N\U\to P$, and write $p\cdot \genericU$ for the action of $\genericU\in \U$ on $p\in P$, and $X\cdot v$ for the action of $v\in T\U$ on $X\in TP$, whenever defined.
The zero vector at a point $p\in P$ is denoted by $0_p$.
Recall that $T^lP$ is the subbundle of $TP$ of vectors tangent to the $l$-fibers, $T^lP=\ker Tl$, and similarly for $T^\pi P$. 

\begin{defn}
  \label{defn:infinitesimal-v-f-right-action}
  The \emph{infinitesimal vector fields} of a right action $R:P\times_{N}\U\to P$ of a Lie groupoid $\U\toto N$ on a manifold $P$ are the vector fields $\hat v\in \mathfrak X(P)$ defined by 
  \begin{align*}
    \hat v|_p = \left. \frac{d}{dt} R_{\exp(tv)^{-1}}(p) \right|_{t=0} = 0_p \cdot (v|_p)^{-1}
  \end{align*}
  for all $v\in\Gamma(\uu)$ and $p\in P$.
\end{defn}

Let us consider now an equivariant principal bundle $P$ and momentarily denote by $\psi$ the Lie groupoid diffeomorphism \eqref{eq:epb-inverse-of-right-action-on-P}.
Then the infinitesimal vector fields map coincides with the Lie algebroid isomorphism
\begin{equation}
\label{eq:Lie-PxU-to-PxP}
  r^*U \cong \Lie(P\rtimes \U) \xrightarrow{Lie(\psi)} \Lie (P\times_X P) \cong T^\pi P .
\end{equation}
A connection form is a bundle map from $TP$ to $r^*U$ that extends the inverse of the above map, in an equivariant way.
\begin{defn}
  \label{defn:connection-epb}
  A \emph{connection form} (resp., \emph{fibered connection form}) on an $\L$-equivariant principal $\U$-bundle $P$ is an $r^*\uu$-valued 1-form $\omega\in\Omega^1(P,r^*\uu)$ (resp., an $r^*\uu$-valued 1-form on the $l$-fibers $\omega\in\Omega^1_l(P,r^*\uu)$) such that
  \begin{enumerate}[label=(F\arabic*)]
    \item \label{defn:connection-epb-1}
      $\omega(\hat v) = r^*v$, for all $v\in\Gamma(\uu)$,
    \item \label{defn:connection-epb-2}
      ${\Ad_{\Sigma^{-1}}}\circ{\omega} = R_\Sigma^*\omega$, for all $\Sigma\in\Bis(\U)$,
    \item \label{defn:connection-epb-3}
      $L_\Sigma^*\omega = \omega$, for all $\Sigma\in\Bis(\L)$.
  \end{enumerate}
\end{defn}

\begin{rmk}
\label{rmk:fibered-connection-forms-unique-on-generalized-morphisms}
Fibered connection forms are specifically designed for equivariant principal bundles which are \emph{not} generalized morphisms. 
Indeed, if $P$ is a generalized morphism then $l=\pi$, the infinitesimal vector fields of the right $\U$-action span the tangent space to the $l$-fibers at all points, and condition \ref{defn:connection-epb-1} in Definition \ref{defn:connection-epb} entirely determines the values of $\omega$ at all points.
On the other hand, since $R_\bisection(\hat v)=\left(\Ad_{\bisection^{-1}}(v)\right)^{\wedge}$ and since the left and right actions commute, the map defined by \ref{defn:connection-epb-1} also satisfies \ref{defn:connection-epb-2} and \ref{defn:connection-epb-3}.
Hence, there exists one and only one fibered connection form on a generalized morphism, given by the inverse of \eqref{eq:Lie-PxU-to-PxP}.
We will call it the \emph{Maurer--Cartan form} of a generalized morphism.

Connection forms, on the other hand, reduce to the usual notion of connection form on a principal bundle when $\U$ is a Lie group and $\L$ is the (trivial) groupoid $M\toto M$ of a manifold.
Moreover, when $\U$ is a Lie group and $\L$ is any Lie groupoid, our connection forms coincide with the connection forms on a principal bundle over a groupoid defined in \cite{laurent-gengoux_chern_2007}.

We stress that both kinds of connections need not exist in general, due to the left-invariance condition, just like $G$-invariant connections on a homogeneous space $G/H$ need not exist in general.
It is the very purpose of this paper to study the obstruction to their existence and some constructions that can be made when such connections exist.
\end{rmk}

In this paper, we will only use \emph{fibered} connection forms $\cf\in\Omega^1_l(P,r^*\uu)$ and, more precisely, we will only use the corresponding bundle maps
$$
  \vcenter{\xymatrix{
    T^lP \ar[r]^{\bmap} \ar[d]
      & \uu \ar[d] \\
    P \ar[r]^r
      & N
  }} .
$$
We will use the same letter $\bmap$ for fibered connection forms and the corresponding bundle maps, and call both \emph{fibered connection forms}.
These bundle maps enjoy similar properties to \ref{defn:connection-epb-1}--\ref{defn:connection-epb-3} above, however, in case we want to avoid using bisections, we have two simpler axioms: $T\U$-equivariance and $\L$-invariance.

\begin{prop}
\label{prop:characterization-bundle-maps-of-connection-forms}
  A vector bundle morphism $\bmap:T^lP\to\uu$ over $r: P \to N$ is (the bundle map of) a fibered connection form if and only if it is an anchored map such that, for all $Y\in T^lP$, $v\in T\U$ and $\generic\in\L$,
  \begin{enumerate}[label=(B\arabic*)]
    \item \label{prop:characterization-bundle-maps-of-connection-forms-2}
      $\Add_v \bmap(Y) = \bmap(Y\cdot v)$ whenever $Tr(Y)=T\ta(v)$,
    \item \label{prop:characterization-bundle-maps-of-connection-forms-3}
      $\bmap(0_\generic\cdot Y) = \bmap(Y)$ whenever $Y\in T^l_pP$ with $l(p)=\so(\generic)$.
  \end{enumerate}
\end{prop}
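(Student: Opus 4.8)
The plan is to match, one at a time, the axioms \ref{defn:connection-epb-1}--\ref{defn:connection-epb-3} defining a (fibered) connection form with the anchoredness of $\bmap$ together with \ref{prop:characterization-bundle-maps-of-connection-forms-2}--\ref{prop:characterization-bundle-maps-of-connection-forms-3}; every step is an unwinding of definitions, the work lying in the bookkeeping with the tangent-groupoid multiplications $\bullet$ on $T\L$ and $T\U$, the map $\Add$ and its properties (Lemma~\ref{lem:kappa}, Proposition~\ref{prop:properties-kappa}), and Lemma~\ref{lem:adjoint-formula}. Two elementary observations are used throughout. (i) If $\Sigma$ is a bisection of $\L$ through $\generic$ and $Y\in T^lP$ at a point $p$ with $l(p)=\so(\generic)$, the left translation $L_\Sigma\colon p'\mapsto\Sigma_\so(l(p'))\cdot p'$ has differential $TL_\Sigma(Y)=\bigl(T\Sigma_\so(Tl(Y))\bigr)\cdot Y=0_\generic\cdot Y$, because $Y$ is $l$-vertical. (ii) If $\Sigma$ is a bisection of $\U$ through $\genericU\in\ta^{-1}(r(p))$ and $Y\in T^l_pP$, the right translation $R_\Sigma\colon p'\mapsto p'\cdot\Sigma_\ta(r(p'))$ has differential $TR_\Sigma(Y)=Y\cdot v$, where $v=T\Sigma_\ta(Tr(Y))\in T_\genericU\U$ satisfies $T\ta(v)=Tr(Y)$; moreover, as $\Sigma$ and $\genericU\in\ta^{-1}(r(p))$ vary, $v$ ranges over all of $\{w\in T\U:T\ta(w)=Tr(Y)\}$.

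Recall that the bundle map of a fibered connection form is anchored, $\rho_\uu\circ\bmap=Tr$ on $T^lP$ (this is among the ``similar properties to \ref{defn:connection-epb-1}--\ref{defn:connection-epb-3}'' noted before the statement; on $T^\pi P$ it is forced by \ref{defn:connection-epb-1}, which identifies $\bmap|_{T^\pi P}$ with the inverse of \eqref{eq:Lie-PxU-to-PxP}). \textbf{Axiom \ref{defn:connection-epb-3} $\Leftrightarrow$ \ref{prop:characterization-bundle-maps-of-connection-forms-3}.} By (i), $(L_\Sigma^*\bmap)(Y)=\bmap(0_\generic\cdot Y)$ for $Y\in T^l_pP$; since every $\generic\in\L$ lies in a bisection, $L_\Sigma^*\bmap=\bmap$ for all $\Sigma\in\Bis(\L)$ holds iff $\bmap(0_\generic\cdot Y)=\bmap(Y)$ for all admissible $\generic,p,Y$, which is \ref{prop:characterization-bundle-maps-of-connection-forms-3}.

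\textbf{Axiom \ref{defn:connection-epb-2} $\Leftrightarrow$ \ref{prop:characterization-bundle-maps-of-connection-forms-2}, and axiom \ref{defn:connection-epb-1}.} By (ii), $(R_\Sigma^*\bmap)(Y)=\bmap_{p\cdot\genericU}(Y\cdot v)$ with $v=T\Sigma_\ta(Tr(Y))$. On the other hand, Lemma~\ref{lem:adjoint-formula} applied to $\Sigma$ and a section of $\uu$ through $\bmap_p(Y)\in\uu_{r(p)}$ gives $\Ad_{\Sigma^{-1}}\bigl(\bmap_p(Y)\bigr)=\Add\bigl(T\Sigma_\ta(\rho_\uu(\bmap_p(Y))),\bmap_p(Y)\bigr)$, and anchoredness, $\rho_\uu(\bmap_p(Y))=Tr(Y)$, identifies the tangent vector occurring here with $v$; hence \ref{defn:connection-epb-2} reads $\Add_v\bigl(\bmap_p(Y)\bigr)=\bmap_{p\cdot\genericU}(Y\cdot v)$, which by the last part of (ii) is exactly \ref{prop:characterization-bundle-maps-of-connection-forms-2}. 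It remains to see that an anchored $\bmap$ satisfying \ref{prop:characterization-bundle-maps-of-connection-forms-2} automatically satisfies \ref{defn:connection-epb-1}: for $v\in\Gamma(\uu)$ one has $\hat v|_p=0_p\cdot(v|_{r(p)})^{-1}$ (Definition~\ref{defn:infinitesimal-v-f-right-action}), so $\hat v|_p\cdot v|_{r(p)}=0_p\cdot\bigl((v|_{r(p)})^{-1}\bullet v|_{r(p)}\bigr)=0_p$ and $Tr(\hat v|_p)=\rho_\uu(v|_{r(p)})=T\ta(v|_{r(p)})$; applying \ref{prop:characterization-bundle-maps-of-connection-forms-2} with $Y=\hat v|_p$ and $v|_{r(p)}$ gives $\Add_{v|_{r(p)}}\bigl(\bmap(\hat v|_p)\bigr)=\bmap(0_p)=0$. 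Since also $\Add(v|_{r(p)},v|_{r(p)})=0$ by Lemma~\ref{lem:kappa} (the product $(v|_{r(p)})^{-1}\bullet v|_{r(p)}$ being a unit of $T\U$), and since $\Add_{v|_{r(p)}}$ is a bijection of $\rho_\uu^{-1}(T\ta(v|_{r(p)}))$ onto $\rho_\uu^{-1}(T\so(v|_{r(p)}))$ with inverse $\Add_{(v|_{r(p)})^{-1}}$ (by properties~\ref{prop:properties-kappa-composition}, \ref{prop:properties-kappa-action-of-units} of Proposition~\ref{prop:properties-kappa}), its domain containing $\bmap(\hat v|_p)$ by anchoredness, we conclude $\bmap(\hat v|_p)=v|_{r(p)}$, i.e.\ \ref{defn:connection-epb-1}.

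\textbf{Expected main obstacle.} The crux is the equivalence \ref{defn:connection-epb-2}$\Leftrightarrow$\ref{prop:characterization-bundle-maps-of-connection-forms-2}: one must check that the tangent vector $T\Sigma_\ta(\rho_\uu(\bmap_p(Y)))$ that enters the adjoint action via Lemma~\ref{lem:adjoint-formula} genuinely coincides with the one entering the pushforward $TR_\Sigma$ — this is precisely where anchoredness is indispensable — and that $\Sigma\mapsto T\Sigma_\ta(Tr(Y))$ does exhaust $\{w\in T\U:T\ta(w)=Tr(Y)\}$. Keeping the conventions for $\bullet$, $\Ad$ and $\Add$ coherent throughout, and verifying that $\Add_v$ really restricts to a bijection between the relevant affine subspaces of $\uu$ (so that the argument for \ref{defn:connection-epb-1} closes), are the remaining delicate points; one should also double-check the standing fact that a fibered connection form's bundle map is anchored.
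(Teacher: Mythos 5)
Your handling of \ref{prop:characterization-bundle-maps-of-connection-forms-3}$\Leftrightarrow$\ref{defn:connection-epb-3} matches the paper, and your derivation of \ref{defn:connection-epb-1} from \ref{prop:characterization-bundle-maps-of-connection-forms-2} (via injectivity of $\Add_v$) is a workable, if roundabout, variant of the paper's direct substitution $Y=0_p$, $v=u^{-1}$. The genuine gap is in the direction ``fibered connection form $\Rightarrow$ \ref{prop:characterization-bundle-maps-of-connection-forms-2}'', and it sits exactly at the point you flagged: the claim that, as $\Sigma\in\Bis(\U)$ and $\genericU\in\ta^{-1}(r(p))$ vary, $v=T\Sigma_\ta(Tr(Y))$ exhausts $\{w\in T\U\mid T\ta(w)=Tr(Y)\}$ is false. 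A nonzero vector tangent to a bisection can lie neither in $\ker T\so$ nor in $\ker T\ta$; for instance when $Tr(Y)=0$ the only bisection-tangent $v$ over it is $0_\genericU$, whereas $\{w\in T_\genericU\U\mid T\ta(w)=0\}=\ker T_\genericU\ta$ has dimension $\rk\uu$. In general an admissible $v$ differs from a bisection-tangent one by an arbitrary element of $\ker T_\genericU\ta$, and that component is invisible to axiom \ref{defn:connection-epb-2}. A symptom of the problem is that your forward direction never uses \ref{defn:connection-epb-1}: you are in effect claiming that \ref{defn:connection-epb-2} alone implies \ref{prop:characterization-bundle-maps-of-connection-forms-2}, hence implies \ref{defn:connection-epb-1}, which cannot be right since \ref{defn:connection-epb-1} carries independent information (it pins $\bmap$ down on $\ker T\pi$).

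The repair is the decomposition the paper uses: write $(Y,v)=(0_p,\,0_\genericU\bullet u^{-1})+(Y,\,T\Sigma_\ta(Tr(Y)))$ for some bisection $\Sigma$ through $\genericU$ and some $u\in\uu_{\so(\genericU)}$, which is possible because $\ker T_\genericU\ta=\{0_\genericU\bullet u^{-1}\mid u\in\uu_{\so(\genericU)}\}$. Since $\Add$ is a vector bundle morphism $\ta^\pb\uu\to\uu$ and the tangent action is the differential of a smooth map, both sides of \ref{prop:characterization-bundle-maps-of-connection-forms-2} are additive over this decomposition (anchoredness is what guarantees each summand lies in $\ta^\pb\uu$). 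The second summand is then handled by \ref{defn:connection-epb-2} exactly as you do, while the first reduces, after computing $0_p\cdot(0_\genericU\bullet u^{-1})=\hat u|_{p\cdot\genericU}$ and $\Add_{0_\genericU\bullet u^{-1}}(0)=u$, precisely to \ref{defn:connection-epb-1}. Two smaller points: the bijectivity of $\Add_v$ you invoke requires $\Add_{T\unit(X)}=\mathrm{id}$ for $X=T\ta(v)$ possibly nonzero, which is not literally item \ref{prop:properties-kappa-action-of-units} of Proposition \ref{prop:properties-kappa} but follows at once from the formula in Lemma \ref{lem:kappa}; and the anchoredness of the bundle map of a fibered connection form is indeed left implicit here, as it is in the paper's own proof.
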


\begin{proof}
We will show that \ref{prop:characterization-bundle-maps-of-connection-forms-2} is equivalent to \ref{defn:connection-epb-1} + \ref{defn:connection-epb-2}, and that \ref{prop:characterization-bundle-maps-of-connection-forms-3} is equivalent to \ref{defn:connection-epb-3}.

Assume $\bmap:T^lP\to \uu$ satisfies \ref{prop:characterization-bundle-maps-of-connection-forms-2}.
Taking $Y=0_p$ and $v=u^{-1}$ with $u\in \uu_{r(p)}$, we get
\[
  u 
  = u \bullet 0_{1_{r(p)}} \bullet 0_{(1_{r(p)})^{-1}} 
  = \Add(u^{-1},0_{1_{r(p)}})
  = \bmap (0_p \cdot u^{-1}) 
  = \bmap (\hat u|_p), 
\]
which is \ref{defn:connection-epb-1}.
By Lemma \ref{lem:adjoint-formula}, \ref{prop:characterization-bundle-maps-of-connection-forms-2} also implies \ref{defn:connection-epb-2}.
Conversely, assume $\bmap:T^lP\to \uu$ is a bundle map satisfying \ref{defn:connection-epb-1} and \ref{defn:connection-epb-2}.
Notice that any pair $(Y,v)\in T^l_{(p,\genericU)}(P\times_N \U)$ can be written as
\[  (Y,v) = (0_p,0_\genericU \bullet u^{-1}) + (Y,T\bisection_\ta(Tr(Y)))  \]
for some bisection $\bisection\in\Bis(\U)$ and some $u\in\uu_{\so(\genericU)}$.
The same arguments as for the converse show that \ref{prop:characterization-bundle-maps-of-connection-forms-2} holds for each term in this sum. Hence, it holds in general.

The equivalence between \ref{prop:characterization-bundle-maps-of-connection-forms-3} and \ref{defn:connection-epb-3} follows from the fact that, for a bisection $\bisection\in\Bis(\L)$ and a vector $Y\in T^l_pP$, we have
\begin{align*}
  \bisection \star Y &= T\bisection_\so(Tl(Y)) \cdot Y \\
  &= T\bisection_\so(0_{l(p)}) \cdot Y \\
  &= 0_{\bisection_\so(l(p))} \cdot Y .
\end{align*}
\end{proof}

\begin{ex}[Units]
\label{ex:maurer-cartan-form}
  The Maurer--Cartan form (see Remark \ref{rmk:fibered-connection-forms-unique-on-generalized-morphisms}) of a unit generalized morphism $\Id_\L$ is the bundle map $\tau=\tau_\L:T^\ta\L\to\ll$ defined by
  \begin{align}
  \label{eq:maurer-cartan-form}
    \tau(u) = u^{-1} \bullet 0_{\generic} = \Add(u,0_{\ta(\generic)})
  \end{align}
  for all $u\in (T^\ta\L)_\generic$.
  Hence, it is the usual Maurer--Cartan form of the Lie groupoid $\L$.
  It is natural in $\L$: if $\phi:\L\to\U$ is a Lie groupoid morphism, then $\tau_\U\circ T\phi = \Lie(\phi)\circ \tau_\L$. 
\end{ex}

\begin{ex}[Bundlizations]
\label{ex:fibered-connection-form-on-bundlization}
  The Maurer--Cartan form of the bundlization of a morphism $\varphi:\L\to\U$ is induced from the Maurer--Cartan form of $\U$.
  Indeed, the fibered tangent bundle $T^lP_\varphi$ is the set of tangent vectors $(X,v)$ in $TM\times_{TN}T\U$ that project to zero through the projection on the first component, hence the set of vectors $(0,v)$ with $v\in T^\ta\U$.
  The connection form on $P_\varphi$ is then the bundle map $\omega_\varphi:T^lP_\varphi\to\uu$ defined by
  \[
    \bmap_\varphi(0,v) = \tau_\U(v)
  \]
  for all $(0,v)\in T^lP_\varphi$.
\end{ex}

Just like equivariant principal bundles can be composed, connections on such bundles can be composed as well (see Section~\ref{sec:reductive-homogeneous-spaces} for an application of that result).

\begin{prop}
\label{prop:composition-of-connections}
  Let $P$ be an $\L$-equivariant principal $\A$-bundle, and $Q$ an $\A$-equivariant principal $\U$-bundle, for some Lie groupoids $\L$, $\A$, and $\U$.
  Let $\bmap_P$ and $\bmap_Q$ be fibered connection forms on $P$ and $Q$, respectively.
  Then there is a fibered connection form $\bmap$ on $P\circ Q$ defined by
  \begin{equation*}
    \bmap([(X,Y)]) = \bmap_Q \left( Y- \bmap_P(X)\cdot 0_q \right)
  \end{equation*}
  for all $(X,Y)\in T^l(P\times_M^{r,l} Q)$, where $M$ is the base manifold of $\A$, and $q$ is the base point of $Y$.
\end{prop}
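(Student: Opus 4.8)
The plan is to verify directly that the proposed formula
\[
  \bmap([(X,Y)]) = \bmap_Q \left( Y - \bmap_P(X)\cdot 0_q \right)
\]
is well-defined and satisfies the two axioms \ref{prop:characterization-bundle-maps-of-connection-forms-2} and \ref{prop:characterization-bundle-maps-of-connection-forms-3} of Proposition \ref{prop:characterization-bundle-maps-of-connection-forms}, for which it suffices by that proposition to treat it as a bundle map on the relevant fibered tangent bundle. Recall that $P\circ Q = \frac{P\times_M^{r,l}Q}{\A}$, so a tangent vector to $P\circ Q$ along an $l$-fiber is represented by a pair $(X,Y)\in T^l(P\times_M^{r,l}Q)$, meaning $X\in T^lP$, $Y\in TQ$, and $Tr_P(X) = Tl_Q(Y)$ in $TM$; note $Y$ need not be $l_Q$-vertical, but $\bmap_Q$ is only evaluated after subtracting the correction term $\bmap_P(X)\cdot 0_q$, which lands in $T^{l}_qQ$ precisely because $\bmap_P(X)\in\aa_{r_P(p)} = \aa_{l_Q(q)}$ and the infinitesimal action $\aa\to T^lQ$ of $\A$ on $Q$ has image in the $l_Q$-vertical bundle. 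So the first step is to check that $Y - \bmap_P(X)\cdot 0_q$ genuinely lies in $T^{l}_qQ$, so that applying $\bmap_Q$ makes sense; this uses that $\bmap_P$ is an anchored map (so $Tl_Q$ of the correction term equals $Tr_P(X)=Tl_Q(Y)$).

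Next I would check independence of the representative: two representatives of the same class in $P\circ Q$ differ by the $\A$-action, and on tangent vectors this is the $T\A$-action $(X,Y)\mapsto (X\cdot v^{-1}, v\cdot Y)$ for $v\in T\A$ with appropriate source/target matching. I would substitute this into the formula and use the equivariance property \ref{prop:characterization-bundle-maps-of-connection-forms-2} for $\bmap_P$ (namely $\Add_v\bmap_P(X) = \bmap_P(X\cdot v)$, or its inverse version) together with the compatibility between $\Add$, the action on $Q$, and $\bmap_Q$ — concretely, that replacing the $\A$-valued ``gauge'' $\bmap_P(X)$ by $\Add$ of it and simultaneously moving $Y$ by the same group element leaves $Y - \bmap_P(X)\cdot 0_q$ unchanged up to the $T\A$-action, which $\bmap_Q$ then absorbs by its own \ref{prop:characterization-bundle-maps-of-connection-forms-3}-type invariance under the left $\A$-action on $Q$. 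This is the bookkeeping-heavy step and I expect it to be the main obstacle: one has to be careful about which composition in $T\A$ (the $\bullet$-product) and which ordinary action is being used, and the fact that $\bmap_P$ is $\L$-invariant but only $\A$-equivariant means the cancellation is between the right-$\A$-equivariance of $\bmap_P$ and the left-$\A$-invariance of $\bmap_Q$.

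Then I would verify the three defining properties. For \ref{defn:connection-epb-1} (equivalently, that $\bmap$ restricted to the infinitesimal vector fields of the right $\U$-action returns $r^*$): an infinitesimal $\U$-vector field on $P\circ Q$ is represented by $(0_p, \hat u|_q)$ for $u\in\uu$, and then $\bmap_P(0_p)=0$, so $\bmap([(0_p,\hat u|_q)]) = \bmap_Q(\hat u|_q) = u$ by property \ref{defn:connection-epb-1} for $\bmap_Q$. For the $\U$-equivariance \ref{defn:connection-epb-2} / \ref{prop:characterization-bundle-maps-of-connection-forms-2}: the right $\U$-action on $P\circ Q$ only touches the $Q$-factor, $[(X,Y)]\cdot\genericU$ is represented by $(X, Y\cdot\genericU)$ (and $\bmap_P(X)\cdot 0_q$ transforms correctly since $q\mapsto q\cdot\genericU$ commutes with the left $\A$-action), so the claim reduces to property \ref{prop:characterization-bundle-maps-of-connection-forms-2} for $\bmap_Q$, using $\Add_v$-naturality. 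For the $\L$-invariance \ref{defn:connection-epb-3} / \ref{prop:characterization-bundle-maps-of-connection-forms-3}: the left $\L$-action on $P\circ Q$ only touches the $P$-factor, $0_\generic\cdot[(X,Y)]$ is represented by $(0_\generic\cdot X, Y)$, and $\bmap_P(0_\generic\cdot X)=\bmap_P(X)$ by property \ref{prop:characterization-bundle-maps-of-connection-forms-3} for $\bmap_P$, so the correction term and hence $\bmap$ are unchanged. Finally I would remark that $\bmap$ is an anchored map because $\bmap_Q$ is and because the anchor of $T^l(P\circ Q)\to T(\L/\ldots)$ factors through $\bmap_Q$ of the corrected vector; this is immediate once the above is in place. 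Throughout, the only genuinely delicate point is the well-definedness computation, where I would track the $T\A$-action carefully, writing $v = T\Sigma_\so(\cdot)\bullet(\text{something})$ using a bisection as in the proof of Proposition \ref{prop:characterization-bundle-maps-of-connection-forms} to reduce to the generating cases (pure infinitesimal $\A$-action, and a ``horizontal'' piece), exactly as was done there.
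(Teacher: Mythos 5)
Your proposal is correct and follows essentially the same route as the paper's (much terser) proof: well-definedness from the right-$\A$-equivariance of $\bmap_P$ combined with the left-$\A$-invariance of $\bmap_Q$, the $\U$-equivariance of $\bmap$ from that of $\bmap_Q$ plus commutativity of the two actions, and the $\L$-invariance from that of $\bmap_P$. The extra checks you flag (that $Y-\bmap_P(X)\cdot 0_q$ is $l_Q$-vertical via anchoredness of $\bmap_P$, and that $\bmap$ is anchored) are correct and are left implicit in the paper.
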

\begin{proof}
  Property \ref{prop:characterization-bundle-maps-of-connection-forms-2} for $\bmap_P$ and property \ref{prop:characterization-bundle-maps-of-connection-forms-3} for $\bmap_Q$ imply that $(X,Y) \mapsto \bmap_Q \left( Y- \bmap_P(X)\cdot 0_q \right)$ is $T\A$-invariant, so $\bmap$ is well-defined.
  Then \ref{prop:characterization-bundle-maps-of-connection-forms-2} follows from \ref{prop:characterization-bundle-maps-of-connection-forms-2} for $\bmap_Q$ and from the commutativity of the left and right actions on $P$.
  And \ref{prop:characterization-bundle-maps-of-connection-forms-3} follows from \ref{prop:characterization-bundle-maps-of-connection-forms-3} for $\bmap_P$.
\end{proof}

It is straightforward to check that this composition is compatible with the Maurer--Cartan form of generalized morphisms, and that equivariant principal bundles with fibered connection also form a bicategory, which contains generalized morphisms.


\subsection{Associated vector bundles}
\label{ssec:associated-vector-bundles}

In this section, for a Lie groupoid $\L\toto M$ and a vector bundle $E\to N$, we exhibit an explicit correspondence between fibered connection forms on an $\L$-equivariant principal $\fb(E)$-bundle $P$ and $\L$-invariant fibered connections on the associated vector bundle $P(E)$. 

The results in this section are natural analogues of the classical notions. 

\begin{defn}
  \label{defn:associated-vector-bundle}
  Let $\L$ and $\U$ be Lie groupoids over $M$ and $N$, respectively, let $E\to N$ be a $\U$-module, and let $P$ be an $\L$-equivariant principal $\U$-bundle over $X$.
  The \emph{associated $\L$-module} to $P$ and $E$ is the associated vector bundle
  \[ P(E) = \frac{P \times_N E}{\U} \to X  \]
  with the $\L$-module structure $\L \times_M P(E) \to P(E)$ defined by
  \[  \generic \cdot [(p,e)] = [(\generic\cdot p,e)] .  \]
\end{defn}

We will denote by $\mu$ the (left) $\U$-action on $P\times_N E$ that defines $P(E)$:
\begin{align*}
  \mu_\genericU(p,e) = (p\cdot \genericU^{-1},\genericU\cdot e)
\end{align*}
where $\genericU\in \U$, $p\in P$ and $e\in E$ satisfy $r(p)=q(e)=\so(\genericU)$.
Also, we will denote by $\pi_{P(E)}$ the projection $(p,e)\mapsto [(p,e)]$ from $P\times_N E$ to $P(E)$.

Any vector bundle $E$ is canonically a $\fb(E)$-module.
\emph{In the rest of this section, we will only consider the case where $P$ is an $\L$-equivariant principal $\fb(E)$-bundle}.

\begin{ex}\label{ex:avecE}
  Let $(\L,\A)$ be a Lie groupoid pair over $M$ and $E$ be an $\A$-module.
  The Lie groupoid pair defines an $\L$-equivariant principal $\A$-bundle over $\L/\A$, and the associated $\L$-module to ${}_\L\L_\A$ and $E$ is $\frac{\L\times_M E}{\A}$.
  
  Considering the $\A$-module structure on $E$ as a Lie groupoid morphism from $\A$ to the frame groupoid $\fb(E)$ we get, as in Example \ref{ex:LLA-morphism-phi}, an $\L$-equivariant principal $\fb(E)$-bundle $P=\frac{\L \times_M \fb(E)}{\A}$ over $X=\L/\A$.
  The associated $\L$-module $P(E)=\frac{P \times_M E}{\fb(E)}$ is isomorphic to $\frac{\L\times_M E}{\A}$.
\end{ex}

The following result will be needed in the proof of Proposition \ref{prop:sections-equivariant-maps}.
\begin{lem}
  \label{lem:pistarPofE-isom-fiberproductPandE}
  There is a canonical isomorphism $\Psi:\pi^*P(E)\to P\times_N E$ of vector bundles over $P$.
  It is $\L$- and $\fb(E)$-equivariant, in the sense that 
  \begin{align*}
    \Psi(\generic\cdot p,\generic\cdot \epsilon) &= \generic \cdot \Psi(p,\epsilon) \\
    \Psi(p\cdot \phi^{-1},\epsilon) &= \mu_\phi \Psi(p,\epsilon) 
  \end{align*}
  for all $(p,\epsilon)\in \pi^*P(E)$, $\generic\in\L$ such that $\so(\generic)=l(p)$, and $\phi\in\fb(E)$ such that $\so(\phi)=r(p)$.
\end{lem}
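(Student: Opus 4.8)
The plan is to obtain $\Psi$ as the inverse of the obvious ``opposite direction'' map and then read off the asserted formula. Put
\[
  \Theta : P\times_N E \longrightarrow \pi^*P(E), \qquad (p,e)\longmapsto \bigl(p,[(p,e)]\bigr) .
\]
This is well defined since $\pi(p)=q_{P(E)}([(p,e)])$, it is manifestly smooth, and it is a morphism of vector bundles over $P$ which on the fibre over $p$ is the canonical linear isomorphism $E_{r(p)}\to P(E)_{\pi(p)}$, $e\mapsto[(p,e)]$, that defines the vector-bundle structure of the associated bundle $P(E)$. Hence $\Theta$ is a vector bundle isomorphism over $P$, and I would set $\Psi:=\Theta^{-1}$. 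To obtain the announced formula, let $(p,\epsilon)\in\pi^*P(E)$ and pick any representative $\epsilon=[(p',e')]$ with $\pi(p')=\pi(p)$; by axiom \ref{def:equivariant-principal-bundle-3} there is a unique $\phi:=D_P(p,p')\in\fb(E)$ with $p\cdot\phi=p'$, and since $\mu_\phi(p\cdot\phi,e')=(p,\phi\cdot e')$ we get $[(p,\phi\cdot e')]=[(p',e')]=\epsilon$, hence $\Theta(p,\phi\cdot e')=(p,\epsilon)$; in other words
\[
  \Psi(p,\epsilon)=\bigl(p,\;D_P(p,p')\cdot e'\bigr) .
\]
In particular this is independent of the chosen representative (which could also be checked directly from the equivariance of $D_P$), and the special case $p'=p$ reads $\Psi(p,[(p,e)])=(p,e)$.

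For the two equivariance statements I would argue on $\Theta$ and on this formula. The $\L$-equivariance is immediate: $\Theta(\generic\cdot p,e)=(\generic\cdot p,[(\generic\cdot p,e)])=(\generic\cdot p,\generic\cdot[(p,e)])=\generic\cdot(p,[(p,e)])=\generic\cdot\Theta(p,e)$, using the definition of the $\L$-module structure on $P(E)$ (Definition \ref{defn:associated-vector-bundle}) and the diagonal $\L$-action on $\pi^*P(E)$; therefore $\Psi=\Theta^{-1}$ is $\L$-equivariant too. For the $\fb(E)$-equivariance, write $\Psi(p,\epsilon)=(p,e')$, so that $\epsilon=[(p,e')]$ with $e'\in E_{r(p)}$; given $\phi\in\fb(E)$ with $\so(\phi)=r(p)$ we have $\mu_\phi(p,e')=(p\cdot\phi^{-1},\phi\cdot e')$, hence $\epsilon=[(p\cdot\phi^{-1},\phi\cdot e')]$. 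Applying the ``$p'=p$'' case of the formula at the point $p\cdot\phi^{-1}$ with the representative $(p\cdot\phi^{-1},\phi\cdot e')$ gives $\Psi(p\cdot\phi^{-1},\epsilon)=(p\cdot\phi^{-1},\phi\cdot e')=\mu_\phi(p,e')=\mu_\phi\Psi(p,\epsilon)$, which is the claim.

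I do not expect a serious obstacle here: the lemma is the Lie-groupoid-bundle incarnation of the familiar fact that the pullback of an associated bundle to the total space of the principal bundle is canonically trivial. The only points that require care are the placement of the inverses in the $\fb(E)$-action $\mu_\genericU(p,e)=(p\cdot\genericU^{-1},\genericU\cdot e)$ and the direction of the equivariance property of the division map $D_P$; the organising device used throughout is to represent a class in $P(E)$ by a pair whose first entry is precisely the point of $P$ one is working over.
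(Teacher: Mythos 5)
Your proposal is correct and is essentially the paper's own argument read in the opposite order: the paper defines $\Psi$ directly by $(p,[(p',e')])\mapsto (p,\di_P(p',p)^{-1}\cdot e')$, checks $\fb(E)$-invariance of this assignment, and then observes it inverts the tautological map $(p,e)\mapsto(p,[(p,e)])$, whereas you start from the tautological map and recover the same division-map formula for its inverse. The equivariance checks coincide, so there is nothing substantive to add.
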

\begin{proof}
  This follows directly from the isomorphism \eqref{eq:epb-inverse-of-right-action-on-P}.
  Explicitly, consider the map
  \begin{align}
    \label{eq:definitionPsi}
    P\times_X(P\times_N E) \to P\times_N E : (p',(p,e)) \mapsto (p',D_P(p',p)\cdot e) .
  \end{align}
  It is smooth and invariant under the $\fb(E)$-action on $P\times_X(P\times_N E)$ given by $(\phi,(p',(p,e)))\mapsto (p',\mu_\phi(p,e))$.
  Hence it descends to a smooth map $\Psi:P\times_X P(E) = \pi^*P(E) \to P\times_N E$ which is moreover an inverse of the map $P\times_N E\to \pi^*P(E):(p,e)\mapsto (p,[(p,e)])$.
  
  The equivariance is now obvious from \eqref{eq:definitionPsi} and the equivariance of the division map $D_P$.
\end{proof}

We will use the notation
\[
  \vcenter{\xymatrix{
    \L \ar@<.5ex>[d] \ar@<-.5ex>[d] 
      & P \ar@{>>}[dl]_l \ar@{>>}[d]^\pi \ar[dr]^r 
      & \fb(E) \ar@<.5ex>[d] \ar@<-.5ex>[d] \\
    M 
      & X \ar@{>>}[l]_{\ol l} 
      & N
  }} .
\]
Note that $X$ is still fibered over $M$, so that we may consider the subbundle $T^{\ol l}X = \ker T\ol l \subset TX$, and correspondingly consider \emph{fibered} differential $k$-forms on $X$.
A \emph{$P(E)$-valued fibered $k$-form on $X$} is a section of $\Lambda^k (T^{\ol l}X)^* \otimes P(E) \to X$.

On the other hand, we can consider the \emph{$E$-valued fibered $k$-forms on $P$}, i.e.\ the bundle maps $\alpha:\Lambda^k(T^lP)\to E$ over $r:P\to N$. 
We make the following definition.

\begin{defn}
  \label{defn:horizontal-forms}
  An $E$-valued fibered $k$-form $\alpha$ on $P$ is \emph{horizontal} if it vanishes whenever one of its arguments is in $\ker T\pi \subset T^lP$, where $\pi:P\to X$ is the projection:
  \begin{align*}
    \alpha_p(Y_1,\dots,Y_k) = 0 \quad \text{if\ $Y_i\in\ker T\pi$ for some $i\in\{1,\dots,k\}$},
  \end{align*}
  for all $p\in P$ and $Y_1,\dots,Y_k\in T^l_pP$.
  It is \emph{equivariant} if it is equivariant for the $\fb(E)$-actions on $P$ and $E$:
  \begin{align*}
    R_{\phi^{-1}}^*\alpha = \phi \circ \alpha \quad \text{for all $\phi\in \Bis(\fb(E))$.}
  \end{align*}
  On the right-hand side, $\phi$ is considered as in Example \ref{ex:bisections-of-frame-bundle-are-automorphisms} as a map $E\to E$.
\end{defn}

\begin{prop}
  \label{prop:sections-equivariant-maps}
  There is a $\Bis(\L)$-equivariant isomorphism of $C^\infty(X)$-modules 
  \begin{align*}
    \Omega^k_{\ol l}(X,P(E)) &\xrightarrow{\ \sim\ } \Omega^k_{\text{hor}}(P,E)^{\fb(E)}
  \end{align*}
  between the space $\Omega^k_{\ol l}(X,P(E))$ of $P(E)$-valued fibered $k$-forms on $X$ and the space $\Omega^k_{\text{hor}}(P,E)^{\fb(E)}$ of equivariant horizontal $E$-valued fibered $k$-forms on $P$.
\end{prop}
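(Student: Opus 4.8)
The plan is to build the isomorphism out of the canonical identification $\Psi\colon\pi^*P(E)\xrightarrow{\sim}P\times_N E$ of Lemma~\ref{lem:pistarPofE-isom-fiberproductPandE}. For each $p\in P$, write $\Psi_p\colon P(E)_{\pi(p)}\xrightarrow{\sim}E_{r(p)}$ for the linear isomorphism obtained as the $E$-component of $\Psi(p,\cdot)$; the two equivariance properties of $\Psi$ then read
\[
  \Psi_{\generic\cdot p}\circ(\generic\cdot)=\Psi_p
  \qquad\text{and}\qquad
  \Psi_{p\cdot\phi^{-1}}=\phi\circ\Psi_p
\]
for all $\generic\in\L$ over $l(p)$ and $\phi\in\fb(E)$ over $r(p)$, where in the first identity $(\generic\cdot)$ is the $\L$-action on $P(E)$ of Definition~\ref{defn:associated-vector-bundle} and in the second $\phi$ denotes the induced linear map between the corresponding fibres of $E$. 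To a $P(E)$-valued fibered $k$-form $\beta$ on $X$ I associate the $E$-valued fibered $k$-form $\Phi(\beta)$ on $P$ given by
\[
  \Phi(\beta)_p(Y_1,\dots,Y_k)
  =\Psi_p\bigl(\beta_{\pi(p)}(T_p\pi\,Y_1,\dots,T_p\pi\,Y_k)\bigr),
  \qquad Y_i\in T^l_pP,
\]
which makes sense since $T_p\pi$ maps $T^lP=\ker Tl$ into $T^{\ol l}X=\ker T\ol l$ (because $\ol l\circ\pi=l$). It is horizontal by construction, and it is $\fb(E)$-equivariant in the sense of Definition~\ref{defn:horizontal-forms}: for $\bisection\in\Bis(\fb(E))$ one has $\pi\circ R_{\bisection^{-1}}=\pi$, so pulling back along $R_{\bisection^{-1}}$ only affects the factor $\Psi_p$, and the second identity above turns $R_{\bisection^{-1}}^*\Phi(\beta)$ into $\bisection\circ\Phi(\beta)$.

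For the inverse, given an equivariant horizontal $E$-valued fibered $k$-form $\alpha$ on $P$, a point $x\in X$ and $\xi_1,\dots,\xi_k\in T^{\ol l}_xX$, I pick $p\in\pi^{-1}(x)$ and lifts $Y_i\in T^l_pP$ with $T_p\pi\,Y_i=\xi_i$ --- which exist because the restriction of $\pi$ to an $l$-fibre is a surjective submersion onto the corresponding $\ol l$-fibre --- and set
\[
  \Theta(\alpha)_x(\xi_1,\dots,\xi_k)=\Psi_p^{-1}\bigl(\alpha_p(Y_1,\dots,Y_k)\bigr)\in P(E)_x .
\]
Independence of the lifts follows from multilinearity together with the horizontality of $\alpha$. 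Independence of the choice of $p$ is the crux: any other point of $\pi^{-1}(x)$ has the form $p'=p\cdot\phi^{-1}$ for a unique $\phi\in\fb(E)$ with $\so(\phi)=r(p)$ (by the diffeomorphism~\eqref{eq:epb-inverse-of-right-action-on-P}); choosing a local bisection $\bisection$ of $\fb(E)$ through $\phi$ and using the lifts $T R_{\bisection^{-1}}Y_i$ at $p'$ --- which still project onto the $\xi_i$ since $\pi\circ R_{\bisection^{-1}}=\pi$ --- the equivariance of $\alpha$ gives $\alpha_{p'}(T R_{\bisection^{-1}}Y_i)=\phi\cdot\alpha_p(Y_i)$, while $\Psi_{p'}^{-1}=\Psi_p^{-1}\circ\phi^{-1}$, so the two values of $\Theta(\alpha)_x$ agree. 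Smoothness is clear from local sections of $\pi$, which furnish smooth local lifts.

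It then remains to check that $\Phi$ and $\Theta$ are mutually inverse, which is immediate from the definitions and $\Psi_p\circ\Psi_p^{-1}=\id$; that both are $C^\infty(X)$-linear, the module structure on $\Omega^k_{\text{hor}}(P,E)^{\fb(E)}$ being the one pulled back along $\pi$; and that they are $\Bis(\L)$-equivariant for the natural actions, which is exactly the content of the first displayed identity above together with the fact that the left $\L$-action preserves the right moment map $r$. I expect the only real obstacle to be the $p$-independence in the definition of $\Theta$: it is the step that genuinely combines the equivariance of $\alpha$, the $\fb(E)$-compatibility of $\Psi$, and the (standard) existence of a local bisection of $\fb(E)$ through any prescribed arrow.
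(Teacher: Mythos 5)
Your proposal is correct and follows essentially the same route as the paper: both directions are built from the isomorphism $\Psi$ of Lemma~\ref{lem:pistarPofE-isom-fiberproductPandE}, with horizontality giving independence of the lifts and $\fb(E)$-equivariance giving independence of the chosen point in the $\pi$-fibre. The only cosmetic difference is that you verify well-definedness of the descended form pointwise, while the paper phrases it as the auxiliary form $\pi_{P(E)}\circ\tilde\eta$ depending only on $\pi(p)$ and $T\pi(Y_i)$.
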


\begin{proof}
  Let $\alpha$ be an equivariant horizontal $E$-valued fibered $k$-form on $P$.
  Define a $P\times_N E$-valued fibered $k$-form $\tilde\eta$ on $P$ by
  \begin{align*}
    \tilde\eta_p(Y_1,\dots,Y_k) = (p,\alpha_p(Y_1,\dots,Y_k))
  \end{align*}
  for all $p\in P$ and $Y_i\in T^l_pP$.
  Since $\alpha$ is horizontal, $\tilde\eta$ only depends on $X_i=T\pi(Y_i)$, $i=1,\dots,k$.
  Moreover, since $\alpha$ is equivariant, we have 
  \begin{align*}
    (R_{\phi^{-1}}^*\tilde\eta)_p(Y_1,\dots,Y_k) 
    &= (p\phi^{-1},(R_{\phi^{-1}}^*\alpha)_p(Y_1,\dots,Y_k)) \\
    &= (p\phi^{-1},\phi \circ \alpha_p(Y_1,\dots,Y_k)) \\
    &= \mu_\phi(\tilde\eta_p(Y_1,\dots,Y_k)) 
  \end{align*}
  for all $\phi\in \Bis(\fb(E))$, $p\in P$, and $Y_i\in T^l_pP$.
  Hence the composition $\pi_{P(E)} \circ \tilde\eta$ only depends on $x=\pi(p)$ and on $X_i=T\pi(Y_i)$, $i=1,\dots,k$, so that it descends to a $P(E)$-valued fibered $k$-form $\eta$ on $X$.

  Conversely, let $\eta$ be a $P(E)$-valued fibered $k$-form on $X$.
  Define an $E$-valued fibered $k$-form on $P$ by
  \begin{align*}
    \alpha_p(Y_1,\dots,Y_k) = ({\pr_2} \circ {\Psi})(p,(\pi^*\eta)_p(Y_1,\dots,Y_k)) ,
  \end{align*}
  for all $p\in P$ and $Y_i\in T^l_pP$, where $\Psi$ is defined in Lemma~\ref{lem:pistarPofE-isom-fiberproductPandE}.
  By construction, $\alpha$ is horizontal and equivariant.

  An easy check now shows that the two assignments $\alpha\mapsto \eta$ and $\eta\mapsto \alpha$ 
  \begin{enumerate}[label=(\arabic*)]
    \item are linear inverses of each other, 
    \item are $C^\infty(X)$-linear for the multiplications $(f\alpha)_p=f(\pi(p))\alpha_p$ and $(f\eta)_x=f(x)\eta_x$, where $f\in C^\infty(X)$, and
    \item are $\Bis(\L)$-equivariant for the actions 
    \begin{align}
      \label{eq:left-action-on-equivariant-horizontal-forms}
      (g\cdot\alpha)_p(Y_1,\dots,Y_k)
      &= \alpha_{g^{-1}p}(TL_{g^{-1}}(Y_1),\dots,TL_{g^{-1}}(Y_k))
      \\
      \nonumber
      (g\cdot\eta)_x(X_1,\dots,X_k)
      &= g\cdot\left(\eta_{g^{-1}x}(TL_{g^{-1}}(X_1),\dots,TL_{g^{-1}}(X_k))\right) ,
    \end{align}
    where $g\in \Bis(\L)$.
    \qedhere
  \end{enumerate}
\end{proof}

A fibered connection on $P(E)$ can be seen as an $\fR$-linear map 
\[ \nabla:\Omega^0_{\ol l}(X,P(E))\to \Omega^1_{\ol l}(X,P(E)) \]
satisfying the Leibniz rule 
$\nabla(f\epsilon)(Y)=Y(f)\epsilon + f \nabla(\epsilon)(Y)$
for all $\epsilon\in\Omega^0_l(X,P(E))=\Gamma(P(E))$, $f\in C^\infty(X)$ and $Y\in\Gamma(T^{\ol l}X)$.
Through the correspondence of Proposition \ref{prop:sections-equivariant-maps}, this becomes an $\fR$-linear map 
\begin{align}
  \label{eq:connection-eq-hor-forms}
  \nabla:\Omega^0_{\text{hor}}(P,E)^{\fb(E)} \to \Omega^1_{\text{hor}}(P,E)^{\fb(E)}
\end{align}
satisfying the Leibniz rule 
\begin{align}
  \label{eq:connection-eq-hor-forms-leibniz-rule}
  \nabla(f\alpha)(Y) = Y(f)\alpha + f \nabla(\alpha)(Y)
\end{align}
for all $\alpha\in\Omega^0_{\text{hor}}(P,E)^{\fb(E)}$, $f\in C^\infty(P)^{\fb(E)}$ and $Y\in\Gamma(T^lP)$.

\begin{thm}
  \label{thm:connection-forms-connections-on-associated-bundles}
	Let $\L$ be a Lie groupoid, $E$ a vector bundle, and $P$ an $\L$-equivariant principal $\fb(E)$-bundle.
  There is a bijective correspondence between 
	\begin{enumerate}
		\item fibered connection forms on $P$, and
		\item $\L$-equivariant fibered connections on the vector bundle $P(E)$. 
	\end{enumerate}
\end{thm}

\begin{proof}
  As explained above, by Proposition~\ref{prop:sections-equivariant-maps}, we can consider $\L$-equivariant fibered connections on the vector bundle $P(E)$ as $\L$-equivariant maps \eqref{eq:connection-eq-hor-forms} satisfying the Leibniz rule \eqref{eq:connection-eq-hor-forms-leibniz-rule}.

  \medskip\noindent
  \textbf{Step 1.} ({From connection forms to connections})
  Let $\bmap:T^lP \to \Tlin E$ be a fibered connection form on $P$.
  For an equivariant map $\alpha\in \Omega^0_{\text{hor}}(P,E)^{\fb(E)}$, we will denote by $T^l\alpha$ its differential $T\alpha:TP\to TE$ restricted to $T^lP$.
  There is a canonical map $\ker Tq_E\to E$ obtained by considering each vertical vector $v$ at $e\in E_x$ as a vector in $E_x$. We will denote this map by $I$.

  Define a map $\nabla : \Omega^0_{\text{hor}}(P,E)^{\fb(E)} \to \Omega^1_{\text{hor}}(P,E)^{\fb(E)}$ by
  \begin{align}
  \label{eq:definition-nabla-from-omega-sharp}
    \nabla\alpha = I ( T^l\alpha - \bmap(\alpha) ) ,
  \end{align}
  for all $\alpha\in\Omega^0_{\text{hor}}(P,E)^{\fb(E)}$,
  where $\bmap(\alpha)$ is the map $T^lP\to TE$ defined by $\bmap(\alpha)(Y)=\bmap(Y)(\alpha(p))$ for all $Y\in T^l_pP$. 
  
  Since $\bmap$ is an anchored map, it follows easily that $Tq_E\circ (\bmap(\alpha))=Tr$.
  Hence, we have $Tq_E \circ (T^l\alpha - \bmap(\alpha)) = Tr - Tr = 0$.
  As a result, $\nabla\alpha$ is well-defined.
  
  The 1-form $\nabla\alpha$ is horizontal since if $Y\in\ker (T\pi)_p \subset T^l_pP$, then $Y=\hat D_p$ for some $D\in (\Tlin E)_{r(p)}$, and we have
  \begin{align*}
    T^l\alpha(\hat D) - \bmap(\hat D)(\alpha(p))
    &= D(\alpha(p)) - D(\alpha(p)) = 0
  \end{align*}
  by the equivariance of $\alpha$ and the first property of a fibered connection form.
  
  Let us show that $\nabla\alpha$ is equivariant.
  Let $\phi\in\Bis(\fb(E))$. As in Example \ref{ex:bisections-of-frame-bundle-are-automorphisms}, we consider $\phi$ as a diffeomorphism $E\to E$ that sends fibers to fibers linearly.
  Since $\alpha$ satisfies $\alpha\circ R_{\phi^{-1}}=\phi\circ\alpha$, we have 
  \begin{equation}
  \label{eq:nabla-alpha-equivariant-1}
    T^l\alpha \circ TR_{\phi^{-1}}=T\phi\circ T^l\alpha .
  \end{equation}
  The second term satisfies
  \begin{align}
  \label{eq:nabla-alpha-equivariant-2}
    \left( R^*_{\phi^{-1}}(\bmap(\alpha)) \right)(Y) 
    &= \bmap(\alpha)_{p\cdot \phi^{-1}}(TR_{\phi^{-1}}(Y)) \nonumber \\
    &= \bmap(TR_{\phi^{-1}}(Y))(\alpha(p\cdot\phi^{-1})) \nonumber \\
    &= \left( {\Ad_\phi}\circ{\bmap}(Y) \right) (\phi\circ\alpha(p)) \\
    &= T\phi \circ \bmap(Y)(\alpha(p)) \nonumber \\
    &= T\phi \circ \bmap(\alpha)(Y) \nonumber
  \end{align}
  for all $p\in P$, $Y\in T^l_pP$, and $\phi\in \Bis(\fb(E))$, where we used the equivariance of $\alpha$ and $\bmap$, and \eqref{eq:action-bisections-frame-bundle-on-linear-vector-fields}.
  But $\phi$ is linear on the fibers so we have $I\circ T\phi|_{\ker Tq_E} = \phi \circ I$.
  With \eqref{eq:nabla-alpha-equivariant-1} and \eqref{eq:nabla-alpha-equivariant-2}, this shows that $\nabla\alpha$ is equivariant.
  Hence, $\nabla\alpha \in \Omega^1_{\text{hor}}(P,E)^{\fb(E)}$.
  
  The map $\nabla$ is $\L$-equivariant since both parts $\alpha\mapsto T^l\alpha$ and $\alpha\mapsto \bmap(\alpha)$ are.
  
  Finally, it satisfies the Leibniz rule since the first term does and the second term is $C^\infty(P)^{\fb(E)}$-linear.
  
  \medskip\noindent
  \textbf{Step 2.} ({From connections to connection forms}) 
  Let $\nabla: \Omega^0_{\text{hor}}(P,E)^{\fb(E)} \to \Omega^1_{\text{hor}}(P,E)^{\fb(E)}$ be an $\L$-equivariant map satisfying the Leibniz rule \eqref{eq:connection-eq-hor-forms-leibniz-rule}.
  Similarly to $I$ above, for each $e\in E_x$ there is a canonical map $V_e:E_x\to T_e^{\mathrm{vert}}E=\ker (Tq_E)_e$ such that $I\circ V_e = \Id$.
  Define a map $\bmap:T^lP\to \Tlin E$ by
  \begin{align}
  \label{eq:definition-omega-sharp-from-nabla}
    \bmap(Y)(\alpha(p)) = (T^l\alpha)(Y) - V_{\alpha(p)}(\nabla\alpha)(Y)
  \end{align}
  for all $\alpha\in \Omega^0_{\text{hor}}(P,E)^{\fb(E)}$, $Y\in T^l_pP$ and $p\in P$.
  
  By Proposition~\ref{prop:sections-equivariant-maps}, equivariant maps $P\to E$ correspond to sections of $P(E)\to X$.
  Since there exist sections through any point, this shows that, given $p\in P$, any $e\in E_{r(p)}\subset E$ can be written as $\alpha(p)$ for some $\alpha\in\Omega^0_{\text{hor}}(P,E)^{\fb(E)}$.
  Hence, \eqref{eq:definition-omega-sharp-from-nabla} defines $\bmap(Y):E_{r(p)}\to TE$ on all of $E_{r(p)}$.
  
  Let us show that for every $Y\in T^l_pP$ and $e\in E$, $\bmap(Y)(e)$ is well-defined by \eqref{eq:definition-omega-sharp-from-nabla}.
  Let $e\in E$ and let $\alpha,\alpha'\in \Omega^0_{\text{hor}}(P,E)^{\fb(E)}$ taking the value $e$ at $p$.
  Since $\alpha-\alpha'$ vanishes at $p$ and is equivariant, it vanishes on the whole $l$-fiber through $p$.
  Hence $T^l(\alpha-\alpha')(Y)=0$.
  On the other hand, the Leibniz rule \eqref{eq:connection-eq-hor-forms-leibniz-rule} shows that $\nabla(\alpha-\alpha')(Y)$ only depends on the derivative of $\alpha-\alpha'$ in the direction of $Y$, which also vanishes.
  As a result, $\bmap(Y)(e)$ is well-defined by \eqref{eq:definition-omega-sharp-from-nabla}.
  
  Let us show that $Y\mapsto \bmap(Y)$ defines an anchored map $T^lP\to\Tlin E$ over $r:P\to N$.
  We only need to see that $\bmap(Y)(e)\in T_eE$ and that $Tq(\bmap(Y)(e)) = Tr(Y)$ for all $e\in E$ and $Y\in T^l_pP$ with $r(p)=q(e)$, and these statements are easily checked.
  
  We now proceed to show that $\bmap$ satisfies the three conditions of Definition \ref{defn:connection-epb}.
  
  Let $D\in \Gamma(\Tlin E)$ and $p\in P$.
  We have 
  $$T^l\alpha(\hat D_p) 
  = \left.\frac{d}{dt} \alpha(p\cdot (\exp tD)^{-1}) \right|_{t=0} 
  = \left.\frac{d}{dt} \exp tD \cdot \alpha(p) \right|_{t=0} 
  = D(\alpha(p)).$$
  On the other hand, $V_{\alpha(p)}(\nabla\alpha)(\hat D_p) = 0$ since $\hat D_p\in \ker T\pi$.
  Hence, $\bmap(\hat D)=D$.
  
  Now, let $\phi\in \Bis(\fb(E))$. We have
  \begin{align*}
    \bmap ( TR_\phi Y)(\alpha(p\cdot\phi))
    &= T^l\alpha(TR_\phi Y) - V_{\alpha(p\cdot \phi)} (\nabla\alpha)(TR_\phi Y) \\
    &= T\phi^{-1} \circ T^l\alpha (Y) - V_{\phi\circ \alpha(p)} (\phi^{-1} \circ (\nabla\alpha)(Y)) \\
    &= T\phi^{-1} \circ \left( \bmap(Y)(\alpha(p)) \right),
  \end{align*}
  for all $\alpha\in \Omega^0_{\text{hor}}(P,E)^{\fb(E)}$, $Y\in T^l_pP$ and $p\in P$.
  Hence, $\left(R_\phi^*\bmap \right) (Y) = T\phi^{-1} \circ \bmap(Y) \circ \phi = \left( {\Ad_{\phi^{-1}}} \circ {\bmap} \right) (Y)$, i.e.\ $\bmap$ is $\fb(E)$-equivariant.
  
  Let $g\in\Bis(\L)$. 
  Recall that $\nabla$ is equivariant for the $\Bis(\L)$-action \eqref{eq:left-action-on-equivariant-horizontal-forms} on $\Omega^k_{\text{hor}}(P,E)^{\fb(E)}$.
  We have
  \begin{align*}
    \bmap ( TL_g Y)(\alpha(g\cdot p))
    &= T^l\alpha(TL_g Y) - V_{\alpha(g\cdot p)} (\nabla\alpha)(TL_g Y) \\
    &= T^l(g^{-1}\cdot\alpha) (Y) - V_{g^{-1}\cdot\alpha(p)} (\nabla(g^{-1}\cdot\alpha))(Y) \\
    &= \bmap(Y)((g^{-1}\cdot\alpha)(p)) \\
    &= \bmap(Y)(\alpha(g\cdot p))
  \end{align*}
  for all $\alpha\in \Omega^0_{\text{hor}}(P,E)^{\fb(E)}$, $Y\in T^l_pP$ and $p\in P$.
  Hence, $\bmap\circ TL_g = \bmap$.
  
  So $\bmap$ is a fibered connection form on $P$.
  
  \medskip\noindent
  \textbf{Step 3.} ({Bijection}) 
  Let us denote the associations \eqref{eq:definition-nabla-from-omega-sharp} and \eqref{eq:definition-omega-sharp-from-nabla} respectively by $\bmap \mapsto \nabla^\omega$ and $\nabla \mapsto \bmap_\nabla$.
  Recall that $I\circ V_e=\Id$ and $V_e\circ I|_{T_eE}=\Id$ for all $e\in E$.
  We have
  \begin{align*}
    (\nabla^{\omega_\nabla}\alpha)(Y)
    &= I\circ\left( T^l\alpha - \bmap_\nabla(\alpha)\right) (Y) \\
    &= I\circ\left( T^l\alpha - T^l\alpha + V_{\alpha(p)}\circ \nabla(\alpha)\right) (Y) \\
    &= (\nabla\alpha)(Y)
  \end{align*}
  and
  \begin{align*}
    \bmap_{\nabla^\bmap}(\alpha)(Y)
    &= \left( T^l\alpha(Y) - V_{\alpha(p)} \circ (\nabla^\omega\alpha) \right) (Y) \\
    &= \left( T^l\alpha - T^l\alpha + \bmap(\alpha) \right) (Y) \\
    &= \bmap(\alpha)(Y)
  \end{align*}
  for all $\alpha\in\Omega^0_{\text{hor}}(P,E)^{\fb(E)}$, $Y\in T^l_pP$ and $p\in P$, which completes the proof.
\end{proof}


\section{Global Atiyah class}
\label{sec:global-atiyah-class}

Throughout this section, we consider a \emph{Lie groupoid pair} $(\L,\A)$ over a manifold $M$, i.e.\ a Lie groupoid $ \L$ over $M$ and a closed wide Lie subgroupoid $ \A $ of $\L$. 
We define, for any transitive Lie groupoid $\U$, the Atiyah class of a generalized morphism from $\A$ to $\U$ with respect to such a pair.
We then specialize to the case where $\U$ is the frame groupoid $\fb(E)$ of a vector bundle over the same base as $\A$ and the generalized morphism is (the bundlization of) an $\A$-module structure on $E$.
Finally, we consider the special case of the $\A$-module $E=\ll/\aa$  (see section \ref{ssec:bigA-action-on-L-over-A}), which defines the Atiyah class of the Lie groupoid pair $(\L,\A)$.


\subsection{\texorpdfstring{$\A$-action on $\ll/\aa$}{A-action on L/A}}
\label{ssec:bigA-action-on-L-over-A}

Let $ \ll$ be the algebroid of $\L$ and $ \aa \subset \ll$ the Lie subalgebroid corresponding to the Lie subgroupoid $ \A$. Although there is no adjoint action of a Lie groupoid on its Lie algebroid, there is a canonical $\A$-action on the quotient vector bundle $\ll/\aa \to M$, as we proceed to show. 

\begin{prop}
  \label{prop:bigA-action-on-L-over-A}
  There is a unique left $\A$-module structure 
  \[  \A\times_M \ll/\aa \to \ll/\aa :(\generic,\beta) \mapsto \Ad_\generic\beta  \]
  on $\ll/\aa$ 
  which, when extended to bisections, satisfies
  \begin{equation}
  \label{eq:canonical-action-unique}
    \Ad_\bisection \overline l = \overline{\Ad_\bisection l}
  \end{equation}
  for all $\bisection\in\Bis(\A)$ and $l\in\Gamma(\ll)$.
  It satisfies, for all $\generic\in \A$ and $\beta\in(\ll/\aa)_{\so(\generic)}$, the relation
  \begin{equation}
  \label{eq:canonical_action}
    \Ad_\generic \beta = \ol{ \Add_{a^{-1}} \lambda } ,
  \end{equation}
  where $a\in T_\generic\A$ is any tangent vector and $\lambda\in\ll$ is any element such that $T\ta(u)=\rho(\lambda)$ and $\ol\lambda=\beta$. Above, $\lambda \mapsto \ol{\lambda} $ stands the natural projection $ \ll \to \ll/\aa$.
\end{prop}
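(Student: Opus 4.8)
The plan is to take \eqref{eq:canonical_action} as the \emph{definition} of the candidate action and then to verify, in order, that it is well defined, fibrewise $\fR$-linear and smooth, that it satisfies the two groupoid axioms, and finally that it restricts to \eqref{eq:canonical-action-unique} on bisections; the last point, together with the observation that every arrow of $\A$ lies in some local bisection, gives uniqueness for free. As a preliminary one checks that admissible data for \eqref{eq:canonical_action} always exist: given $\generic\in\A$ and $\beta\in(\ll/\aa)_{\so(\generic)}$, a lift $\lambda$ of $\beta$ exists because $\ll\to\ll/\aa$ is a surjective bundle map, and a vector $u\in T_\generic\A$ matched to $\lambda$ exists because $\so|_\A$ is a submersion.

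The crux — and the step I expect to be the main obstacle — is well-definedness: that $\ol{\Add_{u^{-1}}\lambda}\in(\ll/\aa)_{\ta(\generic)}$ does not depend on the admissible pair $(u,\lambda)$. Three ingredients enter. First, $\A$ is a \emph{wide} Lie subgroupoid of $\L$, so the tangent groupoid $T\A\toto TM$ is a subgroupoid of $T\L\toto TM$ and every $\bullet$-computation below stays inside $T\A$ (this is also what forces the output to land in $\aa$ and not merely in $\ll$). Second, $\Add=\Lie(\Phi_{\Id_\L})$ is a Lie algebroid morphism, hence fibrewise linear on $\ta^\pb\ll$. Third, the explicit formula of Lemma~\ref{lem:kappa} together with the unit and inverse laws of the tangent groupoid $T\L$. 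I would then show that any two admissible pairs are joined by two elementary moves: replacing $\lambda$ by $\lambda+a$ with $a\in\aa_{\so(\generic)}$ while correspondingly modifying $u$ by $\pm\lvf{a}\big|_\generic$ (one checks this stays admissible), and — with $\lambda$ fixed — modifying $u$ by a vector of the form $b\bullet 0_\generic$ with $b\in\aa_{\ta(\generic)}$, which is the general element of $\ker T\so$ inside $T_\generic\A$. By fibrewise linearity of $\Add$ it suffices to evaluate the effect of each move separately, and a short $\bullet$-calculation using Lemma~\ref{lem:kappa} shows that the first move leaves $\Add_{u^{-1}}\lambda$ unchanged, while the second changes it by the element $b\in\aa_{\ta(\generic)}$; in either case the class modulo $\aa$ is unaffected. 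The same linearity makes $\beta\mapsto\Ad_\generic\beta$ an $\fR$-linear map $(\ll/\aa)_{\so(\generic)}\to(\ll/\aa)_{\ta(\generic)}$ (add admissible pairs), and plugging in a local smooth splitting of $\ll\to\ll/\aa$ and a local smooth section of the surjective bundle map $T\A\to\so^*TM$ induced by $T\so$ exhibits $(\generic,\beta)\mapsto\Ad_\generic\beta$ as smooth, so one genuinely gets a bundle map $\A\times_M\ll/\aa\to\ll/\aa$.

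For the groupoid axioms I would use the properties of $\Add$ collected in Proposition~\ref{prop:properties-kappa}. Unitality $\Ad_{\unit(x)}=\id$ follows by taking the admissible vector $u=T\unit(\rho(\lambda))$, which satisfies $u^{-1}=u$ and $\Add_{u^{-1}}\lambda=\lambda$ by the unit laws of $\bullet$ (equivalently, by item~\ref{prop:properties-kappa-action-of-units}). For composable $\generic_1,\generic_2\in\A$, choose admissible $(u_2,\lambda)$ for $(\generic_2,\beta)$ and $(w,\lambda)$ for $(\generic_1\generic_2,\beta)$, and set $u_1:=w\bullet u_2^{-1}$; then $u_1\in T_{\generic_1}\A$, $w=u_1\bullet u_2$, and $u_1$ is an admissible vector for the class $\Ad_{\generic_2}\beta$. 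The composition law $\Add_{u_2^{-1}\bullet u_1^{-1}}=\Add_{u_1^{-1}}\circ\Add_{u_2^{-1}}$ (item~\ref{prop:properties-kappa-composition}) then gives $\Ad_{\generic_1\generic_2}\beta=\Ad_{\generic_1}(\Ad_{\generic_2}\beta)$, so $\ll/\aa$ is a left $\A$-module.

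Finally, to match this structure with bisections and conclude uniqueness: for $\bisection\in\Bis(\A)$, $l\in\Gamma(\ll)$ and $\generic\in\bisection$, the tangent vector to $\bisection$ at $\generic$ which is the relevant choice in \eqref{eq:canonical_action} for $\Ad_{\generic}\big(\ol{l_{\so(\generic)}}\big)$, once fed into Lemma~\ref{lem:adjoint-formula} applied to the bisection $\bisection^{-1}$ (whose arrows are the inverses of those of $\bisection$), reproduces exactly $(\Ad_\bisection l)_{\ta(\generic)}$. By the very definition of the action $\star$ of a bisection on sections of a module, this is the identity $\Ad_\bisection\ol l=\ol{\Ad_\bisection l}$ of \eqref{eq:canonical-action-unique}. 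Conversely, any left $\A$-module structure on $\ll/\aa$ obeying \eqref{eq:canonical-action-unique} is determined arrow by arrow: each $\generic\in\A$ lies in a local bisection $\bisection$, and each $\beta\in(\ll/\aa)_{\so(\generic)}$ equals $\ol{l_{\so(\generic)}}$ for some $l\in\Gamma(\ll)$, so \eqref{eq:canonical-action-unique} pins down $\generic\cdot\beta$. Hence the structure constructed above is the unique one with the stated property.
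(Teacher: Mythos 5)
Your proposal is correct and follows essentially the same route as the paper: define the action by \eqref{eq:canonical_action}, check independence of the admissible pair $(u,\lambda)$ using fibrewise linearity of $\Add$ and the $\bullet$-calculus inside $T\A$, deduce \eqref{eq:canonical-action-unique} from Lemma~\ref{lem:adjoint-formula}, and obtain uniqueness because every arrow of $\A$ lies in a local bisection. The paper merely compresses your two elementary moves into the single interchange-law identity $\Add_{u^{-1}}\lambda-\Add_{(u')^{-1}}\lambda'=(u-u')\bullet(\lambda-\lambda')\bullet 0_{\generic^{-1}}\in\aa$, so the content is the same.
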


\begin{proof}
  Let $(a,\lambda)$ and $(a',\lambda')$ be two pairs satisfying the conditions in the statement.
  Then $a-a'\in T_\generic\A$ and $\lambda-\lambda'\in\aa$, and
  \begin{align*}
    \Add_{a^{-1}}\lambda - \Add_{(a')^{-1}}\lambda'
    &= a\bullet\lambda\bullet 0_{\generic^{-1}} - a'\bullet\lambda'\bullet 0_{\generic^{-1}} \\
    &= (a-a') \bullet (\lambda-\lambda') \bullet 0_{\generic^{-1}}
  \end{align*}
  is an element in $\aa$ since all three factors are in $T\A$.
  This proves that $\Ad_\generic \beta$ is well-defined by \eqref{eq:canonical_action}.
  
  The fact that it defines an $\A$-module structure is straightforward, and Lemma~\ref{lem:adjoint-formula} shows that it is a solution of \eqref{eq:canonical-action-unique}. On the other hand, solutions to \eqref{eq:canonical-action-unique} are obviously unique, which concludes the proof.
\end{proof}

\begin{rmk}
  As can easily be checked, for Lie groups, the action above is simply induced from the adjoint action of $\A \subset \L$  on $\ll$.
\end{rmk}

\begin{rmk}
Alternatively, using the global structure of the Lie groupoid pair (and not just the $T\A$-action on $\ll$), there is geometrical picture describing the above action.
Indeed, the Lie groupoid $\A$ has a natural left action on the quotient space $\L/\A$, which is a manifold that fibers over $M$ through $\underline{\ta}$ as in \eqref{eq:t_bar}. 
By construction, this action preserves the unit manifold $M$ seen as a closed embedded submanifold of $\L/\A$, which automatically implies that the action by an element $\generic \in \A $ with source $x$ and target $y$ maps $ T_x \L/\A$ to $ T_y \L/\A$. 
Since the latter are isomorphic to $(\ll/\aa)_x$ and $(\ll/\aa)_y$ respectively, we obtain a linear map that coincides with the map $\Ad_\generic$ defined in 
\eqref{eq:canonical_action}.
\end{rmk}


\subsection[Atiyah class]{Atiyah class}
\label{ssec:atiyah-class}

Let $P$ be a generalized morphism from $\A$ to a Lie groupoid $\U$ over $N$ with Lie algebroid $\uu$.
Let us denote by $l$ and $r$ the left and right moment maps of $P$, respectively.

An \emph{$\ll$-$\uu$-connection over $P$} is an anchored map $\nabla:l^!\ll\to\uu$ over $r$, i.e.~a bundle map such that
\[
  \vcenter{\xymatrix{
    l^!\ll \ar[d]_{\rho=\pr_1} \ar[r]^{\nabla}  & \uu \ar[d]^{\rho} \\ 
    TP\ar[r]^{Tr} & TN 
  }}
\]
commutes.
It is said to \emph{extend the generalized morphism} if $\nabla|_{l^!\aa}=\Lie(\Phi_P)$, see \eqref{eq:gen-mor-induced-morphism}--\eqref{eq:diagram-generalized-morphism-pullback-and-morphism}.
It is said to be \emph{$T\U$-equivariant} if $\nabla \circ R_u = \Add_u \circ \nabla$ for all $u\in T\U$, where $R_u(X,(p,\lambda))=(X\cdot u,(q,\lambda))$ (here, $q$ is such that $X\cdot u\in T_qP$).

\begin{defn}
  \label{defn:compatible-connection}
  An $\ll$-$\uu$-connection $\nabla$ over $P$ is said to be \emph{$P$-compatible} if it commutes with the $T(l^!\A)$-actions, in the sense that the following condition holds
  \begin{enumerate}[label=(C)]
    \item \label{defn:compatible-connection-2-invariant}
      $\Add_{T\Phi_P(a)}\circ\nabla = \nabla \circ \Add_a$ for all $a\in T(l^!\A)$.
  \end{enumerate}
\end{defn}

Restricting \ref{defn:compatible-connection-2-invariant} to zero vectors $(0,(p,0))\in l^!\ll$ and to $a=(0,\alpha^{-1},X)$ with $\alpha\in\aa$ shows that $P$-compatible $\ll$-$\uu$-connections automatically extend the generalized morphism.
Restricting \ref{defn:compatible-connection-2-invariant} to $a=(X,0_1,X\cdot u)$ with $(X,u)\in TP\times_{TN} T\U$ shows that $P$-compatible $\ll$-$\uu$-connections automatically are $T\U$-equivariant.

A $T\U$-equivariant $\ll$-$\uu$-connection over $P$ is entirely determined by its restriction to a section of the left moment map, if such a section exists, i.e.~if $P$ is the bundlization of a morphism $\phi:\A\to\U$.
So a $T\U$-equivariant $\ll$-$\uu$-connection over the bundlization of a morphism is equivalent to an anchored map $\nabla:\ll\to\uu$, and the former exist if and only if the latter do.
For that reason, we make the following definition.
\begin{defn}
  \label{defn:phi-compatible-connection}
  Given a morphism $\varphi:\A\to\U$, an anchored map $\nabla$ from $\ll$ to $\uu$ is said to be \emph{$\varphi$-compatible} if it commutes with the $T\A$-actions, in the sense that the following condition holds
  \begin{enumerate}[label=(C')]
    \item \label{defn:phi-compatible-connection-2-invariant}
      $\Add_{T\varphi(a)}\circ\nabla = \nabla \circ \Add_a$ for all $a\in T\A$.
  \end{enumerate}
\end{defn}

We invite the reader to have the next example in mind.
\begin{ex}
\label{ex:module_case}
  For an $\A$-module $E$, take $\U = \fb (E)$ its frame groupoid, and $P$ the bundlization of the morphism $\A\to\fb(E)$ defining the $\A$-module structure. 
  Then $T\U$-equivariant $\ll$-$\uu$-connections over $P$ are equivalent to anchored maps $\ll\to\Tlin E$ or, after composing with the Lie algebroid isomorphism $\mathcal L^{-1}:\Tlin E\to\D(E)$ defined in \eqref{eq:isomorphism-derivative-endomorphisms-derivations}, to $\ll$-connections on $E$.
\end{ex}

In fact, when the Lie groupoid $\U$ is over the same base $M$ as $\A$ and $\L$ and $\varphi$ is a morphism covering the identity of $M$, then $\varphi$ sends bisections of $\A$ to bisections of $\U$ and $\Lie(\varphi)$ sends sections of $\aa$ to sections of $\uu$, which makes sense of the following proposition.

\begin{prop}
	Given a morphism $\varphi:\A\to\U$ between Lie groupoids over the same base, an anchored map $\nabla$ from $\ll$ to $\uu$ is $\varphi$-compatible if and only if 
  \begin{enumerate}[label=(C\arabic*)]
    \item \label{defn:compatible-connection-1-extend}
      it \emph{extends the morphism $\Lie(\varphi):\aa\to\uu$}, i.e.
      \[ {\nabla|_\aa}=\Lie(\varphi), \]
    \item \label{defn:compatible-connection-2-prime-invariant}
      it is \emph{equivariant}, in the sense that for all bisections $\bisection\in\Bis(\A)$,
		  \[
		    \vcenter{\xymatrix{
		      \Gamma(\ll) \ar[r]^{\nabla} \ar[d]_{\Ad_{\bisection}} & \Gamma(\uu) \ar[d]^{\Ad_{\varphi(\bisection)}} \\ 
		      \Gamma(\ll) \ar[r]^{\nabla} & \Gamma(\uu) 
		    }} .
		  \]
  \end{enumerate}
\end{prop}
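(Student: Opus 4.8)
The proposition asserts the equivalence of two descriptions of $\varphi$-compatibility: the infinitesimal condition \ref{defn:compatible-connection-2-invariant} phrased via the $T\A$-action $\Add$, and the pair of conditions \ref{defn:compatible-connection-1-extend}+\ref{defn:compatible-connection-2-prime-invariant} phrased via $\aa$ and $\Bis(\A)$. I would prove it by unwinding each of the two clauses on the right-hand side in terms of $\Add$ and showing together they amount to condition (C). The key tool is Proposition~\ref{prop:properties-kappa}, which tells us how $\Add$ behaves on the three distinguished kinds of tangent vectors — left-invariant vector fields, right-invariant vector fields, and zero vectors over units — and item \ref{prop:properties-kappa-composition} (the cocycle/composition property), together with Lemma~\ref{lem:adjoint-formula}, which identifies $\Add$ with the adjoint action of bisections.

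\emph{First}, I would treat the direction ``(C) $\Rightarrow$ \ref{defn:compatible-connection-1-extend}+\ref{defn:compatible-connection-2-prime-invariant}''. For \ref{defn:compatible-connection-1-extend}: take $u=\rvf{a}|_x$ the right-invariant vector field evaluated at a unit $x\in M$, for $a\in\Gamma(\aa)$; by Proposition~\ref{prop:properties-kappa}\ref{prop:properties-kappa-rvf}, $\Add_u(0_x)=-a_x$, while $T\varphi(u)=\rvf{\Lie(\varphi)(a)}|_x$ (naturality of right-invariant vector fields under morphisms) so $\Add_{T\varphi(u)}(0_x)=-\Lie(\varphi)(a)_x$. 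Feeding $\lambda=0_x$ into (C) — more precisely applying (C) to the pair $(u,0_x)$, which is admissible since $T\ta(u)=\rho(a_x)$ and $\rho(0_x)=0=\nabla(0)$... actually one must be slightly careful and instead use that $\nabla$ restricted to $\aa$ is a \emph{bundle} map, so it suffices to evaluate on appropriate pairs; the cleanest route is to use Proposition~\ref{prop:properties-kappa}\ref{prop:properties-kappa-rvf} with a nonzero companion and subtract, as in the proof of Proposition~\ref{prop:bigA-action-on-L-over-A}. This yields $\nabla(a_x)=\Lie(\varphi)(a)_x$, i.e.\ \ref{defn:compatible-connection-1-extend}. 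For \ref{defn:compatible-connection-2-prime-invariant}: given $\bisection\in\Bis(\A)$ and $l\in\Gamma(\ll)$, use Lemma~\ref{lem:adjoint-formula} to write $(\Ad_{\bisection^{-1}}l)_x=\Add(u,l_y)$ with $u=T\bisection_\ta(\rho(l_y))$, apply $\nabla$, invoke (C) to move $\nabla$ through $\Add$, and then apply Lemma~\ref{lem:adjoint-formula} again on the $\U$ side — using naturality $T\varphi(T\bisection_\ta(\rho(l_y)))=T(\varphi(\bisection))_\ta(\rho_\uu(\nabla(l_y)))$, which holds because $\nabla$ is anchored and $\varphi$ covers $\id_M$ — to recognize the result as $(\Ad_{\varphi(\bisection)^{-1}}\nabla(l))_x$. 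This gives the square in \ref{defn:compatible-connection-2-prime-invariant}.

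\emph{Second}, the converse ``\ref{defn:compatible-connection-1-extend}+\ref{defn:compatible-connection-2-prime-invariant} $\Rightarrow$ (C)''. Here I would use that an arbitrary $u\in T_\generic\A$ decomposes, as in the proofs above and in Proposition~\ref{prop:characterization-bundle-maps-of-connection-forms}, in the form $u = T\bisection_\ta(T\ta(u))$ for a suitable bisection $\bisection\in\Bis(\A)$ through $\generic$, possibly after adding an $\so$-vertical correction $\lvf{a}|_\generic$. Writing $u$ this way, Lemma~\ref{lem:adjoint-formula} expresses $\Add_u$ on elements of $\ll$ in terms of $\Ad_{\bisection^{-1}}$, and then \ref{defn:compatible-connection-2-prime-invariant} handles the bisection part while \ref{defn:compatible-connection-1-extend} handles the vertical part via Proposition~\ref{prop:properties-kappa}\ref{prop:properties-kappa-lvf}–\ref{prop:properties-kappa-action-of-units}; the composition property \ref{prop:properties-kappa-composition} glues the two contributions. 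Assembling these gives condition (C) for all $u\in T\A$.

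\emph{Main obstacle.} The delicate point is bookkeeping the base points and the anchor constraints: the pairs $(u,\lambda)$ in condition (C) live in $T\A\times_M^{T\ta,\rho}\ll$, so one is never free to plug in an arbitrary $\lambda$, and the decomposition $u=T\bisection_\ta(T\ta(u))\;(+\;\text{vertical})$ must be carried out compatibly with this constraint — exactly the kind of argument already used in Proposition~\ref{prop:bigA-action-on-L-over-A} and Proposition~\ref{prop:characterization-bundle-maps-of-connection-forms}, so I would model the write-up on those. The second genuine input is the naturality of $\Add$ (equivalently of right-invariant vector fields and of $\tau$, cf.\ Example~\ref{ex:maurer-cartan-form}) under the morphism $\varphi$, combined with the anchoredness of $\nabla$, which is what lets $\nabla$ commute past $T\varphi$ inside $\Add$ and past $\varphi$ inside $\Ad$; everything else is routine.
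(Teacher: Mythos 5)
Your proposal is correct and follows essentially the same route as the paper's (very terse) proof: specialize condition (C) to suitable pairs to recover \ref{defn:compatible-connection-1-extend}, derive \ref{defn:compatible-connection-2-prime-invariant} from Lemma~\ref{lem:adjoint-formula}, and for the converse decompose a general $u\in T\A$ into a bisection part plus a vertical part exactly as in the proof of Proposition~\ref{prop:characterization-bundle-maps-of-connection-forms}. The only refinements worth noting are that for \ref{defn:compatible-connection-1-extend} the paper takes $u=T\inv(v)$ with $v\in\aa$, for which $T\ta(u)=T\so(v)=0$, so the pair $(u,0)$ automatically satisfies the anchor constraint and sidesteps the admissibility issue you flagged, and that the two contributions in the converse are glued by fibrewise linearity of $\Add$ and $\nabla$ (an additive decomposition, as in Proposition~\ref{prop:characterization-bundle-maps-of-connection-forms}) rather than by the composition property of $\Add$.
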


\begin{proof}
	Definition \ref{defn:phi-compatible-connection-2-invariant} restricted to the zero vectors in $\ll$ and with $u=v^{-1}$ for $v\in\aa$ indeed implies condition \ref{defn:compatible-connection-1-extend}.
  Also, condition \ref{defn:compatible-connection-2-prime-invariant} follows from Lemma \ref{lem:adjoint-formula}.
	The other implication follows from arguments similar to those in the proof of Proposition \ref{prop:characterization-bundle-maps-of-connection-forms}.
\end{proof}

Assume that $\U$ is a \emph{transitive} Lie groupoid, and let $\uu_0=\ker \rho_\uu$ be the isotropy Lie subalgebroid of $\uu$.
Recall that $\uu_0$ is naturally a $\U$-module with action defined by
\[ \genericU\cdot u=\Add_{0_{\genericU^{-1}}}(u)=0_{\genericU}\bullet u\bullet 0_{\genericU^{-1}} \]
for all $\genericU\in\U$ and $u\in(\uu_0)_{\so(\genericU)}$. The generalized morphism $P$ therefore makes the associated vector bundle $P(\uu_0)$ an $\A$-module (see Definition~\ref{defn:associated-vector-bundle}).
There is also an $l^!\A$-module $r^*\uu_0$ which is isomorphic to $l^*P(\uu_0)$ as an $l^!\A$-module. 

\begin{ex} \label{ex:module_case2}
  The frame groupoid in Example \ref{ex:module_case} is transitive, and its isotropy Lie algebroid is $\uu_0=\End E$.
\end{ex}

\begin{prop}
\label{prop:atiyah-class-transitive-case}
Let $(\L,\A)$ be a Lie groupoid pair, and let $P$ be a generalized morphism from $\A$ to a transitive Lie groupoid $\U$.
\begin{enumerate}
  \item There exist $\ll$-$\uu$-connections over $P$ extending the generalized morphism.
  \item For any such $\ll$-$\uu$-connection $\nabla$, there is a smooth groupoid 1-cochain $R^\nabla\in C^1(l^!\A,l^*(\ll/\aa)^*\otimes r^*\uu_0)$ defined for all $\generic\in l^*\A$ and $\beta\in (l^*(\ll/\aa))_{\ta(\generic)}$ by
    \begin{align}
    \label{eq:definition-R-nabla}
      R^\nabla (\generic)(\beta) = \left( \Add_{T\Phi_P(a)^{-1}} \circ\nabla\circ\Add_{a} - \nabla \right)(\lambda)
    \end{align}
    where $a\in T_\generic(l^!\A)$ is any tangent vector and $\lambda\in l^!\ll$ any element such that $T\ta(a)=\rho(\lambda)$ and $\ol\lambda=\beta$.
  \item The 1-cochain $R^\nabla$ is closed and its cohomology class $\tilde\alpha_{(\L,\A),P}=[R^\nabla]$ is independent of the $\ll$-$\uu$-connection $\nabla$ over $P$ extending the generalized morphism.
  \item The class $\tilde\alpha_{(\L,\A),P}\in H^1(l^!\A,l^*(\ll/\aa)^*\otimes r^*\uu_0)$ is zero if and only if there exists a $P$-compatible $\ll $-$\uu$-connection over $P$.
\end{enumerate}
\end{prop}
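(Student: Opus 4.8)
The plan is to establish the four items in turn, the technical core being a single \emph{variance formula} $R^{\nabla'}=R^\nabla+\partial_0\phi$ comparing the cochains attached to two $\ll$-$\uu$-connections extending $\Lie(\varphi)$; this simultaneously settles the independence in (3) and one direction of (4). For (1) one fixes a vector bundle splitting $\ll=\aa\oplus C$ and uses that, $\U$ being transitive, the anchor $\rho_\uu$ is a fibrewise surjective bundle map: the anchored lifts of $T\varphi_0\circ\rho_\ll|_C$ form an affine bundle modelled on $\Hom(C,\varphi_0^*\uu_0)$, hence admit a global section, which together with $\Lie(\varphi)$ on the $\aa$-summand yields an $\ll$-$\uu$-connection $\nabla$ with $\nabla|_\aa=\Lie(\varphi)$. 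For (2), given $\generic$ and $\beta$ one picks $\lambda\in\ll$ through $\beta$ (via a splitting of $\ll\to\ll/\aa$) and then $u\in T_\generic\A$ with $T\ta(u)=\rho(\lambda)$ (via a local linear section of the submersion $T\ta$); these choices can be made smoothly, so $R^\nabla$ is smooth. That $R^\nabla(\generic)(\beta)$ is independent of the lift $(u,\lambda)$, that it annihilates $\aa$ (hence descends to $\ll/\aa$), and that it is $\uu_0$-valued all follow from the joint linearity of $\Add$ on the fibres of the pullback $\ta^\pb\uu\to\U$, the naturality of $\Phi_{\Id_{(-)}}$ (whence $\Lie(\varphi)$ intertwines $\Add_\A$ with $\Add_\U$, and $\Add_\L$ restricts to $\Add_\A$ on $\ta^\pb\aa$), the identity $\Add_{v^{-1}}\circ\Add_v=\id$ coming from the right-action property of $\Add$ (Remark~\ref{rmk:kappa-is-a-right-TL-action}), the anchor-compatibility of $\nabla$, and the hypothesis $\nabla|_\aa=\Lie(\varphi)$. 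Thus $R^\nabla\in C^1(\A,(\ll/\aa)^*\otimes\varphi_0^*\uu_0)$.

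For (3) and (4), abbreviate $\nabla^u:=\Add_{T\varphi(u)^{-1}}\circ\nabla\circ\Add_u$. From the multiplicativity of $T\varphi$ on tangent groupoids and the composition rule $\Add_{u\bullet u'}=\Add_{u'}\circ\Add_u$ (Proposition~\ref{prop:properties-kappa}) one gets $\nabla^{u_1\bullet u_2}=\Add_{T\varphi(u_1)^{-1}}\circ\nabla^{u_2}\circ\Add_{u_1}$. Substituting a lift $u=u_1\bullet u_2$ of $\generic_1\cdot\generic_2$ together with $\lambda_1:=\Add_{u_1}\lambda$ into the defining formula for $R^\nabla$, and using $\ol{\lambda_1}=\Ad_{\generic_1^{-1}}\beta$ (Proposition~\ref{prop:bigA-action-on-L-over-A}) together with the fact that the linear part on $\uu_0$ of the affine map $\Add_{T\varphi(u_1)^{-1}}$ is precisely the conjugation $w\mapsto\varphi(\generic_1)\cdot w$, the three resulting terms reorganize into $R^\nabla(\generic_1\cdot\generic_2)=\generic_1\cdot R^\nabla(\generic_2)+R^\nabla(\generic_1)$, i.e.\ $\partial_1R^\nabla=0$. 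The same manipulation applied to $\nabla'=\nabla+\phi$, where $\phi$ is the difference of two extensions of $\Lie(\varphi)$ — necessarily $\uu_0$-valued and vanishing on $\aa$, so $\phi\in C^0(\A,(\ll/\aa)^*\otimes\varphi_0^*\uu_0)$ — gives the variance formula $R^{\nabla'}=R^\nabla+\partial_0\phi$, which proves (3). For (4): a $\varphi$-compatible connection automatically extends $\Lie(\varphi)$ (apply (C) with $\lambda=0$ and $u=v^{-1}$, $v\in\aa$), and then $\nabla^u=\nabla$ because $\Add_{v^{-1}}\circ\Add_v=\id$, so $R^\nabla=0$ and a fortiori $\alpha_{(\L,\A),\varphi}=0$. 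Conversely, if $[R^\nabla]=0$, write $R^\nabla=-\partial_0\phi$; then $\nabla'=\nabla+\phi$ is again an extension of $\Lie(\varphi)$ with $R^{\nabla'}=0$, and applying $\Add_{T\varphi(u)}$ to the identity $\Add_{T\varphi(u)^{-1}}\big(\nabla'(\Add_u\lambda)\big)=\nabla'(\lambda)$ — valid for \emph{every} choice of $(u,\lambda)$ since $R^{\nabla'}$ vanishes identically — yields $\nabla'\circ\Add_u=\Add_{T\varphi(u)}\circ\nabla'$ for all $u\in T\A$, which is exactly the compatibility condition (C).

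The only real difficulty lies in the bookkeeping behind the cocycle identity and the variance formula: the operators $\Add_v$ are partially defined and are \emph{affine}, not linear, maps between the anchor-fibres of $\uu$, so one must track base points and anchors carefully at every step and correctly extract the linear part of $\Add_{T\varphi(u_1)^{-1}}$ on $\uu_0$. Once the substitutions $u=u_1\bullet u_2$ and $\lambda_1=\Add_{u_1}\lambda$ have been made, no genuine computation remains.
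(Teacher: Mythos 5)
Your proposal is correct and follows essentially the same route as the paper's proof: existence via a splitting $\ll=\aa\oplus C$ and the surjectivity of $\rho_\uu$, well-definedness by comparing two lifts $(u,\lambda)$, $(u',\lambda')$ and using $\nabla|_\aa=\Lie(\varphi)$, the cocycle identity and the variance formula $R^{\nabla'}=R^\nabla+\partial_0(\nabla'-\nabla)$ via $\Add_{u\bullet u'}=\Add_{u'}\circ\Add_u$, and both directions of (4) from that formula. The observations you flag as the real bookkeeping (the affine nature of $\Add_v$ on anchor-fibres and the identification of its linear part on $\uu_0$ with conjugation by $\varphi(\generic_1)$) are exactly the points the paper's computation relies on.
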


\begin{proof}
  \begin{enumerate}
    \item Any transitive Lie algebroid $\uu\to N$ has Lie algebroid connections \cite[Corollary 5.2.7]{mackenzie_general_2005}, i.e.\ anchored maps from $TN$ to $\uu$ over the identity.
      Let $\nabla^0$ be such a connection.
      Let $\bb\subset l^!\ll$ be a vector subbundle supplementary to $l^!\aa$, and define a map $\nabla:l^!\ll\to\uu$ by $\nabla|_{l^!\aa}=\Lie(\Phi_P)$ and $\nabla|_\bb=\nabla^0\circ Tr\circ\rho_{l^!\ll}$.
      By construction, $\nabla$ is an anchored map which extends the morphism $\Lie(\Phi_P)$.
      
    \item Let $(a,\lambda)$ and $(a',\lambda')$ be two pairs satisfying the conditions in the statement.
      Then $a-a'\in T_\generic l^!\A$ and $\lambda-\lambda'\in l^!\aa$, and
      \begin{align*}
        &\left( \Add_{T\Phi_P(a)^{-1}} \circ\nabla\circ\Add_{a} - \nabla \right)(\lambda)
        - \left( \Add_{T\Phi_P(a')^{-1}} \circ\nabla\circ\Add_{a'} - \nabla \right)(\lambda') \\
        &\qquad\quad= T\Phi_P(a) \bullet \nabla \left( a^{-1} \bullet \lambda \bullet 0_\generic \right) \bullet 0_{\Phi_P(\generic)^{-1}} \\
        &\phantom{\qquad\quad={}} - T\Phi_P(a') \bullet \nabla \left( (a')^{-1} \bullet \lambda' \bullet 0_\generic \right) \bullet 0_{\Phi_P(\generic)^{-1}} \\
        &\phantom{\qquad\quad={}} - \nabla \left( \lambda-\lambda' \right) \\
        &\qquad\quad= T\Phi_P \left( a-a' \right) \bullet \nabla \left( (a-a')^{-1} \bullet  \left( \lambda-\lambda' \right) \bullet 0_\generic \right) \bullet 0_{\Phi_P(\generic)^{-1}} \\
        &\phantom{\qquad\quad={}} - \nabla \left( \lambda-\lambda' \right) \\
        &\qquad\quad= T\Phi_P \left( a-a' \right) \bullet \Lie(\Phi_P) \left( (a-a')^{-1} \bullet  \left( \lambda-\lambda' \right) \bullet 0_\generic \right) \bullet 0_{\Phi_P(\generic)^{-1}} \\
        &\phantom{\qquad\quad={}} - \Lie(\Phi_P) \left( \lambda-\lambda' \right) \\
        &\qquad\quad= 0.    
      \end{align*}
      This proves that the right-hand side of \eqref{eq:definition-R-nabla} only depends on $\generic$ and $\beta$, and justifies the definition of $R^\nabla(\generic)(\beta)$.
      
    \item First note that if $\nu_0\in\uu_0$ is written as $\nu_0=\nu-\nu'$ with $\nu,\nu'\in\uu$,
      then for any $u\in T_{\genericU^{-1}}\U$ such that $T\ta(u)=\rho(\nu)$ we have
      $\genericU \cdot \nu_0 = \Add_{0_{\genericU^{-1}}}(\nu_0) = \Add_{u-u}(\nu-\nu') = \Add_u(\nu)-\Add_u(\nu')$.
      
      Let $\generic,\generic'\in l^!\A$ be two composable elements and $\beta\in l^*(\ll/\aa)_{\ta(\generic)}$.
      Let $a\in T_\generic l^!\A$, $a'\in T_{\generic'} l^!\A$, and $\lambda\in\ll_{\ta(\generic)}$ be such that $(a\bullet a')^{-1}\bullet \lambda$ is defined and $\overline\lambda=\beta$.
      We have
      \begin{align*}
        \left( \generic\cdot R^\nabla(\generic') + R^\nabla(\generic) \right) (\beta)
        &= \Phi_P(\generic) \cdot \left( R^\nabla(\generic')(\Ad_{\generic^{-1}}\beta) \right) + R^\nabla(\generic)(\beta) \\
        &= \Add_{T\Phi_P(a)^{-1}} \circ \left( \Add_{T\Phi_P(a')^{-1}} \circ\nabla\circ\Add_{a'} \right) (\Add_{a}\lambda) \\ 
        &\phantom{={}}- \Add_{T\Phi_P(a)^{-1}} \circ \nabla (\Add_{a}\lambda) \\ 
        &\phantom{={}}+ \left(\Add_{T\Phi_P(a)^{-1}} \circ\nabla\circ\Add_{a}\right) (\lambda) - \nabla(\lambda) \\
        &= R^\nabla(\generic\cdot\generic') (\beta),
      \end{align*}
      which shows that the cocycle identity \eqref{eq:1cocycle} is satisfied.
      
      Let now $\nabla,\nabla'$ be two $\ll$-$\uu$-connections over $P$ extending the morphism $\Lie(\Phi_P)$.
      We have
      \begin{align*}
        \left( R^\nabla(\generic) - R^{\nabla'}(\generic) \right) (\beta)
        &= \left(\Add_{T\Phi_P(a)^{-1}} \circ\nabla\circ\Add_{a}\right) (\lambda) - \nabla(\lambda) \\
        &\phantom{={}}- \left(\Add_{T\Phi_P(a)^{-1}} \circ\nabla'\circ\Add_{a}\right) (\lambda) - \nabla'(\lambda) \\
        &= \Add_{0_{\Phi_P(\generic)^{-1}}} \circ(\nabla-\nabla')\circ\Add_{a}(\lambda)
        - (\nabla-\nabla')(\lambda) \\
        &= (\generic \cdot \mu - \mu)(\lambda)
      \end{align*}
      where $\mu$ is $\nabla-\nabla'$ seen as an element of $\Gamma(l^*(\ll/\aa)^*\otimes r^*\uu_0)$.
      Hence, $\tilde \alpha_{(\L,\A),P}=[R^\nabla]$ is independent of the chosen connection $\nabla$.
      
    \item If $\nabla$ is a $P$-compatible $\ll$-$\uu$-connection over $P$, then $R^\nabla$ vanishes and $\tilde\alpha_{(\L,\A),P}$ as well.
    Conversely, assume that $\tilde\alpha_{(\L,\A),P}=0$. Then for any $\ll$-$\uu$-connection over $P$ extending the morphism $\Lie(\Phi_P)$, there exists $\mu\in \Gamma(l^*(\ll/\aa)^*\otimes r^*\uu_0)$ such that $R^\nabla(\generic)=\generic\cdot\mu-\mu$ for all $\generic\in l^!\A$.
    Let $\overline \mu:l^!\ll\to\uu_0$ be the corresponding bundle map vanishing on $l^!\aa$, and let $\nabla'=\nabla-\overline\mu$.
    We get $0=R^\nabla(\generic)-(\generic\cdot\mu-\mu)=R^{\nabla'}$, proving the claim.
  \end{enumerate}
\end{proof}

Since the left moment map is a surjective submersion, it induces an isomorphism in cohomology
\[
	l^*: H^1(\A,(\ll/\aa)^* \otimes P(\uu_0)) \to H^1(l^!\A,l^*(\ll/\aa)^* \otimes l^*P(\uu_0)) .
\]
Using the isomorphism of $l^!\A$-modules $r^*\uu_0 \cong l^*P(\uu_0)$, we can now define the Atiyah class.

\begin{defn}
  \label{defn:atiyah-class-transitive-case}
	The \emph{Atiyah class of the generalized morphism $P$ with respect to the Lie groupoid pair $(\L,\A)$} is the class
	\[ \alpha_{(\L,\A),P} \in H^1(\A,(\ll/\aa)^* \otimes {P(\uu_0)}) \] 
	defined by $\alpha_{(\L,\A),P} = (l^*)^{-1}\tilde\alpha_{(\L,\A),P}$ where $\tilde\alpha_{(\L,\A),P}$ is defined in Proposition~\ref{prop:atiyah-class-transitive-case}.
\end{defn}

When $P$ is the bundlization of a morphism $\varphi$, the above Atiyah class may be directly defined using anchored maps from $\ll$ to $\uu$.

The main example is when $\U=\fb(E)$ is the frame groupoid of a vector bundle $E\to M$, and $\varphi:\A\to\fb(E)$ is an $\A$-module structure on $E$.
The Atiyah class, written $\alpha_{(\L,\A),E}$, is an element of $H^1(\A,(\ll/\aa)^*\otimes \End E)$.
Recall the definition of $\ll$-connections on a vector bundle from the end of Section \ref{ssec:lie-algebroids}.

\begin{cor}
\label{cor:atiyah-class-A-modules}
  Let $(\L,\A)$ be a Lie groupoid pair and  $E$ an $\A$-module.
  \begin{enumerate}
    \item There exist $\ll$-connections on $E$ extending the $\aa$-action.
    
    \item For any such $\ll$-connection $\nabla$, there is a smooth groupoid 1-cochain $R^\nabla\in C^1(\A,(\ll/\aa)^*\otimes \End E)$ defined by
      \begin{align}
      \label{eq:definition-R-nabla-particular}
        R^\nabla(\gamma)(l,e) =\bisection \star \nabla_{\Ad_{\bisection^{-1}}  l} \left( \bisection^{-1} \star e \right) - \nabla_l e  
      \end{align}
	    for all bisections $\bisection \in \Bis (\A)$ and all sections $l\in\Gamma(\ll)$ and $e\in\Gamma(E)$.
	    
    \item The 1-cochain $R^\nabla \in C^1(\A,(\ll/\aa)^*\otimes \End E) $ is closed and its cohomology class $\alpha_{(\L,\A),E}=[R^\nabla]$ is independent of the choice of $\nabla$.
    
    \item The class $\alpha_{(\L,\A),E}$ is zero if and only if there exists an $\A$-compatible $\ll $-connection on $E$.
  \end{enumerate}
\end{cor}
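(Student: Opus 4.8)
The plan is to derive this corollary from Proposition~\ref{prop:atiyah-class-transitive-case} by specializing to $\U=\fb(E)$, which is a transitive Lie groupoid whose isotropy Lie algebroid is $\uu_0=\End E$ (Example~\ref{ex:module_case2}), and taking for $\varphi:\A\to\fb(E)$ the morphism encoding the given $\A$-module structure on $E$. Under the identification of Example~\ref{ex:module_case} (composing with the Lie algebroid isomorphism of~\eqref{eq:isomorphism-derivative-endormorphisms-derivations}), an $\ll$-$\uu$-connection is the same thing as an $\ll$-connection on $E$, and it extends $\Lie(\varphi)$ precisely when it extends the $\aa$-action; likewise $\varphi_0^*\uu_0=\End E$. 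With these dictionary entries in hand, items~(1), (3) and~(4) are nothing but the corresponding items of Proposition~\ref{prop:atiyah-class-transitive-case}, and all that really remains is to check that the cochain $R^\nabla$ produced there by formula~\eqref{eq:definition-R-nabla} is the one described by formula~\eqref{eq:definition-R-nabla-particular}.

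To do this, I would fix $\generic\in\A$ with source $x$ and target $y$, sections $l\in\Gamma(\ll)$ and $e\in\Gamma(E)$, and a local bisection $\bisection$ of $\A$ through $\generic$, and set $u=T\bisection_\ta(\rho(l_y))\in T_\generic\A$ and $\lambda=l_y$; this $(u,\lambda)$ is an admissible pair for~\eqref{eq:definition-R-nabla}, with $\beta=\ol{l_y}$. By Lemma~\ref{lem:adjoint-formula}, $\Add_u(\lambda)$ is the value of the section $\Ad_{\bisection^{-1}}l$, so that $\nabla(\Add_u\lambda)$ is the derivative endomorphism $\nabla_{\Ad_{\bisection^{-1}}l}$ at the relevant point; and the outer operator $\Add_{T\varphi(u)^{-1}}$ is, by Lemma~\ref{lem:adjoint-formula} applied to $\fb(E)$ together with~\eqref{eq:action-bisections-frame-bundle-on-linear-vector-fields} and the fact that $T\varphi$ carries the tangent of $\bisection$ at $\generic$ to that of $\varphi(\bisection)$ at $\varphi(\generic)$, the conjugation $D\mapsto\varphi(\bisection)\circ D\circ\varphi(\bisection)^{-1}$ of derivative endomorphisms by the automorphism $\varphi(\bisection)$ of $E$. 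Since $\varphi$ is the module structure, $\varphi(\bisection)$ acts on sections of $E$ by $e\mapsto\bisection\star e$, and using~\eqref{eq:relation-algebroid-groupoid}--\eqref{eq:with-star} to read off this conjugation on sections turns $\left(\Add_{T\varphi(u)^{-1}}\circ\nabla\circ\Add_{u}-\nabla\right)(\lambda)$ into exactly $\bisection\star\nabla_{\Ad_{\bisection^{-1}}l}(\bisection^{-1}\star e)-\nabla_l e$, evaluated at the point prescribed by the cochain convention. As a byproduct this also gives the well-definedness of~\eqref{eq:definition-R-nabla-particular} (independence of the chosen bisection, of the section $l$ through $\beta$, and of $e$), since that independence is already part of Proposition~\ref{prop:atiyah-class-transitive-case}(2).

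I expect the genuine difficulty here to be entirely bookkeeping rather than conceptual: one must keep straight several simultaneous identifications---$\D(E)\cong\Tlin E$, $\uu_0=\End E$, the $\star$-action of $\Bis(\A)$ on $\Gamma(E)$ versus the $\Add$-action on $\ll$, and bisections versus individual tangent vectors---and get every inversion, every left-versus-right translation, and every base point ($x$ versus $y$, $\generic$ versus $\generic^{-1}$) to line up so that the two displayed formulas for $R^\nabla$ are manifestly the same map. Once the formulas are matched, the cocycle identity, the independence of the class $\alpha_{(\L,\A),E}=[R^\nabla]$ from the choice of $\nabla$, and the equivalence ``$\alpha_{(\L,\A),E}=0$ if and only if there is an $\A$-compatible $\ll$-connection on $E$'' are inherited verbatim from Proposition~\ref{prop:atiyah-class-transitive-case}(3)--(4), completing the proof.
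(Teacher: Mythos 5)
Your proposal is correct and follows exactly the route the paper intends: the corollary is the specialization of Proposition~\ref{prop:atiyah-class-transitive-case} to the transitive groupoid $\U=\fb(E)$ with $\uu_0=\End E$, and the only substantive point is matching \eqref{eq:definition-R-nabla} with \eqref{eq:definition-R-nabla-particular}, which you do correctly via Lemma~\ref{lem:adjoint-formula} (for $\Add_u\lambda=(\Ad_{\bisection^{-1}}l)_x$) and \eqref{eq:action-bisections-frame-bundle-on-linear-vector-fields} (for $\Add_{T\varphi(u)^{-1}}$ as conjugation by $\varphi(\bisection)$). The paper gives no explicit proof, treating the corollary as immediate, so your write-up supplies the same argument with the bookkeeping made explicit.
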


Here, we used the following definition.

\begin{defn}
\label{def:def-A-compatible}
  An $\ll$-connection extending the $\aa$-action is said to be \emph{$\A$-com\-pa\-ti\-ble} if the 1-cocycle $R^\nabla$ defined in \eqref{eq:definition-R-nabla-particular} vanishes.
\end{defn}

In the setting of Corollary~\ref{cor:atiyah-class-A-modules}, the class 
\[ \alpha_{(\L,\A),E}\in H^1(\A,(\ll/\aa)^* \otimes \End E) \] 
is called the \emph{Atiyah class of the $\A$-module $E$ with respect to the Lie groupoid pair $(\L,\A)$}.

The case of the $\A$ module $\ll/\aa$ of Section \ref{ssec:bigA-action-on-L-over-A} is of special interest.

\begin{defn}
\label{def:Atiyah-class-of-a-Lie-groupoid-pair}
The \emph{Atiyah class of the Lie groupoid pair $(\L,\A)$} is the class 
\[ \alpha_{(\L,\A)}\in H^1(\A,(\ll/\aa)^* \otimes (\ll/\aa)^* \otimes \ll/\aa) \] 
defined by $\alpha_{(\L,\A)}=\alpha_{(\L,\A),\ll/\aa}$.
\end{defn}

We relate our global Atiyah class to the Atiyah class  of the $\aa$-module $E$ with respect to $(\ll,\aa)$, the construction of which we briefly recall, using \cite{chen_atiyah_2016} as a guideline. 
Let $(\ll,\aa)$ be a Lie algebroid pair and let $E$ be an $\aa$-module.
Given an $\ll$-connection $\nabla$ on $E$ extending the flat $\aa$-connection $\nabla^\aa:\aa\to\D(E)$ that defines the module structure, the formula
\begin{align}
\label{eq:atiyahcocycle1}
  \atiyah^\nabla (a) (l, e) = \nabla_a \nabla_l e - \nabla_l \nabla_a e - \nabla_{[a,l]} e
\end{align}
with $l\in\Gamma(\ll)$, $ a \in \Gamma(\aa)$ and $e\in\Gamma(E)$ defines a Lie algebroid
1-cochain $\atiyahTensor^\nabla \in C^1(\aa,(\ll/\aa)^*\otimes \End E)$ as follows.
For all $\alpha \in \aa_x$, $\beta \in (\ll/\aa)_x $ and $\epsilon \in E_x$, set
 \begin{align*}
  \atiyahTensor^\nabla (\alpha)(\beta,\epsilon) = \left. \atiyah^\nabla (a)(l,e) \right|_x
\end{align*}
with $l$ any section of $\ll$	such that $\ol{l_x}=\beta$ and $a,e$ any sections through $\alpha$ and $ \epsilon$ respectively. 
We say that $ \nabla$ is \emph{$\aa$-compatible} when $\atiyahTensor^\nabla=0$.
The 1-cochain  $\atiyahTensor^\nabla$ is closed and its cohomology class $\atiyahTorsionClass_{(\ll,\aa),E}=[\atiyahTensor^\nabla]$ is independent of the $\ll$-connection $\nabla $ on $E$ extending the natural flat $\aa$-connection. 
The class $\atiyahTorsionClass_{(\ll,\aa),E}$ is zero if and only if there exists an $\aa$-compatible $\ll $-connection on $E$ extending the natural $ \aa$-action, defining therefore a class 
\[ \atiyahLieAlgebroid_{(\ll,\aa),E} \in H^1(\aa,(\ll/\aa)^* \otimes {\End E}) , \] 
called the \emph{Atiyah class of the $\aa$-module $E$ with respect to the Lie algebroid pair $(\ll,\aa)$}.
The next proposition relates both classes, we refer to the work of Marius Crainic \cite{crainic_differentiable_2003} for a definition of the van Est functor.

\begin{prop}
\label{prop:vanest}
  Let $ (\L,\A)$ be a Lie groupoid pair, and $ (\ll,\aa)$ be its infinitesimal Lie algebroid pair. 
  Let $ E$ be an $\A$-module. The van Est functor $H^1(\A,(\ll/\aa)^* \otimes {\End E})  \to H^1(\aa,(\ll/\aa)^* \otimes {\End E}) $ maps the Atiyah class of the $\A$-module $E$ with respect to $(\L,\A)$
  to minus the Atiyah class of the $\aa$-module $E$ with respect to $(\ll,\aa)$.
\end{prop}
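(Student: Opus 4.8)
The plan is to represent both Atiyah classes by cocycles built from \emph{the same} $\ll$-connection $\nabla$ on $E$ extending the flat $\aa$-action — such a $\nabla$ exists by Corollary~\ref{cor:atiyah-class-A-modules}(1) — namely the groupoid $1$-cocycle $R^\nabla\in C^1(\A,(\ll/\aa)^*\otimes\End E)$ of Corollary~\ref{cor:atiyah-class-A-modules} and the algebroid $1$-cochain $\atiyahTensor^\nabla\in C^1(\aa,(\ll/\aa)^*\otimes\End E)$ attached to \eqref{eq:atiyahcocycle1}. Here $\U=\fb(E)$ is transitive with isotropy $\uu_0=\End E$, so $R^\nabla$ is indeed the cochain of Proposition~\ref{prop:atiyah-class-transitive-case}. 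Since the van Est map is a morphism of complexes (see \cite{crainic_differentiable_2003}), it suffices to establish the \emph{pointwise} identity $VE(R^\nabla)=\atiyahTensor^\nabla$ and then pass to cohomology; independence of both classes from the chosen connection does the rest.

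First I would make the van Est map explicit in degree one. For a smooth groupoid $1$-cocycle $c$ with values in an $\A$-module, $VE(c)(a)$ is obtained by differentiating $c$ along the one-parameter family of local bisections $\expon{\param a}$, $a\in\Gamma(\aa)$; because $c$ vanishes on the units, the derivative is intrinsically defined and no connection on the coefficient module enters, so that
\[
  VE(c)(a)\big|_x=\frac{\diff}{\diff\param}\Big|_{\param=0}c\big(\expon{\param a}_\so(x)\big).
\]
Feeding $c=R^\nabla$ into this and evaluating $R^\nabla$ through the bisection $\bisection=\expon{\param a}$ by means of the section formula \eqref{eq:definition-R-nabla-particular}, the constant term $-\nabla_le$ contributes nothing and one is left with
\[
  VE(R^\nabla)(a)(l,e)=\frac{\diff}{\diff\param}\Big|_{\param=0}\Big(\expon{\param a}\star\nabla_{\Ad_{\expon{-\param a}}l}\big(\expon{-\param a}\star e\big)\Big)
\]
for $l\in\Gamma(\ll)$ and $e\in\Gamma(E)$.

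Next I would expand this derivative by the Leibniz rule at its three occurrences of $\param$, using \eqref{eq:relation-algebroid-groupoid}--\eqref{eq:with-star} and Proposition~\ref{prop:bisections}(2). Differentiating the outer factor $\expon{\param a}\star(-)$, with the inner expression frozen at its value $\nabla_le$ at $\param=0$, gives $\nabla_a\nabla_le$; differentiating $\expon{-\param a}\star e$ gives $-\nabla_ae$, whence the contribution $-\nabla_l\nabla_ae$; and differentiating $\Ad_{\expon{-\param a}}l$ gives $-[a,l]$ — the infinitesimal adjoint action of $\Bis(\L)$ on $\Gamma(\ll)$ being read off from \eqref{eq:def_adjoint_bisections} and Proposition~\ref{prop:bisections}(2) together with the identification of $\Gamma(\ll)$ with the Lie algebra of $\Bis(\L)$ — whence the contribution $-\nabla_{[a,l]}e$. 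Adding up, $VE(R^\nabla)(a)(l,e)=\nabla_a\nabla_le-\nabla_l\nabla_ae-\nabla_{[a,l]}e=\atiyah^\nabla(a)(l,e)$, which is precisely formula \eqref{eq:atiyahcocycle1} defining $\atiyahTensor^\nabla$. Passing to cohomology, $VE$ carries $\alpha_{(\L,\A),E}=[R^\nabla]$ to $\atiyahLieAlgebroid_{(\ll,\aa),E}=[\atiyahTensor^\nabla]$, as claimed.

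I expect the main difficulty to be bookkeeping rather than conceptual: one must pin down the normalization of the van Est map — in particular its sign and the precise way the fibres of the coefficient bundle are identified along $\expon{\param a}$ — so that the three Leibniz terms assemble with \emph{exactly} the signs of \eqref{eq:atiyahcocycle1}, and one must justify the term-by-term differentiation under the $\star$ operation and the $\Ad$-action, which is legitimate thanks to the smoothness of $\param\mapsto\expon{\param a}$ as a family of bisections and the $C^\infty(M)$-linearity of $\nabla$ in its subscript.
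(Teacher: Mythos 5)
Your proposal is correct and follows essentially the same route as the paper: recall that in degree one the van Est map is given by $c\mapsto\bigl(a\mapsto\frac{\diff}{\diff\param}\big|_{\param=0}c(\expon{\param a})\bigr)$, apply it to the cocycle $R^\nabla$ written via bisections as in \eqref{eq:definition-R-nabla-particular}, and differentiate term by term using \eqref{eq:relation-algebroid-groupoid}--\eqref{eq:with-star} to recover \eqref{eq:atiyahcocycle1}. The only difference is that you spell out the three Leibniz contributions $\nabla_a\nabla_le$, $-\nabla_l\nabla_ae$, $-\nabla_{[a,l]}e$ explicitly, where the paper leaves this computation to the reader.
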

\begin{proof}
  Recall that the van Est functor simply assigns to an $\A$-cocycle $\Phi$ valued in an $\A$-module $F$
  the $F$-valued $1$-form $\phi : \alpha \mapsto T \Phi(\alpha)$, with the understanding that $\alpha \in \aa_x$
  is seen as an element of $T_{x} \A$. An important feature of this assignment is that
  for any section $a$ of $ \aa$,
  $ \phi(a) = \left. \frac{\diff}{\diff t} \Phi( \exp (ta)) \right|_{t=0}$.
	Applying this construction to the Atiyah cocycle $R^\nabla$, we are left with the task of taking
	the derivative at $t=0$ of the quantity
  \[  {\exp(ta)} \star \nabla_{\Ad_{\exp(ta)^{-1}}  l} \left( {\exp(ta)}^{-1} \star e \right) - \nabla_l e   \]
  for arbitrary sections $e\in\Gamma(E)$ and $l\in\Gamma(L)$, which yields precisely minus the expression given in \eqref{eq:atiyahcocycle1}  in view of \eqref{eq:relation-algebroid-groupoid}--\eqref{expon-lie-bracket} and completes the proof.
\end{proof}

According to Theorem 3 in \cite{crainic_differentiable_2003}, the van Est map is an isomorphism in degree $\leq n$ and is injective in degree $n+1$ provided that the fibers of the source map of the Lie groupoid are $n$-connected. As an immediate corollary of the previous proposition, we have the following result: 

\begin{cor}
\label{cor:infinitesimalAtiyahVanishes-implies-globalAtiyahVanishes}
  Let $(\L,\A)$ be a Lie groupoid pair with $\A$ source-connected,
  let $(\ll,\aa)$ be its infinitesimal Lie algebroid pair, and $E$ an $\A$-module. 
  Then the Atiyah class of the $\aa$-module $E$ with respect to $(\ll,\aa)$ vanishes if and only if the Atiyah class of the $\A$-module $E$ with respect to $(\L,\A)$ vanishes.	
\end{cor}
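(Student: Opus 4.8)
The plan is to obtain this as an immediate consequence of Proposition~\ref{prop:vanest} together with the injectivity properties of the van Est map recalled just above the statement. The point is that all the real work has already been done in establishing that $\ll/\aa$, and hence $(\ll/\aa)^*\otimes\End E$, is a genuine $\A$-module (Proposition~\ref{prop:bigA-action-on-L-over-A} and its consequences), and in Proposition~\ref{prop:vanest}, which identifies how the van Est functor acts on Atiyah classes.

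Concretely, I would proceed in two short steps. First, apply Proposition~\ref{prop:vanest} to the $\A$-module $(\ll/\aa)^*\otimes\End E$: the van Est map
\[
  H^1(\A,(\ll/\aa)^*\otimes\End E) \longrightarrow H^1(\aa,(\ll/\aa)^*\otimes\End E)
\]
sends the Atiyah class $\alpha_{(\L,\A),E}$ of the $\A$-module $E$ with respect to $(\L,\A)$ to the Atiyah class $\atiyahLieAlgebroid_{(\ll,\aa),E}$ of the $\aa$-module $E$ with respect to $(\ll,\aa)$. Second, invoke Theorem~3 of \cite{crainic_differentiable_2003} as quoted above: since $\A$ is source-connected, its source fibers are $0$-connected, so the van Est map is an isomorphism in degree $\le 0$ and, crucially, \emph{injective in degree $1$}. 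Combining the two, the hypothesis $\atiyahLieAlgebroid_{(\ll,\aa),E}=0$ forces $\alpha_{(\L,\A),E}=0$, which is the assertion.

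There is essentially no obstacle in this argument — it is a one-line corollary once Proposition~\ref{prop:vanest} is in hand. The only points that deserve a word of care are purely bookkeeping: checking that the coefficient bundle $(\ll/\aa)^*\otimes\End E$ to which we apply Crainic's theorem is indeed the relevant $\A$-module (it is, by the discussion preceding Proposition~\ref{prop:atiyah-class-transitive-case} and Corollary~\ref{cor:atiyah-class-A-modules}), and matching the hypothesis ``source-connected'' with the ``$0$-connected source fibers'' condition in \cite{crainic_differentiable_2003} under the usual convention that $0$-connected means nonempty and path-connected.
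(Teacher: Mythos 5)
Your proposal is correct and is exactly the paper's argument: the corollary is stated there as an immediate consequence of Proposition~\ref{prop:vanest} combined with Theorem~3 of \cite{crainic_differentiable_2003}, which gives injectivity of the van Est map in degree $1$ when the source fibers are $0$-connected. Nothing is missing.
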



\subsection{Morita invariance}
In this section, we prove that our Atiyah classes are invariant under Morita equivalences.

Let us first  recall \cite{crainic_differentiable_2003} how Morita equivalent Lie groupoids have equivalent categories of representations.
Let $\A'$ and $\A$ be two Lie groupoids, and let $Q$ be a generalized morphism from $\A'$ to $\A$.
Given an $\A$-module $E$, $Q$ and $E$ define the associated $\A'$-module
$ Q(E) = \frac{Q\times_M E}{\A} $
(see Definition \ref{defn:associated-vector-bundle}).
Given an $\A$-module map $E\to F$, $Q$ naturally induces an $\A'$-module map $\amod Q(E)\to \amod Q(F)$, and this defines a functor $\amod Q$ from the category of $\A$-modules to that of $\A'$-modules.
When $Q$ is a Morita morphism, the functor $\amod Q$ becomes an equivalence of categories (see \eqref{eq:P-Pop}--\eqref{eq:Pop-P}).

Recall \cite{crainic_differentiable_2003} also that a Morita morphism $Q$ induces an isomorphism, denoted by $Q^*$, from the Lie groupoid cohomology of $\A$ valued in $E$ to the  Lie groupoid cohomology of $\A'$ valued in $\amod Q(E)$.

From now on, let us fix two Lie groupoid pairs, $(\L',\A')$ over $M'$ and $(\L,\A)$ over $M$.
Recall the following notion from \cite{laurent-gengoux_equivariant_2009}.
A \emph{Morita morphism from $(\L',\A')$ to $(\L,\A)$} is a pair $(\tilde Q,Q)$ with
\begin{enumerate}
  \item $\tilde Q$ a Morita morphism from $\L'$ to $\L$, 
  \item $Q$ a Morita morphism from $\A'$ to $\A$,
  \item $i: Q \hookrightarrow \tilde Q$ an inclusion map that makes $Q$ an immersed submanifold of $\tilde Q$,
\end{enumerate}
such that
\begin{enumerate}
  \item the following diagram commutes:
  \begin{equation}
  \label{eq:ileftandright}
    \vcenter{\xymatrix{
         & \tilde Q\ar[ld]_{\tilde l}\ar[rd]^{\tilde r}          &   \\ 
      M' & Q \ar[l]^l \ar[u]^i \ar[r]_r & M
    }} ,
  \end{equation}
  \item the inclusion map $i$ is equivariant with respect to the left $\A'$-action and the right $\A$-action.
\end{enumerate}

Note that such a Morita morphism may be expressed with bundlizations of morphisms as
\begin{equation}
\label{eq:decomp-morita-morphism-of-pairs}
  (\tilde Q, Q) \cong ((P_{\tilde l})^{-1}, (P_l)^{-1}) \circ (P_{\Phi_{\tilde Q}}, P_{\Phi_{Q}}) .
\end{equation}

\begin{thm}
\label{thm:Morita-equivalence}
  Let $(\tilde Q,Q) $ be a Morita morphism from $(\L',\A')$ to $(\L,\A)$.
  Let $P$ be a generalized morphism from $\A$ to a transitive Lie groupoid $\U$ and let $P'=Q\circ P$ be its composition with $Q$.
  Then
  \begin{enumerate}
    \item \label{thm:Morita-equivalence-1}
      the functor $\amod{Q}$ maps the $\A$-module $(\ll/\aa)^* \otimes P(\uu_0)$ to the $\A'$-module $(\ll'/\aa')^* \otimes P'(\uu_0)$,
    \item \label{thm:Morita-equivalence-2}
      the Lie groupoid cohomology isomorphism 
      \[ Q^*: H^1(\A, (\ll/\aa)^* \otimes P(\uu_0)) \to H^1 (\A',(\ll'/\aa')^* \otimes P'(\uu_0)) \]
      associated to the Morita morphism $Q$ maps the Atiyah class of $P$ with respect to $(\L,\A)$ to the Atiyah class of $P'$ with respect to $(\L',\A')$.
  \end{enumerate}
\end{thm}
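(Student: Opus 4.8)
The plan is to reduce both assertions to a compatibility statement between the $\ll$-$\uu$-connections (with $\uu = \D(E)$) on the two pairs, transported through the Morita morphism $(P,Q)$. First I would make part \eqref{thm:Morita-equivalence-1} explicit: by Definition \ref{defn:associated-vector-bundle} the functor $\underline Q$ sends an $\A$-module $F$ to $Q(F)=\frac{Q\times_M F}{\A}$, so it suffices to exhibit $\A'$-equivariant isomorphisms $\underline Q(\ll/\aa)\cong \ll'/\aa'$ and $\underline Q(\End E)\cong \End E'$, then use that $\underline Q$ is (strong) monoidal and commutes with duals. The second isomorphism is standard Morita theory for modules. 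For the first, I would use that the inclusion $i:Q\hookrightarrow P$ is equivariant and fits into the commuting square \eqref{eq:ileftandright}, together with the fact (see \eqref{eq:isomorphism-of-pullback-groupoids-from-Morita-equivalence}) that $P$ induces a base-preserving isomorphism $\tilde\Phi_P:l^\pb\L\to r^\pb\L'$ of pullback Lie algebroids covering $l,r:P\to M,M'$; restricting along $i$ and passing to the quotient by $\A$ identifies $\underline Q(\ll/\aa)$ with $\ll'/\aa'$, compatibly with the $\A$-module structures of Proposition \ref{prop:bigA-action-on-L-over-A}. The key point here is that the $\A$-action on $\ll/\aa$ was characterized in \eqref{eq:canonical_action} purely in terms of the operator $\Add_\L$, which is natural under Lie groupoid morphisms (Example \ref{ex:maurer-cartan-form}), hence behaves well under $\tilde\Phi_P$.

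Next, for part \eqref{thm:Morita-equivalence-2}, I would recall from \cite{crainic_differentiable_2003} the explicit form of the cochain map inducing $Q_*$: for a generalized morphism $Q$ from $\A'$ to $\A$ with division map $D_Q$, a $1$-cochain $c\in C^1(\A,F)$ is sent to the $1$-cochain on $\A'$ whose value on $\generic'\in\A'$ is computed by choosing $p,p'\in Q$ with $\generic'\cdot p' = p$ (any choice in a common $r$-fiber), setting $\generic=D_Q(p,p')$, and transporting $c(\generic)$ via the $\A$-module isomorphism $F_{l(p)}\to \underline Q(F)_{\generic'\cdot[\,]}$. The plan is then: pick an $\ll$-connection $\nabla$ on $E$ extending the $\aa$-action, and produce from it an $\ll'$-connection $\nabla'$ on $E'$ extending the $\aa'$-action by transport along $\tilde\Phi_P$ and the module identifications of step one; then check that $Q_*$ sends $R^\nabla$ (defined in \eqref{eq:definition-R-nabla-particular}) to $R^{\nabla'}$ on the nose. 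Since $R^\nabla(\generic)$ is expressed entirely through $\star$-actions of bisections of $\A$ and the connection, and the Morita data intertwines all of these (bisections of $\A$ correspond to bisections of $\A'$ up to the $Q$-twist, $\star$-action on sections transports, and $\Ad$ transports because $\Add$ is natural), this becomes a direct unwinding of \eqref{eq:definition-R-nabla-particular} using \eqref{eq:relation-algebroid-groupoid}. Taking cohomology classes then gives $Q_*(\alpha_{(\L,\A),E}) = \alpha_{(\L',\A'),E'}$.

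The main obstacle I expect is bookkeeping rather than conceptual: keeping track of the three intertwined transports (the Lie groupoid isomorphism $\tilde\Phi_P$ of pullbacks, the module identifications $\underline Q(\End E)\cong \End E'$ and $\underline Q(\ll/\aa)\cong\ll'/\aa'$, and the cochain-level formula for $Q_*$) and verifying they are mutually compatible, i.e.\ that the square relating $R^\nabla$, $R^{\nabla'}$ and $Q_*$ actually commutes pointwise. A clean way to organize this, which I would adopt, is to work with the induced morphism $\tilde\Phi_Q: l^\pb\A\to r^\pb\A'$ attached to $Q$ and check that the defining formula \eqref{eq:definition-R-nabla-particular} — or better, the coordinate-free version \eqref{eq:definition-R-nabla} with $\uu=\D(E)$, which only involves $\Add$, $\nabla$, and $T\varphi$ — is manifestly preserved by any isomorphism of Lie groupoid pairs and hence, by the general principle that Morita morphisms of pairs act through such isomorphisms on pullbacks, is preserved up to the canonical identifications. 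Since \eqref{eq:definition-R-nabla} is built solely from functorial data ($\Add$ is natural, connections pull back, the frame groupoid $\fb(E)$ is transitive so Proposition \ref{prop:atiyah-class-transitive-case} applies), the independence of the class on $\nabla$ proved there lets us make the convenient choices of $\nabla$ and $\nabla'$ above without loss of generality, which keeps the verification short.
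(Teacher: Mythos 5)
Your proposal is correct and follows essentially the same route as the paper: part (1) via the pullback-groupoid isomorphism $\tilde\Phi_P$ restricted along $i:Q\hookrightarrow P$, quotiented by $\A$, combined with monoidality of $\underline Q$ for the $\End E$ factor; and part (2) via the factorization of $Q_*$ through the common pullback groupoid $l^\pb\A'\cong r^\pb\A$ over $Q$, checking that pullback of connections induces pullback of the cocycles $R^\nabla$ (computed with pullback bisections) and that the middle isomorphism of Lie algebroid pairs preserves the class. The alternative you sketch first (a direct cochain-level verification through the division map $D_Q$) would also work, but the ``clean organization'' you settle on is exactly the paper's argument.
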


We start with a lemma. 

\begin{lem}\label{lem:LsurA}
  Let $(\tilde Q,Q) $ be a Morita morphism from $(\L',\A')$ to $(\L,\A)$. 
  Then \[ \amod Q(\ll/\aa)\cong\ll'/\aa' \] as $\A'$-modules.
\end{lem}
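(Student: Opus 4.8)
The plan is to identify $\amod Q(\ll/\aa)$ with $\ll'/\aa'$ by producing a natural isomorphism and then checking it is $\A'$-linear. The starting point is the observation, recalled just before the lemma, that the Morita morphism $P$ from $\L'$ to $\L$ gives a Lie groupoid isomorphism $\tilde\Phi_P:l^\pb\L\to r^\pb\L'$ between the pullback groupoids over $P$, and similarly $Q$ gives $\tilde\Phi_Q:l^\pb\A\to r^\pb\A'$; moreover, since $i:Q\hookrightarrow P$ is compatible with all the structure in \eqref{eq:ileftandright}, $\tilde\Phi_Q$ is the restriction of $\tilde\Phi_P$. Passing to Lie algebroids, $\Lie(\tilde\Phi_P)$ is an isomorphism $l^\pb\ll\to r^\pb\ll'$ of Lie algebroids over $P$ restricting to an isomorphism $l^\pb\aa\to r^\pb\aa'$. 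Since pullback of Lie algebroids along a submersion is exact (the kernel of the anchor-difference construction behaves well with respect to the vertical subbundles), taking quotients gives a canonical isomorphism of vector bundles over $P$,
\[
  l^\pb\ll / l^\pb\aa \;\cong\; r^\pb\ll'/r^\pb\aa' .
\]

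Next I would reconcile the two sides of this isomorphism with the pullbacks $l^*(\ll/\aa)$ and $r^*(\ll'/\aa')$. For a surjective submersion $l:P\to M$, the pullback Lie algebroid $l^\pb\ll = TP\times_{l^*TM}l^*\ll$ surjects onto $l^*\ll$ with kernel $T^lP=\ker Tl$, and this descends to a short exact sequence whose quotient identifies $l^\pb\ll/l^\pb\aa$ with $l^*(\ll/\aa)$ \emph{after} quotienting out the common $T^lP$ factor — more precisely, $l^\pb\ll / (T^lP + l^\pb\aa) \cong l^*(\ll/\aa)$, and one checks $l^\pb\ll/l^\pb\aa$ already has $T^lP$ in the image of $l^\pb\aa$ is \emph{not} quite right, so the cleaner route is: the bundle map $l^\pb\ll\to l^*\ll\to l^*(\ll/\aa)$ is surjective with kernel exactly $l^\pb\aa$ (since a pair $(X,a)\in l^\pb\ll$ lies in the kernel iff $a\in\aa$ iff $(X,a)\in l^\pb\aa$). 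Hence $l^\pb\ll/l^\pb\aa\cong l^*(\ll/\aa)$ canonically, and likewise $r^\pb\ll'/r^\pb\aa'\cong r^*(\ll'/\aa')$. Combining with the isomorphism from the previous paragraph, we get a bundle isomorphism $\Theta: l^*(\ll/\aa)\to r^*(\ll'/\aa')$ over $P$.

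Then I would show $\Theta$ is equivariant in the appropriate sense so that it descends to the claimed isomorphism on $\amod Q(\ll/\aa)=\frac{Q\times_M(\ll/\aa)}{\A}$. On one hand, $\Theta$ restricted over $Q\subset P$ is $\A$-invariant for the diagonal right $\A$-action (this is exactly the $\L$-invariance / $\A$-equivariance of the division map, translated through $\tilde\Phi_Q$ and the functoriality of $\Lie$ and of quotients — it is the same mechanism by which $\underline Q$ is defined on modules); hence the restriction $\Theta|_Q$ descends to a bundle isomorphism $\frac{Q\times_M(\ll/\aa)}{\A}\to \frac{Q\times_M r^*(\ll'/\aa')}{\A}$. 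On the other hand, over $Q$ the map $r$ is the moment map to $M'$ and $Q\to M'$ is $\A$-invariant with quotient $M'$, so $\frac{Q\times_M r^*(\ll'/\aa')}{\A}\cong \ll'/\aa'$ canonically. It remains to check that the resulting isomorphism intertwines the $\A'$-module structures: on the source $\amod Q(\ll/\aa)$ the $\A'$-action comes from left translation in $Q$ composed with the identification $\ll/\aa\cong \Add$-action data, and on the target $\ll'/\aa'$ it is the canonical $\A'$-action of Proposition~\ref{prop:bigA-action-on-L-over-A}; both are expressed through $\Add$ and $\tilde\Phi$, and the compatibility is Proposition~\ref{prop:properties-kappa}\ref{prop:properties-kappa-composition} together with the fact that $\tilde\Phi_P$ is a groupoid isomorphism, hence commutes with the $\Add$-actions.

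The main obstacle is bookkeeping rather than a deep idea: one must carefully track which pullbacks are along $l$ and which along $r$, verify exactness of the pullback-Lie-algebroid construction with respect to subalgebroids (so that quotients commute with pullbacks), and confirm that the isomorphism $\Theta$ really is $\A$-basic over $Q$ — i.e.\ that it uses only the $\L$-$\A$-bi-equivariance of $P$ and not any choice. I expect the cleanest write-up to phrase everything in terms of the induced groupoid isomorphism \eqref{eq:isomorphism-of-pullback-groupoids-from-Morita-equivalence}, apply $\Lie$, quotient by the subalgebroids, and then invoke the definition of $\underline Q$ on modules verbatim; the $\A'$-equivariance then falls out of naturality. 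A secondary subtlety is making sure the immersed (not embedded) nature of $Q\hookrightarrow P$ causes no trouble — it does not, since all constructions are pointwise over $Q$ and only use that $i$ is an immersion intertwining the actions.
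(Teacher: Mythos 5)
Your proof is correct and follows essentially the same route as the paper's: use the pullback-groupoid isomorphism \eqref{eq:isomorphism-of-pullback-groupoids-from-Morita-equivalence} induced by the Morita morphism, apply the Lie functor, quotient by the subalgebroids, identify $l^\pb\ll/l^\pb\aa$ with $l^*(\ll/\aa)$ via the kernel argument, and descend through the principal $\A$-quotient using equivariance of all the maps involved. The only differences are cosmetic (which moment map is labelled $l$ versus $r$, and whether one works over $P$ and restricts to $Q$ or pulls back to $Q$ from the start).
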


\begin{proof}
  Since $\tilde Q$ is a Morita morphism from $\L'$ to $\L$, there is a natural base-preserving isomorphism \eqref{eq:isomorphism-of-pullback-groupoids-from-Morita-equivalence} of Lie groupoids $\tilde\Phi_{\tilde Q}:\tilde l^\pb\L'\to \tilde r^\pb\L$.
  Pulling back by the map $i:Q\to \tilde Q$ and using $\tilde l\circ i=l$ and $\tilde r\circ i=r$ yields an isomorphism $l^\pb\L'\to r^\pb\L$.
  Applying the Lie functor, we obtain a base-preserving Lie algebroid isomorphism
  \[  l^\pb \ll' \to r^\pb \ll .  \]
  Since $Q$ is a Morita morphism from $\A'$ to $\A$, similarly, we obtain a base-preserving Lie algebroid isomorphism
  \[  l^\pb \aa' \to r^\pb \aa .  \]
  The inclusion map $i$ of $Q$ into $\tilde Q$ being compatible with the bibundle structures (see \eqref{eq:ileftandright}), there is a commutative diagram of Lie algebroids over $Q$
  \[
    \vcenter{\xymatrix{
      l^\pb\aa' \ar[r] \ar@{^{(}->}[d] \vphantom{|} & r^\pb\aa \ar@{^{(}->}[d] \vphantom{|} \\
      l^\pb\ll' \ar[r] & r^\pb\ll
    }}
  \]
  where the horizontal arrows are isomorphisms and the vertical arrows are inclusions.
  As a result, we get an isomorphism 
  \begin{equation}
  \label{eq:isomorphism-pullbacks-LprimeAprime-LA}
    l^\pb\ll'/l^\pb\aa' \to r^\pb\ll/r^\pb\aa
  \end{equation}
  of vector bundles over $Q$. By construction, this isomorphism intertwines the $l^\pb \A$-module structure and the $r^\pb \A$-module structure.

  Since $l$ is a surjective submersion, the projection on the second component $l^\pb\ll'\to l^*\ll'$ induces an isomorphism
  \begin{equation}
  \label{eq:isomorphism-quotient-of-pullback-bundle-and-algebroid}
    l^\pb\ll'/l^\pb\aa' \to l^*(\ll'/\aa') . 
  \end{equation}
  Let us prove that this isomorphism is in fact an $l^\pb \A'$-module isomorphism. Notice that bisections can be pulled-back---for $\Sigma'$ a local bisection of $\A'$, a bisection of $ l^\pb \A'$ is defined by
  \begin{equation}
  \label{eq:pull_back_bisection}
    l^* \Sigma' = \left\{ (q,\generic',q') \mid \generic' \in \Sigma', l(q)=\ta (\generic'),  l(q')=\so (\generic') \right\} .
  \end{equation}
  Equation (\ref{eq:canonical-action-unique}), applied to bisections and sections of the previous form, 
  gives immediately that the natural projection from $l^\pb \A'$ to $ \A' $ intertwines the module structures on 
  $l^\pb\ll'/l^\pb\aa'$  and  $\ll'/\aa'$. This implies the result. The same procedure applies with $\A$ and $\ll/\aa$,
  and yields an isomorphism of modules
  \begin{equation}
  \label{eq:isomorphism-pullbacks-LprimeAprime-LA2}
    l^*(\ll'/\aa' )\to r^*(\ll/\aa) .
  \end{equation}
		
  Now, in view of Definition \ref{defn:associated-vector-bundle}, $\frac{r^*(\ll/\aa)}{\A} = \amod Q(\ll/\aa)$. Since $\A$ acts freely on the leaves of $l:Q\to M'$, we of course have $ \ll'/\aa' \simeq \frac{l^*(\ll'/\aa')}{\A}$.
  Taking the quotient by the right action of $\A$ on both sides of \eqref{eq:isomorphism-pullbacks-LprimeAprime-LA2} finally yields
  \begin{equation*}
    \ll'/\aa' 
    \simeq \frac{l^*(\ll'/\aa')}{\A}
    \simeq \frac{r^*(\ll/\aa)}{\A}
    = \amod Q(\ll/\aa) .
  \end{equation*}
  This completes the proof.
\end{proof}


\begin{proof}[Proof of Theorem \ref{thm:Morita-equivalence}]
  We start with the first item. It is easy to check that the functor $\amod{Q}$ is compatible with duality and tensor products. Hence, Item \ref{thm:Morita-equivalence-1} follows from Lemma \ref{lem:LsurA}.

  We now turn to Item \ref{thm:Morita-equivalence-2}. 
	Since $Q\mapsto Q^*$ is a functor and sends Morita morphisms to isomorphisms in cohomology, and since we have the decomposition \eqref{eq:decomp-morita-morphism-of-pairs}, it suffices to prove the result for a Morita morphism of the kind $(P_{\tilde\varphi}, P_\varphi)$ with $\tilde\varphi:\L'\to \L$ and $\varphi:\A'\to \A$ morphisms such that $\tilde \varphi|_{\A'} = \varphi$. 
	In that case, we need to show that the top arrow in the diagram
  \begin{equation}
  \label{eq:forMorita0}
    \vcenter{\xymatrix{
      \ar[d] H^1(l^\pb \A, l^*(\ll/\aa)^* \otimes l^*P(\uu_0)) \ar[r]^{(l^!\varphi)^*} 
      &  H^1(l^\pb \A', l^*(\ll'/\aa')^* \otimes l^*P'(\uu_0)) \ar[d] \\
      H^1(\A, (\ll/\aa)^* \otimes P(\uu_0))\ar[r]^{\varphi^*} 
      &  H^1 (\A',(\ll'/\aa')^* \otimes P'(\uu_0))
    }} 
  \end{equation}
	sends $\tilde\alpha_{(\L,\A),P}$ to $\tilde\alpha_{(\L',\A'),P'}$.
	Here, $P'=P_\varphi\circ P \cong M'\times_M P$ and we have $\Phi_{P'} = \Phi_P \circ (l^!\varphi)$.
	Now, if $\nabla:l^!\ll\to\uu$ is an $\ll$-$\uu$-connection over $P$ extending the generalized morphism, a direct computation shows that
	\[  (l^!\varphi)^*R^\nabla = R^{\nabla \circ \Lie(l^!\tilde\varphi)} ,  \]
	which yields the result.
\end{proof}


\section{Connections on homogeneous spaces}

\label{sec:main-results}

Throughout this section, $(\L,\A)$ is a Lie groupoid pair integrating a Lie algebroid pair $(\ll,\aa)$ over $M$, and $\U$ is a Lie groupoid  over $N$.


\subsection{Equivariant principal bundles}

\begin{thm}
  \label{thm:general}
  Let $Q$ be an $\L$-equivariant principal $\U$-bundle over the homogeneous space $X=\L/\A$.
  Let $P$ be a generalized morphism, given by Proposition~\ref{prop:exhaust}, such that $Q\cong{}_{\L}\L_\A \circ P$.
  There is a bijection between the following affine spaces:
  \begin{enumerate}
    \item the fibered connection forms on $Q$ (Definition~\ref{defn:connection-epb}),
    \item the $P$-compatible $\ll$-$\uu$-connections (Definition~\ref{defn:compatible-connection}).
  \end{enumerate}
\end{thm}  

\begin{proof}
  Without loss of generality, we may identify $Q$ with ${}_{\L}\L_\A \circ P$.
  
  Let $\nabla:l^!\ll\to\uu$ be a $P$-compatible $\ll$-$\uu$-connection.
  The elements of $Q$ are equivalence classes $[(\generic,p)]$ of pairs $(\generic,p)\in\L\times_M^{\so,l} P$.
  We use the same notation for tangent vectors $[(v,X)]\in T_{[(\generic,p)]} Q$.
  There is a map 
  \[ \tilde\tau:T^\ta\L \times_{TM} TP \to l^!\ll : (v,X) \mapsto (X,(p,\tau(v)))  \]
  where $X$ is in $T_pP$ (essentially, $\tilde\tau=\tau_{l^!\L}$).
  Define then $\bmap:T^lQ\to \uu$ by
  \begin{align}
  \label{eq:from-compatible-connection-to-connection-form}
    \bmap([(v,X)]) = \nabla\circ\tilde\tau(v,X). 
  \end{align}
  The equivariance properties of $\nabla$ and $\tau$ and the identity $\Phi_P(p,\generic^{-1},\generic\cdot p) = 1_{r(p)}$ for all compatible $\generic\in\A$ and $p\in P$ imply that $\nabla \circ \tilde\tau$ is $\A$-basic (i.e.\ $T\A$-invariant), so $\bmap$ is well-defined.
  Now \ref{prop:characterization-bundle-maps-of-connection-forms-2} directly follows from the equivariance of $\nabla$ and the identity $\Phi_P(p,1,p\cdot\genericU)=\genericU$ for all compatible $p\in P$ and $\genericU\in \U$, and \ref{prop:characterization-bundle-maps-of-connection-forms-3} follows from the fact that $\tau$ is left-invariant.
  
  Conversely, let $\bmap:T^lQ\to\uu$ be a fibered connection form on $Q$.
  Define, for all $(X,(p,\lambda))\in l^!\ll$,
  \begin{align}
  \label{eq:from-connection-form-to-compatible-connection}
    \nabla(X,(p,\lambda)) = \bmap([(\lambda^{-1},X)]) .
  \end{align}
  For all $(X,(p,\lambda))\in l^!\ll$ and $(X,a,X')\in T(l^!\A)$, using successively \ref{prop:characterization-bundle-maps-of-connection-forms-2}, the defining property of $\Phi_P$, and \ref{prop:characterization-bundle-maps-of-connection-forms-3} we get
  \begin{align*}
    \Add_{T\Phi_P(X,a,X')}\circ \nabla(X,(p,\lambda))
    &= \bmap([(\lambda^{-1},X\cdot T\Phi_P(X,a,X'))]) \\
    &= \bmap([(\lambda^{-1},a\cdot X')]) \\
    &= \bmap(0_\gamma\cdot [(\lambda^{-1}\bullet u, X')]) \\
    &= \bmap([((\Add_a\lambda)^{-1},X')]) \\
    &= \nabla\circ\Add_{(X,a,X')} (X,(p,\lambda)) .
  \end{align*}
  This proves \ref{defn:compatible-connection-2-invariant}.
  
  The two associations $\bmap\mapsto \nabla$ and $\nabla\mapsto \bmap$ are obvious inverses of each other, which concludes the proof.
\end{proof}


\subsection{Associated vector bundles}

Consider now the case where $\U=\fb(E)$ for some vector bundle $E\to M$ and $P$ is the bundlization of a morphism $\varphi:\A\to \fb(E)$, i.e.~of an $\A$-module structure on $E$.
We thus have an $\L$-equivariant principal $\fb(E)$-bundle $Q=\frac{\L\times_M \fb(E)}{\A}$ (see Examples~\ref{ex:LLA-morphism-phi} and \ref{ex:avecE} for more details).
Composing Theorem \ref{thm:general} and Theorem \ref{thm:connection-forms-connections-on-associated-bundles} yields:

\begin{thm}
  \label{thm:L-conn-on-E--conn-on-assoc-bundle}
  Let $(\L, \A)$ be a Lie groupoid pair over $M$,
  and let $E$ be an $\A$-module.
  There is a bijective correspondence between
  \begin{enumerate}[label=(\arabic*)]
    \item $\A$-compatible $\ll$-connections on $E$, and
    \item $\L$-invariant fibered connections on the associated vector bundle $ \frac{\L \times_M E}{ \A} \to \L/\A \to M$ defined in Example \ref{ex:avecE}.
  \end{enumerate}
\end{thm}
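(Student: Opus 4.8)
The plan is to obtain Theorem~\ref{thm:L-conn-on-E--conn-on-assoc-bundle} as a direct composition of two correspondences already established in the excerpt, specialized to the case $\U=\fb(E)$. Concretely, take the $\L$-equivariant principal $\fb(E)$-bundle $P=\frac{\L\times_M\fb(E)}{\A}$ over $X=\L/\A$ of Examples~\ref{ex:avecE} and \ref{ex:LLA-morphism-phi}, with $\varphi:\A\to\fb(E)$ the $\A$-module structure on $E$. Theorem~\ref{thm:general} gives a bijection between fibered connection forms on $P$ and $\varphi$-compatible $\ll$-$\uu$-connections (here $\uu=\Tlin E$), while Theorem~\ref{thm:connection-forms-connections-on-associated-bundles} gives a bijection between fibered connection forms on $P$ and $\L$-equivariant fibered connections on the associated bundle $P(E)$. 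Composing these two bijections yields the desired correspondence, once we identify the objects on both ends with those in the statement.

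First I would identify the associated bundle: by Example~\ref{ex:avecE}, $P(E)=\frac{P\times_M E}{\fb(E)}\cong\frac{\L\times_M E}{\A}$ as an $\L$-module over $X=\L/\A$, which is exactly the bundle appearing in item~(2) of the statement, fibered as $\frac{\L\times_M E}{\A}\to\L/\A\to M$. Second, I would match the connections: a $\varphi$-compatible $\ll$-$\Tlin E$-connection is, after composing with the Lie algebroid isomorphism $\TD^{-1}:\Tlin E\to\D(E)$ of \eqref{eq:isomorphism-derivative-endormorphisms-derivations}, the same thing as an $\ll$-connection on $E$; and under this identification the compatibility condition (C) of Definition~\ref{defn:compatible-connection} translates, via the Proposition characterizing $\varphi$-compatibility by conditions \ref{defn:compatible-connection-1-extend}--\ref{defn:compatible-connection-2-prime-invariant} (applicable since $\fb(E)$ is over the same base $M$ and $\varphi$ covers $\id_M$), into: $\nabla$ extends the $\aa$-action and is equivariant in the sense of \ref{defn:compatible-connection-2-prime-invariant}. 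By Corollary~\ref{cor:atiyah-class-A-modules} and Definition~\ref{def:def-A-compatible}, the vanishing of $R^\nabla$ in \eqref{eq:definition-R-nabla-particular} is precisely the combination of these two conditions — so $\varphi$-compatible $\ll$-$\Tlin E$-connections are exactly $\A$-compatible $\ll$-connections on $E$, which is item~(1). Third, I would note that "$\L$-equivariant fibered connection on $P(E)$" (in the sense of the map \eqref{eq:connection-eq-hor-forms}) is exactly the notion of $\L$-invariant fibered connection on $\frac{\L\times_M E}{\A}$ used in item~(2), this identification being built into Proposition~\ref{prop:sections-equivariant-maps}.

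The main obstacle — really the only nontrivial point — is verifying that the two translations of compatibility agree, i.e.\ that condition (C) of Definition~\ref{defn:compatible-connection}, pushed through $\TD^{-1}$, coincides on the nose with the vanishing of the cochain $R^\nabla$ of \eqref{eq:definition-R-nabla-particular}. This amounts to comparing the infinitesimal formula \eqref{eq:def-adjoint-gen-mor} for $\Add$ against the bisection formula \eqref{eq:definition-R-nabla-particular}, which is exactly the content of Lemma~\ref{lem:adjoint-formula} together with the fact that the $T\A$-action $\Add$ integrates to the adjoint action of $\Bis(\A)$ (Proposition~\ref{prop:properties-kappa} and the arguments in the proof of Proposition~\ref{prop:characterization-bundle-maps-of-connection-forms}). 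Since the equivalence between conditions \ref{defn:compatible-connection-1-extend}--\ref{defn:compatible-connection-2-prime-invariant} and (C) is already proved in the Proposition preceding Example~\ref{ex:module_case2}, and the equality of \ref{defn:compatible-connection-2-prime-invariant}+\ref{defn:compatible-connection-1-extend} with $R^\nabla=0$ is immediate from \eqref{eq:definition-R-nabla-particular}, all the work has been done upstream; the proof of the theorem itself is therefore just the sentence "compose Theorems~\ref{thm:general} and \ref{thm:connection-forms-connections-on-associated-bundles}, using Example~\ref{ex:avecE} to identify the associated bundle and the above identifications of the connection notions."
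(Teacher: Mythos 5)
Your proposal is correct and follows exactly the paper's route: the paper obtains Theorem~\ref{thm:L-conn-on-E--conn-on-assoc-bundle} precisely by specializing to $\U=\fb(E)$ and composing Theorem~\ref{thm:general} with Theorem~\ref{thm:connection-forms-connections-on-associated-bundles}, using Examples~\ref{ex:avecE} and \ref{ex:LLA-morphism-phi} to identify the associated bundle. The extra care you take in matching $\varphi$-compatibility (condition (C)) with $\A$-compatibility (vanishing of $R^\nabla$) is left implicit in the paper but is exactly the bookkeeping already done in Example~\ref{ex:module_case}, the proposition preceding Example~\ref{ex:module_case2}, and Definition~\ref{def:def-A-compatible}.
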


We thus arrive at the main theorem of this section.
\begin{thm}
  \label{thm:L-conn-on-LoverA--conn-on-tgt-bundle}
  Let $(\L, \A)$ be a Lie groupoid pair over $M$.
  There is a bijective correspondence between
  \begin{enumerate}[label=(\arabic*)]
    \item $\A$-compatible $\ll$-connections on $\ll/\aa$, and
    \item $\L$-invariant fibrewise affine connections on $\L/\A \to M$.
  \end{enumerate}
\end{thm}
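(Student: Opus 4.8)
The plan is to deduce Theorem \ref{thm:L-conn-on-LoverA--conn-on-tgt-bundle} from the already-established Theorem \ref{thm:L-conn-on-E--conn-on-assoc-bundle} by taking the specific $\A$-module $E=\ll/\aa$, so the only real content is to identify the associated vector bundle $\frac{\L\times_M(\ll/\aa)}{\A}\to\L/\A$ with the fibred tangent bundle $T^{\underline\ta}(\L/\A)\to\L/\A$, in an $\L$-equivariant way. Once that identification is in place, an $\L$-invariant fibred connection on $\frac{\L\times_M(\ll/\aa)}{\A}$ is the same thing as an $\L$-invariant fibred connection on $T^{\underline\ta}(\L/\A)$, i.e.\ an $\L$-invariant fibrewise affine connection on $\L/\A\to M$, and Theorem \ref{thm:L-conn-on-E--conn-on-assoc-bundle} matches these bijectively with $\A$-compatible $\ll$-connections on $\ll/\aa$.

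Concretely, I would first recall that $\L$, viewed as the equivariant principal $\A$-bundle ${}_\L\L_\A$ over $X=\L/\A$, has projection $\pi:\L\to\L/\A$ with $T^\pi\L=\ker T\pi$ the bundle of vectors tangent to the $\A$-orbits, i.e.\ the right translates of $\aa$. Then I would write down the bundle map $T^{\underline\ta}\L=\ker T(\ta)\subset T\L$ (restricted to fibres of $\underline\ta\circ\pi=\ta$) and push forward along $T\pi$ to get $T\pi:T^{\underline\ta}\L\to\pi^*T^{\underline\ta}(\L/\A)$; its kernel is exactly $T^\pi\L$, which is the image of the right-invariant vector fields of $\aa$, so $T^{\underline\ta}\L/T^\pi\L\cong\pi^*T^{\underline\ta}(\L/\A)$. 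On the other hand, via the left translation trivialisation, $T^\ta\L\cong \ta^*\ll$ (using right-invariant vector fields, cf.\ \eqref{eq:left-and-right-invariant-vector-fields}), under which $T^\pi\L$ corresponds to $\ta^*\aa$; hence $T^{\underline\ta}\L/T^\pi\L\cong\ta^*(\ll/\aa)$, which as an $\A$-equivariant bundle over $\L$ descends to $\frac{\L\times_M(\ll/\aa)}{\A}\cong T^{\underline\ta}(\L/\A)$. The key check is that this isomorphism intertwines the left $\L$-actions — the one on $T^{\underline\ta}(\L/\A)$ coming from the action of $\L$ on $\L/\A$ by left translation, and the one on $\frac{\L\times_M(\ll/\aa)}{\A}$ from Definition \ref{defn:associated-vector-bundle} — and that the $\A$-module structure induced on $\ll/\aa$ by this identification is precisely the canonical one of Proposition \ref{prop:bigA-action-on-L-over-A}; the second remark after that proposition essentially already records this geometric picture (the action of $\generic\in\A$ on $T_x(\L/\A)$ corresponds to $\Ad_\generic$).

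With the identification of bundles and module structures settled, the theorem follows formally: apply Theorem \ref{thm:L-conn-on-E--conn-on-assoc-bundle} with $E=\ll/\aa$. An $\L$-invariant fibred connection on $\frac{\L\times_M E}{\A}\to\L/\A$ becomes, under the isomorphism above, an $\L$-invariant fibrewise affine connection on $\L/\A\to M$ (a fibrewise affine connection on a fibration $Y\to M$ being by definition a $T^fY$-connection on $T^fY$, as recalled at the end of Section \ref{ssec:lie-algebroids}), and the theorem's correspondence gives the bijection with $\A$-compatible $\ll$-connections on $\ll/\aa$. I would spell out explicitly that the notion of ``$\L$-invariant'' on the two sides matches, since $\L$-invariance of the fibred connection on the associated bundle was defined via the $\Bis(\L)$-action in Proposition \ref{prop:sections-equivariant-maps}, which is exactly the action induced by left translations on $\L/\A$.

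The main obstacle I expect is the bookkeeping in establishing the $\L$-equivariant isomorphism $\frac{\L\times_M(\ll/\aa)}{\A}\cong T^{\underline\ta}(\L/\A)$ compatibly with all the module/action structures: one must be careful about the left-versus-right invariant trivialisations of $T\L$, about the fact that $\pi$ is a quotient by a \emph{right} $\A$-action while the relevant module structure on $\ll/\aa$ is a \emph{left} action obtained via $\Ad$ (hence the appearance of $\Add_{u^{-1}}$ in \eqref{eq:canonical_action}), and about matching $T^\pi\L$ with $\ta^*\aa$ rather than $\so^*\aa$. None of these steps is deep, but getting the variances consistent is where the argument could go wrong; everything else is a direct appeal to Theorems \ref{thm:L-conn-on-E--conn-on-assoc-bundle} and \ref{thm:connection-forms-connections-on-associated-bundles}.
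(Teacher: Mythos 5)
Your proposal follows essentially the same route as the paper: deduce the theorem from Theorem \ref{thm:L-conn-on-E--conn-on-assoc-bundle} with $E=\ll/\aa$ by exhibiting an $\L$-equivariant isomorphism $\frac{\L\times_M(\ll/\aa)}{\A}\cong T^{\underline\ta}(\L/\A)$ of vector bundles over $\L/\A$; the paper's proof is precisely this one-paragraph reduction. The one concrete slip is in your trivialisation, at exactly the spot you flag as delicate: with the paper's conventions ($\ll=\ker T\so$, and $\L/\A$ the quotient by the \emph{right} $\A$-action, so that $\ta$ descends to $\underline\ta$), the bundle $T^\ta\L=\ker T\ta$ is trivialised by \emph{left} translation, i.e.\ by left-invariant vector fields, as $\so^*\ll$ via $(\generic,\lambda)\mapsto 0_\generic\bullet\lambda^{-1}$, and under this map $T^\pi\L$ corresponds to $\so^*\aa$ --- not $\ta^*\ll$ and $\ta^*\aa$ as you wrote. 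This is not cosmetic: quotienting $\ta^*(\ll/\aa)$ by the right $\A$-action would yield the pullback bundle $\underline\ta^{\,*}(\ll/\aa)$ rather than the associated bundle $\frac{\L\times_M^{\so,q}(\ll/\aa)}{\A}$ of Example \ref{ex:avecE}, whereas $\so^*(\ll/\aa)/\A$ is the right object. The paper avoids this bookkeeping by writing the map explicitly, $(\generic,\lambda)\mapsto T\pi\left(0_\generic\bullet\lambda^{-1}\right)$, checking that it vanishes on $\L\times_M\aa$ and is invariant under the diagonal right $\A$-action; adopting that explicit formula is the cleanest way to repair your sketch, after which the rest of your argument (matching the induced $\A$-action with Proposition \ref{prop:bigA-action-on-L-over-A} and invoking Theorem \ref{thm:L-conn-on-E--conn-on-assoc-bundle}) goes through as stated.
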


\begin{proof}
  There is a natural $\L$-equivariant isomorphism $\theta : \frac{\L\times_M \ll/\aa}{\A} \to T^{\ol \ta}(\L/\A)$ of vector bundles over $\L/\A$, defined as follows.
  The projection $\pi:\L\to \L/\A$ induces an $\L$-equivariant map $\pi':T^\ta\L \to T^{\ol\ta}(\L/\A)$.
  Now the composition of the ($\L$-equivariant) map $\L\times_M \ll \to T^\ta \L : (\generic,\lambda) \mapsto 0_\generic \bullet \lambda^{-1}$ with $\pi'$ vanishes on $\L\times_M \aa$, hence descends to a map $\L\times_M \ll/\aa \to T^{\ol\ta}(\L/\A)$.
  The latter is $\A$-invariant, so that it descends to a map $\theta$ as above.
  
  Through $\theta$, $\L$-invariant fibered connections on the vector bundle $\frac{\L \times_M \ll/\aa}{\A} \to \L/\A \to M$ become $\L$-invariant fibered connections on the vector bundle $T^{\ol\ta}(\L/\A) \to \L/\A \to M$, so that the result follows by applying Theorem~\ref{thm:L-conn-on-E--conn-on-assoc-bundle} to $E=\ll/\aa$.
\end{proof}

\begin{rmk}
\label{rmk:simple-construction-of-main-correspondence}
  The correspondence of Theorem~\ref{thm:L-conn-on-E--conn-on-assoc-bundle} is by construction obtained by composing the correspondences of Theorems \ref{thm:general} and \ref{thm:connection-forms-connections-on-associated-bundles}.
  We give here a shorter description of it. Let $\nabla^E $ be an $\ll$-connection on $E$.
  There exists a unique fibered connection $\nabla^{\so^* E}$ on $ \so^* E \to \L \xrightarrow{\ta} M$ such that
  \begin{equation*}
    \nabla_{L(l)}^{\so^* E} \so^*  e = \so^* \nabla^E_l e ,
  \end{equation*}  
  where $L(l)$ is the left-invariant vector field associated to $l\in \ll$.
  The vector bundle $\so^* E \to \L$ is canonically isomorphic to the pullback of $\frac{\L \times E}{ \A} \to \L/\A$ through $\L \to \L/\A$.
  If $\nabla^E$ is $\varphi$-compatible, then one can see that the fibered connection $ \nabla^{\so^* E}$ is in fact the pullback of some (unique) fibered connection $\nabla^{\frac{\L \times E}{\A}}$ on the vector bundle $\frac{\L \times E}{\A} \to \L/\A$. 
  Spelling out the construction, one can see that
  \begin{equation*}
    \nabla^E \mapsto \nabla^{\frac{\L \times E}{ \A}}
  \end{equation*}
  is the correspondence that we obtained. 
\end{rmk}

We now derive an immediate consequence of Theorem \ref{thm:L-conn-on-E--conn-on-assoc-bundle} and the last item of Corollary \ref{cor:atiyah-class-A-modules}.

\begin{cor}
\label{cor:L-conn}
  Let $(\L, \A)$ be a Lie groupoid pair over $M$, and let $E$ be an $\A$-module.
  The Atiyah class of $E$ with respect to $(\L,\A)$ vanishes if and only if there exist $\L$-invariant fibered connections on the associated vector bundle $\frac{\L \times_M E}{\A} \to \L/\A \to M$. 
\end{cor}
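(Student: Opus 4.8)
The plan is to simply chain together two results that have already been established. First, Corollary~\ref{cor:atiyah-class-A-modules}(4) asserts that the Atiyah class $\alpha_{(\L,\A),E}$ vanishes precisely when there exists an $\A$-compatible $\ll$-connection on $E$; moreover, by item~(1) of the same corollary, $\ll$-connections extending the $\aa$-action always exist, so this condition is not vacuous and the cocycle $R^\nabla$ of \eqref{eq:definition-R-nabla-particular} is genuinely defined. Second, Theorem~\ref{thm:L-conn-on-E--conn-on-assoc-bundle} provides a bijective correspondence between $\A$-compatible $\ll$-connections on $E$ and $\L$-invariant fibered connections on $\frac{\L\times_M E}{\A}\to \L/\A\to M$.

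Putting these together: the space of $\L$-invariant fibered connections on the associated bundle is nonempty if and only if the space of $\A$-compatible $\ll$-connections on $E$ is nonempty (by Theorem~\ref{thm:L-conn-on-E--conn-on-assoc-bundle}), which in turn holds if and only if $\alpha_{(\L,\A),E}=0$ (by Corollary~\ref{cor:atiyah-class-A-modules}). This is exactly the claimed equivalence, so the proof is a one-line deduction once the two ingredients are invoked in the right order.

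There is no genuine obstacle here: all the content of the corollary is carried by Theorems~\ref{thm:connection-forms-connections-on-associated-bundles} and \ref{thm:general} (whose composition is Theorem~\ref{thm:L-conn-on-E--conn-on-assoc-bundle}) and by the cohomological analysis of Proposition~\ref{prop:atiyah-class-transitive-case} (whose specialization to $\U=\fb(E)$, using Example~\ref{ex:module_case2}, yields Corollary~\ref{cor:atiyah-class-A-modules}). The only point worth a second glance is that the phrase ``$\A$-compatible $\ll$-connection on $E$'' denotes the same object in both statements, namely the one of Definition~\ref{def:def-A-compatible}; since this is indeed the case, the two correspondences compose without any compatibility adjustment, and nothing further is needed.
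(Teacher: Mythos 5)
Your proposal is correct and matches the paper's own argument exactly: the corollary is stated there as an immediate consequence of Theorem~\ref{thm:L-conn-on-E--conn-on-assoc-bundle} combined with the last item of Corollary~\ref{cor:atiyah-class-A-modules}, which is precisely the chain of deductions you give. Nothing further is needed.
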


In the case where $E=\ll/\aa$, using Theorem \ref{thm:L-conn-on-LoverA--conn-on-tgt-bundle} instead of Theorem \ref{thm:L-conn-on-E--conn-on-assoc-bundle} yields:

\begin{cor}
\label{cor:atiyah=0GivesConnections}
  Let $(\L, \A)$ be a Lie groupoid pair over $M$.
  The Atiyah class of the Lie groupoid pair $(\L,\A)$ vanishes if and only if there exist $\L$-invariant fibrewise affine connections on $\L/\A \to M$. 
\end{cor}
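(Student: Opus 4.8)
The plan is to obtain this corollary as the special case $E=\ll/\aa$ of the machinery already assembled in this section, so that essentially no new work is required. By Definition~\ref{def:Atiyah-class-of-a-Lie-groupoid-pair}, the Atiyah class $\alpha_{(\L,\A)}$ of the Lie groupoid pair is by definition the Atiyah class $\alpha_{(\L,\A),\ll/\aa}$ of the canonical $\A$-module $\ll/\aa$ of Proposition~\ref{prop:bigA-action-on-L-over-A}, so the first step is simply to unwind that definition.

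Next I would feed $E=\ll/\aa$ into Corollary~\ref{cor:atiyah-class-A-modules}: item~(1) guarantees the existence of at least one $\ll$-connection on $\ll/\aa$ extending the $\aa$-action (so that the representative cocycle $R^\nabla$ makes sense), and item~(4) says that $\alpha_{(\L,\A),\ll/\aa}$ vanishes if and only if one of these $\ll$-connections is in fact $\A$-compatible. Then I would invoke Theorem~\ref{thm:L-conn-on-LoverA--conn-on-tgt-bundle}, which provides a bijection between $\A$-compatible $\ll$-connections on $\ll/\aa$ and $\L$-invariant fibrewise affine connections on $\L/\A\to M$. Composing the two equivalences, the set of $\L$-invariant fibrewise affine connections on $\L/\A\to M$ is nonempty precisely when $\alpha_{(\L,\A)}=0$, which is the claim. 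The same conclusion can also be read off from Corollary~\ref{cor:L-conn} by substituting $E=\ll/\aa$ and using the $\L$-equivariant isomorphism $\frac{\L\times_M\ll/\aa}{\A}\cong T^{\ol\ta}(\L/\A)$ built in the proof of Theorem~\ref{thm:L-conn-on-LoverA--conn-on-tgt-bundle}.

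There is no real obstacle in this last step; the only point requiring a word of care is bookkeeping: one should check that the $\A$-module structure on $\ll/\aa$ implicit in the phrase ``$\A$-compatible $\ll$-connection on $\ll/\aa$'' in Theorem~\ref{thm:L-conn-on-LoverA--conn-on-tgt-bundle} is the same canonical one of Proposition~\ref{prop:bigA-action-on-L-over-A} used to define $\alpha_{(\L,\A)}$, which it is by construction. The substantive content has already been spent upstream --- in Theorem~\ref{thm:general} (fibered connection forms on $Q$ versus $\varphi$-compatible $\ll$-$\uu$-connections), in Theorem~\ref{thm:connection-forms-connections-on-associated-bundles} (connection forms versus $\L$-invariant fibered connections on associated bundles), in the identification $\frac{\L\times_M\ll/\aa}{\A}\cong T^{\ol\ta}(\L/\A)$, and in the well-definedness of the Atiyah cocycle established in Proposition~\ref{prop:atiyah-class-transitive-case} and Corollary~\ref{cor:atiyah-class-A-modules}.
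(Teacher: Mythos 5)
Your proposal is correct and follows exactly the paper's route: the paper obtains this corollary by specializing to $E=\ll/\aa$ and combining the last item of Corollary~\ref{cor:atiyah-class-A-modules} with the bijection of Theorem~\ref{thm:L-conn-on-LoverA--conn-on-tgt-bundle}. Your remark that the $\A$-module structure on $\ll/\aa$ is the canonical one of Proposition~\ref{prop:bigA-action-on-L-over-A} is the right bookkeeping point, and nothing further is needed.
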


In view of Corollary \ref{cor:infinitesimalAtiyahVanishes-implies-globalAtiyahVanishes}, the following result can be derived: 

\begin{cor}
\label{cor:L-conn2}
	Let $(\L, \A)$ be a Lie groupoid pair over $M$ with $A$ source-connected, and let $(\ll,\aa)$ be the corresponding infinitesimal Lie algebroid pair.
  \begin{enumerate}
    \item The Atiyah class of an $\A$-module $E$ with respect to $(\ll,\aa)$ vanishes if and only if there exist $\L$-invariant fibered connections on the associated vector bundle  $\frac{\L \times_M E}{\A} \to \L/\A \to M$. 
    \item The Atiyah class of the Lie algebroid pair $(\ll,\aa)$ vanishes if and only if there exist $\L$-invariant fibrewise affine connections on $\L/\A \to M$.
	\end{enumerate}
\end{cor}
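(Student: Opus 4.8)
The plan is to combine the global results of Section~\ref{sec:main-results} with the comparison between the global and infinitesimal Atiyah classes afforded by the van Est map. The key observation is that, since the source fibres of $\A$ are connected (i.e.\ $0$-connected), the van Est map
\[
  H^1(\A,(\ll/\aa)^*\otimes\End E)\longrightarrow H^1(\aa,(\ll/\aa)^*\otimes\End E)
\]
is injective in degree $1$, being the case $n=0$ of the statement that van Est is an isomorphism in degrees $\le n$ and injective in degree $n+1$ when the source fibres are $n$-connected (Theorem~3 of \cite{crainic_differentiable_2003}). Combined with Proposition~\ref{prop:vanest}, which identifies the image of the global Atiyah class $\alpha_{(\L,\A),E}$ under this map with the infinitesimal Atiyah class $\atiyahLieAlgebroid_{(\ll,\aa),E}$, this yields the equivalence
\[
  \alpha_{(\L,\A),E}=0 \iff \atiyahLieAlgebroid_{(\ll,\aa),E}=0 .
\]
Indeed, the implication $\Rightarrow$ is immediate (van Est sends $0$ to $0$), while the converse is precisely Corollary~\ref{cor:infinitesimalAtiyahVanishes-implies-globalAtiyahVanishes}, so in fact no new argument is needed here beyond invoking that corollary.

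For item~(1), I would then simply chain this equivalence with Corollary~\ref{cor:L-conn}: the vanishing of $\atiyahLieAlgebroid_{(\ll,\aa),E}$ is equivalent to the vanishing of $\alpha_{(\L,\A),E}$, which in turn is equivalent to the existence of $\L$-invariant fibered connections on $\frac{\L\times_M E}{\A}\to\L/\A\to M$.

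For item~(2), I would specialize to $E=\ll/\aa$. Here the classes in question are $\alpha_{(\L,\A)}=\alpha_{(\L,\A),\ll/\aa}$ (Definition~\ref{def:Atiyah-class-of-a-Lie-groupoid-pair}) and $\atiyahLieAlgebroid_{(\ll,\aa)}=\atiyahLieAlgebroid_{(\ll,\aa),\ll/\aa}$, so the equivalence above already gives $\alpha_{(\L,\A)}=0\iff\atiyahLieAlgebroid_{(\ll,\aa)}=0$. Replacing Corollary~\ref{cor:L-conn} by Corollary~\ref{cor:atiyah=0GivesConnections} (equivalently, by Theorem~\ref{thm:L-conn-on-LoverA--conn-on-tgt-bundle} applied to $E=\ll/\aa$) then translates the vanishing of $\alpha_{(\L,\A)}$ into the existence of $\L$-invariant fibrewise affine connections on $\L/\A\to M$.

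As for the main obstacle: there is essentially none — the whole argument is a bookkeeping exercise splicing together results already established, and its correctness rests entirely on having the van Est injectivity in degree $1$ available, which is exactly what the hypothesis of source-connectedness provides and what Corollary~\ref{cor:infinitesimalAtiyahVanishes-implies-globalAtiyahVanishes} records. The only point meriting a sentence of care is to state explicitly that ``source-connected'' means connectedness of the $\so$-fibres of $\A$, so that $n=0$ in the van Est estimate and injectivity (rather than bijectivity) in degree $1$ is what one is entitled to use.
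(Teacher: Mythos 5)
Your proposal is correct and follows exactly the route the paper intends: the reverse implication is the trivial functoriality of van Est applied to Proposition~\ref{prop:vanest}, the forward implication is Corollary~\ref{cor:infinitesimalAtiyahVanishes-implies-globalAtiyahVanishes} (van Est injectivity in degree $1$ from source-connectedness), and the translation into connections is Corollary~\ref{cor:L-conn} for item~(1) and Corollary~\ref{cor:atiyah=0GivesConnections} for item~(2). This matches the paper's own (one-line) derivation.
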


\subsection{Reductive homogeneous spaces}
\label{sec:reductive-homogeneous-spaces}

Consider a Lie group $G$ seen as a $G$-equivariant principal $H$-bundle over $G/H$ for some closed Lie subgroup $H$. 
It is well-known (see e.g.\ \cite[II, Theorem 11.1]{kobayashi_1963_foundations}) that the principal $H$-bundle $G$ admits a $G$-invariant connection if and only if $G/H$ is a \emph{reductive homogeneous space}, in the sense that $\h$ admits an $\Ad_H$-invariant complement in $\g$.
In that case, any $G$-invariant principal $U$-bundle over $G/H$ (and any associated vector bundle thereof) also admits a $G$-invariant connection.
  
A similar statement holds for Lie groupoid pairs. \emph{For a Lie groupoid pair $(\L,\A)$, the principal $\A$-bundle $\L$ admits an $\L$-invariant fibered connection form if and only if $\L/\A$ is a reductive homogeneous space, in the sense that there exists a vector subbundle $\bb$ in $\ll$ supplementary to $\aa$ and which is invariant under the $\Bis(\A)$-action on $\ll$. In that case, any $\L$-equivariant principal bundle over $\L/\A$ admits fibered connection forms.}
	
Indeed, by left-invariance, a fibered connection form $\omega:T^\ta\L\to \so^*\aa$ is equivalent, through the formula $\omega=\underline\omega\circ\tau_\L$, to a projection $\underline\omega:\ll\to \aa$ which is anchored and invariant under the $\Bis(\A)$-actions on $\ll$ and $\aa$. The kernel of $\underline\omega$ is then the desired subbundle $\bb$. Note that the anchor map restricted to $\bb$ necessarily vanishes, so $\L/\A$ is in some sense a ``bundle of (Lie group) homogeneous spaces''.
	
By Proposition~\ref{prop:exhaust}, any $\L$-equivariant principal bundle $Q$ over $\L/\A$ is a composition $_\L\L_\A \circ P$ with a generalized morphism $P$. Such morphisms always have a Maurer--Cartan form (see Remark~\ref{rmk:fibered-connection-forms-unique-on-generalized-morphisms}), hence by Proposition~\ref{prop:composition-of-connections} $Q$ always admits fibered connection forms.

\subsection{Proper Lie groupoids}

The vanishing theorem for proper Lie groupoids \cite[Proposition 1]{crainic_differentiable_2003} states that $H^d(\A,E)=0$ for $d\geq 1$ whenever $\A$ is a proper Lie groupoid and $E$ is an $\A$-module. 
Hence the Atiyah class of any generalized morphism from $\A$ to a transitive Lie groupoid $\U$ vanishes.

In particular, for such an $\A$,
\begin{enumerate}
	\item any equivariant principal $\U$-bundle over a homogeneous space $\L/\A$ with $\U$ transitive admits fibered connection forms,
	\item any equivariant vector bundle over a homogeneous space $\L/\A$ admits invariant connections,
	\item any homogeneous space $\L/\A$ admits invariant fibrewise affine connections.
\end{enumerate}

Note that, in contrast to the Lie group case, homogeneous spaces of proper Lie groupoids are in general not reductive homogeneous spaces in the sense of Section~\ref{sec:reductive-homogeneous-spaces}.
Indeed, there might not even exist an anchored projection $\nabla:\ll\to\aa$ since this would imply that the anchor vanishes on the kernel of $\nabla$.


\section{Poincaré--Birkhoff--Witt theorem}
\label{sec:PBW-theorem}

In this section, for a Lie groupoid pair $(\L,\A)$ we relate the vanishing of the Atiyah class (Definition \ref{def:Atiyah-class-of-a-Lie-groupoid-pair}) to the existence of an $\A$-equivariant Poincaré--Birkhoff--Witt map.
The latter map is obtained as the infinite jet of the exponential map of an invariant connection on $\L/\A$ whose existence was shown in Corollary \ref{cor:atiyah=0GivesConnections} to be equivalent to the vanishing of the Atiyah class.


\subsection{Exponential map of invariant connections}
\label{ssec:exponential-map-of-invariant-connections}

Let $\nabla$  be a fibrewise affine connection on $\L/\A\to M$.

For any $ x \in M$ there is, in view of (\ref{eq:t_bar}), a natural isomorphism of vector spaces
\[  T^{\underline\ta}_x (\L/\A) \cong T^\ta_x\L / T^\ta_x \A . \]
Since the differentials at $ x$ of the inverse maps $ \inv $ of $ \L$ and $ \A$ restrict to isomorphisms $ \ll_x \to T^\ta_x\L  $ and 	$ \aa_x \to T^\ta_x\A $, they further induce an isomorphism
\begin{equation}
\label{eq:inverse_canonique} 
  T_x\inv :(\ll/\aa)_x \simeq T^{\underline\ta}_x(\L/\A) . 
\end{equation}
Now, the exponential map $ \exp_x^\nabla $ at $x$ of the connection $\nabla$ is a diffeomorphism from a neighborhood of $0 \in  T^{\underline\ta}_x(\L/\A)$ to a neighborhood of $ x$ in the fiber $\underline\ta^{-1}(x)$. 
Composing $ \exp_x^\nabla $ with the isomorphism (\ref{eq:inverse_canonique}),	we get a diffeomorphism from a neighborhood of $0$ in $\ll_x/\aa_x$	to a neighborhood of $ x$ in the fiber ${\underline\ta}^{-1}(x)$. 
By a slight abuse of notation, we shall still denote by $ \exp_x^\nabla $ this diffeomorphism. 
This construction can now be done at all points $ x \in M$. 
The henceforth obtained assignment
\[  \beta \mapsto \exp^\nabla_{q (\beta)}  (\beta) \]
is a diffeomorphism from a neighborhood 
$[M,\ll/\aa]$
of $M $ in $\ll/\aa $	to a neighborhood 
$[M,\L/\A]$
of $M $ in $\L/\A $. We shall denote by 
$ \exp^\nabla : [M,\ll/\aa] \to [M,\L/\A]$  
this diffeomorphism.
	
Let us study the equivariance of $ \exp^\nabla$.	Notice first that $ \exp^\nabla$ is a fibered diffeomorphism that respects $M$, i.e.\ the diagrams
\[
  \vcenter{\xymatrix{
    & \L/\A \\
    M \ar[r]_0 \ar[ur]^1 & \ll/\aa \ar[u]_{\exp^\nabla} 
  }} 
  \text{\quad and\quad} 
  \vcenter{\xymatrix{
    \L/\A \ar[dr]^{\underline\ta} & \\
    \ll/\aa \ar[u]^{\exp^\nabla} \ar[r]_q & M     
  }}
\]
are commutative.
The commutativity of these diagrams implies that,  for all $\generic \in \A $ whose  action on a given $\beta \in \ll/\aa$ makes sense (i.e.\ when $\so(\generic)=q(\beta)$), the action of $\generic$ on $ \exp^{\nabla}(\beta) $ also makes sense (i.e.\ $\so(\generic) = \ta \circ \exp^{\nabla}(\beta) $), at least when $\exp^{\nabla}(\beta) $ is defined. 
Let $U$ be a neighborhood of $ M$  in $\ll/\aa$ on which $\exp$ is defined and let
$V$ be its image through the exponential map, we say that the diffeomorphism $\exp^\nabla $ is \emph{$\A$-equivariant on $U$} when the equality
\[  \exp^\nabla (\generic \cdot \beta ) = \generic \cdot \exp^\nabla (\beta)  \]
holds for all $ \generic \in \A $ and $\beta \in U \subset \ll/\aa $ such that $\so(\generic)=q(\beta)$ and 
$\generic \cdot \beta \in U $. 

\begin{thm}
\label{thm:exponential}
  A Lie groupoid pair $(\L,\A)$ has vanishing Atiyah class if and only if there exists an $\A$-equivariant diffeomorphism from a neighborhood of $M $ in $\ll/\aa $	to a neighborhood of $M $ in $\L/\A $.
  In particular, under these equivalent conditions, the $\A$-action on $ \L/\A$ is linearizable.
  For the ``if'' part, only the fibered 2-jet of such a diffeomorphism is needed.
\end{thm}

\begin{proof}
  According to the fourth item in Corollary  \ref{cor:atiyah-class-A-modules}, the Atiyah class of $(\L,\A) $ vanishes if and only if an $\A$-compatible $\ll$-connection on $L/A$ exists. 
  By Theorem \ref{thm:L-conn-on-LoverA--conn-on-tgt-bundle}, $\A$-compatible $\ll $-connections on $L/A$ exist if and only if $\L$-invariant fibrewise affine connections on $ \L/\A\to M$ exist. 
  We are thus left the task of relating the existence of invariant fibrewise affine connections on $\L/\A$ to the existence of equivariant diffeomorphisms from a neighborhood of $M $ in $\ll/\aa $	to a neighborhood of $M $ in $\L/\A $.
  The statement about the linearizability of the action will then follow from the linearity of the $\A$-action on $\ll/\aa$.

	Let first $\nabla$ be an $\L$-invariant fibrewise affine connection on $\L/\A$.
  For any diffeomorphism $\Psi$ from $(\L/\A)_x$ to $ (\L/\A)_y$ mapping $x$ to $y$ and preserving the connection $\nabla$, the diagram
  \[
    \vcenter{\xymatrix{
      T_x  (\L/\A)_x \ar[d]^{ T_x \Psi }  \ar[r]^{ \exp^\nabla } 
      & (\L/\A)_x \ar[d]^{ \Psi } \\ 
      T_y (\L/\A)_y   \ar[r]^{ \exp^\nabla } 
      &  (\L/\A)_y
    }}
  \]
  is commutative.
  Applying this to a left-translation $\Psi=L_\generic $ for some $\generic \in \A$ with source $x$ and target $y$ yields the desired result, since $T L_\generic: T_x  (\L/\A)_x \to T_y (\L/\A)_y$ coincides with the $\A $-action defined in section \ref{ssec:bigA-action-on-L-over-A}, after identifying $  T_x  (\L/\A)_x $ with $(\ll/\aa)_x$ and $  T_y  (\L/\A)_y $ with $(\ll/\aa)_y$.
	
	Conversely, let $\phi$ be an $\A$-equivariant diffeomorphism from a neighborhood of the zero section in $\ll/\aa $	to a neighborhood of $M $ in $\L/\A $.
	Although it is not the exponential map of a connection, it does define a unique torsionfree fibrewise affine connection $\nabla$ on $\L/\A $ whose geodesic symmetry agrees with that of $\phi$ up to order~2.
	Indeed, transporting $\phi$ using \eqref{eq:inverse_canonique} and the transitive left $\L$-action yields a well-defined $\L$-equivariant diffeomorphism from a neighborhood of the zero section in $T^{\underline\ta}(\L/\A)$ to a neighborhood of the diagonal in $\L/\A \times_M \L/\A$.
	Let us denote it by $\Phi:X_p\mapsto (p,\Phi_p(X_p))$.
	Let, for each $p\in \L/\A$, $s_p$ be the fibrewise ``geodesic symmetry'' at $p$ defined by $s_p(\Phi_p(X)) = \Phi_p(-X)$ for sufficiently small $X\in T_p^{\underline\ta}(\L/\A)$.
	Then, we have $s_p(p)=p$ and $T^{\underline \ta}_ps_p=-\Id$, and the formula
	\begin{equation*}
		\left( \nabla_XY \right)_p = \frac12 \left[ X, Y + (s_p)_*Y \right]_p
	\end{equation*}
	defines \cite[Proposition 1.3]{bertelson_affine_2011} the announced connection.
	It only depends on the 2-jet of each $s_p$ at $p$, and thus of each $\Phi_p$ at $p$.
	It is $\L$-invariant since $\Phi$ and hence all $s_p$ are $\L$-equivariant.
\end{proof}


\subsection{Poincaré--Birkhoff--Witt theorem}
\label{ssec:PBW-theorem}

The space $ J^\infty([M,\ll/\aa])$ of fibered (along the projection $\ll/\aa\to M$) jets at $M$ of smooth real-valued functions on $\ll/\aa$ is an algebra. 
It is also a $C^\infty(M)$-module.
Let ${\mathcal I}([M,\ll/\aa]) $ be the ideal of jets of  smooth functions on $[M,\ll/\aa]$ vanishing identically on $M$, and consider the decreasing sequence of ideals $({\mathcal I}^k([M,\ll/\aa]) )_{k \geq 0}$ of jets along $M$ of functions vanishing on $M$ together with their $k$ first derivatives. Let us denote by $\Hom_{C^\infty(M)}(J^\infty([M,\ll/\aa]),C^\infty(M)) $  the dual relative to this filtration, i.e.\ the space of $C^\infty(M) $-linear maps from $J^\infty([M,\ll/\aa])$ to $C^\infty(M) $ vanishing on ${\mathcal I}^k([M,\ll/\aa]) $ for some integer $k$. 

\begin{rmk}
  Let us recall that we do not need to consider the topological dual here.
  The $C^\infty(M) $-linear maps from $J^\infty([M,\ll/\aa])$ to $C^\infty(M) $ vanishing on ${\mathcal I}^k([M,\ll/\aa]) $ for a given integer $k$ are automatically isomorphic to the space  of sections of an ordinary vector bundle over $M$.  
  When $M$ is a point for instance, this dual is a vector space of finite dimension.
\end{rmk}

By construction, $\Hom_{C^\infty(M)}(J^\infty([M,\ll/\aa]),C^\infty(M)) $ is a filtered coalgebra, with a filtration given by 
$\Hom_{C^\infty(M)}(J^\infty([M,\ll/\aa]),C^\infty(M)) = \cup_{k \geq 0} J^\infty([M,\ll/\aa]))_k^\perp $
with $J^\infty([M,\ll/\aa])_k^\perp $ being, for all $ k \geq 0$ the subspace of elements in 
\[ \Hom_{C^\infty(M)}(J^\infty([M,\ll/\aa]),C^\infty(M)) \]
vanishing on $ {\mathcal I}^k([M,\ll/\aa]) $.
The coalgebra structure is the dual of the algebra structure on $J^\infty([M,\ll/\aa])$ and is compatible with the filtration, since 
\[
  {\mathcal I}^k([M,\ll/\aa])  {\mathcal I}^{k'}([M,\ll/\aa])
\subset {\mathcal I}^{k+k'}([M,\ll/\aa]).
\]
We now intend to describe this coalgebra explicitly. Recall that a section $X$ of $\ll/\aa$ can be seen as a vertical vector field $\overline{X}$ on the fibered manifold $\ll/\aa\to M$ that is constant on each fiber.
The required filtered coalgebra isomorphism is obtained by mapping a section $X_1 \odot \dots \odot X_k$  in $S^k(\ll/\aa)$ (with $X_1, \dots,X_k \in \Gamma(\ll/\aa)$),
to the element of  $\Hom_{C^\infty(M)}(J^\infty([M,\ll/\aa]),C^\infty(M))$ given by
 \begin{equation}\label{eq:co_iso_symetrique} F \mapsto  \left. \overline{X_1} \left[ \dots \overline{X_k} \left[ F \right] \right] \right|_M \end{equation}  
for all $ F \in J^\infty([M,\ll/\aa])$.
The differential operator above vanishes for $F \in {\mathcal I}^k([M,\ll/\aa]) $,
and the correspondence obviously sends $S^k(\ll/\aa)$ bijectively to $J^\infty([M,\ll/\aa])^\perp_k$. 

\begin{lem}\label{lem:coalgebras1}
  The map \eqref{eq:co_iso_symetrique} induces a natural filtered coalgebra isomorphism from  $ \Gamma(S(\ll/\aa))$ to $\Hom_{C^\infty(M)}(J^\infty([M,\ll/\aa]),C^\infty(M))$.
\end{lem}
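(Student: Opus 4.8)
The plan is to recognise both sides of \eqref{eq:co_iso_symetrique} as mutually dual objects attached to the vector bundle $V=\ll/\aa$, and to check that the map of the lemma realises that duality — in other words, this is the familiar fact that a symmetric algebra $\Gamma(S(V))$ is the graded dual of the completed symmetric $C^\infty(M)$-algebra $\widehat{S}_{C^\infty(M)}\Gamma(V^*)$, once one knows that the jet algebra of $[M,\ll/\aa]$ is exactly that completed symmetric algebra. So first I would make the jet algebra explicit: since $J^\infty([M,V])$ only depends on the germ of $V$ along its zero section $M$, Taylor expansion in the fibre directions identifies it with $\prod_{k\ge0}\Gamma(S^kV^*)=\widehat{S}_{C^\infty(M)}\Gamma(V^*)$, the decreasing ideals ${\mathcal I}^k([M,V])$ corresponding to the truncations $\prod_{j>k}\Gamma(S^jV^*)$ (I would record the precise matching of indices with the paper's conventions, which is a harmless shift). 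Dualising over $C^\infty(M)$ and using that each $S^kV^*$ is a finite-rank bundle — so $\Hom_{C^\infty(M)}(\Gamma(S^kV^*),C^\infty(M))=\Gamma(S^kV)$ via the characteristic-zero perfect pairing $S^kV\otimes S^kV^*\to\fR$ — the right-hand side of the lemma becomes, as a filtered $C^\infty(M)$-module, $\bigoplus_{k\ge0}\Gamma(S^kV)$ with its obvious exhaustive, bounded-below filtration.

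Next I would verify the three structural properties of the map $\Phi$ given by \eqref{eq:co_iso_symetrique}. The key observation for well-definedness and $C^\infty(M)$-linearity is that each invariant vector field $\overline{X}$ is vertical, hence annihilates pullbacks $q^*f$; combined with $\overline{fX}=(q^*f)\,\overline{X}$ and the Leibniz rule, this makes $(X_1,\dots,X_k)\mapsto\overline{X_1}[\cdots\overline{X_k}[F]]|_M$ $C^\infty(M)$-multilinear in the $X_i$ and, the $\overline{X_i}$ being pairwise commuting, symmetric — so it descends to $\Gamma(S^kV)$ — and makes each $\Phi(s)$ a $C^\infty(M)$-linear functional on $J^\infty([M,V])$; moreover a product of $k$ such derivations sends ${\mathcal I}^k$ into ${\mathcal I}$, which vanishes on $M$, so $\Phi$ is filtration-compatible. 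That $\Phi$ is a coalgebra morphism, for the standard cocommutative coproduct on $\Gamma(S(V))$ (sections of $V$ primitive) and the coproduct on the target dual to the multiplication of jets as recalled above, is just the Leibniz identity
\[
  \overline{X_1}\cdots\overline{X_k}[FG]
  =\sum_{S\subseteq\{1,\dots,k\}}
   \Big(\textstyle\prod_{i\in S}\overline{X_i}\Big)[F]\cdot
   \Big(\textstyle\prod_{j\notin S}\overline{X_j}\Big)[G]
\]
restricted to $M$, which reads off as $(\Phi\otimes\Phi)\circ\Delta$ evaluated on $X_1\odot\cdots\odot X_k$.

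Finally, to prove bijectivity I would pass to associated graded modules — legitimate since both filtrations are exhaustive and bounded below, so that $\Phi$ is an isomorphism as soon as $\operatorname{gr}\Phi$ is. In a local frame $(e_i)$ of $V$ with dual frame $(e^i)$ and corresponding linear fibre coordinates $y^i$, one has $\overline{e_i}=\partial/\partial y^i$, so on the $k$-th graded piece $\Phi$ sends $e_{j_1}\odot\cdots\odot e_{j_k}$ to the functional $[y^{i_1}\cdots y^{i_k}]\mapsto\partial_{y^{j_1}}\cdots\partial_{y^{j_k}}(y^{i_1}\cdots y^{i_k})|_0$, i.e.\ to the image under the canonical pairing $S^kV\otimes S^kV^*\to C^\infty(M)$; over $\fR$ this pairing is nondegenerate, so $\operatorname{gr}^k\Phi$ is an isomorphism for all $k$ and $\Phi$ is a filtered isomorphism. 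Naturality is immediate, every ingredient being functorial in $\ll/\aa$. The only step needing real care is the first one: pinning down the associated graded of the jet algebra together with the precise index conventions, and making sure $\operatorname{gr}\Phi$ is genuinely the perfect symmetric pairing and not a degenerate relative of it — everything else is Leibniz-rule bookkeeping.
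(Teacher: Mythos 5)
Your proposal is correct and follows essentially the same route as the paper: the paper defines the map by the same formula \eqref{eq:co_iso_symetrique}, notes it kills ${\mathcal I}^k$, and asserts bijectivity on each filtration level, which is exactly what your associated-graded computation via the perfect pairing $S^kV\otimes S^kV^*\to C^\infty(M)$ establishes. You have merely supplied the details the paper leaves implicit (the Taylor-expansion identification of the jet algebra, the $C^\infty(M)$-multilinearity and symmetry coming from verticality and commutativity of the $\overline{X_i}$, and the Leibniz identity giving the coalgebra property), all of which are sound.
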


The same construction can be done considering $J^\infty([M,\L/\A])$, yielding a filtered coalgebra $\Hom_{C^\infty(M)}(J^\infty([M,\L/\A]),C^\infty(M))$ of fibered jets at $M$ of smooth functions on $\L/\A$. 
Let ${\mathcal I}([M,\L/\A]) $ be the ideal of smooth functions on $[M,\L/\A]$ vanishing identically on $M$, and consider the decreasing sequence of ideals $({\mathcal I}^k([M,\L/\A]) )_{k \geq 0}$ of jets of functions vanishing on $M$ together with their $k$ first derivatives. 
Let us denote by $\Hom_{C^\infty(M)}(J^\infty([M,\L/\A]),C^\infty(M)) $  the dual relative to this filtration, i.e.\ the space of $C^\infty(M) $-linear maps from $J^\infty([M,\L/\A])$ (which is a $ C^\infty(M) $-module since $\L/\A$ fibers over $M$) to $C^\infty(M) $ vanishing on ${\mathcal I}^k([M,\L/\A]) $ for some integer $k$.

Since $\L$ acts on the left on $\L/\A$, a section $X$ of $\ll$ can be seen  as a vector field along the fibers of $ \L/\A$  that we denote by $\overline{X} $.
Let us map an element $X_1 \cdot \dots \cdot X_k$  in $U(\ll)$ (with $X_1, \dots,X_k \in \Gamma(\ll)$),
to the element of  $\Hom_{C^\infty(M)}(J^\infty([M,\L/\A]),C^\infty(M))$ given by
  \begin{equation}\label{eq:co_iso_univ} F \mapsto  \left. \overline{X_k} \left[ \dots \overline{X_1} \left[ F \right] \right] \right|_M \end{equation}
for all $ F \in J^\infty([M,\ll/\aa])$.
The map above vanishes on ${\mathcal I}^k([M,\L/\A])$ and every element in 
$\Hom_{C^\infty(M)}(J^\infty([M,\L/\A]),C^\infty(M))$ vanishing on  ${\mathcal I}^k([M,\L/\A])$ is uniquely a linear combination
of differential operators of this form. 
Also, for $X_k  \in \Gamma(\aa)$, the previous differential operator is clearly equal to zero, which eventually leads to the following lemma.

\begin{lem}\label{lem:coalgebras2}
The map \eqref{eq:co_iso_univ} induces a filtered coalgebra isomorphism from $ U(\ll) / \allowbreak U(\ll) \cdot \Gamma(\aa)$ to $\Hom_{C^\infty(M)}(J^\infty([M,\L/\A]),C^\infty(M))$.
\end{lem}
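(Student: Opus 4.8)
The plan is to prove Lemma~\ref{lem:coalgebras2} in close analogy with the proof of Lemma~\ref{lem:coalgebras1}, exploiting the fact that $\L/\A$ fibers over $M$ through $\underline\ta$, so that $J^\infty([M,\L/\A])$ is naturally a $C^\infty(M)$-module and the dual relative to the $\mathcal I^k$-filtration is again the module of sections of a vector bundle over $M$. First I would check that the map \eqref{eq:co_iso_univ} is well-defined. For this one needs that, for $X_1,\dots,X_k\in\Gamma(\ll)$, the vector fields $\overline{X_1},\dots,\overline{X_k}$ on $\L/\A$ are tangent to the $\underline\ta$-fibers (this is clear since the left $\L$-action preserves the $\underline\ta$-fibers, the left action moving points only within a source-fiber of $\L$), hence the composite differential operator $F\mapsto \overline{X_k}[\cdots\overline{X_1}[F]]|_M$ makes sense on jets along $M$ and is $C^\infty(M)$-linear. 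It visibly annihilates $\mathcal I^k([M,\L/\A])$, since applying $k$ fiberwise vector fields to a function vanishing to order $k$ along $M$ still produces a function vanishing along $M$.

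Next I would argue that \eqref{eq:co_iso_univ} factors through $U(\ll)/U(\ll)\cdot\Gamma(\aa)$: if $X_k\in\Gamma(\aa)$, then $\overline{X_k}$ is tangent to the image of the unit section $M\hookrightarrow \L/\A$ (indeed the left $\A$-action fixes $M\subset\L/\A$ setwise, as noted in the second Remark after Proposition~\ref{prop:bigA-action-on-L-over-A}), so $\overline{X_k}[F]|_M$ depends only on $F|_M$; applying the remaining operators $\overline{X_{k-1}},\dots,\overline{X_1}$ and restricting to $M$ then yields a differential operator that annihilates the whole ideal $\mathcal I([M,\L/\A])$, i.e.\ it is constant along $M$ in the appropriate sense and represents the trivial contribution modulo lower filtration degree. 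More precisely, modulo $U(\ll)_{k-1}$ the element $X_1\cdots X_{k-1}\cdot a$ with $a\in\Gamma(\aa)$ maps to zero, so by induction on filtration degree the map descends to $U(\ll)/U(\ll)\cdot\Gamma(\aa)$. Then I would identify both sides as coalgebras: the coalgebra structure on $U(\ll)/U(\ll)\cdot\Gamma(\aa)$ is the quotient of the standard one on $U(\ll)$ (recalled in Section~\ref{ssec:lie-algebroids}), under which a product of $k$ sections is primitive-like with the usual shuffle coproduct, and this matches the coproduct on $\Hom_{C^\infty(M)}(J^\infty([M,\L/\A]),C^\infty(M))$ dual to the algebra structure on jets, because composition of fiberwise vector fields dualizes multiplication of functions via the Leibniz rule exactly as in the computation behind Lemma~\ref{lem:coalgebras1}.

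Finally I would prove bijectivity filtration degree by filtration degree. Surjectivity: by the observation stated in the paragraph preceding the lemma, every element of $\Hom_{C^\infty(M)}(J^\infty([M,\L/\A]),C^\infty(M))$ vanishing on $\mathcal I^k$ is a $C^\infty(M)$-linear combination of operators of the form \eqref{eq:co_iso_univ} with at most $k$ factors, hence lies in the image. Injectivity: working on the associated graded, the symbol of the operator attached to $X_1\cdots X_k$ is the $k$-th symmetric power expression $\overline{X_1}\odot\cdots\odot\overline{X_k}$ evaluated on the fiber normal directions, and since $X_k\in\Gamma(\aa)$ contributes a direction tangent to $M$ it is killed, so the induced map on $\operatorname{gr}$ is precisely the isomorphism $\Gamma(S^k(\ll/\aa))\to \mathcal I^k/\mathcal I^{k+1}$-dual of Lemma~\ref{lem:coalgebras1}; an isomorphism on each graded piece that is filtered gives a filtered isomorphism. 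The main obstacle I anticipate is the careful bookkeeping showing that the map is well-defined on the quotient and compatible with filtrations --- i.e.\ that inserting an $\aa$-section in any slot (not just the last, using the relations in $U(\ll)$ and the Leibniz/commutator identities) lands in strictly lower filtration degree --- and that this is exactly what is needed to reduce everything to the already-established graded statement; the coalgebra-morphism property itself is a routine dualization of the Leibniz rule, essentially identical to the $S(\ll/\aa)$ case.
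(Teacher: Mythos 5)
Your overall skeleton (define the operator, check it kills $\mathcal I^k([M,\L/\A])$ and the right ideal $U(\ll)\cdot\Gamma(\aa)$, then deduce bijectivity from surjectivity plus the associated-graded comparison with Lemma~\ref{lem:coalgebras1}) is the same as the paper's, and the symbol computation at the end is essentially the standard PBW/Rinehart argument the paper relies on implicitly. But your first step contains a concrete error that propagates. You claim the vector fields $\overline X$, $X\in\Gamma(\ll)$, are tangent to the $\underline{\ta}$-fibers ``since the left $\L$-action preserves the $\underline\ta$-fibers, the left action moving points only within a source-fiber of $\L$''. Left translation by $\generic'$ does preserve sources, but $\underline{\ta}$ is induced by the \emph{target} map (diagram \eqref{eq:t_bar}): it sends $[\generic]$ to $[\generic'\cdot\generic]$ and hence changes $\underline{\ta}$ from $\ta(\generic)$ to $\ta(\generic')$. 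Infinitesimally, $\overline X$ is the projection to $\L/\A$ of the right-invariant vector field $\rvf{X}$, and $T\underline{\ta}(\overline X)=\rho(X)\circ\underline{\ta}$; it is tangent to the $\underline{\ta}$-fibers only where $\rho(X)$ vanishes. So the tangency you invoke --- and with it the immediate $C^\infty(M)$-linearity of the operators \eqref{eq:co_iso_univ} and the clean ``fiber normal direction'' bookkeeping in your symbol argument --- is not available and must be replaced by an actual analysis of how $\overline X$ sits relative to $\underline{\ta}$ and to $M\subset\L/\A$.

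The same confusion affects the descent to the quotient. For $a\in\Gamma(\aa)$ one computes $\overline a|_{\unit(x)}=T\unit(\rho(a_x))$: the restriction of $\overline a$ to $M$ is tangent to $M$, as you say, but equals $\rho(a)$ rather than vanishing, so ``$\overline{X_k}[F]|_M$ depends only on $F|_M$'' is not by itself a vanishing statement. Note also that in \eqref{eq:co_iso_univ} the factor $X_1$ is applied first and $X_k$ last, so for a monomial $X_1\cdots X_{k-1}\cdot a$ of the right ideal the $\aa$-section occupies the \emph{outermost} slot; your phrase ``applying the remaining operators $\overline{X_{k-1}},\dots,\overline{X_1}$'' after $\overline{X_k}$ reads the composition in the wrong order. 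As written, your argument shows at best that such monomials contribute in strictly lower filtration degree, which would give a map on the associated graded but not the asserted factorization of \eqref{eq:co_iso_univ} through $U(\ll)/U(\ll)\cdot\Gamma(\aa)$ itself. Both points need to be repaired before the reduction to the graded statement --- which is otherwise the right strategy --- becomes legitimate.
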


Assume that we are given a diffeomorphism $\Phi $ from a neighborhood $[M,\ll/\aa]$ of $M$ in $\ll/\aa $
to a neighborhood  $[M,\L/\A]$ of $M$ in $\L / \A$
which is a \emph{fibered map}, i.e.\ assume that the diagrams
\begin{equation}
\label{eq:PhiisLocal}
  \vcenter{\xymatrix{
    & \L/\A \\
    M \ar[r]_0 \ar[ur]^1 & \ll/\aa \ar[u]_{\Phi} 
  }}
  \text{\quad and\quad}
  \vcenter{\xymatrix{
    \L/\A \ar[dr]^{\underline\ta} & \\
    \ll/\aa \ar[u]^{\Phi} \ar[r]_q & M     
  }}
\end{equation}
are commutative.
Since $\Phi$ is a diffeomorphism, its pullback
\begin{equation}\label{eq:Phi*1} \Phi^* :J^\infty([M,\L/\A]) \to J^\infty([M,\ll/\aa])  \end{equation}
is an algebra isomorphism. The commutativity of the diagram on the right-hand side of (\ref{eq:PhiisLocal})
implies that $ \Phi^*$ is $ C^\infty(M)$-linear and the commutativity of the diagram on the left hand side of \eqref{eq:PhiisLocal} implies that the algebra isomorphism $\Phi^*$ restricts to an algebra isomorphism, valid for all integer $k$,
  \begin{equation}\label{eq:Phi*2}  \Phi^* : {\mathcal I}^k([M,\L/\A]) \to {\mathcal I}^k([M,\ll/\aa]). \end{equation}
Altogether, relations \eqref{eq:Phi*1} and \eqref{eq:Phi*2} imply that 
the algebra isomorphism $\Phi^*$ can be dualized again to induce a filtered coalgebra isomorphism
\[ \Hom_{C^\infty(M)}(J^\infty([M,\ll/\aa]),C^\infty(M)) \to \Hom_{C^\infty(M)}(J^\infty([M,\L/\A]),C^\infty(M)). \]
By Lemmas \ref{lem:coalgebras1} and \ref{lem:coalgebras2}, $\Phi^*$ therefore induces
a coalgebra isomorphism that we denote by 
\[ PBW_{\Phi} :  \Gamma(S(\ll/\aa))\to  U(\ll) / U(\ll) \cdot \Gamma(\aa)  \]
and call the \emph{Poincaré--Birkhoff--Witt map of $\Phi$}. By construction, $PBW_{\Phi}$ makes this diagram
\begin{equation}\label{def:PBW} 
\xymatrix{\Hom_{C^\infty(M)}(J^\infty([M,\ll/\aa]),C^\infty(M))  \ar[r]\ar[d]& \Hom_{C^\infty(M)}(J^\infty([M,\L/\A]),C^\infty(M)) \ar[d]\\  \Gamma(S(\ll/\aa)) \ar[r]^-{PBW_\Phi}& U(\ll) / U(\ll) \cdot \Gamma(\aa) } \end{equation}
commutative.

We now intend to explore the consequences of the existence of an $\A$-equivariant local diffeomorphism $\Phi$.
Every bisection $\Sigma$ of $\A$ induces diffeomorphisms of both $\L/\A  $ and $\ll/\aa $
that  we denote by $\underline{\Sigma}$.
The equivariance of $\Phi$ means that the following diagram is commutative:
\begin{equation}
\label{eq:equivMeaning}
  \vcenter{\xymatrix{
    J^\infty([M,\L/\A]) \ar[r]^{\Phi^*} \ar[d]_{\underline{\Sigma}^*} 
    & J^\infty([M,\ll/\aa] )  \ar[d]^{\underline{\Sigma}^*} \\ 
    J^\infty([M,\L/\A]) \ar[r]^{\Phi^*} 
    & J^\infty([M,\ll/\aa] ) 
  }} .
\end{equation}
For all integer $k$, it also restricts to
\[
  \vcenter{\xymatrix{
    {\mathcal I}^k ([M,\L/\A]) \ar[r]^{\Phi^*} \ar[d]^{\underline{\Sigma}^*} 
    & {\mathcal I}^k([M,\ll/\aa] )  \ar[d]^{\underline{\Sigma}^*} \\ 
    {\mathcal I}^k([M,\L/\A]) \ar[r]^{\Phi^*} 
    & {\mathcal I}^k([M,\ll/\aa] ) 
  }} .
\]
Altogether, these commutative diagrams allow to dualize (\ref{eq:equivMeaning}), to wit:  
\[
  \vcenter{\xymatrix{
    \Hom_{C^\infty(M)}(J^\infty([M,\L/\A]),C^\infty(M))   & \ar[l]_-{\Phi}   \Hom_{C^\infty(M)}(J^\infty([M,\ll/\aa]),C^\infty(M))   \\ \Hom_{C^\infty(M)}(J^\infty([M,\L/\A]),C^\infty(M))  \ar[u]^{\underline{\Sigma}}& \ar[l]_-{\Phi} \Hom_{C^\infty(M)}(J^\infty([M,\ll/\aa]),C^\infty(M)) \ar[u]_{\underline{\Sigma}}
  }} .
\]
The isomorphism of filtered coalgebras described in Lemma \ref{lem:coalgebras1} intertwines the action of the pseudogroup of local bisections $\Bis(\A)$ on $\Hom_{C^\infty(M)}(J^\infty([M,\L/\A]), \allowbreak C^\infty(M))$ with the action defined in Section \ref{ssec:bigA-action-on-L-over-A}.
The isomorphism of filtered coalgebras described in Lemma \ref{lem:coalgebras2} intertwines the action of the pseudogroup $\Bis(\A)$ on $\Hom_{C^\infty(M)}( \allowbreak J^\infty([M,\L/\A]), \allowbreak C^\infty(M))$ with the action on  $U(\ll)/U(\ll) \cdot \Gamma(A)$ defined in Proposition \ref{bisection_action}.
This proves, in view of (\ref{def:PBW}), the commutativity of the diagram
\[
  \vcenter{\xymatrix{
  U(\ll)/ U(\ll) \cdot  \Gamma (\aa)  & \ar[l]_-{PBW_\Phi} \Gamma( S(\ll/\aa))   \\ 
  U(\ll)/ U(\ll) \cdot  \Gamma (\aa)    \ar[u]^{\underline{\Sigma}}& \ar[l]_-{PBW_\Phi} \Gamma( S(\ll/\aa)) \ar[u]_{\underline{\Sigma}}
  }} .
\]
Tracking the implications in the other direction, we get the following result.

\begin{prop}\label{PhiAndEquiv}
Let $(\L,\A)$ be a Lie groupoid pair.
For every $\A$-equivariant diffeomorphism $\Phi $ from a neighborhood of $M$ in $\ll/\aa$ to a neighborhood of $M$ in $\L / \A$, the map $PBW_{\Phi} :  \Gamma(S(\ll/\aa))\to  U(\ll) / U(\ll) \cdot \Gamma(\aa)$ defined as above is a $\Bis(\A)$-equivariant isomorphism of filtered coalgebras.
Conversely, any such equivariant isomorphism of filtered coalgebras defines an infinite jet of equivariant diffeomorphism from a neighborhood of $M$ in $\ll/\aa$ to a neighborhood of $M$ in $\L / \A$.
\end{prop}

As a direct application of Theorem~\ref{thm:exponential} and Proposition~\ref{PhiAndEquiv}, we get the following theorem, which is an equivalent at the groupoid level of Theorem 1.1 in \cite{calaque_pbw_2014}, which was also re-proved in \cite{laurent-gengoux_kapranov_2014} using different techniques.

\begin{thm}
\label{thm:calaque}
	A Lie groupoid pair $(\L,\A)$ has vanishing Atiyah class if and only if there exists a $\Bis(\A)$-equivariant filtered coalgebra isomorphism from $\Gamma(S(\ll/\aa))$ to $U(\ll) / U(\ll) \cdot \Gamma(\aa)$.
\end{thm}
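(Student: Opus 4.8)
The plan is to assemble the theorem from the machinery developed in Sections \ref{sec:main-results} and \ref{sec:PBW-theorem}, so that essentially no new computation is required. First I would invoke Corollary~\ref{cor:atiyah=0GivesConnections}: since the Atiyah class of $(\L,\A)$ vanishes, there exists an $\L$-invariant fibrewise affine connection $\nabla$ on $\L/\A\to M$. By Proposition~\ref{prop:exponential}, the exponential map $\exp^\nabla$ of such a connection restricts to an $\A$-equivariant diffeomorphism from a neighborhood $[M,\ll/\aa]$ of $M$ in $\ll/\aa$ to a neighborhood $[M,\L/\A]$ of $M$ in $\L/\A$, and this diffeomorphism is fibered over $M$ and respects the zero section, i.e.\ it satisfies the commutativity of the diagrams \eqref{eq:PhiisLocal} (with $\Phi=\exp^\nabla$).

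Next I would feed $\Phi=\exp^\nabla$ into the constructions of Section~\ref{ssec:PBW-theorem}. By the preceding Proposition, such a fibered local diffeomorphism induces the Poincar\'e--Birkhoff--Witt isomorphism of filtered coalgebras $PBW_\Phi:\Gamma(S(\ll/\aa))\to U(\ll)/U(\ll)\cdot\Gamma(\aa)$, built by dualizing the pullback $\Phi^*$ on jets along $M$ and transporting through the identifications of Lemmas~\ref{lem:coalgebras1} and \ref{lem:coalgebras2}. Since $\exp^\nabla$ is in addition $\A$-equivariant, Proposition~\ref{PhiAndEquiv} applies verbatim and gives that $PBW_{\exp^\nabla}$ is $\Bis(\A)$-equivariant, where the $\Bis(\A)$-actions are the ones of Section~\ref{ssec:bigA-action-on-L-over-A} on the source side and of Proposition~\ref{bisection_action} on the target side. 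This is exactly the asserted isomorphism, so the proof is just the chain
$$
\text{Atiyah class}=0
\ \Longrightarrow\ \exists\,\nabla
\ \Longrightarrow\ \exp^\nabla\ \A\text{-equivariant}
\ \Longrightarrow\ PBW_{\exp^\nabla}\ \Bis(\A)\text{-equivariant}.
$$

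There is essentially no obstacle here: all the real work has already been done. The only points that merit a sentence of care are (i) checking that the exponential map genuinely furnishes a $\Phi$ satisfying the hypotheses of Proposition~\ref{PhiAndEquiv}, i.e.\ that it is fibered over $M$ and preserves the zero section in the precise sense of \eqref{eq:PhiisLocal}---but this is recorded in the discussion preceding Proposition~\ref{prop:exponential}---and (ii) recalling, when applying Proposition~\ref{bisection_action}, that the group-level $\Bis(\A)$-action on $U(\ll)/U(\ll)\cdot\Gamma(\aa)$ is by coalgebra morphisms and is compatible with the filtration, so that the equivariant map produced is indeed an isomorphism of \emph{filtered coalgebras}. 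If one wishes the statement to be genuinely ``semi-local'' rather than formal, one should also note that $\exp^\nabla$ is an actual diffeomorphism on a neighborhood of $M$, not merely a formal one, but this plays no role in the coalgebra isomorphism itself, which only sees the jet $\Phi^*$ along $M$.
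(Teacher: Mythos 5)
Your proposal is correct and follows exactly the paper's own argument: vanishing Atiyah class gives an $\L$-invariant fibrewise affine connection via Corollary~\ref{cor:atiyah=0GivesConnections}, its exponential map is $\A$-equivariant by Proposition~\ref{prop:exponential}, and Proposition~\ref{PhiAndEquiv} then yields the $\Bis(\A)$-equivariant filtered coalgebra isomorphism $PBW_{\exp^\nabla}$. The two points of care you flag are sensible but, as you note, already handled by the cited results.
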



\section{Local Lie groupoids}
\label{sec:local-Lie-groupoids}

We sketch in this section what happens when we work with local Lie groupoids instead of Lie groupoids.
Recall that unlike Lie algebras, which are always the tangent space at the unit element of a Lie group, Lie algebroids may not be the infinitesimal object of a Lie groupoid (see \cite{crainic_integrability_2003} for a characterization of integrable Lie algebroids).
There is however always a \emph{local Lie groupoid} (see \cite[Section 3]{debord_local_2001} for a definition)  integrating a Lie algebroid $\aa$ (as shown in \cite[Corollary 5.1]{crainic_integrability_2003}).
This local Lie groupoid is not unique: for instance one can always replace it by a neighborhood of the unit submanifold.
For any $\aa$-module $E$, there exists moreover such a local Lie groupoid acting on $E$. 

Now, for any Lie algebroid pair $(\ll,\aa)$, the Lie algebroid $\ll$ can be integrated to an $\so$-connected local Lie groupoid $\L$ and, upon replacing $\L$ by a neighborhood of $M$ in $\L$ if necessary, we can assume that 
$\aa$ is the Lie algebroid of a closed local $\so$-connected Lie subgroupoid $\A \subset \L$. 
In that case, we shall say that the pair $(\L,\A)$ is a \emph{local Lie groupoid pair} that integrates $(\ll,\aa)$.
Again, it is not unique. Since $\A$ is $\so$-connected (hence also $\ta$-connected), the quotient $\L/\A$ coincides with the quotient of $\L$ by the foliation on $\L$ induced by the right Lie algebroid action of $\aa$. 
Since the tangent space of this foliation has, for all $m \in M$, no intersection with $T_m M \subset T_m\L$,
$\L$ can be replaced by a wide, open, local Lie subgroupoid such that the quotient $\L/\A$ is a smooth manifold. 
By construction, this submanifold fibers over $M$ and is acted upon on the left by $\L$.

Also, for every $\aa$-module $E$, upon shrinking both $\L$  and $\A$ if necessary, one can assume that $E$ comes equipped with an $\A$-module structure, and the induced vector bundle $\frac{\L \times_{M} E}{\A} \to \L /\A$ is made sense of through the same construction. Theorems \ref{thm:L-conn-on-E--conn-on-assoc-bundle} and \ref{thm:L-conn-on-LoverA--conn-on-tgt-bundle} extend to local Lie groupoid pairs.

\begin{thm}\label{th:localCase}
  Let $(\ll, \aa)$ be a Lie algebroid pair over $M$, and let $E\to M$ be an $\aa$-module. 
  Then, there exists a local Lie groupoid pair $(\L,\A)$ integrating it such that $\L/\A$ is a manifold and $E$ is an $\A$-module.
  Moreover, there is a bijective correspondence between
  \begin{enumerate}[label=(\arabic*)]
    \item $\aa$-compatible $\ll$-connections on $E$, and
    \item $\L$-invariant fibered connections on the associated vector bundle $\frac{\L \times_M E}{\A} \to \L/\A \to M$.
  \end{enumerate}
  For $E =\ll/\aa$, the associated vector bundle $ \frac{\L \times_M E}{ \A} \to \L/\A \to M$ is isomorphic to the tangent bundle of the natural projection $ \L/\A \to  M$.
\end{thm}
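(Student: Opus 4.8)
The plan is to reduce Theorem~\ref{th:localCase} to the global case treated in Theorems~\ref{thm:L-conn-on-E--conn-on-assoc-bundle} and \ref{thm:L-conn-on-LoverA--conn-on-tgt-bundle} by first producing a suitable local Lie groupoid pair and then checking that all the constructions used in the global proofs are in fact local in nature. First I would invoke \cite[Corollary 5.1]{crainic_integrability_2003} to integrate $\ll$ to an $\so$-connected local Lie groupoid, then shrink it near $M$ so that $\aa$ integrates to a closed $\so$-connected local Lie subgroupoid $\A$, as recalled at the beginning of this section; a further shrinking of $\aa$-module structures gives a local Lie groupoid acting on $E$, and intersecting the two shrinkings yields a local Lie groupoid pair $(\L,\A)$ that simultaneously integrates $(\ll,\aa)$ and carries an $\A$-module structure on $E$. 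Since $\A$ is $\so$-connected and $\ta$-connected, the quotient $\L/\A$ agrees with the leaf space of the foliation on $\L$ defined by the right $\aa$-action; because this foliation meets $T_mM$ trivially for all $m\in M$, replacing $\L$ by a suitable wide open local subgroupoid makes $\L/\A$ a smooth manifold fibered over $M$ and acted on by $\L$, and $\frac{\L\times_M E}{\A}$ is then defined by the same associated-bundle construction.

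The core of the argument is then to observe that the proofs of Theorems~\ref{thm:general}, \ref{thm:connection-forms-connections-on-associated-bundles}, \ref{thm:L-conn-on-E--conn-on-assoc-bundle} and \ref{thm:L-conn-on-LoverA--conn-on-tgt-bundle} only ever use: the tangent groupoid multiplication $\bullet$ and the maps $\Add$, $\tau$; the equivariant principal bundle structure on $\L$ (as ${}_\L\L_\A$) and on $Q=\frac{\L\times_N\U}{\A}$; and the bijections between connection forms and compatible connections expressed pointwise in terms of $\bullet$ and $\Add$. All of these are purely local: $T\L$ is still a local groupoid, $\Add$ and $\tau$ are defined from the structure maps near the units, and the bundle-map characterizations in Proposition~\ref{prop:characterization-bundle-maps-of-connection-forms} are fibrewise conditions. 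Moreover, since by hypothesis the \emph{local} data integrate $(\ll,\aa)$ and the module structure, the correspondence with \emph{$\aa$-compatible} (rather than $\A$-compatible) $\ll$-connections is automatic: for a source-connected $\A$ (local or global) an $\ll$-connection is $\A$-compatible iff it is $\aa$-compatible, because $R^\nabla$ is a groupoid cocycle determined on a neighborhood of the units by its infinitesimal counterpart $\atiyahTensor^\nabla$ via the van Est argument of Proposition~\ref{prop:vanest} — and vanishing of a cocycle near the identity on a connected local groupoid forces vanishing everywhere it is defined. Hence one replaces ``$\A$-compatible'' by ``$\aa$-compatible'' throughout.

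For the last sentence of the theorem, concerning $E=\ll/\aa$, I would reuse verbatim the construction of the isomorphism $\theta$ from the proof of Theorem~\ref{thm:L-conn-on-LoverA--conn-on-tgt-bundle}: the $\L$-equivariant map $\L\times_M\ll\to T^\ta\L:(\generic,\lambda)\mapsto 0_\generic\bullet\lambda^{-1}$ composed with $\pi':T^\ta\L\to T^{\ol\ta}(\L/\A)$ descends first over $\L\times_M\aa$ and then over the $\A$-action, giving a natural $\L$-equivariant vector bundle isomorphism $\frac{\L\times_M\ll/\aa}{\A}\xrightarrow{\ \sim\ }T^{\ol\ta}(\L/\A)$; this argument never leaves a neighborhood of $M$ and so is insensitive to the local/global distinction.

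The main obstacle I anticipate is purely bookkeeping: one must shrink $\L$ and $\A$ several times (to get $\A$ closed, to make $\L/\A$ Hausdorff and smooth, to have the $\A$-module structure on $E$, and to have the local exponential/connection constructions well defined) and check that a common shrinking still works and that the resulting local groupoid pair is $\so$-connected, so that the van Est reduction from $\A$-compatibility to $\aa$-compatibility remains valid. No essentially new idea is needed beyond carefully verifying that every map, action and cocycle identity used in Sections~\ref{sec:equivariant-principal-bundles}--\ref{sec:main-results} is defined on, or restricts to, a neighborhood of the unit manifold.
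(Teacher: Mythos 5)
Your proposal is correct and follows essentially the same route as the paper, which likewise obtains the local statement by integrating $(\ll,\aa)$ to a source-connected local Lie groupoid pair after suitable shrinkings (so that $\A$ is closed, $\L/\A$ is a manifold, and $E$ is an $\A$-module) and then observing that the constructions underlying Theorems \ref{thm:general}, \ref{thm:connection-forms-connections-on-associated-bundles}, \ref{thm:L-conn-on-E--conn-on-assoc-bundle} and \ref{thm:L-conn-on-LoverA--conn-on-tgt-bundle} only involve data ($\bullet$, $\Add$, $\tau$, the map $\theta$) defined near the unit manifold. Your explicit justification that $\aa$-compatibility and $\A$-compatibility of a connection coincide for a source-connected (local) groupoid — via the cocycle identity and the van Est computation of Proposition \ref{prop:vanest} — is the one step the paper leaves implicit, and it is handled correctly.
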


The definition of cohomology for a local Lie groupoid $\A$, see Section \ref{ssec:lie-groupoids}, must be adapted.
Let $\A_n$ stand for the manifold of all $n$-tuples $(\generic_1, \dots, \generic_n) \in \A^n$ such that the product of any two successive elements is defined. Notice that $M$ is for all $n \in {\mathbb N}$ a submanifold of $\A_n$ through
the natural assignment $m \mapsto (\unit(m),\dots,\unit(m))$ ($n$ times). 
For any $\A$-module $E$, the cohomology $H^\bullet(\L,E)$ is defined as being the germification around $M$ of the complex that appears in \eqref{eq:groupoid-cohomology-complex}, i.e.\ the cohomology of 
\begin{equation*}
  \xymatrix{
    C^0_M(\A,E)
    \ar[r]^{\partial_0} &
    C^1_M(\A,E)
    \ar[r]^{\partial_1} &
    C^2_M (\A,E)
    \ar[r]^{\partial_2} &
    C^3_M(\A,E)
    \ar[r]^-{\partial_3} &
    \cdots
  }
\end{equation*}
where, for all $n \in {\mathbb N_*}$, $C^n_M (\A,E)$ is the space of germs around $M$
of smooth functions from $\A_n$ to $E$ such that 
$F(\generic_1, \dots, \generic_n) \in E_{\ta (\generic_1)}$
for all $ (\generic_1, \dots, \generic_n) \in \A_n$ while the differential $\partial_n F$ is defined by a same formula, but taken at the level of germs around $M$. Definition \ref{defn:atiyah-class-transitive-case} of the Atiyah class still makes sense, and Proposition \ref{prop:atiyah-class-transitive-case} is easily adapted, as well as Corollary \ref{cor:atiyah-class-A-modules} at least for the case of interest for our purpose, namely $\aa$-modules.
		
More precisely, let $(\ll,\aa)$ be a Lie algebroid pair and $E$ an $\aa$-module. The first item in Proposition \ref{cor:atiyah-class-A-modules} deals only with algebroids, and therefore holds true again: there exist $\ll$-connections on $E$ extending the $\aa$-action. 
To generalize the next items in Proposition \ref{cor:atiyah-class-A-modules}, notice that, for local groupoids, bisections and the assignment $\Add$ defined in \eqref{eq:def-kappa} still make sense and satisfy essentially the same properties, which allows to extend our proofs to this context without additional difficulties.
For any $\ll$-connection $\nabla$ on $E$ extending the $\aa$-action and for any local Lie groupoid pair $(\L,\A)$ integrating $(\ll,\aa)$, a smooth local groupoid 1-cochain $R^\nabla\in C^1(\A,(\ll/\aa)^*\otimes \End E)$ is still defined by
\begin{align}
\label{eq:definition-R-nabla-particular-local}
  R^\nabla(\gamma)(l,e) =\bisection \star \nabla_{\Ad_{\bisection^{-1}}  l} \left( \bisection^{-1} \star e \right) - \nabla_l e  
\end{align}
with $\bisection \in \Bis (\A)$ and $l\in\Gamma(\ll)$, $e\in\Gamma(E)$.
It is still true that the 1-cochain $R^\nabla \in C^1(\A,(\ll/\aa)^*\otimes \End E) $ is closed, that its cohomology class $\alpha_{(\L,\A),\varphi}=[R^\nabla]$ is independent of the choice of $\nabla$ and that the class $\alpha_{(\L,\A),\varphi}$ is zero if and only if there exists an $\A$-compatible $\ll$-connection on $E$. 

Moreover, Corollary \ref{cor:infinitesimalAtiyahVanishes-implies-globalAtiyahVanishes} holds and yields that the class $\alpha_{(\L,\A),\varphi}$ is zero if the Atiyah class of $E$ with respect to the Lie algebroid $(\ll,\aa)$ is zero.
Altogether, this last point and theorem \ref{th:localCase} imply a variation of Corollary \ref{cor:L-conn2}, which is interesting to state explicitly.

\begin{cor}
\label{cor:L-conn3}
  Let $(\ll, \aa)$ be a Lie algebroid pair over $M$ and let $E$ be an $\aa$-module. Then there exist a local Lie groupoid pair $(\L,\A)$ integrating $(\ll,\aa) $ with $\A$ source-connected and such that $\L/\A$ is a manifold and $E$ is an $\A$-module.
  Moreover, $E$ has vanishing Atiyah class with respect to $(\ll,\aa)$ if and only if there exists an $\L$-invariant fibered connection on the associated vector bundle $ \frac{\L \times_M E}{ \A} \to \L/\A \to M$.
\end{cor}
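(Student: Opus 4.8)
The plan is to chain together, in the local setting, the three ingredients already assembled in this section. First I would invoke Theorem~\ref{th:localCase} to produce a local Lie groupoid pair $(\L,\A)$ integrating $(\ll,\aa)$ such that $\L/\A$ is a manifold and $E$ carries an $\A$-module structure, shrinking $\L$ and $\A$ to wide open local subgroupoids around $M$ as needed, and in particular arranging that $\A$ is $\so$-connected so that the van Est argument applies. Theorem~\ref{th:localCase} also furnishes the bijective correspondence between $\aa$-compatible $\ll$-connections on $E$ and $\L$-invariant fibered connections on $\frac{\L \times_M E}{\A} \to \L/\A \to M$; hence it suffices to exhibit an $\aa$-compatible $\ll$-connection on $E$. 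By the local adaptation of Corollary~\ref{cor:atiyah-class-A-modules} (valid for $\aa$-modules, as noted above, since bisections, the operator $\Add$ of \eqref{eq:def-kappa}, and the cocycle formula \eqref{eq:definition-R-nabla-particular-local} all retain their meaning and properties for $\so$-connected local Lie groupoids), this amounts to showing that the global Atiyah class $\alpha_{(\L,\A),E}\in H^1(\A,(\ll/\aa)^*\otimes\End E)$ vanishes.

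Second, I would deduce this vanishing from the local version of Corollary~\ref{cor:infinitesimalAtiyahVanishes-implies-globalAtiyahVanishes}: Proposition~\ref{prop:atiyah-class-transitive-case} and Proposition~\ref{prop:vanest} carry over verbatim to germs around $M$, so the van Est map from $H^1(\A,(\ll/\aa)^*\otimes\End E)$ to $H^1(\aa,(\ll/\aa)^*\otimes\End E)$ sends $\alpha_{(\L,\A),E}$ to the infinitesimal Atiyah class of $E$ with respect to $(\ll,\aa)$. Since $\A$ is $\so$-connected, this map is injective in degree $1$ by the van Est theorem, and since the infinitesimal class is zero by hypothesis, we conclude $\alpha_{(\L,\A),E}=0$. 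Combining the two steps, the local Corollary~\ref{cor:atiyah-class-A-modules} yields an $\A$-compatible $\ll$-connection $\nabla$ on $E$, and transporting $\nabla$ through the correspondence of Theorem~\ref{th:localCase} gives the desired $\L$-invariant fibered connection on $\frac{\L \times_M E}{\A} \to \L/\A \to M$.

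I expect the only genuine obstacle to be the bookkeeping of the successive shrinkings of $\L$ and $\A$: smoothness of $\L/\A$, the existence of the $\A$-action extending the $\aa$-module structure on $E$, and $\so$-connectedness all hold only after replacing $\L$ by a suitable wide open local subgroupoid, and one must check that a single $(\L,\A)$ can be chosen to satisfy all of them simultaneously. This is immediate, since the admissible neighborhoods of $M$ in $\L$ form a family stable under finite intersection, so one simply intersects the three neighborhoods produced above. Beyond this, the proof is a direct application of Theorem~\ref{th:localCase} together with the local analogues of Corollaries~\ref{cor:atiyah-class-A-modules} and~\ref{cor:infinitesimalAtiyahVanishes-implies-globalAtiyahVanishes}, as indicated.
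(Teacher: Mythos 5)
Your proposal is correct and follows essentially the same route as the paper: invoke Theorem~\ref{th:localCase} for the local Lie groupoid pair and the connection correspondence, adapt Corollary~\ref{cor:atiyah-class-A-modules} to germs around $M$, and use the local version of Corollary~\ref{cor:infinitesimalAtiyahVanishes-implies-globalAtiyahVanishes} (van Est injectivity in degree $1$ for $\so$-connected $\A$) to pass from the vanishing of the infinitesimal Atiyah class to the vanishing of the groupoid-level class, hence to a compatible connection. The remark on intersecting the finitely many shrinkings of $\L$ is exactly the bookkeeping the paper performs implicitly.
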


For $E = \ll/\aa$ for instance, we obtain that there exist $\L$-invariant fibrewise affine connections on $\L/\A \to M$ when the Atiyah class of the Lie algebroid pair vanishes.

An $\L$-invariant fibrewise affine connection on $\L/\A \to M$ as in Corollary \ref{cor:L-conn3} is the only required object for all the arguments in Section \ref{ssec:PBW-theorem}. We  thus obtain:   

\begin{cor}\label{cor:calaque2}
  A Lie algebroid pair $(\ll,\aa)$ has vanishing Atiyah class if and only if there exists a filtered coalgebra isomorphism from $ \Gamma(S(\ll/\aa))$ to $U(\ll) / U(\ll) \cdot \Gamma(\aa)$ which intertwines, for all $a \in \Gamma(\aa)$,
  \begin{enumerate}
    \item the unique derivation of $ \Gamma(S(\ll/\aa))$ which is given by $\rho(a)$ on smooth functions on $M$ and by the canonical $\aa$-action on sections of $\Gamma(\ll/\aa)$, and
    \item the left multiplication $L_a:U(\ll) / U(\ll) \cdot \Gamma(\aa) \to U(\ll) / U(\ll) \cdot \Gamma(\aa) $ by $a \in \Gamma(\aa)$.
  \end{enumerate}
\end{cor}		

Corollary \ref{cor:calaque2} is the content of Theorem 1.1 in \cite{calaque_pbw_2014} when the Lie algebroids considered are over ${\mathbb R}$. 
When the Lie algebroid is over $ {\mathbb C}$, however, the geometrical interpretation used here is not relevant anymore, and only the methods of
\cite{calaque_pbw_2014} or \cite{laurent-gengoux_kapranov_2014} remain valid.
Also:

\begin{thm}
\label{thm:exponential-local}
  A Lie algebroid pair $(\ll, \aa)$ over $M$ has vanishing Atiyah class if and only if there exists a local Lie groupoid pair $(\L,\A)$ integrating $(\ll,\aa)$ with $\A$ source-connected and such that $\L/\A$ is a manifold equipped with an $\A$-equivariant diffeomorphism 
from a neighborhood of $M $ in $\ll/\aa $	to a neighborhood of $M $ in $\L/\A$.
In particular, under these equivalent conditions, the $\A$-action on $ \L/\A$ is linearizable.
\end{thm}


\section{Examples and applications}
\label{sec:examples}


\subsection{Lie group pairs and homogeneous spaces}	

We now explore the case of Lie groups. We shall use the same terminology without the suffix ``-oid'' and simply speak of Lie group pairs, Lie algebra pairs and so on. Notice that, for $\g$ a Lie algebra, a $\g$-connection on a vector space $E$ is simply a bilinear assignment $\g \times E \to E$. In this case, our results give back well-known results on homogeneous spaces.
	
Let $(G,H)$ be a Lie group pair, so $H$ is a closed Lie subgroup of $G$, and let $(\g,\h)$ be the corresponding Lie algebra pair.
Consider the associated homogeneous space $G/H$.
For $E$ an $H$-module, the associated vector bundle is the vector bundle $ \frac{G \times E}{H} \to G/H$. Also, the canonical $H$-module structure on $\g/\h$ described in Section \ref{ssec:bigA-action-on-L-over-A} is simply induced by the adjoint action under the quotient map.
Specializing to this case, Theorem \ref{thm:general} yields Wang's characterization \cite[p.\ 1]{wang_invariant_1958}, Theorem \ref{thm:L-conn-on-E--conn-on-assoc-bundle} yields Proposition 2.7 in \cite{bordemann_atiyah_2012} and Theorem \ref{thm:L-conn-on-LoverA--conn-on-tgt-bundle} yields Theorem 1 in \cite{wang_invariant_1958}.

\begin{rmk}
  When $H$ is not closed in $G$, or when we are only given a Lie algebra pair $(\g,\h)$, we may consider local Lie groups as in Section \ref{sec:local-Lie-groupoids} and obtain similar results.
\end{rmk}

Let us turn our attention to the case  $E = \g/\h$, that is, let us explore the meaning of the vanishing of the Atiyah class of a Lie group pair. First, notice that the Atiyah class of a Lie algebra pair automatically  vanishes in the following cases:
\begin{enumerate}
	\item When $\h$ is reductive in $\g$, i.e.\ when there exists an $\h$-invariant subspace $\mathfrak{m}$ in direct sum (as vector spaces) with $\h$ in $\g$. In that case, extending the $\h$-action on $\g/\h$ by zero on $\mathfrak{m}$ yields an $\g$-connection on $\g/\h$ with vanishing Atiyah cocycle.
	\item When $\h$ is semisimple, since then it has no first degree cohomology.
	\item When $\mathfrak d$ is the double of a Lie bialgebra $(\g,\g^*)$ arising from an $r$-matrix $r\in\g\otimes\g$, the Atiyah class of the Lie pair $(\mathfrak d,\g)$ vanishes. 
\end{enumerate}

\begin{rmk}
Let us explain the third item: we owe this result to Khaoula Abdeljellil, who gave an explicit expression of a connection
whose Atiyah cocycle is zero (see also the recent paper of Hong \cite{hong_atiyah_2018} on this subject). It is, up to a scalar factor, given by $\nabla_\alpha \beta = \ad_{r^\#(\alpha)} \beta$ for all $\alpha,\beta\in\mathfrak g^*$.
It can be seen conceptually as follows: Drinfeld \cite{drinfeld_1989_quasi} showed that a bialgebra $\mathfrak g$ is coboundary if and only if the associated infinitesimal homogeneous space $\mathfrak d/\mathfrak g$ is reductive, where $\mathfrak d$ is the double of $\mathfrak g$.
Reductive homogeneous spaces are well known to have invariant connections \cite{nomizu_invariant_1954}, hence vanishing Atiyah class.
\end{rmk}

In the case of Lie groups, Theorem~\ref{thm:exponential} implies the following result.
			
\begin{prop}
\label{prop:exponential-group}
  A Lie group pair $(G,H)$ has vanishing Atiyah class if and only if there exists an $H$-equivariant diffeomorphism from a neighborhood of $0$ in $\g/\h $	to a neighborhood of $eH$ in $G/H$.
  In particular, under these equivalent conditions, the $H$-action on $G/H$ is linearizable.
\end{prop}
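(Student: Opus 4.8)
The statement is the group-theoretic specialization of Corollary \ref{cor:exponential}, so the plan is essentially to unwind the general machinery in the simpler setting where $\L$ and $\A$ are Lie groups, $M$ is a point, and $\ll/\aa$ is an ordinary vector space. First I would record the hypothesis: the Atiyah class $\alpha_{(\L,\A)} = \alpha_{(\L,\A),\ll/\aa}$ vanishes. By Corollary \ref{cor:atiyah=0GivesConnections} (or directly by the last item of Corollary \ref{cor:atiyah-class-A-modules} combined with Theorem \ref{thm:L-conn-on-LoverA--conn-on-tgt-bundle}), the vanishing of this class is equivalent to the existence of an $\L$-invariant fibrewise affine connection $\nabla$ on $\L/\A \to M$. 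Since here $M$ is a point, ``fibrewise affine connection'' just means an ordinary $\L$-invariant affine connection on the homogeneous space $\L/\A$.

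Next I would invoke Proposition \ref{prop:exponential}: for this $\L$-invariant connection $\nabla$, the exponential map $\exp^\nabla$ restricts to an $\A$-equivariant diffeomorphism from a neighborhood $[M,\ll/\aa]$ of $M$ in $\ll/\aa$ onto a neighborhood $[M,\L/\A]$ of $M$ in $\L/\A$. In the Lie group case $M$ is a point, the zero section is $0 \in \ll/\aa$, and the image of $M$ in $\L/\A$ is the class of the identity $1 \in \L/\A$; so $\exp^\nabla$ is an $\A$-equivariant diffeomorphism from a neighborhood of $0$ in $\ll/\aa$ onto a neighborhood of $1$ in $\L/\A$. This is exactly the first assertion. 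For the ``in particular'' clause, I would note that by Proposition \ref{prop:bigA-action-on-L-over-A} (and the remark following it, which says the $\A$-action on $\ll/\aa$ is in this case simply the one induced by the adjoint action of $\A \subset \L$), the $\A$-action on the model space $\ll/\aa$ is \emph{linear}; transporting the nonlinear $\A$-action on a neighborhood of $1 \in \L/\A$ through the $\A$-equivariant diffeomorphism $\exp^\nabla$ therefore exhibits it, semi-locally, as a linear action — i.e.\ the action is linearizable.

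There is essentially no new obstacle here: the whole content has already been established in the groupoid setting. The only thing to be careful about is the bookkeeping of what ``neighborhood of $M$'' degenerates to when $M$ is a point, and making explicit that the canonical $\A$-module structure on $\ll/\aa$ of Section \ref{ssec:bigA-action-on-L-over-A} coincides with the linear adjoint action, so that ``linearizable'' means what one expects. One could also remark, as in the preceding text, that when $\aa$ is reductive or semisimple in $\ll$ the Atiyah class automatically vanishes, recovering the classical existence of invariant connections on reductive homogeneous spaces; but that is a comment rather than part of the proof. Hence the proof reduces to the two citations above plus this translation of terminology.
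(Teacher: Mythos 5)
Your proposal is correct and follows exactly the paper's route: the paper derives this proposition directly from Proposition \ref{prop:exponential} and Corollary \ref{cor:exponential} (which already package the step from the vanishing Atiyah class to an invariant connection and then to the equivariant exponential), specialized to the case where $M$ is a point. Your additional remark that linearizability follows because the canonical $\A$-action on $\ll/\aa$ is the (linear) adjoint action matches the paper's own comment in Section \ref{ssec:bigA-action-on-L-over-A}.
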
	

Now, let  $D = G \Join H$ be a matched pair of Lie groups, i.e.\ $G$ and $H$ are two Lie subgroups such that the multiplication map $m$ of $D$ restricts to a diffeomorphism $m|_{G\times H}:G\times H\to D$.
In this case, there is a $D$-equivariant diffeomorphism between $D/G$ equipped with the natural left $D$-action and the Lie group $H$ equipped with the natural left action of $H$ and the dressing action of $G$.
The following is an immediate consequence of Proposition \ref{prop:exponential-group}.
	
\begin{cor}
\label{cor:dressing-action-linearizable}
	Let $D=G\Join H$ be a matched pair of Lie groups.
	The Lie group pair $(D,G)$ has vanishing Atiyah class if and only if the dressing action of $G$ on $H$ is linearizable.
\end{cor}		
	
Applying this result to an integrable Lie bialgebra $\g$ and its double $ \mathfrak d = \g \oplus \g^*$, one sees that the vanishing of the Atiyah class implies that the dressing action of a Poisson--Lie group $G$ on its dual $G^*$ can be linearized in a neighborhood of the identity.
Its linearized action is of course the coadjoint action, see \cite{lu_multiplicative_1990}.

Alekseev and Meinrenken \cite{alekseev_2016_linearization} have proved that, for Poisson--Lie groups arising from an $r$-matrix, the Poisson structure on the dual group $G^*$ is 
linearizable in a neighborhood of the identity, while our result gives that the dressing action of $G$ on $G^*$ is linearizable. These results are clearly related: 
the symplectic leaves of the Poisson structure on $G^*$ coincide with the orbits of the dressing action \cite{lu_multiplicative_1990}. 
Notice that to linearize the Poisson structure and the dressing action are two different problems.
An interesting question is to investigate whether or not the vanishing of the Atiyah class, which gives the linearizability of the dressing action,
also implies the linearizability of the Poisson structure on $G^*$.

\begin{rmk}
	Since for Poisson--Lie groups, the leaves of the dressing action are precisely the symplectic leaves of the multiplicative Poisson structure while the leaves of the coadjoint action are precisely the symplectic leaves of the linear Poisson structure, it is tempting to believe that a diffeomorphism that intertwines both actions must be a Poisson diffeomorphism. As will be shown in \cite{abdeljellil__2015}, this is \emph{not} the case: there is in general no  $G$-equivariant Poisson diffeomorphisms between the Poisson--Lie group $G^*$ and the linear Poisson structure on $\mathfrak d/\g \simeq \g^*$.
\end{rmk}
	
	Given a $G$-invariant connection on $G/H$, one can $H$-equivariantly relate tensors on $G/H$ in a neighborhood of the unit to tensors on $\g/\h$ in a neighborhood of zero.
	
	\begin{cor}
	A Lie group pair $(G,H)$ has vanishing Atiyah class if and only if there exists an $H$-equivariant one-to-one correspondence between
	\begin{enumerate}[label=(\arabic*)]
	\item germs at $eH$ of $(k,l)$-tensors on $G/H$, and
	\item germs at $0$ of $(k,l)$-tensors on $\g/\h$ (considered as a manifold).
	\end{enumerate}
	\end{cor}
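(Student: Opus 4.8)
The plan is to derive this corollary from the $\A$-equivariant diffeomorphism already furnished by Corollary~\ref{cor:exponential}, using the naturality of tensor constructions under diffeomorphisms. First I would recall that, by Corollary~\ref{cor:exponential}, the vanishing of the Atiyah class of $(\L,\A)$ gives an $\A$-equivariant diffeomorphism $\Phi$ from a neighborhood $U$ of $0$ in the vector space $\ll/\aa$ to a neighborhood $V$ of the class of the unit in $\L/\A$, where the $\A$-action on $\ll/\aa$ is the linear one induced by the adjoint action (Proposition~\ref{prop:bigA-action-on-L-over-A}) and the $\A$-action on $\L/\A$ is the natural left action. Shrinking $U$ and $V$ if necessary, we may assume $\Phi$ is defined on an $\A$-invariant neighborhood, so that germs at $0$ and germs at $1$ are the relevant local data.

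The key step is then purely formal: any diffeomorphism $\Phi:U\to V$ induces, for each pair of integers $(k,l)$, a linear isomorphism between the space of $(k,l)$-tensor fields on $V$ and the space of $(k,l)$-tensor fields on $U$, obtained by pulling back covariant slots via $T\Phi$ and pushing forward contravariant slots via $(T\Phi)^{-1} = T(\Phi^{-1})$ (equivalently, transporting via the bundle isomorphism $T\Phi$ covering $\Phi$ applied tensorially). Passing to germs at $0$ and at $1$ respectively, this gives the claimed bijection. It remains to check $\A$-equivariance: for $\generic\in\A$ the two actions are related by $\Phi\circ L_\generic = L_\generic\circ\Phi$ (the equivariance of $\Phi$), hence $T\Phi\circ TL_\generic = TL_\generic\circ T\Phi$ at the level of tangent maps, and the induced maps on tensor fields intertwine the $\A$-actions by transport of structure, which is exactly functoriality of the tensor-field construction together with equivariance of $\Phi$.

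The main obstacle—such as it is—is bookkeeping rather than mathematics: one must be careful that the $\A$-action on $\ll/\aa$ as a \emph{manifold} (acting by the linear maps $\Ad_\generic$, each of which is in particular a diffeomorphism of $\ll/\aa$, not respecting the zero section in any extra way beyond fixing $0$) is precisely the one for which $\Phi$ is equivariant, and that germs at $0$ are preserved because each $\Ad_\generic$ fixes $0$ while each $L_\generic$ on $\L/\A$ fixes the unit class. Once this identification is in place, the proof is a one-line invocation of transport of tensors along an equivariant diffeomorphism, and I would write it as such, perhaps noting explicitly that the correspondence sends the germ of a tensor $T$ on $\L/\A$ to the germ of $\Phi^*T$ on $\ll/\aa$ and is inverse to $S\mapsto (\Phi^{-1})^*S$.
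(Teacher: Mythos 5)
Your proposal is correct and is essentially the paper's own argument: the paper derives this corollary directly from the $\A$-equivariant exponential diffeomorphism of Proposition~\ref{prop:exponential-group} (the Lie group specialization of Corollary~\ref{cor:exponential}), transporting tensors by pullback along that diffeomorphism exactly as you describe. Your bookkeeping remarks about the base points being fixed by the respective $\A$-actions are the only content beyond the one-line transport-of-structure argument, and they are accurate.
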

	This corollary might be more relevant when seen at the level of jets where it immediately implies the following result, which gives back Theorem 1.5 of Calaque, C\u{a}ld\u{a}raru and Tu \cite{calaque_pbw_2013} for Lie algebras over ${\mathbb R}$.
	\begin{cor}
	\label{cor:givescalaquecalderaruTu}
	A Lie group pair $(G,H)$ has vanishing Atiyah class if and only if there exists an $H$-equivariant one-to-one correspondence between
	\begin{enumerate}[label=(\arabic*)]
	\item jets at $eH$ of $(k,l)$-tensors on $G/H$, and
	\item elements in 
	  $\widehat S((\g/\h)^*) \bigotimes \otimes^k (\g/\h)^* \bigotimes \otimes^l  (\g/\h) $.
	\end{enumerate}
	\end{cor}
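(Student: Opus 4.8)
The plan is to obtain this by passing to $\infty$-jets in the preceding corollary on germs of tensors, the only genuinely new ingredient being the standard description, on a vector space, of jets of tensor fields. Recall first from Proposition~\ref{prop:exponential-group} (which rests on Proposition~\ref{prop:exponential} and Corollary~\ref{cor:exponential}) that the vanishing of the Atiyah class furnishes an $\A$-equivariant diffeomorphism $\exp^\nabla$ from a neighbourhood of $0$ in $\ll/\aa$ onto a neighbourhood of $1$ in $\L/\A$, intertwining the linear $\A$-action on $\ll/\aa$ of Section~\ref{ssec:bigA-action-on-L-over-A} with the $\A$-action on $\L/\A$ (which fixes the point $1$, as $\A$ is a subgroup). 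First I would pull back $(k,l)$-tensor fields along $\exp^\nabla$; as in the preceding corollary this is an $\A$-equivariant bijection between germs at $1$ of $(k,l)$-tensors on $\L/\A$ and germs at $0$ of $(k,l)$-tensors on $\ll/\aa$. Since taking $\infty$-jets at the base point is an $\A$-equivariant surjection on germs and pullback along a germ of diffeomorphism sending $0$ to $1$ descends to jets, this bijection descends to an $\A$-equivariant bijection between $\infty$-jets at $1$ of $(k,l)$-tensors on $\L/\A$ and $\infty$-jets at $0$ of $(k,l)$-tensors on $\ll/\aa$.

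Next I would identify the target intrinsically. Set $V=\ll/\aa$. Since $TV$ is canonically trivial, $TV\cong V\times V$, a $(k,l)$-tensor field on $V$ is nothing but a smooth map $V\to V^{*\otimes k}\otimes V^{\otimes l}$, and its $\infty$-jet at $0$ is its formal Taylor expansion there, i.e.\ an element of $\widehat S(V^*)\otimes V^{*\otimes k}\otimes V^{\otimes l}$, every such formal series being realized by Borel's theorem. Because $\A$ acts \emph{linearly} on $V$, this identification intertwines the $\A$-action on jets with the $\A$-action on $\widehat S(V^*)\otimes V^{*\otimes k}\otimes V^{\otimes l}$ induced from the linear action on $V$ by duality and tensor powers. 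Composing with the bijection of the previous step gives the asserted $\A$-equivariant correspondence.

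The only point requiring any care is this last identification: one must check that the canonical trivialisation of $TV$, the Taylor expansion at $0$, and the dualisation and tensor-power functors are all \emph{strictly} $\A$-equivariant for the linear $\A$-action, so that no affine correction term enters. Granting this --- which is immediate precisely because the action is linear --- the corollary is a formal consequence of the already-established existence of the $\A$-equivariant diffeomorphism $\exp^\nabla$.
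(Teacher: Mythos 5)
Your proposal is correct and follows the same route as the paper, which simply states this corollary as an immediate consequence of the preceding germ-level correspondence obtained from the $\A$-equivariant diffeomorphism $\exp^\nabla$. The only thing you add is the explicit (and correct) justification of the identification of $\infty$-jets at $0$ of $(k,l)$-tensors on the vector space $\ll/\aa$ with $\widehat S((\ll/\aa)^*)\otimes \otimes^k(\ll/\aa)^*\otimes \otimes^l(\ll/\aa)$, equivariantly for the linear action — a detail the paper leaves implicit.
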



\subsection{An interpretation of the Molino class of regular foliations}

In this section, we give an interpretation of the Molino class of a foliation in terms of linearizability of monodromies.
All foliations are assumed to be regular. Throughout, ${\mathcal F}$ is a foliation of rank $r$ on a manifold $M$ of dimension $d$.
	
We first fix our vocabulary and notation.	We shall denote by ${\mathcal F}_m$ the leaf of ${\mathcal F} $ through $m  \in M$, but we shall use the latin letter $F$ to denote a chosen particular leaf  of  ${\mathcal  F} $. 
A submanifold $T$ of $M$ is said to be \emph{transverse at $m \in F$ to the leaf $F$} if it intersects transversally $F$ at $m$, i.e.\ if $ T_m M = T_{m}F \oplus T_mT$. 
A foliation ${\mathcal T} $ on $M$, defined in a neighborhood $ {\mathcal U}$ of the leaf $F$ in ${\mathcal F}$, is said to be \emph{transverse to  $ {\mathcal F}$} when for all $m \in {\mathcal U}$, the leaf ${\mathcal T}_m $ is transverse to the leaf ${\mathcal F}_m $. 
Given such a transverse foliation, and given a smooth path $\gamma:I=[0,1] \to F$ in $F$, a \emph{parallel lift of $\gamma$ starting at $n \in {\mathcal T}_{\gamma(0)}$} is a path $\tilde{\gamma}: I \to {\mathcal U} \subset M $ satisfying the following three conditions:
\begin{enumerate}
	\item $\tilde{\gamma}(0) = n $,
	\item for all $ t \in I$, $ \tilde{\gamma}(t) \in {\mathcal F}_n$, and
	\item for all $ t \in I$, $ \tilde{\gamma}(t) \in {\mathcal T}_{\gamma(t)}$.
\end{enumerate}
When $ \gamma$ is given, a parallel lift starting at $n$ exists for all $n $ in a neighborhood of $ \gamma(0)$ in ${\mathcal T}_{\gamma(0)}$. Moreover, when it exists, it is unique. 
This allows to define the \emph{parallel transport over a smooth path $\gamma$ in $F$} as being the germ of local diffeomorphism from ${\mathcal T}_{\gamma(0)}$ to ${\mathcal T}_{\gamma(1)}$ around $\gamma(0)$ mapping a point $ n \in {\mathcal T}_{\gamma(0)}$ to the value at $t=1$ of the parallel lift of $ \gamma$ starting at $n$.
Parallel transport is known to depend on the homotopy class of $\gamma$ only and, when applied to loops at $m\in F$, it yields a group morphism from $\pi_1(F) $ to the germs of diffeomorphisms of $ {\mathcal T}_m$ that we call \emph{the monodromy of ${\mathcal  F}$ at $m$ with respect to ${\mathcal T}$}.

\begin{rmk} 
The monodromy does not depend on the transverse foliation ${\mathcal T} $. More precisely, given two transverse foliations ${\mathcal T} $ and ${\mathcal T}'$, the submanifolds ${\mathcal T}_m$ and ${\mathcal T}_m'$ are always diffeomorphic, in a neighborhood of $m$, in a canonical manner: the germ of diffeomorphism is obtained by restricting ourselves to a neighborhood $ {\mathcal V}$ of $m$ where ${\mathcal F}$ is described by the fibers of a surjective submersion, then by mapping a point in ${\mathcal T}_m \cap {\mathcal V}$ to the point (unique if it exists) in ${\mathcal T}_m'$ which is in the same fiber of this surjective submersion. The monodromies of ${\mathcal F}$ at $m$ with respect to ${\mathcal T} $ and ${\mathcal T}'$ are intertwined by these canonical diffeomorphisms, as is easily seen.
\end{rmk}

We consider some particular families of submanifolds transversal to the leaves, that we describe as follows.

\begin{defn}
  Let ${\mathcal F}$ be a foliation of rank $r$ on a manifold $M$, and let $N_{\mathcal F} = TM / T{\mathcal F}$ be the normal bundle of this foliation. 
  A \emph{system of transversals} is a pair $({\mathcal U}(N_{\mathcal F}),p)$ with ${\mathcal U}(N_{\mathcal F})$ a neighborhood of the zero section in the normal bundle $ N_{\mathcal F} $ and $p:{\mathcal U}(N_{\mathcal F}) \to M$ a submersion admitting the zero section $i:M \to {\mathcal U}(N_{\mathcal F}) $ as right-inverse.
\end{defn}

Let us choose a metric on $M$, and let $p$ be the composition of the identification $ N_{\mathcal F} \cong T{\mathcal F}^\perp \subset TM$ with the exponential map of the Levi-Civita connection. Then there exists a neighborhood ${\mathcal U}(N_{\mathcal F})$ of the zero section such that $({\mathcal U}(N_{\mathcal F}),p) $ is a system of transversals. Hence:

\begin{lem}
  Every foliation admits a system of transversals. 
\end{lem}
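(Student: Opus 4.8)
The plan is to follow the recipe sketched just before the statement: build the submersion $p$ out of a Riemannian exponential map restricted to the normal directions. First I would invoke paracompactness of $M$ to fix a Riemannian metric $g$. Then $T\mathcal F^\perp$, the $g$-orthogonal complement of the integrable distribution $T\mathcal F\subset TM$, is a smooth vector subbundle of $TM$, and the composition $T\mathcal F^\perp\hookrightarrow TM\twoheadrightarrow TM/T\mathcal F=N_{\mathcal F}$ is a vector bundle isomorphism over $\mathrm{id}_M$. Call its inverse $\iota\colon N_{\mathcal F}\xrightarrow{\ \sim\ }T\mathcal F^\perp\subset TM$.

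Next I would recall that the Riemannian exponential $\exp^g$ of $g$ is defined and smooth on some open neighborhood $\mathcal O$ of the zero section of $TM$, with $\exp^g(0_x)=x$ for all $x\in M$. Since $\iota$ sends the zero section of $N_{\mathcal F}$ into the zero section of $TM$, there is an open neighborhood $\mathcal W$ of the zero section of $N_{\mathcal F}$ with $\iota(\mathcal W)\subset\mathcal O$; set $p=\exp^g\circ\iota\colon\mathcal W\to M$, a smooth map, and let $i\colon M\to N_{\mathcal F}$ be the zero section.

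Finally I would verify the required properties. The identity $p\circ i=\mathrm{id}_M$ is immediate, since $\iota(i(x))=0_x$ and $\exp^g(0_x)=x$. In particular $T_{i(x)}p\circ T_xi=\mathrm{id}_{T_xM}$, so $T_{i(x)}p$ is surjective at every point of the zero section; as the locus where a smooth map is a submersion is open, shrinking $\mathcal W$ to a smaller open neighborhood $\mathcal U(N_{\mathcal F})$ of the zero section makes $p$ a submersion there. Thus $(\mathcal U(N_{\mathcal F}),p)$ is a system of transversals. (One may also note, though it is not part of the definition, that at $i(x)$ one computes $\ker T_{i(x)}p=\{(-\iota_x(n),n):n\in(N_{\mathcal F})_x\}$, which projects isomorphically onto $(T\mathcal F^\perp)_x$, so near $M$ the fibers of $p$ are genuinely transverse to $\mathcal F$, justifying the terminology.) There is no real obstacle in this argument; the only points requiring a little care are the bookkeeping of domains—arranging $\iota(\mathcal W)\subset\mathcal O$ and that the shrunk neighborhood still contains the zero section—and the standard fact $\exp^g(0_x)=x$, both routine.
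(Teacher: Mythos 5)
Your proof is correct and follows essentially the same route as the paper: the paper also defines $p$ as the Riemannian exponential composed with the identification $N_{\mathcal F}\cong T\mathcal F^\perp\subset TM$, restricted to a suitable neighborhood of the zero section. Your version merely spells out the routine verifications (domain bookkeeping and the openness of the submersion locus) that the paper leaves implicit.
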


To see why the previously defined pairs $({\mathcal U}(N_{\mathcal F}),p )$  deserve to be called ``system of transversals'', denote by ${\mathcal U}(N_{\mathcal F})_m$ the intersection of ${\mathcal U}(N_{\mathcal F})$ with the fiber over $m$ of the canonical projection $\pi:N_{\mathcal F}\to M$.
The restriction of $\pi$ to ${\mathcal U}(N_{\mathcal F})$ is still denoted by the same letter.
The local inverse theorem implies the following results:
\begin{enumerate}
  \item Upon shrinking ${\mathcal U}(N_{\mathcal F})$ if necessary, one can assume that for all  $ m \in {\mathcal U}(N_{\mathcal F})$, the image through $p$ of $ {\mathcal U}(N_{\mathcal F})_m$ is a submanifold of $M $ transverse  to $ {\mathcal F}_m$ at $m$ that we denote by $ {\mathcal T}^{p}_m$.
  \item Upon shrinking ${\mathcal U}(N_{\mathcal F})$ if necessary, one can assume that for all leaves $F$ of ${\mathcal F}$, the map $ p: \pi^{-1}(F) \to M  $ is a local diffeomorphism onto a neighborhood $ {\mathcal U}(F)$ of $F$ in $M$. We call that local diffeomorphism a \emph{local neighborhood diffeomorphism at the leaf $F$}.
  \item Upon shrinking ${\mathcal U}(N_{\mathcal F})$ if necessary, one can assume that for each leaf $F$ of ${\mathcal F} $, the disjoint union $\sqcup_{m \in F} {\mathcal T}^{p}_m$ is a foliation $\mathcal T^F$ of ${\mathcal U}(F)$ transverse to ${\mathcal F}$ (in a neighborhood of $F$).
\end{enumerate}
A system of transversals $({\mathcal U}(N_{\mathcal F}),p ) $ that satisfies these conditions shall be called a \emph{good system of transversals}. 

For $ ( {\mathcal U} (N_{\mathcal F}) , p ) $ a good system of transversals, we now transport, for each leaf $ F$  of $ {\mathcal F}$, the foliation $ {\mathcal  F}$ through the local neighborhood diffeomorphism at the leaf $ F$. This defines a foliation on $ {\mathcal U} (N_{\mathcal F}) $ that we call the \emph{monodromy foliation} and denote  by ${\mathcal M}_{\mathcal F} $.

Notice that:
\begin{enumerate}
\item When $M$ has dimension $d$ and ${\mathcal F}$ rank $r$, the monodromy foliation has rank $r$, but on a manifold
of dimension $2d-r$.
\item By construction, for each leaf $F$ of ${\mathcal F} $, the leaves of ${\mathcal M}_{\mathcal F}$ through points above $F$ are contained in $\pi^{-1}(F)$, i.e. the monodromy foliation is tangent to all the $\pi^{-1}(F)$.
\end{enumerate}
Indeed, since the restriction $\left.{\mathcal M}_{\mathcal F}\right|_{\pi^{-1}(F)}$ of ${\mathcal M}_{\mathcal F} $ to $\pi^{-1}(F)$ is diffeomorphic, as a foliated manifold, to a neighborhood of $F$ in $M$, the following holds true by construction:

\begin{prop}
\label{prop:monodromies}
Let ${\mathcal F}$ be a foliation on a manifold $M$, let $F$ be a leaf of $ {\mathcal F}$ and let $m\in F$.
For every good system of transversals $({\mathcal U}(N_{\mathcal F}),p)$, the local neighborhood diffeomorphism at the leaf $F$ intertwines
\begin{enumerate}
  \item the monodromy of $ {\mathcal F}$ at $m$ with respect to the transverse foliation $\mathcal T^{F}$, and 
  \item the monodromy of the restriction of $\left.{\mathcal M}_{\mathcal F}\right.$ to ${\pi^{-1}(F)}$ at the point $i(m)$ with respect to the transverse foliation given by the fibers of ${\mathcal U}(N_{\mathcal F})_{|_F} \to F$. 
\end{enumerate}
\end{prop}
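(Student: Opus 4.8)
The plan is to realise the \emph{local neighbourhood diffeomorphism at the leaf $F$} as a foliated diffeomorphism that carries every ingredient of assertion~(2) to the corresponding ingredient of assertion~(1), and then to invoke the naturality of parallel transport. Write $\Phi := p|_{\pi^{-1}(F)}\colon \pi^{-1}(F)\to{\mathcal U}(F)$; by the very construction preceding the statement, $\Phi$ is a diffeomorphism of foliated manifolds from $(\pi^{-1}(F),\left.{\mathcal M}_{\mathcal F}\right|_{\pi^{-1}(F)})$ onto $({\mathcal U}(F),\left.{\mathcal F}\right|_{{\mathcal U}(F)})$, this being exactly what it means to obtain ${\mathcal M}_{\mathcal F}$ by transporting ${\mathcal F}$ through the local neighbourhood diffeomorphism.

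First I would record the compatibilities, all of which are immediate. Since $p\circ i=\id_M$ we have $\Phi(i(m))=m$ and $\Phi(i(F))=F$; and for each $m'\in F$ the fibre ${\mathcal U}(N_{\mathcal F})_{m'}$ of $\pi|_{\pi^{-1}(F)}$ is mapped by $\Phi$ onto ${\mathcal T}^{p}_{m'}=p({\mathcal U}(N_{\mathcal F})_{m'})$, so $\Phi$ carries the transverse foliation by $\pi$-fibres onto the transverse foliation ${\mathcal T}^F=\bigsqcup_{m'\in F}{\mathcal T}^{p}_{m'}$; in particular it restricts to a germ of diffeomorphism from $({\mathcal U}(N_{\mathcal F})_m,i(m))$ onto $({\mathcal T}^{p}_m,m)$, and transversality of ${\mathcal T}^F$ to ${\mathcal F}$ forces the $\pi$-fibres to be transverse to $\left.{\mathcal M}_{\mathcal F}\right|_{\pi^{-1}(F)}$, so that the monodromy in (2) is indeed defined. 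Moreover the leaf of $\left.{\mathcal M}_{\mathcal F}\right|_{\pi^{-1}(F)}$ through $i(m)$ is $\Phi^{-1}$ of the leaf of $\left.{\mathcal F}\right|_{{\mathcal U}(F)}$ through $m$, and the latter equals $F$: indeed $F$ is a connected integral submanifold of ${\mathcal F}$ contained in ${\mathcal U}(F)$, hence lies in that leaf, while conversely any connected integral submanifold inside ${\mathcal U}(F)$ lies in a single leaf of ${\mathcal F}$. As $\Phi$ is bijective and $\Phi(i(F))=F$, the leaf through $i(m)$ is $i(F)$, and $\Phi$ induces an isomorphism $\pi_1(i(F),i(m))\cong\pi_1(F,m)$.

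Second, I would invoke naturality of parallel transport. Let $\gamma$ be a loop at $m$ in $F$ and $n\in{\mathcal T}^{p}_m$ a point close to $m$; let $\tilde\gamma$ be the parallel lift of $\gamma$ starting at $n$, relative to ${\mathcal F}$ and the transversals ${\mathcal T}^F$. Then $\Phi^{-1}\circ\tilde\gamma$ satisfies the three defining conditions of a parallel lift of $i\circ\gamma=\Phi^{-1}\circ\gamma$ starting at $\Phi^{-1}(n)$, relative to $\left.{\mathcal M}_{\mathcal F}\right|_{\pi^{-1}(F)}$ and the $\pi$-fibre transversals: it starts at $\Phi^{-1}(n)$; it stays within one leaf of $\left.{\mathcal M}_{\mathcal F}\right|_{\pi^{-1}(F)}$, because $\tilde\gamma$ stays within one leaf of ${\mathcal F}$ and $\Phi$ is foliated; and $\Phi^{-1}(\tilde\gamma(t))\in\Phi^{-1}({\mathcal T}^{p}_{\gamma(t)})={\mathcal U}(N_{\mathcal F})_{\gamma(t)}$, the transverse leaf at $i(\gamma(t))$. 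By uniqueness of parallel lifts, $\Phi^{-1}\circ\tilde\gamma$ is \emph{the} parallel lift of $i\circ\gamma$ at $\Phi^{-1}(n)$; evaluating at $t=1$ shows that the parallel transport of $\left.{\mathcal M}_{\mathcal F}\right|_{\pi^{-1}(F)}$ along $i\circ\gamma$ is $\Phi^{-1}$ composed with the parallel transport of ${\mathcal F}$ along $\gamma$ composed with $\Phi$, as germs of diffeomorphisms at $i(m)$, respectively $m$. Passing to homotopy classes and using the identification $\pi_1(i(F),i(m))\cong\pi_1(F,m)$ of the first step, this says precisely that $\Phi$ intertwines the two monodromies, which is the assertion.

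The one point deserving a moment's care — the closest thing to an obstacle — is the leaf identification in the first step, namely that the leaf of the restricted foliation $\left.{\mathcal F}\right|_{{\mathcal U}(F)}$ through $m$ is all of $F$ and not merely a proper open piece; this rests on $F$ being connected and contained in ${\mathcal U}(F)$ (part of being a \emph{good} system of transversals) together with the elementary fact that a connected integral submanifold lies in a single leaf, and it also underpins the statement, used above, that $\Phi$ is a genuine foliated diffeomorphism rather than merely a local one. Everything else is a direct unwinding of the definitions, which is why the statement holds ``by construction''.
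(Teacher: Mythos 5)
Your proof is correct and is essentially the paper's argument: the paper disposes of this statement with the single remark that $\left.{\mathcal M}_{\mathcal F}\right|_{\pi^{-1}(F)}$ is, by the very definition of the monodromy foliation, the transport of ${\mathcal F}$ through the local neighbourhood diffeomorphism at $F$, so the claim holds ``by construction''; your write-up simply unwinds that remark (foliated map, correspondence of transversal systems, uniqueness of parallel lifts). No discrepancy worth noting.
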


Recall that every flat connection $\nabla$ on a vector bundle $E \to N$ gives rise to a regular foliation ${\mathcal F}^\nabla $ on the total space of the vector bundle $E$: the leaf through $e \in E$ is by definition the subset of all points in $E$ that can be reached from $e$ by parallel transport over paths in $M$. 
The leaves of this foliation have the dimension of $N$ and the zero section is a leaf. The same construction applies when ${\mathcal F}$ is a foliation on $M$ and $E \to M$ is a vector bundle equipped with a flat foliated connection. When applied to the Bott connection (the canonical $T{\mathcal F}$-action on $N_{\mathcal F}$),
this leads to a foliation on the normal bundle $ N_{\mathcal F} \to M$ that we call the \emph{Bott foliation}. We can now express the following notions.

\begin{defn}
	Let ${\mathcal F}$ be a regular foliation on a manifold $ M$.
	\begin{enumerate}
  	\item We say that \emph{the monodromy around a given leaf $F$ is linearizable} when for one (equivalently all) local transverse submanifolds $ {\mathcal T}_m$ to $ F$ at one (equivalently all) point $m \in M$, the monodromy map $ \pi_1(F) \to {\rm Diff}_m ({\mathcal T}_m)$ is conjugate to a linear representation of $\pi_1(F)$. 
  	\item We say that \emph{all the monodromies are simultaneously linerizable} when the Bott foliation and the monodromy foliation (computed with the help of any (equivalently all) good system of transversal) are diffeomorphic in a neighborhood of the zero section. 
	\end{enumerate}
\end{defn}

The next proposition justifies the appellation ``all monodromies are simultaneously linearizable'' used in the previous definition:

\begin{prop}
	Let ${\mathcal F}$ be a regular foliation on a manifold $ M$. If all the monodromies are simultaneously linearizable, then the monodromy around each leaf of $ {\mathcal F}$ is linearizable.
More precisely, the monodromy of each leaf $F$ of $ {\mathcal F}$ linearizes to the \emph{holonomy} of the Bott connection on the restriction to $F$ of the normal bundle.
\end{prop}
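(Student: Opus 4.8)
The plan is to feed the hypothesis into Proposition~\ref{prop:monodromies} and combine it with two elementary facts. The first is that the monodromy of a foliation along loops in a leaf, computed relative to a transversal, is an invariant of the germ of the foliated manifold along that leaf: a foliated diffeomorphism carries the monodromy to the monodromy of the image foliation, up to conjugation by the induced germ of diffeomorphism between the transversals. This is immediate from the uniqueness of parallel lifts recalled before Proposition~\ref{prop:monodromies}. The second is that when a foliation is the one $\mathcal{F}^{\nabla}$ associated to a \emph{flat linear} connection $\nabla$ on a vector bundle $E\to N$, its monodromy at a point $m$ relative to the transversal $E_m$ is, tautologically, the germ at $0$ of the holonomy representation $\pi_1(N,m)\to GL(E_m)$ of $\nabla$, which is a linear representation.

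First I would fix a leaf $F$ of $\mathcal{F}$, a point $m\in F$, and a good system of transversals $(\mathcal U(N_{\mathcal F}),p)$, producing the transverse foliation $\mathcal T^{F}$ near $F$, the local neighborhood diffeomorphism at $F$, and the monodromy foliation $\mathcal{M}_{\mathcal F}$. The hypothesis provides a diffeomorphism $\Psi$, defined on a neighborhood of the zero section, carrying the Bott foliation to $\mathcal{M}_{\mathcal F}$; I would read ``simultaneously linearizable'' to include (and note that this is what the construction of $\exp^{\nabla}$ in Section~\ref{ssec:exponential-map-of-invariant-connections} delivers) that $\Psi$ commutes with the bundle projection $\pi\colon N_{\mathcal F}\to M$ and restricts to the identity on the zero section. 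Both the Bott foliation and $\mathcal{M}_{\mathcal F}$ are tangent to every submanifold $\pi^{-1}(F)$, and $i(F)$ is a leaf of each of them (for the Bott foliation because parallel transport of the zero vector along curves in $F$ is again zero), so $\Psi$ restricts to a foliated diffeomorphism of a neighborhood of $i(F)$ in $\pi^{-1}(F)$ fixing $i(F)$ pointwise, and in particular $\Psi|_{\pi^{-1}(m)}$ is a germ of diffeomorphism between neighborhoods of $0$ in the fibre $(N_{\mathcal F})_m$.

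Next I would chain the conjugacies. By invariance of monodromy under foliated diffeomorphisms, the monodromy of $\mathcal{M}_{\mathcal F}$ restricted to $\pi^{-1}(F)$, at $i(m)$ and relative to the fibre transversal $\pi^{-1}(m)$, is conjugate via $\Psi|_{\pi^{-1}(m)}$ to the monodromy of the Bott foliation restricted to $\pi^{-1}(F)$, at $0_m$ and relative to $\pi^{-1}(m)$. But the restriction of the Bott foliation to $\pi^{-1}(F)$ is exactly $\mathcal{F}^{\nabla}$ for $\nabla$ the Bott connection, which along the leaf $F$ is a flat linear connection on $N_{\mathcal F}|_F\to F$; so by the second fact above its monodromy at $0_m$ relative to $(N_{\mathcal F})_m$ is the germ at $0$ of the holonomy representation $\mathrm{hol}^{\mathrm{Bott}}\colon\pi_1(F,m)\to GL((N_{\mathcal F})_m)$. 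On the other side, Proposition~\ref{prop:monodromies} identifies, through the local neighborhood diffeomorphism at $F$, the monodromy of $\mathcal{F}$ at $m$ relative to $\mathcal T^{F}$ with the monodromy of $\mathcal{M}_{\mathcal F}|_{\pi^{-1}(F)}$ at $i(m)$ relative to the fibres of $\mathcal U(N_{\mathcal F})|_F\to F$, i.e.\ relative to $\pi^{-1}(m)$. Composing the two identifications, the monodromy of $\mathcal{F}$ at $m$ relative to $\mathcal T^{F}$ is conjugate to $\mathrm{hol}^{\mathrm{Bott}}$; since the monodromy around $F$ is independent of the chosen transverse submanifold, these being canonically identified (as recorded in the remark following the definition of monodromy), this shows the monodromy around $F$ is linearizable and linearizes to the holonomy of the Bott connection on $N_{\mathcal F}|_F$. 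As $F$ and $m$ were arbitrary, the proposition follows.

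I expect the main obstacle to be bookkeeping rather than anything deep: one must verify that the single fibre $\pi^{-1}(m)$ genuinely serves as a transversal for all the foliations that intervene — the Bott foliation and $\mathcal{M}_{\mathcal F}$ over $F$, and, after transport by the local neighborhood diffeomorphism, $\mathcal{F}$ itself near $F$ — and one must pin down exactly which compatibilities of $\Psi$ with the fibration $\pi$ and with the zero section are needed for $\Psi$ to induce a germ of diffeomorphism of transversals intertwining the monodromy representations. Turning the ``invariance of monodromy under foliated diffeomorphisms'' into a precise lemma (it follows at once from uniqueness of parallel lifts) is the only other routine point.
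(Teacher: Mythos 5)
Your proof follows essentially the same route as the paper, which simply combines Proposition~\ref{prop:monodromies} with Lemma~\ref{lem:linearcase} (the statement that the foliation of a flat connection has linear monodromy equal to the holonomy). The only difference is that you make explicit the compatibility of the linearizing diffeomorphism with the projection $\pi$ and the zero section, and the invariance of monodromy under foliated diffeomorphisms, both of which the paper leaves implicit in its ``follows immediately''.
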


The previous proposition follows immediately from Proposition \ref{prop:monodromies} together with the following obvious lemma:

\begin{lem}
\label{lem:linearcase}
Let $\nabla$ be a flat connection on a vector bundle $E \to N$. For the associated foliation on $E$, the monodromy of the zero section $N\subset E$ with respect to the transverse foliation given by the fibers of $E \to N$ is by linear endomorphisms. 
These linear endomorphisms coincide with the holonomy of $\nabla$.
\end{lem}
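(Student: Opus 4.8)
This is essentially a matter of unwinding the definitions and recognising the foliation-theoretic parallel transport attached to ${\mathcal F}^\nabla$ as the ordinary parallel transport of $\nabla$.

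First I would recall the structure of ${\mathcal F}^\nabla$: its tangent distribution is the horizontal distribution $H^\nabla\subset TE$ determined by $\nabla$ --- integrable precisely because $\nabla$ is flat --- the leaf through a point $e\in E$ is the set of all $\nabla$-parallel transports of $e$ along paths in $N$ issued from $q(e)$, the zero section $N$ is one such leaf, and $q:E\to N$ restricts to a local diffeomorphism on each leaf. Moreover the fibres $E_m=q^{-1}(m)$ form a foliation of $E$ transverse to ${\mathcal F}^\nabla$, since $H^\nabla$ is a complement to $\ker Tq$ everywhere; thus $E_m$ is precisely the transverse submanifold ${\mathcal T}_m$ through $m$ in the sense used in the definition of parallel lift recalled above.

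The heart of the argument is then the following identification. Fix $m\in N$, a smooth loop $\gamma:[0,1]\to N$ based at $m$, and $n\in E_m$. I claim that the parallel lift $\tilde\gamma$ of $\gamma$ starting at $n$ --- i.e.\ the unique path in $E$ with $\tilde\gamma(0)=n$, contained in the leaf of ${\mathcal F}^\nabla$ through $n$, and satisfying $q\circ\tilde\gamma=\gamma$ --- coincides with the $\nabla$-horizontal lift $t\mapsto P_{\gamma|_{[0,t]}}(n)$ of $\gamma$, where $P$ denotes $\nabla$-parallel transport. Indeed, any curve contained in a leaf of ${\mathcal F}^\nabla$ is tangent to $H^\nabla$; together with $q\circ\tilde\gamma=\gamma$ this forces $\tilde\gamma'(t)$ to be the horizontal lift of $\gamma'(t)$, so that $\tilde\gamma$ solves the parallel transport equation with initial value $n$ and hence equals $t\mapsto P_{\gamma|_{[0,t]}}(n)$. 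Conversely, this last curve starts at $n$, lies in the leaf of ${\mathcal F}^\nabla$ through $n$ by the very description of that leaf, and projects to $\gamma$; by the uniqueness of parallel lifts it must therefore be $\tilde\gamma$.

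Evaluating at $t=1$, the monodromy of ${\mathcal F}^\nabla$ at $m$ with respect to the fibre foliation sends $n\mapsto\tilde\gamma(1)=P_\gamma(n)$, the $\nabla$-holonomy of the loop $\gamma$. Parallel transport of $\nabla$ is fibrewise linear, so $P_\gamma\in GL(E_m)$; and since $\nabla$ is flat, $P_\gamma$ depends only on the class of $\gamma$ in $\pi_1(N,m)$, in agreement with the homotopy invariance of the monodromy. Hence $[\gamma]\mapsto P_\gamma$ is exactly the holonomy representation of $\nabla$ at $m$, which proves that the monodromy is by linear endomorphisms and coincides with the holonomy. There is no real obstacle here: the only step that deserves a moment's care is the identification, in the previous paragraph, of a curve lying in a leaf of ${\mathcal F}^\nabla$ and projecting to $\gamma$ with the horizontal lift of $\gamma$, which rests on the fact that leaves of a foliation are integral submanifolds of its tangent distribution; everything else is the standard theory of linear connections.
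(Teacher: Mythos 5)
Your proof is correct; the paper actually states this lemma without proof, dismissing it as ``obvious,'' and your argument is exactly the standard unwinding it has in mind: the leaves of $\mathcal F^\nabla$ integrate the horizontal distribution, so a foliation-theoretic parallel lift over a loop $\gamma$ (a curve in a leaf projecting to $\gamma$ under $q$) is forced to be the $\nabla$-horizontal lift, whence the monodromy germ at the zero section is the (linear) holonomy $P_\gamma$. No gaps.
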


We now arrive at the main result of this section:

\begin{thm}
\label{thm:monodromies-main}
	Let ${\mathcal F}$ be a regular foliation on a manifold $ M$. The Atiyah class of the Lie algebroid pair $(TM,T{\mathcal F})$ vanishes if and only if all the monodromies are simultaneously linearizable.
More precisely, if the monodromy of each leaf $F$ of $ {\mathcal F}$ linearizes to the \emph{holonomy} of the Bott connection on the restriction to $F$ of the normal bundle.
\end{thm}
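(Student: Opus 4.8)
The plan is to feed the vanishing of the Atiyah class into the local linearization results of Section~\ref{sec:local-Lie-groupoids} and then recognize the resulting $\A$-equivariant diffeomorphism as the diffeomorphism that realizes simultaneous linearization. Here the Lie algebroid pair in play is $(\ll,\aa)=(TM,T{\mathcal F})$, so that $\ll/\aa=N_{\mathcal F}$ is the normal bundle of ${\mathcal F}$.

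First I would set up the groupoid picture. Since the Atiyah class of $(TM,T{\mathcal F})$ vanishes, Corollary~\ref{cor:exponential-local} produces a local Lie groupoid pair $(\L,\A)$ integrating $(TM,T{\mathcal F})$ such that $\L/\A$ is a manifold, together with an $\A$-equivariant diffeomorphism from a neighborhood of $M$ in $N_{\mathcal F}$ onto a neighborhood of $M$ in $\L/\A$; this diffeomorphism is the exponential $\exp^\nabla$ of an $\L$-invariant fibrewise affine connection $\nabla$ on $\L/\A\to M$ (cf.\ Proposition~\ref{prop:exponential}, or Corollary~\ref{cor:L-conn3} and Theorem~\ref{th:localCase} with $E=\ll/\aa$), so in particular it is fibre-preserving over $M$ and sends the zero section of $N_{\mathcal F}$ to the unit section of $\L/\A$. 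Near $M$ we may and do identify $\L$ with the local pair groupoid over $M$ and $\A$ with the local monodromy (holonomy) groupoid of ${\mathcal F}$, i.e.\ with $\{(x,y):x,y \text{ on the same plaque}\}$, a closed local subgroupoid of $M\times M$.

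Next I would identify both the Bott foliation and the monodromy foliation ${\mathcal M}_{\mathcal F}$ as foliations by orbits of the local monodromy groupoid $\A$. On $N_{\mathcal F}=\ll/\aa$: the canonical $\aa$-module structure of Section~\ref{ssec:bigA-action-on-L-over-A} is, for a foliation, the Bott connection, so the $\A$-action of Proposition~\ref{prop:bigA-action-on-L-over-A} integrates Bott parallel transport; hence the foliation of $N_{\mathcal F}$ by $\A$-orbits coincides near the zero section with the Bott foliation. On $\L/\A$: with $\L$ the local pair groupoid and $\A$ as above, $\underline{\ta}:\L/\A\to M$ is, after the usual shrinking, a good system of transversals — the fibre $\underline{\ta}^{-1}(x)$ is the local transversal at $x$ and $M\hookrightarrow\L/\A$ is the zero section — and the orbit of $[(x,y)]$ under the left $\A$-action is $\{[(x',y)]:x' \text{ on the plaque of } x\}$, which is exactly the leaf of ${\mathcal M}_{\mathcal F}$ through $[(x,y)]$ (it transports the transversal point represented by $y$ along the plaque of $x$). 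Thus, up to diffeomorphism near the zero section, ${\mathcal M}_{\mathcal F}$ is the foliation of $\L/\A$ by $\A$-orbits.

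Finally, $\exp^\nabla$ is $\A$-equivariant, hence maps $\A$-orbits to $\A$-orbits and therefore carries the Bott foliation on $N_{\mathcal F}$ to the monodromy foliation on $\L/\A$; being a diffeomorphism near the respective zero sections, this says exactly that the Bott and monodromy foliations are diffeomorphic near the zero section, i.e.\ that all the monodromies are simultaneously linearizable. The sharper assertion, that the monodromy of each leaf $F$ linearizes to the holonomy of the Bott connection on $N_{\mathcal F}|_F$, then follows at once by combining this with Proposition~\ref{prop:monodromies} and Lemma~\ref{lem:linearcase} (this is the proposition stated just before Theorem~\ref{thm:monodromies-main}). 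The step I expect to be the main obstacle is the identification in the third paragraph: precisely matching the definition of ${\mathcal M}_{\mathcal F}$ — built from the local neighborhood diffeomorphisms at the leaves of ${\mathcal F}$ — with the intrinsic foliation of $\L/\A$ by left $\A$-orbits, and checking that $\L/\A\to M$, suitably shrunk and equipped with the linear structure transported along $\exp^\nabla$, genuinely qualifies as a good system of transversals. This amounts to comparing the holonomy of ${\mathcal F}$ in two models of its transversals and verifying that the two prescriptions agree; once it is in place, the remaining steps are formal given the cited results.
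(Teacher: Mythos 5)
Your proposal is correct and follows essentially the same route as the paper: the paper likewise realizes $\L$ as the (local) pair groupoid, identifies the monodromy foliation on a good system of transversals with the foliation of $\L/\A$ by the left $T{\mathcal F}$-action via an explicit local diffeomorphism $x\in N_{\mathcal F}|_m\mapsto[(m,p(x))]$, and then invokes Corollary~\ref{cor:exponential-local} together with Lemma~\ref{lem:linearcase}. The only cosmetic difference is the direction of the identification you flag as the main obstacle (the paper maps a chosen good system of transversals into $\L/\A$ rather than exhibiting $\L/\A\to M$ itself as one), which carries the same content.
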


By construction, the Atiyah class of the Lie algebroid pair $(TM,T{\mathcal F})$ coincides the \emph{Molino class} defined in \cite{molino_proprietes_1973}.
The proof of Theorem~\ref{thm:monodromies-main} shall of course make use of the Lie algebroid pair $(TM,T{\mathcal F}) $. 
It is not relevant to assume that this Lie algebroid pair integrates to a Lie groupoid pair as there are many natural counter-examples, see \cite[Examples 12]{moerdijk_integrability_2006}.
For $M$ simply connected, the Lie algebroid $TM$ integrates to the pair Lie groupoid $M \times M \toto M$, and $T{\mathcal F}$ integrates to a closed Lie subgroupoid of it if and only if $M/{\mathcal F}$ is a manifold (in which case it integrates to the Lie groupoid $M \times_{\mathcal F} M \toto M $ of pairs of points in $M$ which are in the same leaf of ${\mathcal F} $, i.e. in  the same fiber of the natural projection $ M \mapsto M/{\mathcal F}$).

\begin{proof}
  Let us spell out what the construction, given in Section \ref{sec:local-Lie-groupoids}, of the quotient space $\L/\A$ gives when applied to the particular case of $\ll=TM,\aa=T{\mathcal F}$.  A Lie groupoid $\L$ that integrates $\ll=TM $ is the pair groupoid $ {\L} :=M \times M \toto M $. The Lie algebroid $\aa =T{\mathcal F}$ acts on $ {\L}$ from both sides, i.e.
  \begin{enumerate}
    \item it acts on the left by mapping $ u \in \Gamma (T{\mathcal F})$ to the vector field on $ {\L}$ whose value at $(m,m') \in M \times M$ is $  (u_m,0) \in T_{(m,m')} M \times M \simeq T_m M\times T_{m'}M $,
    \item it acts on the right by mapping $ u \in \Gamma (T{\mathcal F})$ to the vector field on $ {\L}$ whose value at $(m,m') \in M \times M$ is $  (0,-u_{m'}) \in T_{(m,m')} M \times M \simeq T_m M\times T_{m'}M $.
  \end{enumerate} 
  The quotient space $ {\L}/{\A} $ can be made sense of as follows: it is the quotient of a neighborhood $ {\mathcal U}$ of the diagonal  $ \Delta(M) \subset M \times M$ by the foliation given by the infinitesimal right action of $\aa =T{\mathcal F} $. By shrinking $ {\mathcal U}$ if necessary, we can assume this quotient to be a manifold, that we denote by $ {\L}/{\A} $.

  Now, let $ ( {\mathcal U} (N_{\mathcal F}) , p ) $ be a good system of transversals, with ${\mathcal U} (N_{\mathcal F}) \subset N_{\mathcal F} $. There is a natural map  
  from $ {\mathcal U} (N_{\mathcal F}) $ to $ M \times M$ mapping $x \in  N_{\mathcal F}|_m \cap {\mathcal U} (N_{\mathcal F})$ to $ (m,p(m))$. Upon shrinking ${\mathcal U} (N_{\mathcal F})$ if  necessary, we can assume that this map
  takes values in the open set $ {\mathcal U}$.   Upon shrinking ${\mathcal U} (N_{\mathcal F})$ once more if  necessary, we  can then assume that the composition 
  \begin{equation}
  \label{diiffeoPhi}
    \Phi
    : {\mathcal U} (N_{\mathcal F})
    \to {\mathcal U}
    \to {\L}/{\A}
  \end{equation}
  is a local diffeomorphism. By construction, the diffeomorphism $\Phi$ defined in (\ref{diiffeoPhi}) 
  intertwines 
  \begin{enumerate}
    \item the monodromy foliation of ${\mathcal U} (N_{\mathcal F})$, and
    \item the foliation on $\L/\A$ given by the left $\aa=T{\mathcal F}$-action.
  \end{enumerate}

  By Corollary \ref{thm:exponential-local}, the Atiyah class is zero if and only if there exists an $\aa$-equivariant diffeomorphism between $ {\L}/{\A}$ and $\ll/\aa$. Since the foliation induced by the $\aa$-action on $\ll/\aa$ is precisely the Bott foliation, we arrive at the desired condition in view of Lemma \ref{lem:linearcase}.
\end{proof}


\bibliographystyle{amsabbrvlinks} 
\bibliography{InvariantConnections}

\end{document}